%% filename: amsart-template.tex
%% version: 1.1
%% date: 2014/07/24
%%
%% American Mathematical Society
%% Technical Support
%% Publications Technical Group
%% 201 Charles Street
%% Providence, RI 02904
%% USA
%% tel: (401) 455-4080
%%      (800) 321-4267 (USA and Canada only)
%% fax: (401) 331-3842
%% email: tech-support@ams.org
%% 
%% Copyright 2008-2010, 2014 American Mathematical Society.
%% 
%% This work may be distributed and/or modified under the
%% conditions of the LaTeX Project Public License, either version 1.3c
%% of this license or (at your option) any later version.
%% The latest version of this license is in
%%   http://www.latex-project.org/lppl.txt
%% and version 1.3c or later is part of all distributions of LaTeX
%% version 2005/12/01 or later.
%% 
%% This work has the LPPL maintenance status `maintained'.
%% 
%% The Current Maintainer of this work is the American Mathematical
%% Society.
%%
%% ====================================================================

%     AMS-LaTeX v.2 template for use with amsart
%
%     Remove any commented or uncommented macros you do not use.

\documentclass[]{amsart}
\synctex=1

\usepackage[bottom]{footmisc}
\usepackage{xr}
\usepackage[dvipsnames,table,xcdraw]{xcolor}

\setcounter{tocdepth}{3}% Include up to \subsubsection in ToC

\setlength{\textwidth}{\paperwidth}
\addtolength{\textwidth}{-2.5in}
\calclayout

%\externaldocument{XBernsteinSatake}
\externaldocument{XSeedrelations}
\externaldocument{XUoperatorsII}

\usepackage[colorlinks=true]{hyperref}
	\hypersetup{urlcolor=RubineRed,linkcolor=RoyalBlue,citecolor=ForestGreen}

\usepackage{cooltooltips}
\usepackage{graphicx}

\usepackage{forest,adjustbox}
\usepackage{forest}
\usetikzlibrary{shapes.geometric,decorations.markings}
\usetikzlibrary{positioning,arrows.meta,decorations.pathreplacing}

\forestset{%
  my tree/.style={
    for tree={
      circle,
      draw,
      inner sep=1pt,
      l sep'=5mm,
      l'=5mm,
      s sep'=2mm,
    },
  }
}
\usepackage{caption} 
			
	\newtheorem{theorem}{Theorem}[subsection]
	\newtheorem{lemma}[theorem]{Lemma}
	\newtheorem{proposition}[theorem]{Proposition}
	\newtheorem{corollary}[theorem]{Corollary}
	\newtheorem*{theorem*}{Theorem}
	\newtheorem*{lemma*}{Lemma}
	\newtheorem*{proposition*}{Proposition}
	\newtheorem*{corollary*}{Corollary}
	%Def-style theorems
		\theoremstyle{definition}
		
		\newtheorem{definition}[theorem]{Definition}
		\newtheorem{example}[theorem]{Example}
		% Remark-style theorems

		\newtheorem{remark}[theorem]{Remark}

\numberwithin{equation}{section}
%Use this file for your own custom packages, command-definitions, etc...

% the following lines are for creating a simplified TO-DO box. However since boites is not per default installed with all latex-distributions, we have removed this example again
% if you want to use it and do not have "boites" installed, you can get it from here: http://www.ctan.org/tex-archive/macros/latex/contrib/boites
%
%\usepackage{boites,boites_exemples}
%\newcommand{\todolist}[1]{\begin{boiteepaisseavecuntitre}{TO DO in this chapter} #1 \end{boiteepaisseavecuntitre}}  % creates a little box
%\newcommand{\todolist}[1]{}  
%to be used when to do is not to be printed

% packages
%\usepackage{amsthm,amssymb,amsmath}
\usepackage[bottom]{footmisc}
\usepackage{tikz}
\usepackage{tikz-cd}
\usepackage{mathrsfs}
\usepackage{supertabular}
\usepackage[shortlabels]{enumitem}
\usetikzlibrary{lindenmayersystems}
\raggedbottom

%Definitions specific to this paper only

\DeclareMathOperator{\Gal}{Gal}
\DeclareMathOperator{\GL}{GL} 
\DeclareMathOperator{\SL}{SL}

 % ordinals
 % Image
 % Image
 % Cokernel
 % Cokernel
 % Kernel

\DeclareMathOperator{\Spec}{Spec} % spectrum
\DeclareMathOperator{\Tr}{Tr} % trace
 % projection
 % extension
 % predecessor
 % domain
 % range
\DeclareMathOperator{\Hom}{Hom} % homomorphism
\DeclareMathOperator{\End}{End} % endomorphism

 % Ext functor
 % Tor functor
 % frak gl group
 % frak gl group

\newcommand{\Gm}{\mathds{G}}
\newcommand{\B}{\mathbf{B}}

\newcommand{\p}{\mathfrak{p}}

\newcommand{\cF}{\mathcal{F}}

\newcommand{\ra}{\rightarrow}

\newcommand{\hra}{\hookrightarrow}

%\usepackage[disable]{todonotes} % notes not showed
%\usepackage[draft]{todonotes}   % notes showed

% Select what to do with command \comment:  
% \newcommand{\comment}[1]{}  %comment not showed
%\newcommand{\comment}[1]
%{\par {\bfseries \color{blue} #1 \par}} %comment showed

%\newcommand{\eps}{\varepsilon}

%\newcommand{\comment}[1]{}
%\newcommand{\Q}{\mathbf{Q}}

%\newcommand{\R}{\mathbf{R}}

%\newcommand{\cC}{\mathcal{C}}
%\newcommand{\cD}{\mathcal{D}}

%\newcommand{\cZ}{\mathcal{Z}}

%\newcommand{\C}{\mathbf{C}}

\renewcommand{\P}{\mathbf{P}}

\newcommand{\cU}{\mathcal{U}}

\newcommand{\Ad}{\text{Ad}}

\renewcommand{\O}{\mathcal{O}}
%\newcommand{\A}{\mathcal{A}}

%\newcommand{\isom}{\cong}
%\newcommand{\bG}{\mathbf{G}}

%\newcommand{\bH}{\mathbf{H}}

   % noncanonical isomorphism
% Specific to Sha 
\DeclareFontEncoding{OT2}{}{} % to enable usage of cyrillic fonts

\renewcommand{\H}{\mathbf{H}}

% For numbering paragraphs
\setcounter{secnumdepth}{5}

% Math operator declarations

%\DeclareMathOperator{\Spec}{Spec}
%\DeclareMathOperator{\Lie}{Lie}
%\DeclareMathOperator{\Tr}{Tr}

%\DeclareMathOperator{\rank}{rank}

%\DeclareMathOperator{\Pic}{Pic}

%\DeclareMathOperator{\ab}{ab}
%\DeclareMathOperator{\Aut}{Aut}

\DeclareMathOperator{\Fr}{Fr}
\DeclareMathOperator{\Ver}{Ver}
\DeclareMathOperator{\Norm}{Norm}

\DeclareMathOperator{\SU}{\mathbf{SU}}

\DeclareMathOperator{\Hbf}{\mathbf{H}}

\DeclareMathOperator{\Zbf}{\mathbf{Z}}

\DeclareMathOperator{\diag}{diag}

\DeclareMathOperator{\Res}{\text{Res}}

\DeclareMathOperator{\Sh}{Sh}
\DeclareMathOperator{\Stab}{Stab}

\DeclareMathOperator{\der}{der}
\DeclareMathOperator{\ad}{ad}
\DeclareMathOperator{\Art}{Art}

\usepackage{dsfont}
\usepackage{upgreek}
\usepackage{stmaryrd}
\usepackage{color}
\usepackage{amsthm,amssymb,amsmath}
\usepackage{lastpage}
\usepackage{centernot}
\usepackage{graphicx}
\usetikzlibrary{cd}
\usetikzlibrary{trees}
\usetikzlibrary{fit}

\tikzset{%
  highlight/.style={rectangle,rounded corners,fill=blue!15,draw,fill opacity=0.5,thick,inner sep=0pt}
}
\newcommand{\tikzmark}[2]{\tikz[overlay,remember picture,baseline=(#1.base)] \node (#1) {#2};}
\newcommand{\Highlight}[1][submatrix]{%
    \tikz[overlay,remember picture]{
    \node[highlight,fit=(left.north west) (right.south east)] (#1) {};}
}
%\usetikzlibrary{external}
\usepackage{nicematrix}
\usepackage{url}
\usetikzlibrary{hobby,backgrounds,calc,trees}
\usetikzlibrary{matrix,arrows,decorations.pathmorphing}

\def\1{\mathds{1}}
\def\Nm{\mathbf{N}}
\def\cC{\mathcal{C}}

\def\cO{\mathcal{O}}
\def\cH{\mathcal{H}}
\def\cX{\mathcal{X}}
\def\cY{\mathcal{Y}}

\def\cZ{\mathcal{Z}}
\def\cK{\mathcal{K}}

\def\ff{\mathfrak{f}}
\def\fc{\mathfrak{c}}

\def\A{\mathds{A}}
\def\F{\mathds{F}}
\def\Z{\mathds{Z}}
\def\N{\mathds{N}}
\def\Q{\mathds{Q}}
\def\R{\mathds{R}}
\def\C{\mathds{C}}

\def\T{\mathbf{T}}

\def\U{\mathbf{U}}
\def\G{\mathbf{G}}
\def\H{\mathbf{H}}

\def\Hom{\text{Hom}}

\def\Spec{\text{Spec}}
\def\Gal{\text{Gal}}

\DeclareMathOperator\Pic{Pic}

%%% revision and notes marker %%%%%%%%%%%%%%%%%%%%%%%%%%%%%%%%%%%%

%\ifnum\draft=1
 %\newcommand{\authnote}[2]{{\bf [{\color{red} #1's Note:} {\color{blue} #2}]}}
%\else
 %\newcommand{\authnote}[2]{}
%\fi

\newcommand{\authnote}[2][]{\noindent {\if!#1!  {\bf TODO} \else {\small \bf #1} \fi: #2}}

% for left superscripts 

%\newcommand{\ku}{\underline{k}}

\newcounter{tasknumber}[subsection]
\setcounter{tasknumber}{0}
\newcommand{\task}[2][]{%
  \addtocounter{tasknumber}{1}%
  \begin{center}%
  \framebox[1.1\width]{\begin{minipage}{0.9\textwidth}%
  \textbf{Task \arabic{tasknumber}} \textit{\if!#1(unassigned)!\else (#1)\fi}: {#2}%
  \end{minipage}}%
  \end{center}%
}

\newcounter{assumptionnumber}
\setcounter{assumptionnumber}{0}
\newcommand{\assumption}[2][]{%
  \addtocounter{assumptionnumber}{1}%
  \begin{center}%
  \framebox[1.1\width]{\begin{minipage}{0.9\textwidth}%
  \textbf{Assumption \arabic{assumptionnumber}} \textit{\if!#1!\else (#1)\fi}: {#2}%
  \end{minipage}}%
  \end{center}%
}

	%%fakesection Theorem setup

%Ce qui suit, est pour pouvoir numéroter les théorème et proposition ... dans l'intro comme dans le main text!

\usepackage[scr=boondoxo,scrscaled=1.05]{mathalfa}%forcaligraphiclowercase
\usepackage[intoc,refpage]{nomencl}
\def\@@@nomenclature[#1]#2#3{%
\def\@tempa{#2}\def\@tempb{#3}%
\protected@write\@glossaryfile{}%
{\string\glossaryentry{#1\nom@verb\@tempa @[{\nom@verb\@tempa}]% 

nompageref{\begingroup\nom@verb\@tempb\protect\nomeqref{\theequation}}}% 
{\thepage}}%
 \endgroup
 \@esphack}

\synctex=1
\usepackage[intoc,refpage]{nomencl}
\def\@@@nomenclature[#1]#2#3{%
\def\@tempa{#2}\def\@tempb{#3}%
\protected@write\@glossaryfile{}%
{\string\glossaryentry{#1\nom@verb\@tempa @[{\nom@verb\@tempa}]% 

nompageref{\begingroup\nom@verb\@tempb\protect\nomeqref{\theequation}}}% 
{\thepage}}%
 \endgroup
 \@esphack}

\usepackage{ifthen}
\renewcommand{\nomgroup}[1]{%
\ifthenelse{\equal{#1}{A}}{\item[\textbf{Chapter II \S 1 \& \S 2}]}{%
\ifthenelse{\equal{#1}{B}}{\item[\textbf{Chapter II \S 3}]}{%
\ifthenelse{\equal{#1}{C}}{\item[\textbf{Chapter III}]}{
\ifthenelse{\equal{#1}{D}}{\item[\textbf{Chapter IV}]}{
\ifthenelse{\equal{#1}{E}}{\item[\textbf{Chapter V}]}{
\ifthenelse{\equal{#1}{F}}{\item[\textbf{Chapter VI}]}{
\ifthenelse{\equal{#1}{G}}{\item[\textbf{Chapter VII}]}{
\ifthenelse{\equal{#1}{H}}{\item[\textbf{Chapter V}]}{
}}}}}}}}}
\makenomenclature

\usepackage{apptools}
\AtAppendix{\counterwithin{theorem}{section}}% reset le numbering des Lemmas dans Appendix

\usepackage{comment}
\usepackage{aligned-overset}
% Skip below environment
  
%\usepackage[affil-it]{authblk}

\begin{document}

\title{\textsc{General Horizontal Norm Compatible Systems}}

%    Remove any unused author tags.

%    author one information
\author{Reda Boumasmoud}
\address{Institut de Mathematiques de Jussieu-Paris Rive Gauche. 75005 Paris, France}
\curraddr{}
\email{reda.boumasmoud@imj-prg.fr \& reda.boumasmoud@gmail.com}
\thanks{The author was supported by the Swiss National Science Foundation grant \#PP00P2-144658 and \#P2ELP2-191672.}

%%Les quelques lignes qui suivent, sont la pour modifier subject class rendu, pour visulaiser 2020 .. comme il n y a pas eu encore de mise a jour!
\makeatletter 
\@namedef{subjclassname@2020}{%
  \textup{2020} Mathematics Subject Classification}
\makeatother
%\subjclass[2020]{}

\keywords{}

\date{}

\begin{abstract}
The ultimate goal of the paper is to construct a novel norm-compatible family of cycles appearing in the context of Gross--Gan--Prasad cycles \cite{gan-gross-prasad} arising from Shimura varieties attached to $U(n-1,1)\hra U(n,1) \times U(n-1,1)$ for arbitrary $n$. 
\end{abstract}

%%  LaTeX will not make the title for the paper unless told to do so.
%%  This is done by uncommenting the following.
%%

 \maketitle

%%
%% LaTeX can automatically make a table of contents.  This is done by
%% uncommenting the following:
%%

\tableofcontents

\section{Introduction}
Arithmetic geometry could be defined as the study of motives over number fields. It is organized around a handful of conjectures. 
One of them is due to Beilinson, Bloch and Kato and it relates the special values of the $L$-functions attached to algebraic varieties or, more generally, motives to the arithmetic invariants of these algebraic objects, such as their Bloch--Kato Selmer groups and the regulators arising from them. 
This conjecture can be interpreted as a remarkable attempt to unify a few conjectures that have been wandering in number theory's paysage for a few decades, such as the Birch--Swinnerton--Dyer conjecture and the main conjecture of Iwasawa theory. 

Euler systems are currently one of the most efficient and fruitful\footnote{There is also the method of Ribet(--Mazur--Wiles), generalized to the $\GL_2$-setting by Skinner and Urban and the method of Wiles ($"R=T"$) that yields results for $L$-functions adjoint motives.} tool %- often highly conjectural - 
to approach cases of Bloch--Kato type conjectures. They consist of "geometric" families of classes in the continuous Galois cohomology groups containing the relevant Bloch--Kato Selmer groups, which are related via "explicit reciprocity laws" to the special values one wishes to study: If $E$ is a number field and $T$ a $p$-representation of $\Gal(\overline{E}/E)$, 
an Euler system\footnote{I would like to avoid here the difficult task of defining what an "Euler/Kolyvagin" system is.} over $T$ is a collection of (Galois) cohomology classes in $\H^1(L,T):=\H^1(\Gal(\overline{L}/L),T)$, indexed by finite abelian fields extension $E \subset L \subset E[\infty]$, for some fixed infinite abelian extension $E[\infty]$.  
These classes are constrained by a set of compatibility conditions for norm application $\H^1(L,T) \to \H^1(E,T)$ involving appropriate Euler factors between them. 
In general, the existence of a non-trivial Euler System adds just enough "rigidity" on the relevant Bloch--Selmer groups to capture its structure.

There are three major types norm-compatibility relations: horizontal (or tame), vertical and congruence relations. 
Horizontal norm relations aim to model the local L-factor of some L-function
(e.g., the L-function of a modular form or an elliptic curve) using cohomological data.  
Vertical norm relations
refer to compatibility of cohomology classes in $\Z_p$-extensions of $E$. 
They are key ingredient in the setting of Iwasawa theory; they yield upper bounds for the size of Selmer groups of $T$ (by a theorem of Rubin which generalizes the work of Kolyvagin) and so plays an fundamental part for the proven case of the Iwasawa main conjecture. %They are  the construction of $p$-adic $L$-function for automorphic forms. 

Assume that $T$ is a group of $p$-adic cohomology étale $\H^{r-1}(X_{\overline{E}},\Q_p(m))$, where $X$ is an algebraic variety over $E$ and $m\ge r/2$. One approach to obtain Euler systems is by means of the Hochschild--Serre spectral sequence
$$E_2^{pq} \to \H^{p}(E,\H^{q}(X_{\overline{E}},\Q_p(m)))\Rightarrow \H^{p+q}(X,\Q_p(m))$$
which links the étale cohomolgy of $X_{\overline{E}}$ with Jannsen' continuous cohomology group $\H^{r}(X,\Q_p(m))$. 
First, we construct classes in the cohomology $\H^{r-1}(X,\Q_p(m))$\footnote{For example, if $m=r/2$, classes of algebraic cycles over $X$ that are defined over abelian extensions of $E$ and of correct dimension}. If these classes are in the kernel $\cF^1$ of the boundary homomorphism 
$\H^{r}(X,\Q_p(m)) \to \H^{r}(\overline{X},\Q_p(m))$, then one can take their images by the differential $\cF^1 \to  \H^{1}(E,\H^{r-1}(X_{\overline{E}},T))$ and the verification of the compatibility conditions on these classes is reduced to similar compatibilities for their corresponding algebraic cycles.

Whereas it is very hard to construct and control algebraic cycles of a general variety, the situation is much more pleasant for Shimura varieties. In this case, we have in hands an abundant source of malleable cycles, these are the special cycles obtained from Hecke translates of connected component of Shimura subvarieties of smaller dimensions.

A fundamental historical example is Kolyvagin's Heegner points Euler system, which consist of the family of classes in the cohomology of elliptic curves arising from Heegner points. This construction arises from the embeddings of the tori associated to an imaginary quadratic field $\cK$ into forms of the group $\GL_2$ and generates a collection of points on modular (or Shimura) curves, which are defined over ring class fields of $\cK$ and which can be mapped to elliptic curves via modular parametrizations. 
Kolyvagin 's pioneer method exploited the horizontal relations of this Euler system to give, combined with the Gross--Zagier formula\footnote{and some analytic results due to Murty--Murty and Bump--Friedberg--Hoffstein
\cite{bump-friedberg-hoffstein,murty-murty}} \cite{kolyvagin:euler_systems,gross-zagier}, the strongest evidence towards the Birch and Swinerton-Dyer conjecture.

Classical Euler systems have been fairly thoroughly exploited already in late 1980's and in 1990's.
Unfortunately, the list of constructed ones was rather short \cite{kolyvagin:euler_systems,kato04}. 
But in recent years, there has been a surge of new candidates proposed by various people (for instance; Cornut \cite{Cornut2018}, Jetchev \cite{jetchev:unitary}, Boumasmoud--Brooks--Jetchev \cite{BBJ18,BBJ16}, Loeffler--Zerbes--Lei \cite{LLZ14,LLZ17}, Loeffler--Zerbes--Kings \cite{KLZ15,KLZ17} and, Loeffler--Zerbes--Skinner \cite{LSZ17}). Judging by the success of their
predecessors, one expects that these new Euler Systems should soon yield significant new developments in arithmetic geometry.

Establishing norm compatibility relations for the early Euler systems constructed out of Heegner points or Siegel units was a pleasant exercise in CM theory and modular functions. 
Yet, the situation changed dramatically for the new applicants to the point that most of the difficulty for obtaining constructions coming from subvarieties of higher-dimensional Shimura varieties seems to be now concentrated in this new bottleneck. 
The sought for relations should involve the Euler factor that was first defined by Langlands, namely, the Hecke polynomial attached to the minuscule cocharacter appearing in the definition of the ambient Shimura variety. 
Gradually, this has become an underground subfield of its own, with a handful of experts with strong connections and shared preoccupations with the larger community of people working in $p$-adic L-functions, starving for new methods that would yield easier - or at least more conceptual - proofs of these relations. 
Such methods would be especially welcome in the lack of general strategy to establish the desired distribution relations for a given collection of special cycles on a Shimura variety. 
This paper attempts to contribute to this part of the theory by proposing a general construction (using $\mathbb{U}$-operators) which emerges from the proof of the main theorem as explained in \S \ref{generalrecipe}.

The ultimate goal of the paper is to construct a novel norm-compatible family of cycles appearing in the context of Gross--Gan--Prasad cycles \cite{gan-gross-prasad} arising from Shimura varieties attached to $U(n-1,1)\hra U(n,1) \times U(n-1,1)$ for arbitrary $n$. These cycles were first considered by Jetchev \cite{jetchev:unitary} in the simplest non-classical case $n = 2$ and further developed by Boumasmoud, Brooks and Jetchev in \cite{BBJ16} and \cite{BBJ18}. 

\subsection{From classical Heegner points towards a general construction}\label{classical}
In order to outline the strategy of the proof, let us first briefly review the classical case of heegner points on a modular curve.

Let $N$ be an integer and $E/\Q$ be an imaginary quadratic field with ring of integers $\cO_{E}$. Assume that all primes of $N$ split in $E$ and let $\mathcal{N}$ be an ideal of $\cO_{E}$ of norm $N$. If $m$ is prime to $N$, the isogeny $\C/\cO_{m} \to \C/(\mathcal{N} \cap \cO_{m})^{-1}$ corresponds to a Heegner point $x_{m}$ in $X_0(N)(E[m])$, where $E[m]$ denotes the ring class field of conductor $m$ and $\cO_m = \Z + m \cO_E$ is the corresponding order of $E$. Set $\text{CM}_E:=\{x_m\colon m \text{ prime to }N\}$. The points in $\text{CM}_E$ are related by the following norm-compatibilities \cite[Proposition 3.10]{darmon:ratpoints}:
\begin{proposition}[\textbf{\textit{Distribution relations}}]
Let $m$ be an integer and $\ell$ a prime which is unramified over $E$. We also suppose that $m\ell$ is prime to $N$. In this case, we have
\begin{enumerate}
\item[(i)]\textbf{Tame relations:} Let $\lambda$ be a prime of $E$ that lies over $\ell$. If $\ell \nmid m$, then
\begin{equation}\label{tameheegner}\Tr_{E[m\ell]/E[m]} x_{m\ell}=\left(T_\ell - \Fr_\lambda- \left( \frac{\mathfrak{d}_E}{\ell}\right)\Fr_\lambda^{-1}\right) x_m,\end{equation}
where, $T_\ell$ denotes the Hecke operator corresponding to $[\GL_2(\Z_\ell) \text{diag}(\ell,1) \GL_2(\Z_\ell)]$ and $\Fr_\lambda\in \Gal(E[\infty]/E)$ denotes the geomteric Frobenius.
\item[(ii)]\textbf{Vertical relations:} if $\ell \mid m$, then
\begin{equation}\label{verticalheegner} 
\Tr_{E[m\ell]/E[m]} x_{m\ell}=T_\ell x_m-x_{m/\ell}.
\end{equation}
\end{enumerate}
\end{proposition}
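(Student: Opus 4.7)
The strategy is to use the moduli interpretation of $X_0(N)$ together with CM theory. Write $x_m = (E_m, C_m)$, where $E_m = \C/\cO_m$ and $C_m = \mathcal{N}^{-1}\cO_m/\cO_m \subset E_m[N]$; note that $C_m$ is cyclic of order $N$ precisely because all primes dividing $N$ split in $E$. By the standard moduli description of the Hecke correspondence,
$$T_\ell \cdot (E_m, C_m) = \sum_D \bigl(E_m/D,\ (C_m + D)/D\bigr),$$
where $D$ runs over the $\ell + 1$ order-$\ell$ subgroups of $E_m[\ell]$. The plan is to classify these subgroups by whether they are stable under the $\cO_m$-action and then match the resulting isogeny targets with Galois conjugates of the relevant Heegner point through Shimura's reciprocity law.

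For the tame case $\ell \nmid m$, the completion $\cO_m \otimes \Z_\ell \cong \cO_E \otimes \Z_\ell$ acts faithfully on $E_m[\ell]$, and the number of $\cO_m$-stable order-$\ell$ subgroups equals $1 + \left(\frac{\mathfrak{d}_E}{\ell}\right)$: two if $\ell$ splits (as $\lambda\overline{\lambda}$), one if $\ell$ ramifies, and zero if $\ell$ is inert. For these stable subgroups $E_m/D$ still has CM by $\cO_m$, and Shimura reciprocity identifies the corresponding quotients with $\Fr_\lambda x_m$ and $\left(\frac{\mathfrak{d}_E}{\ell}\right)\Fr_\lambda^{-1} x_m$. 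For the remaining $\ell - \left(\frac{\mathfrak{d}_E}{\ell}\right)$ non-stable subgroups, the endomorphism order of the quotient drops to $\cO_{m\ell} = \Z + m\ell \cO_E$, and each quotient inherits a level-$N$ structure making it a Heegner point associated to $\cO_{m\ell}$.

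Next I would identify these latter quotients with the full set of Galois conjugates of $x_{m\ell}$ over $E[m]$. By the class number formula for non-maximal orders, $[E[m\ell]:E[m]] = \ell - \left(\frac{\mathfrak{d}_E}{\ell}\right)$, which matches the count exactly. Using that the kernel of $\Pic(\cO_{m\ell}) \twoheadrightarrow \Pic(\cO_m)$ acts simply transitively on this set of quotients and that Galois acts on such CM points by ideal translation (again via Shimura reciprocity), each conjugate is hit exactly once. Reassembling the sum then gives
$$T_\ell x_m = \Fr_\lambda x_m + \left(\frac{\mathfrak{d}_E}{\ell}\right) \Fr_\lambda^{-1} x_m + \Tr_{E[m\ell]/E[m]} x_{m\ell},$$
which rearranges to \eqref{tameheegner}.

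For the vertical case $\ell \mid m$, the extension $E[m\ell]/E[m]$ has degree exactly $\ell$ with no Frobenius contribution. Exactly $\ell$ of the $\ell+1$ subgroups $D$ of $E_m[\ell]$ produce quotients whose endomorphism ring is $\cO_{m\ell}$, while the remaining subgroup, namely $\ker\bigl(\C/\cO_m \twoheadrightarrow \C/\cO_{m/\ell}\bigr)$, produces the Heegner point $x_{m/\ell}$. Matching the $\ell$ equivalent quotients to the $\ell$ Galois conjugates of $x_{m\ell}$ over $E[m]$ yields \eqref{verticalheegner}. The main obstacle is the bookkeeping in the tame case: carefully executing the dictionary between Galois action and Hecke correspondence via Shimura's reciprocity law, and uniformly packaging the split, inert, and ramified behaviors of $\ell$ in $E$ into a single formula governed by the Kronecker symbol $\left(\frac{\mathfrak{d}_E}{\ell}\right)$.
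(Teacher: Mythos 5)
Your proof is correct and is essentially the classical moduli-theoretic argument (as in Gross's ``Kolyvagin's work'' and Darmon's book, which is exactly the reference \cite[Proposition 3.10]{darmon:ratpoints} that the paper cites for this statement). The paper itself does not re-prove the proposition this way; it quotes it and immediately reinterprets it through the $\cU_\ell$-operator framework laid out in \S\ref{classical}: one introduces a single operator $\cU_\ell$ on $\Z[\text{CM}_E]$ and establishes three facts about it---that $H_\ell(\cU_\ell)=0$ (``Hecke side''), that $\cU_\ell$ realizes the trace $\Tr_{E[m\ell^{s+1}]/E[m\ell^s]}$ (``Galois side''), and the congruence $(\cU_\ell - \ell\Fr_\lambda)x_m \equiv 0 \bmod (\ell - (\mathfrak{d}_E/\ell))$ (``congruence side'')---from which both the vertical relation (Hecke $+$ Galois) and the tame relation (Hecke $+$ congruence) drop out formally. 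The trade-off is clear: your argument is self-contained and conceptually transparent because it leans on the explicit moduli interpretation of $X_0(N)$, the concrete parametrization of order-$\ell$ isogenies by subgroups $D\subset E_m[\ell]$, and Shimura reciprocity to match quotients against Galois conjugates; but precisely because it relies on counting and classifying cyclic $\ell$-isogenies of CM elliptic curves, it does not port to higher-dimensional Shimura varieties where there is no such moduli-theoretic handle. The paper's factorization into Hecke/Galois/congruence sides is engineered to survive that passage, and indeed the entire body of the paper is the higher-rank unitary analogue of that factorization (Theorem \ref{divisibilityheckepol} is the general congruence side, \cite[Theorem \ref{uoprootheckeseed}]{Seedrelations2020} the general Hecke side, and Proposition \ref{galoisusingH} drives the Galois side). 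One small caveat on your version: the identity $[E[m\ell]:E[m]] = \ell - (\mathfrak{d}_E/\ell)$ holds only up to the index $[\cO_m^\times:\cO_{m\ell}^\times]$, which is $1$ once $m>1$ but can be $2$ or $3$ when $m=1$ and $d_E\in\{-3,-4\}$; the paper's general statement sidesteps this with the normalization factor $u(r)$ and the hypothesis $\p\cO_E\nmid I_0$, so a careful version of your bookkeeping should either exclude those small discriminants or insert the same unit-index correction.
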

Now, consider the Hecke polynomial
 $$H_{\ell}(X)=X^2-T_\ell X +\ell S_\ell,$$
 where, $S_\ell$ denotes the Hecke operator corresponding to the double coset $[\GL_2(\Z_\ell) \text{diag}(\ell,\ell) \GL_2(\Z_\ell)]$.
There exists an operator $\cU_\ell\in \text{End}_\Z\Z[\text{CM}_E]$ (see Figure \ref{U-opGL2} and \cite[Introduction]{UoperatorsII2021}), a variant of the combinatorial "successor" operator\footnote{It may be compared with Cornut--Vatsal operator  $T_P^u$ in \cite[6.3]{cornut-vatsal:durham}.}, verifying the properties below:
\begin{enumerate}[noitemsep,partopsep=0pt,topsep=0pt,parsep=0pt]
\item[i.] \textbf{The Hecke side:}
\begin{equation}\label{heckesideheegner} H_\ell(\cU_\ell)=0 \text{ in } \text{End}_{\Z}\Z[\text{CM}_E].\end{equation}
\item[ii.] \textbf{The Galois side:} Let $\ell \nmid m$. For $s\ge 1$, we have
\begin{equation}\label{galoissideheegner} \Tr_{E[m\ell^{s+1}]/E[m\ell^{s}]}x_{m\ell^{s+1}}=\cU_\ell x_{m\ell^s},\end{equation}
\item[iii.] \textbf{The Congruence side:} If $\ell \nmid m$, then \begin{equation}\label{congruencesideheegner}(\cU_\ell-\ell\Fr_\lambda) x_m \equiv 0\mod \left(\ell-\left( \frac{\mathfrak{d}_E}{\ell}\right)\right) \text{ in }\Z[ \cO_{E_\ell}^\times\backslash \text{CM}_E],\end{equation}
 where $\mathfrak{d}_E$ denotes the different ideal of $E$.
 \end{enumerate}
 \begin{figure}
\centering
\begin{tikzpicture}[
  grow cyclic,
  level distance=2.5cm,	
  level/.style={
    level distance/.expanded={\ifnum#1>3 \tikzleveldistance/2\else\tikzleveldistance\fi},
    nodes/.expanded={\ifodd#1 fill=red!75   \else fill=blue!75\fi}
  },
  level 1/.style={sibling angle=60},
  level 2/.style={dashed,sibling angle=60},
  level 3/.style={solid,sibling angle=60},
  level 4/.style={solid, sibling angle=20},
  nodes={circle,draw,inner sep=+1pt, minimum size=5pt},
  ]
\path[rotate=10]
 node (L_0)[draw,top color=red!90,bottom color=blue!10,minimum size=10pt]  [above right]{}
  child  {
    node [draw,top color=red!10,bottom color=blue!90,minimum size=10pt][above right] (a){} 
    child { 
      node  [solid,draw,top color=red!90,bottom color=blue!10,minimum size=5pt] (b) {}
      child  {
        node  [draw,top color=red!10,bottom color=blue!90,minimum size=5pt] (c) {} 
          child {node [solid,draw,top color=red!90,bottom color=blue!10,minimum size=5pt] (d){}}   
          child {node [solid,draw,top color=red!90,bottom color=blue!10,minimum size=5pt] (e){}}   
          child {node [solid,draw,top color=red!90,bottom color=blue!10,minimum size=5pt] (f){}}
          child {node [solid,draw,top color=red!90,bottom color=blue!10,minimum size=5pt] (g){}}  
          child {node [solid,draw,top color=red!90,bottom color=blue!10,minimum size=5pt] (h){}} 
      }
    }
  };
  \node [draw=none,fill=none, below =1pt] at (L_0.south east) {$ x_m $};
    \node [draw=none,fill=none, below right=1pt] at (a.south east) {$x_{m\ell}$};
     \node [draw=none,fill=none, below right=1pt] at (b.south east) {$x_{m\ell^{s-1}}$};
     \node (us) [draw=none,fill=none, below =1pt] at (b.south) {};
     \node (uw) [draw=none,fill=none, left =1pt] at (b.west) {};
     \node (ua) [draw=none,fill=none, above =1pt] at (b.north) {};
     \node [draw=none,fill=none, below = 1pt] at (c.south east) {$x_{m \ell^{s}}$};
     \node (x) [draw=none,fill=none, below = 1pt,minimum size=10pt] at (d.west) {$x_{m\ell^{s+1}}$};
       \node (xw) [draw=none,fill=none, left = 1pt,minimum size=10pt] at (d.west) {};
         \node (xe) [draw=none,fill=none, right = 1pt,minimum size=10pt] at (d.west) {};
        \node (yw) [draw=none,fill=none, left= 4pt] at (e.south west) {};
         \node (ye) [draw=none,fill=none, right= 4pt] at (e.south west) {};
           \node (zw) [draw=none,fill=none,  left= 4pt,minimum size=10pt] at (f.west) {};
           \node (ze) [draw=none,fill=none,  right= 4pt,minimum size=10pt] at (f.west) {};
              \node (sw)[draw=none,fill=none,  left= 4pt,minimum size=10pt] at (g.west) {};
              \node (se) [draw=none,fill=none,  right= 4pt,minimum size=10pt] at (g.west) {};
                 \node (tw) [draw=none,fill=none, below left= 4pt,minimum size=10pt] at (h.north) {};
                 \node (te) [draw=none,fill=none,  right= 4pt,minimum size=10pt] at (h.west) {};
                  \node (ta) [draw=none,fill=none,  above= 4pt,minimum size=10pt] at (h.west) {};
                  \node [draw=none,fill=none,   right= 6pt,minimum size=10pt] at (g.north west) {\textcolor{blue}{$\mathcal{U}_\ell(x_{m\ell^{s-1}})$=Galois orbit of $x_{m\ell^{s+1}}$}};
                  \node [draw=none,fill=none,   above right= 30pt,minimum size=10pt] at (a.north east) {\textcolor{red}{The Hecke support $T_\ell x_{m\ell^{s}}$}};
       \begin{pgfonlayer}{background}
     \draw[red,fill=red,opacity=0.3](x.south) to[closed,curve through={(xw.west) .. (yw.west) ..(zw.south) .. (sw.south) .. (tw.south) .. (us.east) .. (uw.west) ..(ua.north) .. (ta.north west) ..(te.north) .. (se.north) .. (ze.north) .. (ye.east) ..(xe.east) }](x.south);

   \draw[blue,fill=blue,opacity=0.3](x.south) to[closed,curve through={(xw.west) .. (yw.west) ..(zw.south) .. (sw.south) .. (tw.west) .. (ta.north west) ..(te.north) .. (se.north) .. (ze.north) .. (ye.east) ..(xe.east) }](x.south);
   \end{pgfonlayer}
 \end{tikzpicture}
\caption{(local) Hecke vs Galois actions\label{U-opGL2}.}
\end{figure}
It is an easy exercise to verify (see \cite[Introduction]{UoperatorsII2021}) that the {\em "Hecke side (\ref{heckesideheegner}) and Galois side (\ref{galoissideheegner})"} implies the vertical distribution relations (\ref{verticalheegner}), while the {\em "Hecke side (\ref{heckesideheegner}) and Congruence side (\ref{congruencesideheegner})"} implies the tame distribution relation (\ref{tameheegner}).

\subsection{A general recipe for horizontal/vertical norm relations families}\label{generalrecipe}
{A strategy to construct norm compatible families (for general embeddings of Shimura data) emerges from the the reformulation of \S \ref{classical} and it is confirmed by the method of proof of Theorem \ref{Horizontal}. It goes as follows:
\begin{enumerate}[(A)]
\item\label{B} \textbf{The Hecke side:} In \cite{UoperatorsII2021}, we define and study the arithmetic of $\mathbb{U}$-operators in terms of Iwahori--Hecke algebras. This is done in utmost generality, for arbitrary connected reductive groups over p-adic fields. 
In the unramified case, we prove that the $\mathbb{U}$-operators attached to a cocharacter (not necessarily minuscule) is a right root of the corresponding Hecke polynomial \cite[Theorem \ref{uoprootheckeseed}]{Seedrelations2020}. 
\item \textbf{The congruence side:}  As the method of proof involves only the $\mathbb{U}$-operator and its powers through Lemma \ref{divisibilitylemma}; the Hecke polynomial enters the picture only through \ref{B}.
\item \textbf{The Galois side:} This is dealt with in \cite{Vunitarynormrelations2020}, it generalizes the equation (\ref{galoissideheegner}) for the general unitary case and gives evidence how ${U}$-operators can provide a strategy to establish vertical norm relations involving other Shimura varieties and also vertical norm relations.
\end{enumerate}
A much more detailed description of this recipe for constructing (horizontal) normal compatible systems of cycles of Shimura varieties will be the subject of another upcoming paper.
}

\subsection{Main results of the paper}
\subsubsection{The Gan--Gross--Prasad setting} Gan, Gross and Prasad formulated some conjectures\footnote{These conjectures may be thought of as Gross--Zagier-type formulas the cycles being generalizations of classical Heegner points.} relating special values of derivatives of automorphic $L$-functions to heights of certain special cycles on Shimura varieties constructed from embeddings of reductive groups, e.g. \cite[Conjecture~27.1]{gan-gross-prasad}. In this paper, we consider the case of special cycles on higher-dimensional Shimura varieties, where the embedding
$\Res_{E/\Q} \G_{m, E} \hra \GL_{2, \Q}$ defining Heegner points is replaced by an embedding of unitary groups $\U(n-1,1)\hra \U(n,1) \times \U(n-1,1)$ associated to a CM-extension $E/F$.

(\S \ref{groups}) Let $E$ be a CM field, that is, an imaginary quadratic extension of a totally real number field $F$. Set $[E:\Q]=2[F:\Q]=2d$. Let $\tau$ be the non-trivial element of $\Gal(E/F)$. Fix an integer $n> 1$. Let $W$ be a Hermitian $E$-space of dimension $n$ and of signature $(n-1,1)$ at one fixed distinguished embedding $\iota\colon E\hra \C$ and, of signature $(n,0)$ at the other archimedean places. Let $D$ be a positive definite Hermitian $E$-line. Consider the $n+1$-dimensional Hermitian $E$-space $V = W \oplus D$, it has signature $(n,1)$ at the distinguished archimedean place and, signature $(n,0)$ at the other ones. 

We consider the $F$-algebraic reductive groups of unitary isometries $\U(V)$ and $\U(W)$. Set $\G_V:=\Res_{F/\Q}\U(V)$ and $\G_W:=\Res_{F/\Q}\U(W)$. We identify $\G_W$ with the subgroup of $\G_V$. Let $\G = \G_V \times \G_W$ and 
$\mathbf{H} = \Delta(\G_W) \subset \G$, where $\Delta$ denotes the diagonal embedding $\Delta\colon \G_W \hookrightarrow \G$.

(\S \ref{Hermsymdom}) Let $\cX_V$ be the Hermitian symmetric domain consisting of negative definite lines in $V \otimes_{F, \iota} \R$ \ and similarly let $\cX_W$ be the set of negative definite lines in $W \otimes_{F, \iota} \R$. Setting $\cX = \cX_V \times \cX_W$, the diagonal embedding $W \hookrightarrow V \oplus W$ induces an embedding of Hermitian symmetric domains $\cX_W$ into $\cX$; set $\cY$ for the image of $\cX_W$.

(\S \ref{unitaryshimuravar}) The two pairs $(\G,\cX)$ and $({\Hbf},\cY)$ are Shimura data. For small enough compact open subgroup $K_{\G} \subset \G(\A_f)$ (resp. $K_{\Hbf}\subset \G(\A_f)$), the Shimura variety $\Sh_{K_{\G}}(\G,\cX)$ (resp. $\Sh_{K_{\Hbf}}({\Hbf},\cY)$) is a complex quasi-projective smooth variety whose $\C$-points are given by $$\G(\Q)\backslash(\cX \times (\G(\A_f)\slash K_{\G}))\quad(\text{resp. }{\Hbf}(\Q)\backslash(\cY \times ({\Hbf}(\A_f)\slash K_{\Hbf}))),$$ where $\G(\Q)$ (resp. ${\Hbf}(\Q)$) acts diagonally on $\cX \times (\G(\A_f)\slash K_{\G})$ (resp. $\cY \times ({\Hbf}(\A_f)\slash K_{\Hbf}))$. In fact, these varieties are defined over the reflex field $E=E(\G,\cX)=E({\Hbf},\cY)$ (See \S \ref{reflexfield} for the calculation of the reflex field).

(\S \ref{defcycles}) For every $g\in \G(\A_f)$, we will denote by $\mathfrak{z}_g$ the $n$-codimensional ${\Hbf}$-special cycle $[\cY\times gK] \subset \Sh_K(\G,\cX)(\C)$, as defined in Definition \ref{specialcycles}. Set,
$$\cZ_{\G,K}({\Hbf}):=\{\mathfrak{z}_g: g \in \G(\A_f).\}$$
The natural map $\G(\A_f)\to \cZ_{\G,K}({\Hbf})$ given by $g\mapsto \mathfrak{z}_g$, induces the bijection
$$\cZ_{\G,K}({\Hbf}) \simeq {\Hbf}(\Q){Z}_{\G}(\Q)\backslash\G(\A_f)/K.$$
(\S \ref{fieldofdefinition1}) The ${\Hbf}$-special cycles $\cZ_{\G,K}({\Hbf})$ are all defined over the {transfer class field} $E(\infty)$ (\S \ref{ringtransferclassfield}).

(\S \ref{galoisH}) The Galois group $\Gal(E(\infty)/E)$ acts on the set of special cycles {\em through} the left action of $H(\A_f)$. More precisely, for every $\sigma \in \Gal(E(\infty)/E)$, we let $h_\sigma \in {\Hbf}(\A_f)$ be any element verifying $Art_E^1(\det(h_\sigma) \cdot \T^1(\Q)=\sigma|_{E(\infty)}$. For every $g\in \G(\A_f)$, we have
$$\sigma(\mathfrak{z}_g)=\mathfrak{z}_{h_\sigma g}.$$ 
(\S \ref{compactsfiniteset}) For $\star\in \{W,V\}$, we fix any compact open subgroups $K_\star\subset \U_\star(\A_{F,f})$. There exists a finite set $S$ of places of $F$ such that $K_\star$ is of the form $K_{\star,S}\times K_\star^S$ where $K_{\star,S}$ is some compact open subgroup of $ \U_\star(\A_{F,f}^S)$ and $K_\star^S$ is the product of the hyperspecial compact open subgroups $K_{\star,v}:=\underline{\U}_\star(\O_{F_v})\subset \underline\U_\star(F_{v})$ for all $v \not \in S$. In particular, $K_{W,v}=K_{V,v}\cap \underline\U_W(F_{v})$. Set $K_v:=K_{V,v}\times K_{W,v}$.

\subsubsection{Main theorems on distribution} Set $$\mathcal{P}_{sp}:=\left\{\p \in \Spec(\cO_F)\colon \p \text{ is split in }E/F, \p\not\in S, \p \nmid \fc_1, \p \cO_E \nmid I_0\right\},$$
where $S$ is defined in \S \ref{basecycle}, $\fc_1$ in Remark \ref{containementfieldofdefinitions} and $I_0$ in Lemma \ref{I0nekovar}. Denote by $\mathcal{N}_{sp}$ the set of square free products of primes in $\mathcal{P}_{sp}$. For every place $v$ in $\mathcal{P}_{sp}$ corresponding to the prime ideal  $\p_v \in \mathcal{N}_{sp}$, let $w$ be the place of $E$ defined by the embedding $\iota_v \colon \overline{F} \hra \overline{F}_v$ fixed in \S \ref{NotationShimura}. We denote by ${\mathfrak{P}_w}$ the prime ideal of $\cO_E$ above $\p_v$ corresponding to the place $w$ and set $\Fr_w$ for the corresponding {\em geometric} Frobenius\footnote{Which induces $x \mapsto x^{-q_v}$ on the residue fields of $E_{v}^{un}$ (resp. $E_{w}^{un}$ and $E_{\overline{w}}^{un}$).}. Let $\textbf{Frob}_{w} \in \T^1(\A_f)$ be any element such that $\Art_E^1(\textbf{Frob}_{w} )|_{E(\infty)^{un,w}} = \Fr_w$, where $E(\infty)^{un,w}$ is the maximal unramified at $w$ extension in $E(\infty)$.

\textbf{Local congruence relations.} As we have pointed out in the Heegner case, the horizontal norm compatibility relation below are derived from local congruence relations (see Lemma \ref{divisibilitylemma} and Theorem \ref{Hordist}) generalizing the congruence equality (\ref{congruencesideheegner}):
\begin{theorem}\label{divisibilityheckepol}
With the above notation, we have 
$$H_{w}(\textbf{Frob}_{w})([1]_v)\equiv 0 \mod q_v^{n-1} \left(q_v- 1\right) \quad\text{in } \Z[q_v^{-1}][{\H}^{\der}(F_v)\backslash {\G}(F_{v})/K_v],$$
where $H_{w}$ is the Hecke polynomial attached to $\Sh_K(\G,\cX)$ at the place ${w}$ of the reflex field $E=E(\G,\cX)$.
\end{theorem}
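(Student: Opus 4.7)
The plan is to follow the general recipe of Section \ref{generalrecipe}, combining the Hecke-side cancellation $H_w(\mathbb{U}_w) = 0$ with an explicit local congruence between $\mathbb{U}_w$ and $\textbf{Frob}_w$ at the split place $v$. The first reduction is to a purely local computation at $v$: since $\p_v \in \mathcal{P}_{sp}$ splits in $E/F$, the unitary groups trivialize locally, so that $\G(F_v) \cong \GL_{n+1}(F_v) \times \GL_n(F_v)$ and $\H(F_v) \cong \GL_n(F_v)$ sits diagonally (embedded in the standard Levi of the first factor and as the identity of the second). The hyperspecial $K_v = K_{V,v} \times K_{W,v}$ is the standard maximal compact, and $H_w(X)$ is a polynomial whose coefficients lie in the spherical Hecke algebra $\cH(\G(F_v), K_v)$, attached to the minuscule cocharacter defining $\Sh_K(\G, \cX)$.

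Two ingredients then combine to yield the divisibility. On the Hecke side, the $\mathbb{U}$-operator $\mathbb{U}_w$ of \cite{UoperatorsII2021}, acting on $\Z[q_v^{-1}][\H^{\der}(F_v)\backslash\G(F_v)/K_v]$, is a right root of the Hecke polynomial by \cite[Theorem \ref{uoprootheckeseed}]{Seedrelations2020}: $H_w(\mathbb{U}_w) = 0$. On the congruence side, I would establish the local congruence
$$\mathbb{U}_w \cdot [1]_v \;\equiv\; q_v^{n-1} \, \textbf{Frob}_w \cdot [1]_v \pmod{q_v-1}$$
which generalizes \eqref{congruencesideheegner} from the Heegner case (and recovers it for $n=1$ at a split prime, where $q_v^{n-1} = 1$ and the modulus collapses to $\ell - 1$). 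The factor $q_v^{n-1}$ reflects the combinatorial asymmetry between the Hecke-support of $\mathbb{U}_w \cdot [1]_v$ and the Galois orbit of $\textbf{Frob}_w \cdot [1]_v$, coming from the minuscule cocharacter $(1, 0^n)$ on the $\GL_{n+1}$-factor. Once both inputs are in hand, Lemma \ref{divisibilitylemma} -- applied by writing $H_w(\textbf{Frob}_w)([1]_v) = (H_w(\textbf{Frob}_w) - H_w(\mathbb{U}_w))([1]_v)$, factoring the difference through the telescoping identity $X^k - Y^k = (X - Y)(X^{k-1} + \cdots + Y^{k-1})$, and tracking the $q_v^{n-1}$-scaling through each power -- yields the claimed divisibility by $q_v^{n-1}(q_v - 1)$.

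The main obstacle is the local congruence in the middle step. This requires a careful computation in the Iwahori--Hecke algebra of $\G(F_v)$: one must expand $\mathbb{U}_w \cdot [1]_v$ as a sum of $K_v$-double cosets, regroup them into $\H^{\der}(F_v)$-orbits, and pair those orbits against $\textbf{Frob}_w$-translates on the nose modulo $q_v - 1$. This is essentially the unitary generalization of the $\GL_2$-combinatorics visualized in Figure \ref{U-opGL2}, but the doubled product $\GL_{n+1} \times \GL_n$ with a diagonal $\GL_n$ introduces considerably more intricate bookkeeping, and the $q_v^{n-1}$-discrepancy demands careful tracking of Hecke-orbit cardinalities attached to the minuscule cocharacter.
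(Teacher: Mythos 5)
Your overall strategy matches the paper's: pass to a local computation at the split place $v$, use that the $\mathbb{U}$-operator is a right root of the Hecke polynomial (so $H_w(\cU_\mu)=0$), and then convert the identity into a congruence by comparing $\cU_\mu$ with $q_v^{n-1}\textbf{Frob}_w$ after projecting from $\Z[G/K]$ down to $\Z[H_0\backslash G/K]$. Up to that point the plan is correct and is precisely what the paper does in Theorem~\ref{Hordist}, with the local normalization and Iwahori/$\mathbb{U}$-operator setup of \S\ref{localhorizontal}.

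The genuine gap is the telescoping step. You propose to prove only the $k=1$ congruence $\cU_\mu\cdot[1]_v\equiv q_v^{n-1}\textbf{Frob}_w\cdot[1]_v \pmod{q_v-1}$ in the quotient $\Z[q_v^{-1}][H_0\backslash G/K]$ and then derive the higher powers via $X^k-Y^k=(X-Y)(X^{k-1}+\cdots+Y^{k-1})$. This does not go through, because the congruence lives only in the quotient by $H_0$, and $\cU_\mu$ is $B$-equivariant but not $H_0$-equivariant: it does not descend to an operator on $\Z[H_0\backslash G/K]$. Concretely, write $\cU_\mu([1])=q_v^{n-1}\textbf{Frob}_w([1])+\mathrm{err}$ with $\phi_0(\mathrm{err})\in (q_v-1)\Z[q_v^{-1}][H_0\backslash G/K]$. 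Then $\cU_\mu^2([1])=q_v^{2(n-1)}\textbf{Frob}_w^2([1])+q_v^{n-1}\textbf{Frob}_w(\mathrm{err})+\cU_\mu(\mathrm{err})$; the term $\textbf{Frob}_w(\mathrm{err})$ is fine since $\textbf{Frob}_w\in T$ acts on the quotient, but $\phi_0(\cU_\mu(\mathrm{err}))$ is not controlled by $\phi_0(\mathrm{err})$, so the induction does not close. The paper avoids this entirely: Lemma~\ref{divisibilitylemma} proves, by a direct computation, the congruence $\phi_0\bigl((\cU_\mu^k-q^{k(n-1)}\textbf{Frob}^k)\cdot[1]\bigr)\equiv 0$ for \emph{every} $k\ge1$ separately, using Lemma~\ref{cosrep2} to expand $\cU_\mu^k([1])$ as a sum over $I^+/\mu(\varpi^k)I^+\mu(\varpi^{-k})$, regrouping by the invariants $\varepsilon(c)$ and $\alpha(c)$, and showing the resulting multiplicities $\sum_\beta|J(\underline{\varepsilon},\beta)|=\prod_i(q^{k-\mathrm{ord}\,\varepsilon_i}-q^{k-1-\mathrm{ord}\,\varepsilon_i})$ are divisible by $q-1$. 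No telescoping occurs, and no telescoping would reproduce the factor $q^{k(n-1)}$ scaling at each power correctly, since $\textbf{Frob}_w^k-\cU_\mu^k$ and $(q^{n-1}\textbf{Frob}_w)^k-\cU_\mu^k$ only agree when $n=1$.

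A secondary point: you have left the actual content of the base congruence (what you call the "main obstacle") as an assertion. This is the entire substance of the proof --- even for $k=1$ it requires the decomposition of Lemma~\ref{cosrep2}, the observation that the diagonal $\GL_n$ inside $\GL_{n+1}\times\GL_n$ lets one transfer the strictly upper-triangular unipotent $v_{k,b}$ from the second factor into the first (producing $u_{k,a-b}$ and the factor $q^{k(n-1)}$), and the subsequent $\alpha/\varepsilon$ bookkeeping. I would push you to carry out at least the $k=1$ case explicitly and then recognize that the direct per-$k$ computation is both necessary (telescoping fails) and no harder than the $k=1$ case once Lemma~\ref{cosrep2} is in hand.
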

\textbf{Tame relations.} As a corollary of Theorem \ref{divisibilityheckepol}, we obtain local horizontal relations in Corollary \ref{relhor2}, from which we derive the tame relations.

In \S \ref{basecycle}, we fix a cycle $\xi_1:=\mathfrak{z}_{g_0}$, for any $g_0$. There exists a field $\mathcal{K}$ (\S \ref{fieldkappa}) over which the base cycle $\mathfrak{z}_{g_0}$ is defined. 
Set $\mathscr{R}:=\Z[q_v^{-1},n_{E,F}^{-1}]$, where $n_{E,F}:=[E^\times \cap \A_{F,f}^\times \mathfrak{O}_{1}^\times :F^\times]$.
\begin{theorem}[Horizontal relations]\label{Horizontal}There exists a collection of cycles $\xi_{\mathfrak{f}}\subset \mathscr{R}[\mathcal{Z}_{\G,K}(\H)]$ (for all $\ff\in\mathcal{N}_{sp}$) each defined over $\cK(\mathfrak{f})$ (constructed in \S \ref{constructioncycles}) such that for every place $v \in \mathcal{P}_{sc}$, with $\p_v \nmid \ff$, we have
$$H_{{w}}(\Fr_{{w}}) \cdot \xi_{\mathfrak{f}}=\Tr_{\cK(\p_v \mathfrak{f})/\cK(\mathfrak{f})}\xi_{\p_v \mathfrak{f}},$$
where, $H_{{w}}$ is the Hecke polynomial attached to $\Sh_K(\G,\cX)$ at the place ${w}$ of the reflex field $E=E(\G,\cX)$ defined by $\iota_v$.
\end{theorem}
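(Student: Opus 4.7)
The plan is to implement the general recipe of \S\ref{generalrecipe} in the specific unitary Gan--Gross--Prasad setting, taking the three ``sides'' (Hecke, Congruence, Galois) as ingredients and assembling them into the horizontal relation. For a squarefree ideal $\mathfrak{f} = \p_{v_1} \cdots \p_{v_r} \in \mathcal{N}_{sp}$, I would define the cycle $\xi_{\mathfrak{f}}$ by successive application of the $\mathbb{U}$-operators at the primes dividing $\mathfrak{f}$ to the fixed base cycle $\xi_1 = \mathfrak{z}_{g_0}$, namely
\[
\xi_{\mathfrak{f}} := \mathbb{U}_{v_1} \cdots \mathbb{U}_{v_r} \cdot \xi_1,
\]
interpreted inside $\mathscr{R}[\mathcal{Z}_{\G,K}(\H)]$ after inverting the relevant denominators. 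The fact that each $\xi_{\mathfrak{f}}$ is already defined over $\cK(\mathfrak{f})$, rather than a larger extension, will follow from the class-field-theoretic description of the Galois action on special cycles (\S\ref{galoisH}) combined with the support description of $\mathbb{U}_v$.

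\textbf{From local congruence to local horizontal relation.} Starting from Theorem \ref{divisibilityheckepol}, which gives the congruence
\[
H_w(\textbf{Frob}_w)\, [1]_v \equiv 0 \mod q_v^{n-1}(q_v - 1)\quad \text{in } \Z[q_v^{-1}][\H^{\der}(F_v)\backslash \G(F_v)/K_v],
\]
the divisibility Lemma \ref{divisibilitylemma} lifts the divisor on the right into an explicit identity in which the quotient is precisely the $\mathbb{U}$-operator action. This yields the local horizontal relation of Corollary \ref{relhor2}, which is a genuine equality (not just a congruence) in the appropriate Hecke module, of the form $H_w(\textbf{Frob}_w)\cdot [1]_v = \mathbb{U}_v \cdot [1]_v$ (up to the tameness/normalization factors built into $\mathscr{R}$).

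\textbf{Globalization via the Galois action.} To translate the local identity into the claimed global equality, I would use the central fact from \S\ref{galoisH}: the Galois action of $\Gal(E(\infty)/E)$ on special cycles factors through the left action of $\H(\A_f)$ via the reciprocity map $\Art_E^1$, and in particular $\Fr_w$ acts as $\textbf{Frob}_w$. Applying the local horizontal identity to $\xi_{\mathfrak{f}}$ at the place $v$ (with $\p_v \nmid \mathfrak{f}$, so the operators $\mathbb{U}_{v_i}$ commute with $\mathbb{U}_v$ and $\Fr_w$) transforms $H_w(\Fr_w)\cdot \xi_{\mathfrak{f}}$ into $\mathbb{U}_v \cdot \xi_{\mathfrak{f}} = \xi_{\p_v \mathfrak{f}}$ viewed naively. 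The final step is to identify $\mathbb{U}_v \cdot \xi_{\mathfrak{f}}$ with the Galois trace $\Tr_{\cK(\p_v \mathfrak{f})/\cK(\mathfrak{f})}\xi_{\p_v \mathfrak{f}}$: this is the generalization of the Heegner equation \eqref{galoissideheegner} to the present setting and amounts to checking that the set-theoretic support of $\mathbb{U}_v$ applied at a fresh prime coincides with one full Galois orbit over the previous level, exactly the combinatorial picture in Figure \ref{U-opGL2}.

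\textbf{Main obstacle.} The delicate point will be matching, on the nose, the $\mathbb{U}$-operator's support at a fresh prime $v$ with the Galois orbit of $\xi_{\p_v \mathfrak{f}}$ over $\cK(\mathfrak{f})$, including the correct multiplicity. This requires precise control over: (i) the ramification behavior of $\cK(\mathfrak{f}) \subset E(\infty)$ at primes above $v$, coming from the transfer class field construction of \S\ref{ringtransferclassfield}; (ii) the compatibility of the class-field-theoretic normalization in $\Art_E^1$ with the Satake-type description of the $\mathbb{U}$-operator from \cite{UoperatorsII2021}; and (iii) the fact that the denominators appearing in the congruence of Theorem \ref{divisibilityheckepol} are absorbed exactly by the localization $\mathscr{R}$, so that the identity makes sense already in $\mathscr{R}[\mathcal{Z}_{\G,K}(\H)]$ without further inversions.
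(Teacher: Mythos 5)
Your proposed construction of the cycles and your reading of the local input are both at odds with what the paper actually does, and the combination leads to a genuine gap.

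First, the cycles $\xi_{\mathfrak{f}}$ are \emph{not} defined as $\mathbb{U}_{v_1}\cdots\mathbb{U}_{v_r}\cdot\xi_1$. In \S\ref{constructioncycles} they are built factorizably: at each $v\mid\mathfrak{f}$ one places the local element $x_v$ produced by Corollary~\ref{relhor2}, so $\xi_{\mathfrak{f}}=u(r)^{-1}\pi_{\mathrm{cyc}}\bigl([g_{0,S}]\otimes(\otimes_{v\in\mathcal{P}_{\mathfrak{f}}}x_v)\otimes(\otimes_{v\notin S\cup\mathcal{P}_{\mathfrak{f}}}[1]_v)\bigr)$. The local $x_v$ is an explicit element of $R[H^{\der}\backslash G/K]^{H_1}$ obtained by grouping $H_0$-orbits and dividing the $c(y)=0$ coefficients by $q-1$; it is not $\mathbb{U}_v[1]_v$, and if one did use $\mathbb{U}_v[1]_v$ the resulting global cycle would not, in general, be defined over $\cK(\mathfrak{f})$ (the trace structure of $x_v$ is exactly what Proposition~\ref{fieldofdefsplitcycles} needs to pin down the field of definition).

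Second, you misstate Corollary~\ref{relhor2}. What the paper proves there is the local trace relation
$H_w(\mathbf{Frob})\cdot x_0=\Tr_{1,0}x$ in $R[H^{\der}\backslash G/K]^{H_0}$, where $\Tr_{1,0}\phi(v)=\sum_{h\in H_0/H_1}h\cdot\phi(v)$ — a genuine ``trace from level $H_1$ to level $H_0$'' statement. It is \emph{not} the identity $H_w(\mathbf{Frob}_w)\cdot[1]_v=\mathbb{U}_v\cdot[1]_v$. Indeed $H_w(\cU_\mu)=0$ in $\End_{\Z[G]}\Z[G/K]$ by \cite[Theorem~\ref*{uoprootheckeseed}]{Seedrelations2020}, but this only gives the congruence of Theorem~\ref{Hordist} after passing to $H_0\backslash G/K$, never an equality $H_w(\mathbf{Frob})[1]=\cU_\mu[1]$; the whole point of Lemma~\ref{divisibilitylemma} is that $\cU_\mu^k[1]-q^{k(n-1)}\mathbf{Frob}^k[1]$ has a controlled remainder divisible by $q^{k(n-1)}(q-1)$, not that the remainder vanishes.

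Third, your final step — identifying $\mathbb{U}_v\cdot\xi_{\mathfrak{f}}$ with $\Tr_{\cK(\p_v\mathfrak{f})/\cK(\mathfrak{f})}\xi_{\p_v\mathfrak{f}}$ — is precisely the vertical (``Galois side'') relation (\ref{galoissideheegner}) in the unitary setting. That relation is deferred to a separate paper \cite{Vunitarynormrelations2020} and is not available here; the proof of Theorem~\ref{Horizontal} is specifically designed to avoid it. The correct globalization is far more direct: apply Corollary~\ref{relhor2} at the new place $v_\circ$ inside the factorizable tensor expression for $\xi_{\mathfrak{f}}$, rewrite $H_{w_\circ}(\Fr_{w_\circ})[1]_{v_\circ}=\sum_{\lambda\in\cO_{v_\circ,0}^\times/\cO_{v_\circ,1}^\times}\lambda\cdot x_{v_\circ}$, use Proposition~\ref{galoisusingH} to convert the $\lambda$-sum into a Galois sum $\sum_\sigma\xi_{\p_{v_\circ}\mathfrak{f}}^\sigma$, and then invoke Proposition~\ref{VII11} to match the index set with $\Gal(\cK(\p_{v_\circ}\mathfrak{f})/\cK(\mathfrak{f}))$ up to the normalization $u(r)$. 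So while the broad three-sided ``recipe'' you cite from \S\ref{generalrecipe} is the right high-level picture, the proof you sketch would need the vertical relation as a black box, whereas the actual argument short-circuits it by constructing the $x_v$'s directly out of the local congruence.
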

\begin{remark}
Actually, the cycles $\xi_{\mathfrak{f}}$ for $\mathfrak{f} \in \mathcal{N}_{sp}^r$ can be constructed in $\Z[\mathcal{Z}_{\G,K}(\H)]$ if $r\ge 1$. For this it suffices to use $P_{\mu^\varpi} \in \cH(\G(F)\sslash K,\Z)][X]$ the minimal polynomial of $\cU_{\mu^\varpi}$ instead of $H_{{w}}$ and exactly the same proof holds.
\end{remark}
\begin{remark}\label{finalremark}
The methods used in the sections \ref{localhorizontal} to prove Theorem \ref{Horizontal} for split places gives also, mutatis mutandis,  horizontal norm-compatible systems for inert places of $F$. %The details of these calculations for the split vertical case and tame/vertical inert cases will appear in a forthcoming paper, which will also include similar treatment for other embeddings of Shimura data. 
\end{remark}

\subsection{Arithmetic applications via split Kolyvagin systems}
Kolyvagin's original argument (see~\cite{kolyvagin:euler_systems,kolyvagin:mordellweil,kolyvagin:structure_of_selmer,kolyvagin:structureofsha,gross:kolyvagin} as well as \cite{howard:heeg}) uses the tame norm relations at inert auxiliary primes. 
In all these variants of the same fundamental argument, one uses the fact that the global cohomology $H^1(\cK, \overline{T})$ (resp., the local cohomology groups $H^1(\cK_\lambda, \overline{T})$) 
decompose into two eigenspaces $H^1(\cK, \overline{T})^{\pm}$ (resp., $H^1(\cK_\lambda, \overline{T})^{\pm}$) for the action of a complex conjugation on the residual representation $\overline{T}$.  

Instead of applying global duality for the entire cohomology $H^1(\cK, \overline{T})$, one does that for each of the eigenspaces $H^1(\cK, \overline{T})^{\pm}$ 
and crucially uses the fact that if $\lambda$ is a Kolyvagin prime then $H^1_{ur}(\cK_\lambda, \overline{T})^{\pm}$ (being isomorphic to $(\overline{T} / (\Fr_\lambda - 1)\overline{T})^{\pm}$) are both one-dimensional. In the case when 
the residual representation $\overline{T}$ of $G_{\cK}$ need not extend to a representation of $G_{F}$, one can no longer 
apply the duality for the $\pm$-parts of the corresponding Selmer groups. If one attempts to use inert special primes $\lambda$, the local condition at $\lambda$ will no longer be one, but higher-dimensional and the same argument with global duality will no longer work. 

Very recently, Jetchev, Nekov\'a\v{r} and Skinner \cite{jetchev-nekovar-skinner} have managed to solve this problem by using split instead of inert 
auxiliary primes. In this case, if $w$ and $\overline{w}$ are the two places of $\cK$ above a split place $v$ of $F$, one applies the duality simultaneously for $w$ and $\overline{w}$ where the local term in the duality becomes 
$$
H^1(\cK_v, \overline{T}) / H^1_{ur}(\cK_v, \overline{T}) \oplus H^1(\cK_{\overline{v}}, \overline{T}) / H^1_{ur}(\cK_{\overline{v}}, \overline{T}). 
$$
By using a suitable application of the \v{C}ebotarev density theorem,  Jetchev, Nekov\'a\v{r} and Skinner still manage to run the argument, this time avoiding completely the action of complex conjugation on the residual Galois representation and the corresponding Selmer groups. The Kolyvagin systems obtained in this manner are referred to as split Kolyvagin systems.   

This application of the methods of Jetchev, Nekov\'a\v{r} and Skinner to the Gan--Gross--Prasad setting above, relies in a key way on the tame norm relations at split primes of Theorem \ref{Horizontal}. This allows the construction of a split Kolyvagin system for cohomological Galois representations appearing in the middle-degree cohomology for Shimura varieties associated to certain product unitary isometry groups $\U(V) \times \U(W)$ that appear naturally in the context of the Gan--Gross--Prasad conjectures.
\subsection{Acknowledgements}
This paper consists of chapters VI and VII of the author’s EPFL 2019 thesis. 
Therefore, I am thankful to my adviser D. Jetchev.  
I am indebted and very grateful to C. Cornut and J. Nekovar for their meticulous and careful reading.
Finally, I thank Y. Dadoun for valuable discussions and comments.
\section{Unitary Shimura varieties}\label{SectionShimura}
\subsection*{Notations}\label{NotationShimura}We fix an algebraic closure $\overline{\Q}$ of $\Q$ and an embedding $i_{\overline{\Q}}\colon \overline{\Q} \hra \C\nomenclature[F]{$i_{\overline{\Q}}$}{Fixed embedding $ \overline{\Q} \hra \C$}$. Let $F\subset \overline{\Q}$ be a totally real number field. Let $E\nomenclature[F]{$E$}{CM field in $\overline{\Q}$, with maximal totally real subfield $F$}$ be an imaginary quadratic extension of $F$, $E$ is usually said a CM field. Set $[E:\Q]=2[F:\Q]=2d\nomenclature[F]{$d$}{$[F:\Q]$}$ and let $$\Sigma_F\nomenclature[F]{$\Sigma_F$}{$\Hom_\Q(F\,, \overline{\Q})$}:=\Hom_\Q(F\,, \overline{\Q})=\Hom_\Q(F\,, \R)=\{{\,\iota}_i\nomenclature[F]{$\iota_i$}{The elements of $\Sigma_F$}\colon F\to \R, 1\le i\le d\}$$ be the set of real embedding corresponding to the archimedean places of $F$. Let $\tau\colon x \mapsto x^{\tau}\nomenclature[F]{$\tau$}{Non trivial element of $\Gal(E/F)$}$ be the non-trivial element of $\Gal(E/F)$.

For each place $v$ of $F$ (possibly archimedean), we fix an algebraic closure $\overline{F}_v\nomenclature[F]{$\overline{F}_v$}{Fixed algebraic closure for each place $v$ of $F$}$ of $F_v$ and consider the set $\Sigma_{E,v}:= \Hom_F(E\,, \overline{F}_v)$. The Galois group $\Gal(\overline{F}_v/F_v)$ acts on $\Sigma_{E,v}$ by post-composition: an element $\sigma \in \Gal(\overline{F}_v/F_v)$ sends any homomorphism $\varphi\in \Sigma_{E,v}$ to $\sigma \circ \varphi \in \Sigma_{E,v}$. This yields a bijection 
$$\Sigma_{E,v}/ \Gal(\overline{F}_v/F_v) \simeq \{w\text{ a place of }E\colon w \mid v\}$$
between $\Gal(\overline{F}_v/F_v)$-orbits of $F$-embeddings of $E$ in $\overline{F}_v$ and the places of $E$ above $v$. 
Set
$${\Sigma}_E:= \Hom_\Q(E\,, \overline{\Q})=\Hom_\Q(E\,, \C)=\bigsqcup_{\iota \in \Sigma_F}\Sigma_{E,\iota}.$$
For each $1\le i \le d$, choose an embedding $\widetilde{\iota}_i\in \Sigma_E\nomenclature[F]{$\widetilde{\iota}_i$}{A fixed extension in $\Sigma_E$ of $\iota_i\in \Sigma_F$}$ that extends ${\,\iota}_i\in \Sigma_F$. The set $\widetilde{\Sigma}_E=\{\widetilde{\iota}_i\colon 1\le i \le d\}\nomenclature[F]{$\widetilde{\Sigma}_E$}{A fixed CM type}$ is a CM type for $E$, because ${\Sigma}_E:=\widetilde{\Sigma}_E\sqcup \widetilde{\Sigma}_E^\tau$. We denote by $c \colon x \mapsto \overline{x}\nomenclature[F]{$c$}{Complex conjugation of $\Gal(\C/\R)$}$ the complex conjugation of $\Gal(\C/\R)$, we then have $c\circ  \widetilde{\iota}_i = \widetilde{\iota}_i\circ \tau$. 

We fix, for each finite place $v$ of $F$, an embedding $\begin{tikzcd}\iota_v\colon \overline{F} \arrow[r, hook] &  \overline{F}_v.\end{tikzcd}\nomenclature[F]{$\iota_v$}{Fixed embedding $\overline{F}\hra \overline{F}_v$}$We may then view elements of $\Sigma_E$ (in particular $\Sigma_{E,\iota}$) as $v$-adic embeddings of $E$: 
$$\begin{tikzcd} {\Sigma}_E \arrow{r}{}&\Sigma_{E,v}& \widetilde{\iota}\arrow[r, mapsto] &\widetilde{\iota}_v:= \iota_v \circ  \widetilde{\iota}.\end{tikzcd}$$
We fix a place of $E$ above each finite place of $F$ as follows: if $v$ splits in $E$, we fix the place $w_v$ to be the one defined by $\iota_v$ and by abuse of notation, denote the other place by $\overline{w}_v$. If $v$ is inert/ramified in $E$, we abuse notation and denote also by $v$ the unique place of $E$ above it.
\subsection{The groups}\label{groups}
Fix a positive integer $n>1\nomenclature[F]{$n$}{Fixed integer}$ and let $(V,\psi)\nomenclature[F]{$(V,\psi)$}{$n+1$-dimensional $E$-hermitian space with signature $(n,1)$ at $\iota_1$ and $(n+1,0)$ elsewhere}$ be a non-degenerate hermitian $E$-space of dimension $n+1$. Suppose that \footnote{\label{tensorhermitianspace} 
For every $F$-algebra $R$, set $E_R:=E\otimes_F R$ and $V_R:=V\otimes_{F}R= V\otimes_{E}E_R$. We define the action of $\tau$ on $E_R=E\otimes_F R$ by letting it act on left component. Then, extend $\psi$ to a Hermitian form $\psi_{R}\colon V_R\times V_R \to E_R$ as follows  $$\psi_{R}(v\otimes x, v'\otimes y)=\psi_R(v,v')xy^\tau,\quad \forall v,v'\in V \text{ and }\forall x,y \in E_R.$$
For example: For each $1\le i \le d$, the fixed embedding ${\,\iota}_i\colon F \to F_v$ induces a natural $F$-algebra structure on $\R$. Now letting $R=\R$, one gets $(V,\psi)\otimes_{F\,,{\,\iota}_i}\R:=(V_R,\psi_R)$, the hermitian space $(V\otimes_{F\,,{\,\iota}_i} \R,\psi_{i})$ over $E\otimes_{F\,,{\,\iota}_i}\R$.}
$$\text{sign}((V,\psi)\otimes_{F\,,{\,\iota}_i} \R)=\begin{cases} (n,1) & \text{ if }i=1,\\
(n+1,0) &\text{ if }i\neq1.\end{cases}$$
We consider the $F$-algebraic reductive group of unitary isometries $\U(V,\psi)$, this is a connected reductive group whose $R$-points, for any $F$-algebra $R$, are given by
$$\U(V,\psi)(R)=\{g \in \GL(V \otimes_F R)\colon \psi(gx,gy)=\psi(x,y), \forall x,y \in  V \otimes_F R\}.$$
We will be mainly interested in the cases where $R=F_v$, the completion of $F$ at finite places $v$, or $R=\A_{F,f}$, the ring of finite adeles of $F$. Since our hermitian form $\psi$ is fixed, we shall refer to the group of unitary isometries $\U(V,\psi)$ as $\U(V)$. 

Let $v\in V$ be an anisotropic vector, every other $v'\in V$ such that $\psi(v',v')=\psi(v,v)$ is conjugate under $\mathbf{SU}(V)(F)=\U(V)(F)\cap \SL(V)(E)$ to $v$, (see \cite[1.5]{Shimura08}). Fix once and for all an anisotropic vector $w_{n+1}\in V\nomenclature[F]{$w_{n+1}$}{Fixed anisotropic vector in $V$ with value $1$}$. 
Without loss of generality, assume that $\psi(w_{n+1},w_{n+1})=1$ \footnote{If $\psi(v,v)\neq 0$, one can choose any non-zero vector $v' \in E\cdot v$ and consider the modified hermitian $E$-space $(V,\psi(v,v)^{-1}\psi)$. But, we may have changed the signature. A better argument: By density of $V$ in $V_{\iota_1}$, there is a vector $v \in V$ with $\psi(v,v)$ positive at $\iota_1$, hence everywhere. We choose this $v$ and consider the hermitian $E$-space $(V,\psi(v,v)^{-1}\psi)$. Although the hermitian $E$-spaces are different, the associated unitary groups are isomorphic.}. 
Set $D=E\cdot w_{n+1}$ and $W=D^\perp$. Hence, the signature of $(D,\psi|_{D})$ is $(1,0)$ at all archimedean places of $F$. Consequently, the induced hermitian subspace $(W,\psi|_W)$ has signature $(n-1,1)$ at the distinguished archimedean place ${\,\iota}_1$ and $(n,0)$ at the other archimedean places ${\,\iota}_i, \forall 2\le i \le n$. Similarly, we associate to $(W,\psi|_W)$ the $F$-algebraic group of unitary isometries $\U(W)$. 

Set $\G_V:=\Res_{F/\Q}\U(V)\nomenclature[F]{$\G_\star$}{$\Res_{F/\Q}\U(\star)$ for $\star\in \{V,W\}$}$ and $\G_W:=\Res_{F/\Q}\U(W)$. Thus, for $\star\in \{V,W\}$ and for any $\R$-algebra $R$, we have
\begin{align*}\G_{\star,\R}(R)=\U(\star)(F \otimes_\Q R)&= \U(\star)(F \otimes_\Q \R \otimes_\R R)\\
&= \U(\star)(\bigoplus_{\iota  \in \Sigma_F} \R_\iota \otimes_\R R)\\
&=\prod_{\iota\in \Sigma_F}\U(\star)(R_{\iota_i}),\end{align*}
where $\R_\iota$ is just $\R$ indexed by elements of $\Sigma_F$ and $R_{\iota}$ is $R$ endowed with the $F$-algebra structure given by the embedding $\iota \colon F \hra \R$. 
This yields $\G_{\star,\R}= \prod_{\iota \in \Sigma_F}\G_{\star,{\,\iota}}$ where, $$\G_{\star,{\,\iota}}=\U((\star,\psi)\otimes_{F,{\,\iota}}\R)\simeq\begin{cases}\U(\dim_E\star-1,1)_\R & \text{ if } \iota=\iota_1,\\
\U(\dim_E\star,0)_\R &\text{ if }\iota \neq \iota_1.
\end{cases}$$
Likewise
$$\G_{\star,\C}= \prod_{\widetilde{\iota} \in \widetilde{\Sigma}_E}\G_{\star, \widetilde{\iota}} \quad \text{ where } \quad\G_{\star,\widetilde{\iota}}=\GL(\star\otimes_{E,\widetilde{\iota}}\C)\simeq \GL(\dim_E\star)_\C.$$
By left-exactness of the Weil restriction we have an embedding of $\Q$-algebraic groups $\G_W\hra \G_V$ that identifies $\G_W$ with the subgroup of $\G_V$ given by:
$$ \{g \in \G_V(R)\subset \GL(V\otimes_\Q R)\colon g \cdot x=x, \quad\forall x \in  D \otimes_\Q R\},$$
for any $\Q$-algebras $R$. Let $\G = \G_V \times \G_W\nomenclature[F]{$\G$}{$\G_V \times \G_W$}$ and $\H=\Delta(\G_W)\subset \G\nomenclature[F]{$\H$}{$\Delta(\G_W)$}$, where $\Delta\nomenclature[F]{$\Delta$}{The diagonal embedding $\Delta\colon \G_W \hookrightarrow \G$}$ denotes the diagonal embedding $\Delta\colon \G_W \hookrightarrow \G$.
\subsection{The Deligne torus and variant}
\label{comparisoniotass}
\begin{itemize}[topsep= 0pt]
\item The torus $\Res_{\C/\R} \mathds{G}_{m,\C}$ is called the Deligne torus and is usually denoted by $\mathds{S}$. we will also consider its norm one subgroup $\U(1):=  \U_{\C/\R}(1)$. We get by base change to $\C$ a canonical isomorphism of $\C$-tori
 $$\mathds{S}_\C\simeq  \mathds{G}_{m,\C}\times \mathds{G}_{m,\C},$$
where the factors are ordered in the way that $\mathds{S}(\R)=\C^\times \to \mathds{S}(\C)=\C^\times \times {\C}^\times$ is the map $z\mapsto (z, \overline{z})$. 

Define the cocharacter $\mu=\mu_{\mathds{S}} \colon \mathds{G}_{m,\C} \to \mathds{S}_\C$, given on $\C$-points by: $ \C^\times \to  \mathds{S}_\C(\C)\simeq \C^\times \times \C^\times$, $ z \mapsto (z,1)$, one may also consider $\mu^c=\overline{\mu}_{\mathds{S}}$ ($c$ for complex conjugation) given on $\C$-points by $z \mapsto (1,{z})$. 
\item For each $\iota \in \Sigma_F$, consider the extension $ E_{\iota}:=E\otimes_{F,\iota}\R/\R$. We will use the notation $\iota\mathds{S}:=\Res_{E_{\iota}/\R}\mathds{G}_{m,E_{\iota}}$ and $ \widetilde{\iota}\U(1):=  \U_{E_{\iota}/\R}(1)$. Likewise, a base change to $E_{\iota}$ yields an isomorphism
 $$\iota\mathds{S}_{E_\iota}\simeq \mathds{G}_{m,E_\iota}\times \mathds{G}_{m,E_\iota},$$
 where the factors are again ordered in the way that $E_\iota^\times \to \iota\mathds{S}(E_\iota)=E_\iota^\times \times E_\iota^\times$ is the map $z\mapsto (z, z^{\tau})$. Define similarly the cocharacter $\mu_{\iota}=\mu_{\iota\mathds{S}} \colon \mathds{G}_{m,E_\iota} \to \iota\mathds{S}_{E_\iota}$, given on $E_\iota$-points by: $ E_\iota^\times \to  \iota\mathds{S}_{E_\iota}(E_\iota)\simeq E_\iota^\times \times E_\iota^\times$, $ z \mapsto (z,1)$, one may also consider $\mu^\tau=\overline{\mu}_{\iota\mathds{S}}$ given on $\C$-points by $z \mapsto (1,{z})$.
\item The distinguished complex embedding $\widetilde{\iota}_1\colon E \to \C$, induces an isomorphism of fields 
 $$\begin{tikzcd} \jmath_1 \colon E_{\iota_1}=E\otimes_{F,\iota_1} \R \arrow{r}{\simeq}  &   \widetilde{\iota}_1(E)\otimes_{F,\iota_1} \R=\C  \end{tikzcd},$$
and so yields an isomorphisms of $\R$-groups \begin{tikzcd}\jmath_1 \colon \mathds{S} \arrow{r}{\simeq}&  {\iota}_1\mathds{S}\end{tikzcd},\begin{tikzcd}\U(1) \arrow{r}{\simeq}&{}\widetilde{\iota}\U(1)\end{tikzcd}.\newline Moreover, the base change of $\jmath_1$ to $E_{\iota_1}$ is compatible with the base change to $\C$, in the sense that
$$\jmath_1 \circ \mu= \mu_{\iota_1} \text{ and } \jmath_1 \circ\mu^c =\mu_{\iota_1}^\tau,$$
and the following diagram
\begin{equation}\label{squareiota}\begin{tikzcd}[column sep=large]
\mathds{S}\arrow{d}{\simeq}[swap]{\jmath_1} \arrow[two heads]{r}{z \mapsto \frac{z}{\overline{z}}} &\U(1)\arrow{d}{\jmath_1}[swap]{\simeq}\\
 {\iota}_1 \mathds{S} \arrow[two heads]{r}{s \mapsto \frac{s}{s^\tau}} &\widetilde{\iota}\U(1)
\end{tikzcd}\end{equation}
commutes.
\end{itemize}
We refer the reader to \cite[\S II.1.3]{boumasmoud19} for more details on tori of the form $\Res_{A/B} \mathds{G}_{m,A}$.
\subsection{The Hermitian symmetric domains}\label{Hermsymdom}
Let $\mathfrak{B}_V=(w_1,\cdots,w_{n+1})\nomenclature[F]{$\mathfrak{B}_\star$}{$E$-basis of $\star\in \{V,W\}$}$ be an orthogonal $E$-basis of $(V,\psi)$, here $w_{n+1}$ is the $E$-generator fixed in \S \ref{groups} of the $E$-line $D=W^\perp$. Thus, $\mathfrak{B}_W:=(w_1,\cdots,w_{n})$ is also an orthogonal $E$-basis of $(W,\psi)$. Recall that $E_{{\iota}_1}=E\otimes_{F,{\,\iota}_1}\R (\simeq \widetilde{\iota}_1(E) \otimes_{F,{\,\iota}_1}\R=\C)$ and let $\mathfrak{B}_{V,1}=(w_{1,1},\cdots,w_{n+1,1})\nomenclature[F]{$\mathfrak{B}_{V,1}$}{Induced $E_{\iota_1}$-base of $V_{\iota_1}$}$ be the orthogonal $E_{{\iota}_1}$-basis of $(V_{{\iota}_1}:=V\otimes_{F,{\,\iota}_1}\R,\psi_1)$ obtained from $\mathfrak{B}_V$ by base change along the distinguished ${\,\iota}_1\colon F\hookrightarrow \R$. Since the signature of $(V_{\iota_1},\psi_1)$ is $(n,1)$, we may and will assume that $c_1=\psi_1(w_{1,1},w_{1,1})<0$ and $c_i=\psi_1(w_{i,1},w_{i,1})>0$ for all $1< i \le n+1$. Consider for simplicity the orthogonal basis $\mathfrak{B}_{V,1}^\prime=\left(
w_{1,1}^\prime,w_{2,1}^\prime\cdots,w_{n+1,1}^\prime\right)=\left(\frac{1}{\sqrt{-c_1}}w_{1,1},\frac{1}{\sqrt{c_2}}w_{2,1}\cdots,\frac{1}{\sqrt{c_{n+1}}}w_{n+1,1}\right)$, thus the Hermitian form $\psi_1$ with respect to $\mathfrak{B}_{V,1}^\prime$ is given by 
$$\psi_1(x,y)=-x_1{y}_1^\tau+\cdots+x_n{y}_n^\tau+x_{n+1}{y}_{n+1}^\tau,$$
where $x=\sum_{i=1}^{n+1} x_i w_{i,1}^\prime$ and $y=\sum_{i=1}^{n+1} y_i w_{i,1}^\prime$ are two elements of $V_{{\iota}_1}$, with $x_i,y_i \in E_{{\iota}_1}$. This form corresponds to the matrix $J_{\mathfrak{B}_V^\prime}=\diag(-1,1,\cdots,1)$. Likewise, let $J_{\mathfrak{B}_W^\prime}=\diag(-1,1,\cdots,1)$ be the hermitian matrix corresponding to the basis $\mathfrak{B}_W^\prime=\left(w_{1,1}^\prime,w_{2,1}^\prime, \cdots,w_{n,1}^\prime\right)$. 

Let us view the $E_{{\iota}_1}$-vectors space $V_{{\iota}_1}\simeq E_{{\iota}_1}^{n+1} (\simeq \C^{n+1})$ as being a union $$V_1^-\cup V_1^0 \cup V_1^+,$$ of negative (resp. null, positive) vectors $x\in V_{{\iota}_1}$, depending on the sign of $\psi_1(x,x)\in \R$. For instance
$$V_1^-=\left\{x=\sum_{i=1}^{n+1} x_i w_{i,1}^\prime\in V_{{\iota}_1} \colon \psi_1(x,x)=-|x_{1}|^2+\sum_{i=2}^{{n+1}} |x_i|^2<0\right\}.$$
For every non-zero $x\in V_{{\iota}_1}$, all non-zero vectors of the "complex"\footnote{By a slight abuse of language, we use the adjective complex here, since this is really a complex line up to base change along $\widetilde{\iota}_1$.} line $E_{{\iota}_1} x$ have the same sign as $x$. Therefore, we can attach to each $E_{{\iota}_1}$-line in $V_{{\iota}_1}$ a sign in $\{-,0,+\}$. In particular, we may think of $V_1^-$ as the union of all negative lines in $ V_{{\iota}_1}$. Let $\cX_V\nomenclature[F]{$\cX_V,\cX_W$}{Hermitian symmetric domain of negative lines in $V_1$ and $W_1$}$ denote the set of negative lines in $V_1^-$. 

Intersecting the negative lines with the hyperplane defined by 
$$\left\{x=\sum_{i=1}^{n+1} x_i w_{i,1}^\prime\in V_{{\iota}_1} \colon x_{1}=1\right\},$$
one gets an identification of $\cX_V$ with the complex open ball of dimension $n$:
$$\mathbb{B}_n:=\left\{x=\sum_{i=1}^{n+1} x_i w_{i,1}^\prime\in V_{{\iota}_1} \colon x_1=1 \text{ and }\sum_{i=2}^{{n+1}} |x_i|^2<1\right\}.$$
For instance, the "complex" line $\ell_{V}=E_{\iota_1}w_{1,1}\nomenclature[F]{$\ell_{V}$}{Negative $E_{\iota_1}$-line spanned by $w_{1,1}$}$ is an element of $\cX_V$. The line $\ell_{V}$ corresponds to the centre $(1,0,\cdots,0)$ (for the basis $\mathfrak{B}_V^\prime$) of the ball $\mathbb{B}_n$.

The group $\G_{V,{\,\iota}_1}(\R)\simeq \U(n,1)(\R)$ acts transitively on the set $\cX_V$ and the stabilizer of $\ell_{V}$ is isomorphic to $\U(n)\times \U(1)$ (See \cite[Lemma 3.1.3]{Goldman99}). 

The negative line $\ell_{V}$ determines a homomorphism of $\R$-algebraic groups \begin{tikzcd}{h}_{\mathfrak{B}_V,1}\colon \mathds{S} \arrow{r}& \G_{V,{\,\iota}_1},\end{tikzcd}as follows: the basis $\mathfrak{B}_V$ gives rise to the maximal $F$-subtorus of $\U(V)$
$$\T(\mathfrak{B}_{V}):=\U(Ew_{1})\times \cdots \times\U(Ew_{n+1})\subset \U_{V},\nomenclature[F]{$\T(\mathfrak{B}_{V})$}{Maximal $F$-subtorus of $\U(V)$ defined by $\mathfrak{B}_{V}$}$$
similarly, the basis $\mathfrak{B}_{V,1}$ defines the maximal $\R$-subtorus of $\G_{V,{\,\iota}_1}$:
$$\T(\mathfrak{B}_{V,1}):=\U(\ell_V) \times \U(E_{{\iota}_1} w_{1,2})\times \cdots\times \U(E_{{\iota}_1} w_{1,n+1})=\T(\mathfrak{B}_V)_\R\subset \G_{V,{\,\iota}_1}.$$
The homomorphism of $E_{{\iota}_1}$-spaces $E_{{\iota}_1}\to \ell_{V}$ given by $z\mapsto z w_{1,1}$ induces an isomorphism of $\R$-groups $u_{V,1}\colon \widetilde{\iota}_1\U(1)_\R \simeq \U(E_{{\iota}_1} w_{1,1})$. Extend $u_{V,1}$ to the morphism of $\R$-groups  $\widetilde{\iota}_1\U(1)_\R \to \T(\mathfrak{B}_{V,1})\subset \G_{V,{\,\iota}_1}$ (by letting it act trivially on $\ell_V^\perp \subset V_{{\iota}_1}$), given with respect to the basis $\mathfrak{B}_{V,1}$, as follows:
$$u_{V,1}\colon\widetilde{\iota}_1\U(1)_\R\to \G_{V,{\,\iota}_1}, \quad z\mapsto \diag(z,1,\cdots,1).$$ 
Now, define the homomorphism $\widetilde{h}_{\mathfrak{B}_V,1}$ to be the composition of the following homomorphism of $\R$-algebraic groups:
 $$\begin{tikzcd}{\iota}_1 \mathds{S} \arrow{r}{s \mapsto \frac{s}{s^\tau}}&\widetilde{\iota}\U(1)_\R\arrow{r}{u_{V,1}}&\G_{V,{\,\iota}_1}.
\end{tikzcd}$$ 
Accordingly, the desired\begin{tikzcd}[column sep= small]{h}_{\mathfrak{B}_V,1}\colon \mathds{S} \arrow{r}& \G_{V,{\,\iota}_1},\end{tikzcd} is ${h}_{\mathfrak{B}_V,1}= \widetilde{h}_{\mathfrak{B}_V,1} \circ \jmath_1$, applying the square (\ref{squareiota}) we also see that $${h}_{\mathfrak{B}_V,1}\colon z \longmapsto u_{V,1}\left(\frac{\jmath_1(z)}{(\jmath_1({z}))^\tau}\right) = u_{V,1}\circ \jmath_1\left(\frac{z}{\overline{z}}\right).$$

The centralizer of $\widetilde{h}_{\mathfrak{B}_V,1}$ is the compact subgroup
$$\U(\ell_V) \times \U(\ell_V^\perp) \simeq \U(1)_\R \times \U(n,0)_\R.$$
Similarly, any negative line $\ell\in \cX_V$ defines a homomorphism of $\R$-algebraic groups $\widetilde{h}_{V,\ell}\colon \widetilde{\iota}_1\mathds{S}  \to \G_{V,{\,\iota}_1}$ and the transitive action of $\G_{V,{\,\iota}_1}(\R)$ on $\cX_V$, there exists a unitary isometry $g\in \G_{V,{\,\iota}_1}(\R)$ such that $g \cdot \ell_V = \ell$. Hence, the construction of ${h}_{V,1}$ above, yields the equality
$$\widetilde{h}_{V,\ell}= g \widetilde{h}_{V,1} g^{-1}\text{ and }{h}_{V,\ell}= g {h}_{V,1} g^{-1}.$$
Therefore, one may and will identify the set of negatives lines $\cX_V$ with the $\G_{V,{\,\iota}_1}(\R)$-conjugacy class of the homomorphism ${h}_{V,1}$.

The discussion above applies also to $W$. Let $\cX_W$ be the set of negative definite $\C$-lines in $W_{\iota_1}:=W \otimes _{F, {\,\iota}_1} \R (\simeq \C^{n})$. The negative line $\ell_W=E_{\iota_1} w_{1,1} (=\ell_V)$, defines a homomorphism of $\R$-algebraic groups $\widetilde{h}_{W,1}\colon\widetilde{\iota}_1\mathds{S} \to \G_{W,{\,\iota}_1}$, given on $\R$-points by\footnote{With respect to the basis $\mathfrak{B}_{W,1}=(w_{1,1},\cdots,w_{n,1})$.}
$$z\in E_{\iota_1}^\times=\widetilde{\iota}_1\mathds{S} (\R)\mapsto \diag(z/{z}^\tau,1,\cdots,1)\in \G_{W,{\,\iota}_1}(\R).$$
Consider the induced homomorphism\begin{tikzcd}[column sep= small]{h}_{W,1}=\widetilde{h}_{W,1} \circ \jmath_1\colon \mathds{S} \arrow{r}& \G_{W,{\,\iota}_1}.\end{tikzcd} 
Likewise, we identify as above $\cX_W$ with the $\G_{W,{\,\iota}_1}(\R)$-conjugacy class of the $\R$-algebraic homomorphism ${h}_{W,1}$.

For every $\star\in \{V,W\}$, the transitive action of $\G_{\star,{\,\iota}_1}(\R)$ on $\cX_{\star}$ naturally induces a transitive action of $\G_{\star}(\R)=\prod_{i=1}^d\G_{\star,{\,\iota}_i}(\R)$ on $\cX_{\star}$ with isotropy group the maximal connected compact subgroup:
$$\Stab_{\G_{\star}(\R)}({h}_{\star})= (\U(\ell_\star^\perp)\times \U(\ell_\star))\times \prod_{i=2}^d\G_{\star,{\,\iota}_i}(\R)\simeq \U(1)_\R \U(\dim_E \star-1)_\R\times \U(\dim_E \star,0)_\R^{d-1},$$
where, ${h}_{\star} \colon\mathds{S}\to \G_{\star,\R}$ is the $\R$-algebraic homomorphism induced from ${h}_{\star,1}\colon\mathds{S}\to \G_{\star,{\,\iota}_1}$ and given on $\R$-points by
$${h}_{\star} \colon z\mapsto ({h}_{\star,1} (z),\text{Id}_2,\cdots,\text{Id}_{d}).$$
We shall, therefore, identify $\cX_\star$ with the $\G_{\star}(\R)$-conjugacy class of the homomorphism ${h}_{\star} $. Let $\cX = \cX_V \times \cX_W\nomenclature[F]{$\cX$}{$\cX_V \times \cX_W$}$, i.e. the $\G(\R)$-conjugacy class of the homomorphism ${h}_{V}\times {h}_{W}\colon \mathds{S}\to \G_\R$. The diagonal embedding $\Delta\colon W \hookrightarrow V \oplus W$ induces an embedding of $\cX_W$ into $\cX$. Write $\cY\nomenclature[F]{$\cY$}{$\Delta(\cX_W)$ with $\Delta\colon \cX_W\hra \cX$ diagonal}$ for $\Delta(\cX_W)$; the $\H(\R)$-conjugacy class of the homomorphism $\Delta({h}_{W})\colon \mathds{S}\to \H_\R$. It can be easily checked that $$\cY=\{h \in \cX| h\colon \mathds{S} \to \G_\R \text{ factors through } \Delta\colon\H_\R\hookrightarrow \G_\R\}.$$
\subsection{The reflex field}\label{reflexfield}
We define $$\T:=\Res_{E/\Q}\mathds{G}_{m,E},\quad\Zbf:=\Res_{F/\Q}\mathds{G}_{m,F},\nomenclature[F]{$\T, \Zbf$}{$\Res_{E/\Q}\mathds{G}_{m,E}, \Res_{F/\Q}\mathds{G}_{m,F}$}$$ 
and $$\T^1:=\Res_{F/\Q} \U_{E/F}(1)= \ker (\Norm\colon \T \twoheadrightarrow \Zbf)\nomenclature[F]{$\T^1$}{$ \ker (\Norm\colon \T \twoheadrightarrow \Zbf)$}.$$ For $\star \in \{{V},W\}$, let $\det:\G_\star\twoheadrightarrow \T^1$ be the determinant map. We have $\G_\star^{\der}=\ker(\det)=\Res_{F/\Q}\SU(\star)$.

Recall the maximal $F$-subtorus of $\U_{{\star}}$,
$$\T(\mathfrak{B}_{\star})=\U(Ew_1)\times\cdots \times\U(Ew_{{\dim \star}}) \simeq (\U_{E/F}(1))^{{\dim \star}}.$$
It induces the maximal $\Q$-subtorus of $\G_{{\star}}=\Res_{F/\Q}\U_{{\star}}$:
$$\T_{\mathfrak{B}_{\star}}:=\Res_{F/\Q}(\T(\mathfrak{B}_{\star}))=\Res_{F/\Q}\U(Ew_1)\times\cdots \times \Res_{F/\Q}\U(Ew_{{\dim \star}}) \simeq\left( \T^1\right)^{{\dim \star}}.$$
Using the natural diagonal embedding $\T^1\to \T_{\mathfrak{B}_\star}$, we may view $\T^1$ as the center of the group $\G_\star$. Define ${\mu}_{\mathfrak{B}_{\star}}=({h}_{{\star}})_\C\circ \mu \in X_*(\T_{\mathfrak{B}_{\star}})_\C\nomenclature[F]{${\mu}_{\mathfrak{B}_{\star}},({h}_{\star})_\C$}{}$ \footnote{We can recover ${h}_{\mathfrak{B}_{\star}}$ from $\mu_{\star}$ via ${h}_{\mathfrak{B}_{\star}}(z)=\mu_{\star}(z)\cdot \mu_{\star}(\overline{z})$.}, where $\mu$ is the cocharacter of the Deligne torus introduced in \S \ref{comparisoniotass} and $({h}_{\star})_\C$ is, using the identification\footnote{Determined by the choice of the CM type $\widetilde{\Sigma}_1$.} $\G_{\star,\C}\simeq \prod_{\widetilde{\iota} \in \widetilde{\Sigma}_E}\GL(\star\otimes_{E,\widetilde{\iota}}\C)\simeq \GL(\dim_E\star)_\C$, given by
$$\begin{tikzcd}[row sep= tiny]({h}_{\star})_\C\colon  \mathds{S}_\C(\C)\arrow{r}&\T_{\mathfrak{B}_{{\star},\C}}(\C) \\
 (z_1, z_2) \arrow[r, mapsto]& \left( \begin{pmatrix}
z_1/z_2&\\
&\text{Id}_{\dim_E\star-1}
\end{pmatrix},\text{Id}_{\dim_E\star},\cdots,\text{Id}_{\dim_E\star} \right).  \end{tikzcd}$$
For every $?\in \{\G,\H,\G_{V},\G_W\}$ and any field $\cK\subset \overline{\Q}$, $\mathcal{M}_{?}(\cK)$ denotes the set of $?(\cK)$-conjugacy classes of (algebraic group) homomorphisms $\Gm_{m,\cK}\to  ?_\cK$. Note that by construction, ${h}_{\star}\colon \mathds{S} \to \G_{\star}$ factors through the torus $\T_{\mathfrak{B}_{\star}} \hookrightarrow \G_{\star}$.

By \cite[(b) Lemma 1.1.3]{Ko1}, we know that $\mathcal{M}_{\G_{\star}}(\C)=\mathcal{M}_{\G_{\star}}(\overline{\Q})$. Let $\cX_{{\star},\overline{\Q}}\in \mathcal{M}_{\G_{\star}}(\overline{\Q})$ be the class that corresponds to the $\G_{\star}(\C)$-conjugacy class of $\mu_{\star}$ \footnote{The $\G_{\star}(\C)$-conjugacy class of $\mu_{\star}$ does not depend on the representatives ${h}_{\mathfrak{B}_{\star}}\in \cX_{\star}$.}. The reflex field $E(\G_{\star},\cX_{\star})$ is defined to be the fixed field of the subgroup of $\Gal(\overline{\Q}/\Q)$ fixing $\cX_{{\star},\overline{\Q}}$. 
We have (see proof of \cite[Lemma 1.1.3]{Ko1})
$$\mathcal{M}_{\G_{\star}}(\overline{\Q})\simeq X_*(\T_{\mathfrak{B}_{\star}})/W(\G_{\star},\T_{\mathfrak{B}_{\star}})$$
where, $W(\G_{\star},\T_{\mathfrak{B}_{\star}})$ denotes the absolute Weyl group of $\T_{\mathfrak{B}_{\star}}$ in $\G_{\star}$. Therefore, the reflex field $E(\G_{\star},\cX_{\star})$ is also the field of definition of the $W(\G_{\star},\T_{\mathfrak{B}_{\star}})$-orbit of ${\mu}_{\mathfrak{B}_{\star}}$. We now exhibit a rather explicit description of the $W(\G_{\star},\T_{\mathfrak{B}_{\star}})\rtimes\Gal{(\overline{\Q}/\Q)}$-module $X_*(\T_{\mathfrak{B}_{\star}})$ which will ease the computation of the reflex field. 
Recall that the duality between $X_*(\T_{\mathfrak{B}_{\star}})$ and $X^*(\T_{\mathfrak{B}_{\star}})$ is compatible with the $\Gal{(\overline{\Q}/\Q)}$-action on both sides, in addition 
\begin{align*}X_*(\T_{\mathfrak{B}_{\star}})&\simeq \Hom_{\Z-\text{mod}}(X^*(\T_{\mathfrak{B}_{\star}}),\Z)\\ 
&=\bigoplus_{i=1}^{i={\dim \star}}\Hom_{\Z-\text{mod}}(X^*(\Res_{F/\Q}\U(Ew_i)),\Z).\end{align*}
thus, $$X_*(\T_{\mathfrak{B}_{\star}})\simeq \bigoplus_{i=1}^{i={\dim \star}} \Hom_{\Z-\text{mod}}(X^*(\T^1)_i,\Z),$$
where, $X^*(\T^1)_i$ is just a copy of $X^*(\T^1)$ indexed by $1\le i\le \dim_E {\star}$. On the other hand,
we have $$\T(\overline{\Q})=\Res_{E/\Q}\Gm_{m,E} (\overline{\Q})=(E\otimes_\Q \overline{\Q})^\times=\bigoplus_{{\,\iota}\in \Sigma_K} (\overline{\Q}_{{\,\iota}})^\times,$$
where $\overline{\Q}_{{\,\iota}}$ is just $\overline{\Q}$ indexed by elements of $\Sigma_E=\Hom_\Q(E\,, \overline{\Q})$ and endowed with the $E$-algebra structure given by the embedding $\iota \colon E \hra \R$. Moreover, projections on each factor $(\overline{\Q}_{{\,\iota}})^\times$ is an algebraic character that we denote by $f_{\,\iota}$. Hence, $\{f_{\,\iota}\colon {\,\iota} \in\Sigma_E\}$ is a $\overline{\Q}$-basis for $X^*(\T)$. We have then a canonical isomorphism of $\Gal(\overline{\Q}/\Q)$-modules
$$X^*(\T)_{\overline{\Q}}\simeq \bigoplus_{{\,\iota} \in \Sigma_E} \Z f_{\,\iota},$$
with the canonical Galois module on the right hand side. Define $\{f^\vee_{\,\iota}\colon {\,\iota}\in \Sigma_E\}$ to be the dual basis of $\{f_{\,\iota}\colon {\,\iota} \in\Sigma_E\}$ in $\Hom_{\Z-\text{mod}}(X^*(\T),\Z)$. A homomorphism $f\in \Hom_{\Z-\text{mod}}(X^*(\T),\Z)$ is completely determined by the $\Z$-values it attaches to the basis $\{f^\vee_{\,\iota}\colon {\,\iota}\in \Sigma_E\}$ or equivalently, to $\{{\,\iota} \in \Sigma_E\}$. Therefore, we obtain the isomorphism
$$\begin{tikzcd}\{f\colon \Sigma_E \to \Z\} \arrow{r}{\simeq}& X_*(\T)_{\overline{\Q}} \quad(\simeq \Hom_{\Z-\text{mod}}(X^*(\T)_{\overline{\Q}},\Z)),\end{tikzcd}$$ 
given by 
$$f \longmapsto \left(\lambda_f\colon \overline{\Q}\to \T(\overline{\Q})=\bigoplus_{{\,\iota}\in \Sigma_E} (\overline{\Q}_{{\,\iota}})^\times, \quad  z\mapsto \prod_{{\,\iota}\in \Sigma_E} f_{\,\iota}^\vee(z)^{f({\,\iota})}\right).$$
The embedding $\T^1 \hookrightarrow \T$ induces an injection $X_*(\T^1) \hookrightarrow X_*(\T)$. We will try here to describe the submodule of $\{f\colon \Sigma_E \to \Z\} $ corresponding to $X_*(\T^1)$. We begin by describing the $\overline{\Q}$-points of $\T^1$ as follows
\begin{align*}\T^1(\overline{\Q})&=\left\{z\in \T(\overline{\Q})\colon \chi(z)=1,\, \forall \chi\in X^*(\T)^{\Gal(E/F)}\right\}
\\ &=\left\{z\in\bigoplus_{{{\,\iota}}\in {\Sigma}_E} (\overline{\Q}_{{{\,\iota}}})^\times\colon f_{{{\,\iota}}}(z)f_{{{\,\iota}}^\tau}(z)=1,\,\, \forall {{{\,\iota}}}\in {{\Sigma}_E}\right\}.
\end{align*}
Therefore, one can identify $X_*(\T^1)$ with $\{f\colon \Sigma_E \to \Z| f({\,\iota}) + f({\,\iota}^\tau)=0, \,\forall {{{\,\iota}}}\in {{\Sigma}_E}\}$. Recall, that for each $1\le i \le d$, we have fixed a $\widetilde{{\,\iota}}_i\in  \Sigma_E$ extending the fixed ${\,\iota}_i\in \Sigma_F$. 

In conclusion, we have a isomorphism of $\Gal(\overline{\Q}/\Q)$-modules between $X_*(\T^1)$ and $\Hom(\widetilde{\Sigma}_E,\Z)$ and thus an isomorphism 
$$\begin{tikzcd}\Hom(\widetilde{\Sigma}_E,\Z^{{\dim \star}})\arrow{r}{\simeq }&X_*(\T_{\mathfrak{B}_{\star}}).\end{tikzcd}$$
The representative ${\mu}_{\mathfrak{B}_{\star}}\in X_*(\T_{\mathfrak{B}_{\star}})$ of the class $\cX_{{\star},\overline{\Q}}$ corresponds, under the isomorphism above, to the function
$$f_{\star}\colon \widetilde{\Sigma}_E \to \Z^{{\dim \star}}, \quad \widetilde{{\,\iota}}_i\mapsto\begin{cases}(1,0,\cdots,0)& \text{ if } i=1,\\
(0,\cdots,0)&\text{ if } 2\le i\le d,\end{cases}$$
or, equivalently
$$f_{\star}\colon {\Sigma}_E \to \Z^{{\dim \star}}, \quad \widetilde{{\,\iota}}\mapsto\begin{cases}(1,0,\cdots,0)& \text{ if } \widetilde{{\,\iota}}=\widetilde{\iota}_1,\\
(-1,0,\cdots,0)& \text{ if } \widetilde{{\,\iota}}=\widetilde{\iota}_1\,^\tau,\\
(0,\cdots,0)&\text{ if }  \widetilde{{\,\iota}} \not\in\{ \widetilde{\iota}_1, \widetilde{\iota}_1\,^\tau\}. \end{cases}$$
Since all considered tori split over $E$, the absolute Galois group $\Gal(\overline{\Q}/\Q)$ will then act on these objects by its projection on $\Gal(E/\Q)$. The absolute Weyl group $W(\G_{\star},\T_{\mathfrak{B}_{\star}})\simeq \mathcal{S}_{{\dim \star}}^d$ acts on $\Hom(\Sigma_E,\Z^{{\dim \star}})$ by permuting the components in $\Z^{{\dim \star}}$, e.g. $W(\G_{\star},\T_{\mathfrak{B}_{\star}}) f_{\star}=\{f_{{\star},k}\colon \widetilde{\Sigma}_E \to \Z^{{\dim \star}}, \text{ with } {1\le k\le {\dim \star}}\}$ where
$$f_{{\star},k}\colon \widetilde{\Sigma}_E \to \Z^{{\dim \star}}, \widetilde{{\,\iota}}_i\mapsto \begin{cases}(0,\cdots,0,\underbrace{1}_{k^{\text{th}}\text{ position}},0,\cdots,0)& \text{ if } i=1,\\
(0,\cdots,0)&\text{ if } 2\le i\le d,\end{cases}.$$
Consequently, a Galois element fixing the Weyl orbit $W(\G_{\star},\T_{\mathfrak{B}_{\star}})\cdot f_{\star}$ must fix the ${\,\iota}_1$ component, hence is contained in $\Gal(\overline{\Q}/{\,\iota}_1(F))$. Now since, $f_{\star}^\tau\neq f_{\star}$, the Galois subgroup that fixes $W(\G_{\star},\T_{\mathfrak{B}_{\star}})\cdot f_{\star}$ is precisely $\Gal(\overline{\Q}/{\,\widetilde{\iota}}_1(E))$ and the field of definition of $W(\G_{\star},\T_{\mathfrak{B}_{\star}}) \cdot f_{\star}$ is
$$E(\G_{\star},\cX_{\star})=\widetilde{\iota}_1(E).$$
In conclusion, we also have $E(\H,\cY)=E(\G,\cX)=\widetilde{\iota}_1(E)$.
\subsection{Reflex norm maps}\label{Reflexnormmaps5}
For $\star\in \{V,W\}$, the discussion of \S \ref{reflexfield} shows that ${\mu}_{\mathfrak{B}_\star}=({h}_{\mathfrak{B}_\star})_\C\circ \mu \in X_*(\T_{\star})_\C$ is actually defined over $\widetilde{\iota}_1(E)=\widetilde{\iota}\,^\tau_1(E)$. Put $\iota_1\T:= \Res_{\widetilde{\iota}_1(E)/\Q}\mathds{G}_{m,\widetilde{\iota}_1(E)}$ and define the reflex norm map $r(\mu_{\mathfrak{B}_\star})$ to be the composition
$$\begin{tikzcd}[column sep= large]\T\arrow{r}{\jmath_1}[swap]{\simeq}&\iota_1\T \arrow{rr}{\Res_{\widetilde{\iota}_1(E)/\Q}(\mu_{\mathfrak{B}_\star})}&&\Res_{\widetilde{\iota}_1(E)/\Q}(\T_{\mathfrak{B}_\star})_{{\iota}_1(F)} \arrow{r}{\Norm}&\T_{\mathfrak{B}_\star} \arrow[two heads]{r}{\det} &\T^1\end{tikzcd}$$
Therefore, $r(\mu_{\mathfrak{B}_\star})= \Res_{F/\Q} \Nm_{\U_{E/F}(1)}\circ \jmath_1$, where $\Nm_{\U_{E/F}(1)}$ is 
$$\begin{tikzcd} \Res_{\widetilde{\iota}_1(E)/F}\mathds{G}_{m,\widetilde{\iota}_1(E)} (R)\arrow{r}&\U_{E/F}(1)(R),& s \arrow[r, mapsto]& \frac{s}{s^{\tau}}.\end{tikzcd}$$
for any $F$-algebra $R$. This shows, in particular, that $r(\mu_{\mathfrak{B}_\star})$ is independent of the choice of the fixed basis $\mathfrak{B}_\star$, from now on we will denote this map by $\nu:=r(\mu_{\mathfrak{B}_\star})\nomenclature[F]{$\nu$}{Reflex norm map $\T \to \T^1$}$. In addition, we have \footnote{On the one hand, the Weil restriction is left exact on the category of $\Q$-groups. On the other hand, the right surjectivity is a consequence of \cite[Corollary A.5.4(1)]{CGP10}.} an exact sequence of $\Q$-tori
\begin{equation}\label{exactsequencenu}\begin{tikzcd}1\arrow{r}& \Zbf \arrow{r}&\T\arrow{r}{\nu}&\T^1\arrow{r}& 1.\end{tikzcd}\end{equation}
The above discussion justifies the possibility of omitting the distinguished embeddings $\widetilde{\iota}_1$ and ${{\,\iota}}_1$. Accordingly, we identify the abstract number fields $F$ with ${\,\iota}_1(F)\subset \R$ and $E$ with $\widetilde{\iota}_1(E)=\widetilde{\iota}_1^\tau(E)\subset \C$.
\subsection{The Shimura varieties}\label{unitaryshimuravar}
Let us begin by proving some properties of the pairs $(\G,\cX)$ and $(\H,\cY)$:
\begin{proposition} The pairs $(\G_\star,\cX_\star)$, $\star\in\{V,W\}$ are Shimura data (See \cite[Definition 5.5]{milne:shimura}), that is 
\begin{enumerate}[nosep]
\item[SV1] The only characters of the induced representation $\Ad\circ h_{\mathfrak{B}_\star} \colon \mathds{S}\to\GL(\text{\em Lie}(\G_{\star,\C}))$, are $z\mapsto z/\overline{z},1, \overline{z}/z$.
\item[SV2] $\Ad\circ h$ is a Cartan involution of $\G_{\star,\R}^{\der}$ for all $h\in \cX_\star$.
\item[SV3] $\G_{\star}^{\ad}$ does not have any direct $\Q$-factor $\mathbf{L}$ on which $h_{\mathfrak{B}_\star}$ is trivial.
\end{enumerate}
\end{proposition}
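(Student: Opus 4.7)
The plan is to verify each of the three axioms \textup{SV1}--\textup{SV3} using the explicit description of $h_{\mathfrak{B}_\star}$ established in \S\ref{Hermsymdom}. The key observation is that after base change to $\C$ and under the identification $\G_{\star,\C}\simeq\prod_{\widetilde\iota\in\widetilde{\Sigma}_E}\GL(\dim_E\star)_\C$, the homomorphism $h_{\mathfrak{B}_\star}$ is trivial on every component except the distinguished one indexed by $\widetilde{\iota}_1$, where it sends $(z_1,z_2)\in\mathds{S}(\C)$ to $\diag(z_1/z_2,1,\ldots,1)$. This reduces every verification to a local computation at the $\widetilde{\iota}_1$-factor, together with a triviality statement at the other factors.

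For \textup{SV1}, I would decompose $\Lie(\G_{\star,\C})=\bigoplus_{\widetilde\iota\in\widetilde{\Sigma}_E}\Lie(\G_{\star,\widetilde\iota})$. The action $\Ad\circ h_{\mathfrak{B}_\star}$ is trivial on every summand with $\widetilde\iota\neq\widetilde{\iota}_1$, while on the $\widetilde{\iota}_1$-summand $\mathfrak{gl}_{\dim_E\star}(\C)$ the matrix entry in position $(i,j)$ transforms under the character $\chi_i\chi_j^{-1}$, where $\chi_1=z_1/z_2$ and $\chi_k=1$ for $k\ge 2$. Collecting these weights yields exactly the three characters $z/\bar z$, $1$, and $\bar z/z$, as desired.

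For \textup{SV2}, I would first evaluate $h_{\mathfrak{B}_\star}(i)$: since $i/\bar i=-1$, it equals $\diag(-1,1,\ldots,1)$ at the $\iota_1$-factor and the identity at every other $\iota_j$ with $j\ge 2$. At $\iota_1$, conjugation by $h_{\mathfrak{B}_\star}(i)$ on the Lie algebra of $\G_{\star,\iota_1}(\R)\simeq \U(\dim_E\star-1,1)$ has fixed subgroup $\U(1)\times\U(\dim_E\star-1)$, which is compact, hence is a Cartan involution; at the other $\iota_j$ the group $\G_{\star,\iota_j}(\R)\simeq \U(\dim_E\star,0)$ is already compact and the trivial involution is a Cartan involution. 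The property then propagates to every $h\in\cX_\star$ by $\G_\star(\R)$-conjugation.

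For \textup{SV3}, the point is that $\G_\star^{\ad}=\Res_{F/\Q}\U(\star)^{\ad}$ is $\Q$-simple because $\U(\star)^{\ad}$ is a simple $F$-group and $F/\Q$ is a field extension, so its only non-trivial direct $\Q$-factor is itself; and $h_{\mathfrak{B}_\star}$ is non-trivial on this factor because $\diag(-1,1,\ldots,1)$ is non-central in $\U(\star)^{\ad}$ for $\dim_E\star\ge 2$. The only mildly delicate point is the $\Q$-simplicity assertion for the Weil restriction, which is standard, so I do not anticipate a real obstacle: each axiom reduces to a direct matrix calculation at the distinguished archimedean place $\iota_1$ combined with the triviality of $h_{\mathfrak{B}_\star}$ at the remaining places.
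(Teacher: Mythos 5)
Your proposal is correct and follows the same overall strategy as the paper: reduce each axiom to an explicit computation at the distinguished archimedean place $\iota_1$, using the concrete matrix description of $h_{\mathfrak{B}_\star}$. The one genuinely different step is SV2. The paper unwinds Milne's definition of a Cartan involution literally, writing out the twisted real form $\G_{\star,\R}^{\der,\ad(h_{\mathfrak{B}_\star}(i))}(\R)=\{g\in\SU(\star)(\C)\colon gh_{\mathfrak{B}_\star}(i)=h_{\mathfrak{B}_\star}(i)\overline{g}\}$ and proving it compact by identifying it at $\iota_1$ (using $h_{\mathfrak{B}_\star}(i)=J_{\mathfrak{B}_\star}$) with a closed subgroup of the compact orthogonal group $\mathbf{O}(\star_{F,\iota_1}\otimes\C)$, and with compact $\SU$ at the other places. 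You instead invoke the alternative characterization that an involution of a connected semisimple real Lie group whose fixed subgroup in $G(\R)$ is maximal compact is a Cartan involution, and compute that fixed subgroup directly. This is valid but slightly less self-contained: the equivalence with Milne's definition (via the twisted real form) deserves an explicit citation, and since the axiom concerns $\G_{\star,\R}^{\der}\simeq\Res_{F/\Q}\SU(\star)_\R$, the fixed subgroup at $\iota_1$ should be written $S(\mathrm{U}(1)\times \mathrm{U}(\dim_E\star-1))$ rather than $\mathrm{U}(1)\times \mathrm{U}(\dim_E\star-1)$. Your SV1 argument is essentially the paper's, just with the weight decomposition made explicit. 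Your SV3 argument is a mild reorganization: the paper invokes Milne's reduction of SV3 to the condition that $\G_\star^{\ad}$ be of noncompact type, verified via the indefinite signature at $\iota_1$; you instead spell out the two ingredients of that reduction directly, namely $\Q$-simplicity of the Weil restriction and non-centrality of $h_{\mathfrak{B}_\star}(i)$ at $\iota_1$. The content is the same.
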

\proof
For $(\G_\star,\cX_\star)$, $\star\in\{V,W\}$: 
\begin{enumerate}[topsep=0pt,itemsep=0pt,parsep=0pt,partopsep=0pt] 
\item[SV1] Recall that we have defined the $\R$-algebraic homomorphisms $h_{\mathfrak{B}_\star}\colon \mathds{S}\to \G_{\star,\R}\simeq \U(\dim_E\star-1,1)_\R\times\U(\dim_E \star)_\R^{d-1}$, given by
$$h_{\mathfrak{B}_\star}\colon \mathds{S}(\R)=\C^\times \ni z\mapsto \left(
\begin{pmatrix}
z/\overline{z}&\\
&1_{\dim_E\star-1}
\end{pmatrix},1_{\dim_E\star},\cdots,1_{\dim_E\star} \right)$$
with respect to the basis $\mathfrak{B}_\star$. It is then, straightforward to see that the only characters of the induced representation $\Ad\circ h_{\mathfrak{B}_\star} \colon \mathds{S}\to\GL(\text{Lie}(\G_{\star,\C}))$, are $z\mapsto z/\overline{z},1, \overline{z}/z$.
\item[SV2] The ss that $\Ad\circ h$ is a Cartan involution of $\G_{\star,\R}^{\der}$ for all $h\in \cX_\star$. Since, all Cartan involutions are conjugate by an inner automorphism, we just need to verify this axiom for $h=h_{\mathfrak{B}_\star}$, that is to show that the following Lie group
\begin{align*}\hspace{-2cm}\G_{\star,\R}^{\ad(h_{\mathfrak{B}_\star}(i))}(\R)&:=\{g\in\SU(\star)(\C)\colon g h_{\mathfrak{B}_\star}(i)=h_{\mathfrak{B}_\star}(i) \overline{g}\}\\
&=\{g\in\SU(\star)_{F\,,{\,\iota}_1}(\C)\colon g h_{\mathfrak{B}_\star}(i)=h_{\mathfrak{B}_\star}(i) \overline{g}\}\times \prod_{i=2}^{i=d} \SU(\star)_{F\,,{\,\iota}_i}(\C),\\
&\simeq \{g\in \SL(\star_{F\,,{\,\iota}_1}\otimes \C)\colon g J_{\mathfrak{B}_\star} {}^t\overline{g}=J_{\mathfrak{B}_\star}, gh_{\mathfrak{B}_\star}(i)=h_{\mathfrak{B}_\star}(i) \overline{g}\}  \times \SU(\dim_E\star) (\C)^{d-1}
\end{align*}
is compact. Indeed, for all $2\le i \le d$, the subgroup $\SU(\star)_{F\,,{\,\iota}}(\C)\simeq \SU(\dim_E\star) (\C)$ is compact. While, for ${\,\iota}_1$, the fact that $h_{\mathfrak{B}_\star}(i)=J_{\mathfrak{B}_\star}$ shows\footnote{The two conditions above, gives the equality $g J_{\mathfrak{B}_\star} {}^t\overline{g}=J_{\mathfrak{B}_\star} \overline{g}{}^t\overline{g}=J_{\mathfrak{B}_\star}$, hence ${g}{}^t{g}=\text{Id}_{\star_{{\,\iota}_1,\R}\otimes \C}$.}
$\{g\in \SL(\star_{F\,,{\,\iota}_1}\otimes \C)\colon g J_{\mathfrak{B}_\star} {}^t\overline{g}=J_{\mathfrak{B}_\star} \text{ and } gh_{\mathfrak{B}_\star}(i)=h_{\mathfrak{B}_\star}(i) \overline{g}\}$ is equal to $$\SU(\star_{F\,,{\,\iota}_1})(\C)\cap \mathbf{O}(\star_{F\,,{\,\iota}_1}\otimes \C),$$
and this latter is a closed subgroup of the compact orthogonal group {$\mathbf{O}(\star_{F\,,{\,\iota}_1}\otimes \C)$}. This proves the claim.
\item[SV3] The third axiom requires that $\G_{\star}^{\ad}$ does not have any direct $\Q$-factor $\mathbf{L}$ on which $h_{\mathfrak{B}_\star}$ is trivial. By \cite[Remark 4.6.]{milne:shimura}, this holds if and only if $\G_{\star}^{\ad}$ is of noncompact type \cite[Definition 3.18]{milne:shimura}. Consider the isogeny $\G_{\star}^{\der}=\SU(\star)\to \G_{\star}^{\ad}$, clearly $\SU(\star)(\R)$ is noncompact due to the signature of $(\star_{F, {\,\iota}_1},\psi_1)$ which is $(\dim_E\star-1,1)$. This proves $\G_{\star}^{\ad}$ is of noncompact type.\qed\end{enumerate}
The proposition above, shows that 
$$ (\G,\cX)= (\G_V,\cX_V)\times (\G_W,\cX_W)\text{ and }(\H,\cY)=\Delta(\G_W,\cX_W)$$
are also Shimura data and implies, in particular, that the connected\footnote{The identification we have seen in \S \ref{Hermsymdom} between $\cX_V$ and $\cX_W$ and complex open balls shows that these spaces are connected and consequently $\cX$ and $\cY$ are connected too.} spaces $\cX_V,\cX_W,\cX$ and $\cY$ are Hermitian symmetric domains. 
\begin{remark}\label{discreetnessT1}\label{additionalaxioms} Let $\mathbf{X} \in \{V,W\}$, then the Shimura datum $(\G_{\star}, \cX_{\star})$ verifies the following additional axioms \emph{\cite[Additional axioms, p. 63]{milne:shimura}}:
\begin{enumerate}[nosep]
\item[SV4] The weight homomorphism
$$\begin{tikzcd}w_{\cX_{\star}}\colon \mathds{G}_{m,\R} \arrow{r}{r \mapsto r^{-1}}& \mathds{S} \arrow{r}{h_{\mathfrak{B}_V}}&\G_{{\star},\R} \end{tikzcd}$$
is clearly trivial\footnote{As for any "connected" Shimura variety.}, since $h_{\mathfrak{B}_V}$ factors through $\U(1)$, thus it is tautologically defined over $\Q$.
\item[SV5] The rational points of the center $\T^1(\Q)$ are discrete in $\T^1(\A_f)$. Indeed, $\T^1$ is anisotropic over $\Q$ and remains anisotropic over $\R$ too, thus the set of real points $\T^1(\R)$ must be compact which shows that $ \T^1(\Q)$ is discrete in $\T^1(\A_f)$ \emph{\cite[\S 5 - Arithmetic subgroups of tori]{milne:shimura}}.
\item[SV6] By definition, the center $\T^1$ splits over the CM-field $E$.\qedhere
\end{enumerate}
\end{remark}
For every compact subgroup $K_\G \subset \G(\A_f)$ (resp. $K_\H\subset \G(\A_f)$), the Shimura variety $\Sh_{K_\G}(\G,\cX)$ (resp. $\Sh_{K_\H}(\H,\cY)$) is the complex analytic space $$\G(\Q)\backslash(\cX \times (\G(\A_f)\slash K_\G))\quad(\text{resp. }\H(\Q)\backslash(\cY \times (\H(\A_f)\slash K_\H))),$$ where $\G(\Q)$ (resp. $\H(\Q)$) acts diagonally on $\cX \times (\G(\A_f)\slash K_\G)$ (resp. $\cY \times (\H(\A_f)\slash K_\H))$.
\begin{proposition}
The Shimura variety $\Sh_{K_\G}(\G,\cX)$ (resp. $\Sh_{K_\G}(\H,\cY)$) has the following decomposition
$$\bigsqcup_{g\in \mathcal{C}_\G} \Gamma_g\backslash\cX \quad(\text{resp.}\quad \bigsqcup_{h\in \mathcal{C}_\H} \Gamma_h\backslash\cY),$$
where for ${\textbf{X}}\in \{\G,\H\}$, we have used $\mathcal{C}_{\textbf{X}}\nomenclature[F]{$\mathcal{C}_{\textbf{X}}$}{The finite class group ${\textbf{X}}(\Q)\backslash {\textbf{X}}(\A_f)/K_{\textbf{X}}$ for ${\textbf{X}}\in \{\G,\H\}$}$ to denote the finite class group ${\textbf{X}}(\Q)\backslash {\textbf{X}}(\A_f)/K_{\textbf{X}}$ and for $ x\in \mathcal{C}_{\textbf{X}}$ we put $\Gamma_x:=x K_{\textbf{X}}x^{-1} \cap {\textbf{X}}(\Q)\subset {\textbf{X}}(\R)$.  
\end{proposition}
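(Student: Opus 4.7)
The plan is to verify the claimed set-theoretic decomposition directly from the definition
$$\Sh_{K_{\textbf{X}}}(\textbf{X},\cZ) = \textbf{X}(\Q)\backslash(\cZ \times \textbf{X}(\A_f)/K_{\textbf{X}}),$$
where $(\textbf{X},\cZ)$ runs through $\{(\G,\cX),(\H,\cY)\}$; the two cases proceed identically, so I treat them uniformly. First I would recall that for an open compact subgroup $K_{\textbf{X}} \subset \textbf{X}(\A_f)$, the double coset set $\mathcal{C}_{\textbf{X}} = \textbf{X}(\Q)\backslash \textbf{X}(\A_f)/K_{\textbf{X}}$ is finite; this is a classical consequence of reduction theory for the reductive $\Q$-groups $\G = \G_V\times\G_W$ and $\H=\Delta(\G_W)$ (Borel--Harish-Chandra). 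Picking a system of representatives $\{x\}_{x\in\mathcal{C}_{\textbf{X}}}$ yields
$$\textbf{X}(\A_f) = \bigsqcup_{x \in \mathcal{C}_{\textbf{X}}} \textbf{X}(\Q)\, x\, K_{\textbf{X}},$$
and for each such $x$ I would introduce the map $\iota_x\colon \cZ \to \Sh_{K_{\textbf{X}}}(\textbf{X},\cZ)$, $z \mapsto [(z, x K_{\textbf{X}})]$.

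The second step is to identify the fibres of $\iota_x$ with $\Gamma_x$-orbits on $\cZ$ and to show that $\bigsqcup_x \iota_x$ is a bijection onto $\Sh_{K_{\textbf{X}}}(\textbf{X},\cZ)$. Two points $(z, x K_{\textbf{X}})$ and $(z', x K_{\textbf{X}})$ are equivalent in $\Sh_{K_{\textbf{X}}}(\textbf{X},\cZ)$ iff there exists $\gamma \in \textbf{X}(\Q)$ with $\gamma z = z'$ and $\gamma x K_{\textbf{X}} = x K_{\textbf{X}}$, i.e. $\gamma \in xK_{\textbf{X}}x^{-1}\cap \textbf{X}(\Q) = \Gamma_x$. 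Hence $\iota_x$ factors through an injection $\Gamma_x \backslash \cZ \hookrightarrow \Sh_{K_{\textbf{X}}}(\textbf{X},\cZ)$. Surjectivity of the union $\bigsqcup_{x\in\mathcal{C}_{\textbf{X}}} \iota_x$ follows at once from the decomposition of $\textbf{X}(\A_f)$: any class $[(z, g K_{\textbf{X}})]$ has $g = \gamma x k$ with $\gamma\in \textbf{X}(\Q)$ and $k\in K_{\textbf{X}}$, whence $[(z, g K_{\textbf{X}})] = [(\gamma^{-1}z, x K_{\textbf{X}})] = \iota_x(\gamma^{-1}z)$. Disjointness of the images is immediate: if $\iota_x(z) = \iota_{x'}(z')$, then $xK_{\textbf{X}}$ and $x'K_{\textbf{X}}$ lie in the same $\textbf{X}(\Q)$-orbit, which by the choice of $\mathcal{C}_{\textbf{X}}$-representatives forces $x = x'$.

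The only nontrivial input is the finiteness of $\mathcal{C}_{\textbf{X}}$, which is standard for reductive $\Q$-groups and an open compact subgroup $K_{\textbf{X}}$; once that is granted, the argument is purely set-theoretic. Note that the proof does not use the connectedness of $\cZ$ established in \S\ref{Hermsymdom}, although combined with that connectedness and the properly discontinuous action of the arithmetic subgroups $\Gamma_x$ on $\cZ$, the decomposition actually yields the connected components of $\Sh_{K_{\textbf{X}}}(\textbf{X},\cZ)$ and equips each $\Gamma_x\backslash \cZ$ with its natural complex analytic structure as a quotient of a Hermitian symmetric domain by an arithmetic group.
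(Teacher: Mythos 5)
Your proof is correct, and it is essentially the content of Milne's Lemma 5.13, which the paper's own proof simply cites without spelling out the argument. The only input beyond elementary manipulation of double cosets is the finiteness of $\mathcal{C}_{\textbf{X}}$, which you correctly attribute to Borel's finiteness of class numbers of reductive $\Q$-groups. One small remark worth adding: Milne's Lemma 5.13 is actually phrased in terms of $\G(\Q)_+ \backslash \G(\A_f)/K$ and $\Gamma_g \backslash X^+$ (where $X^+$ is a chosen connected component of $X$ and $\G(\Q)_+$ its stabilizer), so to apply it verbatim one needs $\cX$ (resp.\ $\cY$) connected, in which case $\G(\Q)_+ = \G(\Q)$ and $\cX^+ = \cX$; the paper relies on the connectedness noted in \S\ref{Hermsymdom}. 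Your direct argument sidesteps this point entirely and yields the stated set-theoretic decomposition for arbitrary $\cZ$, as you observe, though the identification of each $\Gamma_x \backslash \cZ$ with a single connected component of the Shimura variety does require the connectedness of $\cZ$.
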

\begin{proof}
This is an application of \cite[Lemma 5.13.]{milne:shimura}.
\end{proof}
The subgroups $\Gamma_x$ for $x\in \mathcal{C}_\G$ (resp. $x\in\mathcal{C}_\H$) are congruence\footnote{We have an embedding $\pi\colon\G\hookrightarrow \GL(V\oplus W)\simeq \GL_{2n+1}$. A congruence subgroup $\Gamma\subset \G(\Q)$ is a subgroup containing a finite index subgroup of the form $$\Gamma(N)=\pi(\G)(\Q)\cap \{g\in \GL_{2n+1}(\Z)\colon g\equiv I_{2n+1} \mod N \},$$ for some integer $N$. The subgroup $\Gamma\subset \G(\Q)$ is arithmetic if $\pi(\Gamma)$ is commensurable with $\pi(\G)(\Q)\cap \GL_{2n+1}(\Z)$. Arithmetic and congruence subgroups of $\H(\Q)$, are defined similarly.} arithmetic subgroups of $\G(\Q)$ (resp. $\H(\Q)$) (See  \cite[Proposition 4.1]{milne:shimura}).

By the work of Baily and Borel \cite{BB66}, each connected component $\Gamma_g\backslash\cX$, (${g\in \mathcal{C}_\G}$) (resp. $\Gamma_h\backslash\cY$, (${h\in \mathcal{C}_\H}$)) can be endowed with a natural structure of a complex quasi-projective variety, hence also $\Sh_{K_\G}(\G,\cX)$ and $\Sh_{K_\H}(\H,\cY)$. Moreover, if $\Gamma_x$ (for $x$ in $\mathcal{C}_\G$ or $\mathcal{C}_\H$) is small enough (for example, if it is torsion-free) then $\Gamma_g\backslash\cX$ is smooth and its algebraic structure is unique. In this paper, we will be considering a stronger condition on the compact open subgroups $K_\G \subset  \G(\A_f)$ and $K_\H \subset \H(\A_f)$, namely being neat \cite[\S 0.1]{Pink89}, it prevents $\G(\Q)$ (resp. $\H(\Q)$) from having fixed point in $\cX\times \G(\A_f)/K$ (resp. ($\cY\times \H(\A_f)/K_\H$)), in which case $\Sh_{K_\G}(\G,\cX)$ and $\Sh_{K_\H}(\H,\cY)$ are smooth.

We have a better understanding of the algebraic structure governing the Shimura varieties $\Sh_{K_\G}(\G,\cX)$ and $\Sh_{K_\H}(\H,\cY)$. Indeed, It follows from results of Deligne, Borovoi, Milne and Moonen \cite{deligne:travaux,deligne:shimura,Bor83,Milne1983,Milne:1999,Moonen98}, that every Shimura varietiy has a canonical model \footnote{This canonical model depends only on the associated Shimura data. In particular, canonical models do not depend on the chosen open compact.} defined over its reflex field.

From now on, we will reserve the notation $\Sh_{K_\G}(\G,\cX)$ and $\Sh_{K_\H}(\H,\cY)$ for the corresponding canonical model over the reflex field $E(\G,\cX)=E(\H,\cY)=E$. Therefore, the initial definition given in terms of double cosets gives actually the complex points of these models:$$\Sh_{K_\G}(\G,\cX)(\C)=\G(\Q)\backslash(\cX \times (\G(\A_f)\slash K_\G)),\quad\Sh_{K_\H}(\H,\cY)(\C)=\H(\Q)\backslash(\cY \times (\H(\A_f)\slash K_\H)).$$
\subsection{The projective system and Hecke action}\label{ProjsysandHecke}
For any two neat compact open subgroups $K'\subset K$, we have an obvious quotient map $$\pi_{K',K}\colon \Sh_{K'}(\G,\cX)(\C)\to \Sh_{K}(\G,\cX)(\C),$$ which defines a finite \'{e}tale morphism between $\Sh_{K'}(\G,\cX) \to \Sh_{K}(\G,\cX)$. Taking the projective system over neat compact subgroups, $$\Sh(\G,\cX)=\varprojlim_{K \text{ neat}} \Sh_K(\G,\cX),$$ we obtain a a scheme over $\C$, endowed with a continuous action of $\G(\A_f)$ (see \cite[2.1.4 and 2.7]{deligne:shimura} and \cite[II.2 and II.10]{Milne90}). Define also the quotient map $\pi_{\G,K}\colon \Sh(\G,\cX)\longrightarrow \Sh(\G,\cX)/K=\Sh_K(\G,\cX)$.
The action of $g\in \G(\A_f)$ denoted $T_g\colon \Sh(\G,\cX) \to \Sh(\G,\cX) $ on the system $(\Sh_{K}(\G,\cX))_{K \text{ neat}}$ defines an isomorphism of algebraic varieties between $\Sh_{K}(\G,\cX)$ and $\Sh_{g^{-1}Kg}(\G,\cX)$ (See \cite[Remark 5.29 \& Theorem 13.6]{milne:shimura}). 
For a fixed finite level structure $K\subset \G(\A_f)$, The action of $T_g$ on $\Sh(\G,\cX)$ descends to a correspondence $\mathcal{T}_g\subset \Sh_{K}(\G,\cX)\times_E \Sh_{K}(\G,\cX)\nomenclature[F]{$\mathcal{T}_g$}{Hecke correspondence for $g\in \G(\A_f)$}$ given by the diagram:
$$\begin{tikzcd}\Sh(\G,\cX)\arrow{rr}{T_g}\arrow{d}{\pi_{\G,K}} && \Sh(\G,\cX)\arrow{d}{\pi_{\G,K}}\\\hspace*{\fill}
\Sh_{K}(\G,\cX)&& \Sh_{K}(\G,\cX)  \end{tikzcd}$$
The correspondence $\mathcal{T}_g=(\pi_{\G,K})_*\cdot T_g\cdot (\pi_{\G,K})^*$ is finite, i.e. $$\mathcal{T}_g\in \mathcal{C}_{\text{fin}}(\Sh_{K}(\G,\cX)(\C)\times_E \Sh_{K}(\G,\cX)(\C)).$$
It is called the Hecke correspondence and is usually given by the diagram:$$\begin{tikzcd}[]  \Sh_{K_g}(\G,\cX)\arrow{d}{{\pi_{K_g,K}}}\arrow{r}{\times g}&\Sh_{K_g^{-1}}(\G,\cX) \arrow{d}{{\pi_{K_{g^{-1}},K}}}\\
\Sh_{K}(\G,\cX)& \Sh_{K}(\G,\cX)\end{tikzcd}$$
where, we have used the notation $K_g=K\cap gKg^{-1}$ for (any) $g\in \G(\A_f)$.  
\subsection{Deligne's Reciprocity law for tori}\label{reciprocitytori}
\subsubsection{Artin map}\label{generalartinmap}
Let $L$ be any number field and set $\mathbf{X}_L:=\Res_{L/\Q} \mathds{G}_{m,L}$. Class field theory states the existence of the so called Artin map, that is a continuous surjective homomorphism
\[\begin{tikzcd}  \Art_L\colon \mathbf{X}_L(\A_{\Q}) \arrow[r, twoheadrightarrow]& \Gal(L^{ab}/L),\end{tikzcd}\nomenclature[F]{$\Art_L$}{Artin map for any number field $L$}\]
sending uniformizers to geometric Frobenius elements.
Let $\mathbf{X}_L(\R)^+$ denotes the identity component for the real points and $\mathbf{X}_L(\Q)^+=\ker(\mathbf{X}_L(\Q) \to \pi_0(\mathbf{X}_L(\R)))$. The kernel of $\Art_L$ is precisely the closure\footnote{Here, the subgroups $\mathbf{X}_L(\R)^+ $ is identified with the subgroup of $\mathbf{X}_L(\A_\Q)$ of idele elements $\mathbf{z}=(z_v)$ in $\A_L^\times$ such that $z_v=1$ if the place $v$ is finite and $z_v>0$ if the place $v$ is real.} of $\mathbf{X}_L(\Q) \mathbf{X}_L(\R)^+$, see {\cite[\S2 Chap. VIII]{NSW2008}}. 
% and also \cite[Remark 5.7 (a)]{MilneCFT}. 
{The cited theorem affirms that the closure of the above product in the ideal class group is the kernel. Indeed the quotient map $\mathbf{X}_L(\A_\Q) \twoheadrightarrow \mathbf{X}_L(\A_\Q)/\mathbf{X}_L(\Q)$ is open and continuous, thus the preimage of the closure of $\mathbf{X}_L(\Q) \mathbf{X}_L(\R)^+\subset \mathbf{X}_L(\A_\Q)/\mathbf{X}_L(\Q)$ is the image of the closure of $\mathbf{X}_L(\Q) \mathbf{X}_L(\R)^+\subset \mathbf{X}_L(\A_\Q) $. Recall that by definition of $\Art_L$, we have $\mathbf{X}_L(\Q)$ is in its kernel and clearly $\mathbf{X}_L(\R)^+ $ is also in the kernel, therefore since the homomorphism $\Art_L$ is continuous and the target group $\Gal(L^{ab}/L)$ is Hausdorff the kernel must be closed and hence contains $\left(\mathbf{X}_L(\Q) \mathbf{X}_L(\R)^+\right)^-\subset  \mathbf{X}_L(\A_\Q)$ and this shows the equality.}

By the real approximation theorem (See \cite[Appendix, p. 152]{milne:shimura}), $\mathbf{X}_L(\Q)$ is dense in $\mathbf{X}_L(\R)$ 
thus one has a surjective map $\mathbf{X}_L(\Q) \twoheadrightarrow \pi_0(\mathbf{X}_L(\R))$, thus $\mathbf{X}_L(\R)/\mathbf{X}_L(\R)^+ \simeq \pi_0(\mathbf{X}_L(\R)) \simeq \mathbf{X}_L(\Q)/\mathbf{X}_L(\Q)^+\simeq \prod_{v\in \Hom_\Q (L,\R)} \{\pm\}$. 
{\begin{lemma}
The closure $\mathbf{X}_L(\Q)^-\subset \mathbf{X}_L(\A_f)$ is equal to $L^\times (\O_L^\times)^-$.
\end{lemma}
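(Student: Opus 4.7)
The plan is to prove the lemma by double inclusion, with the nontrivial step being the reverse inclusion.

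For the easy direction $L^\times (\O_L^\times)^- \subseteq \mathbf{X}_L(\Q)^-$, I would observe that $\O_L^\times \subset L^\times$ gives $(\O_L^\times)^- \subseteq (L^\times)^-$, and since multiplication by a fixed element $a \in L^\times$ is a homeomorphism of $\mathbf{X}_L(\A_f) = \A_{L,f}^\times$ sending $L^\times$ into itself, it also sends $(L^\times)^-$ into $(L^\times)^-$. Taking products yields $L^\times \cdot (\O_L^\times)^- \subseteq (L^\times)^-$.

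The substantive direction is $\mathbf{X}_L(\Q)^- \subseteq L^\times (\O_L^\times)^-$. I would take $x \in (L^\times)^-$ and pick a net $(x_n)$ in $L^\times$ with $x_n \to x$ in $\A_{L,f}^\times$. The key observation is that the local valuations $v\colon L_v^\times \to \Z$ are continuous with discrete target (their kernels $\O_{L,v}^\times$ are open), and in the restricted product topology on $\A_{L,f}^\times$, a neighborhood basis of $1$ is given by sets of the form $\prod_{v\in S}U_v \times \prod_{v\notin S}\O_{L,v}^\times$. Consequently, $x_n x^{-1} \to 1$ forces $v(x_n)=v(x)$ for all finite places $v$ once $n$ is large enough, so the fractional ideals $(x_n)$ are eventually all equal to a common fractional ideal $\mathfrak{a}$ which must then equal the "fractional ideal" of $x$.

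Since each $(x_n)$ is principal, $\mathfrak{a}=(a)$ for some $a\in L^\times$. Then $x_n/a \in L^\times \cap \widehat{\O}_L^\times = \O_L^\times$ for $n$ large, and by continuity of multiplication $x_n/a \to x/a$, which exhibits $x/a \in (\O_L^\times)^-$. Hence $x = a \cdot (x/a) \in L^\times \cdot (\O_L^\times)^-$, completing the argument.

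The main obstacle is really just the topological bookkeeping: verifying carefully that convergence in the restricted product topology indeed forces eventual equality of the local valuations at every place (including those outside any prescribed finite set). Once this continuity-to-discrete-target reduction is in hand, the rest is a routine application of the fact that $L^\times \cap \widehat{\O}_L^\times = \O_L^\times$.
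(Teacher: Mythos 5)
Your proof is correct and follows essentially the same strategy as the paper's: exploit a neighborhood of $x$ of the form $\prod_{v\in S}U_v \times \prod_{v\notin S}\O_{L,v}^\times$ to force the valuations of an approximating sequence in $L^\times$ to stabilize, then divide by a generator $a\in L^\times$ of the resulting common principal fractional ideal and conclude $x/a \in L^\times \cap \widehat{\O}_L^\times{}^- = (\O_L^\times)^-$. If anything your write-up is a touch cleaner, since you spell out that the limiting fractional ideal is principal because it coincides with $(x_n)$ for $n$ large, a point the paper's construction of $\ell_x$ leaves implicit.
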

\proof Let $x=(x_v) \in \mathbf{X}_L(\Q)^-\subset \mathbf{X}_L(\A_f)$ and consider $(x_n=(x_{v,n})_v)_{n\in \N})\in \mathbf{X}_L(\Q)$ a sequence that converges to $x$. There exists an open neighbourhood of $x$ of the form $\cO_x=\cO_x^S \times \prod_{v\not\in S} \cO_{L_v}^\times$ for some open subgroup $\cO_x^S \subset L^{S,\times}$ such that for $n\ge N=N(\cO_x)$ one has $x_n \in \cO_x$ and $\ell_x := \prod_{v\in S} x_v$ lies in $\mathbf{X}_L(\Q) =L^\times$. For each $v\in S$, $x_{v,n}$ being convergent to $x_v$, then $|x_{v,n}|_v= |x_v|_v$ is constant for $n$ greater than some integer $N_v$ that we choose to be greater than $N$. Put for $n\ge N_x:=\max_{v\in S} N_v$,
$$y_n:= \ell_x^{-1}x_{v,n}=  (\frac{1}{x_v}x_{v,n})_{v\in S} \otimes (x_{v,n})_{v\not \in S}\in \widehat{\cO}_L^\times \cap L^\times =\O_L^\times.$$
Then, one has
\begin{align*} x&= \ell_x \lim_{n\to \infty} y_n \in L^\times (\cO_L^\times)^-.\qedhere \end{align*}
Consider the homomorphism 
$$\Art_{L,f}\colon\mathbf{X}_L(\A_f) \to\mathbf{X}_L(\A_\Q)/ (\mathbf{X}_L(\Q)\mathbf{X}_L(\R)^+)^-, z \mapsto [(1_\infty, z)].$$
Let us show that it is surjective. Using the density of $\mathbf{X}_L(\Q)$ in $\mathbf{X}_L(\R)$, observe that:
\begin{align*}\mathbf{X}_L(\Q)\mathbf{X}_L(\R)^+=\{(x,\Delta y)\in \mathbf{X}_L(\R)\times \mathbf{X}_L(\A_f) \colon & x\in \mathbf{X}_L(\R), y \in \mathbf{X}_L(\Q) \text{ s.t. } \\&\pi_0(x)=\pi_0(y)\in \pi_0(\mathbf{X}_L(\R)) \}. (\star)\end{align*}
Let $z=(z_\infty,z_f) \in \mathbf{X}_L(\A_\Q)$, we may modify $z$ by an element in $\mathbf{X}_L(\Q)\mathbf{X}_L(\R)^+$ to kill $z_\infty$. 
For this, consider $z'=(z_\infty^{-1},\Delta z_f')$ where $z_f'\in \mathbf{X}_L(\Q) $ is any element such that $\pi_0(z_f')=\pi_0(z_\infty'^{-1})$ (such element exists by density of the $\Q$-points in the $\R$-points of $\mathbf{X}_L$). 
Hence, $$z \mod \mathbf{X}_L(\Q)\mathbf{X}_L(\R)^+ = zz' \mod \mathbf{X}_L(\Q)\mathbf{X}_L(\R)^+,$$ and $zz' \in \mathbf{X}_L(\A_f)$. Accordingly 
$$[z]  \in \text{Im} (\Art_{L,f}),$$
which shows the surjectivity of $\Art_{L,f}$. Let us show that the kernel 
$$\ker \Art_{L,f}=\mathbf{X}_L(\A_f) \cap (\mathbf{X}_L(\Q)\mathbf{X}_L(\R)^+)^-=(\mathbf{X}_L(\Q)^+)^-,$$
where, the super-script ${}^-$ denotes the closure in $\mathbf{X}_L(\A_{\Q})$ first, then in $\mathbf{X}_L(\A_{f})$. 
Let $z \in (\mathbf{X}_L(\A_f) \cap \mathbf{X}_L(\Q)\mathbf{X}_L(\R)^+)^-$, 
one then has (using $\star$) a sequence of $(x_n, \Delta y_n) \in \mathbf{X}_L(\Q)\mathbf{X}_L(\R)^+$ converging to $z$, so in particular $\lim_{n\to \infty}x_n =1$. Hence, there is an integer $N$ big enough such that for $n\ge N$ one has $\pi_0(x_n) = \mathbf{X}_L(\R)^+$, 
and accordingly $\pi_0(y_n) = \mathbf{X}_L(\R)^+$, i.e. $y_n \in \mathbf{X}_L(\Q)^+$. 
Therefore, 
$$z=\lim_{n\to \infty} (x_n,y_n) = \lim_{n\to \infty} (1_\infty,y_n)$$
which shows that $z\in  (\mathbf{X}_L(\Q)^+)^-$ where the closure is taken in $\mathbf{X}_L(\A_f)$. 
This shows that $\ker \Art_{L,f} \subset (\mathbf{X}_L(\Q)^+)^-$. The other inclusion is obvious.}

In summary, this shows that
\begin{align*}\Gal(L^{ab}/L) &\simeq \pi_0(\mathbf{X}_L(\A_{\Q})/\mathbf{X}_L(\Q))
\simeq \mathbf{X}_L(\A_{f})/ (\mathbf{X}_L(\Q)^+)^-.
\end{align*}
By the above discussion we get an exact sequence
\begin{equation}\label{exactsequenceartin}\begin{tikzcd} 1\arrow{r} & (\mathbf{X}_L(\Q)^+)^- \arrow{r}&  \mathbf{X}_L(\A_f)\arrow{r}{r_L}& \Gal(L^{ab}/L) \arrow{r}&1.\end{tikzcd}\end{equation}
It follows from the description of the connected components of Id\`{e}le class group in \cite[\S 9, Theorem 3]{artin-tate:cft} that the kernel of $r_L$ is isomorphic, as $\text{Aut}(L/\Q)$-module, to 
$$L^{\times,+}\otimes (\A_f/\Q)\simeq L^{\times,+} (\cO_L^{\times,+}\otimes (\A_f/\Q)) \simeq L^{\times,+}(\cO_L^{\times,+}\otimes (\widehat{\Z}/\Z))$$
yielding the exact sequence
$$\label{exactsequenceartin2}\begin{tikzcd} 1\arrow{r}& \cO_L^{\times,+}\otimes (\A_f/\Q) \arrow{r}&  \A_{L,f}^\times/L^{\times,+}\arrow{r}{r_L}& \Gal(L^{ab}/L) \arrow{r}&1.\end{tikzcd}$$
\subsubsection{The cases \textbf{T} and \textbf{Z}}\label{caseTZ}
Now let us consider our two fields $E$ and $F$:
\[\begin{tikzcd}  \Art_E\colon \T(\A_{\Q}) \arrow[r, twoheadrightarrow]& \Gal(E^{ab}/E), & \Art_F\colon \Zbf(\A_{\Q}) \arrow[r, twoheadrightarrow]& \Gal(F^{ab}/F).\end{tikzcd}\]
The kernel of $ \Art_E$ (resp. $ \Art_F$) is the closure of $\T(\R)^+ \T(\Q) = \T(\R)\T(\Q) \simeq E^\times (\C^\times)^d $ (resp. $ \Zbf(\R)^+ \Zbf(\Q)\simeq  (\R_{>0}^\times)^d F^\times \simeq (\R^\times)^d (F^\times)^+$), in $\T(\A_\Q)^+$ (resp. $\Zbf(\A_\Q)$), here we have used the fact that $E$ is totally imaginary and that $F$ is totally real. 
Therefore,
\begin{align*}\Gal(E^{ab}/E) 
&\simeq  \A_{E,f}^\times / E^\times(\O_E^\times)^-,
\end{align*}
where, $(\O_E^\times)^-$ denotes the closure of $\O_E^\times$ in $\widehat{\O_E}^\times$. Likewise,
\begin{align*}\Gal(F^{ab}/F)
&\simeq \A_{F,f}^\times /  F^{\times,+}(\cO_F^{\times,+})^-.
\end{align*}

\subsubsection{Deligne's reciprocity law for $\T^1$}
Consider the "zero"-dimensional Shimura datum $(\T^1,\{\det {\mu}_{\mathfrak{B}_\star}\})$. Using the reflex norm map computed in \S \ref{Reflexnormmaps5} 
 $$\begin{tikzcd}\nu \colon \T\arrow[two heads]{rr}{ z \longmapsto \frac{z}{\overline{z}}} && \T^1\end{tikzcd}$$
we obtain the following reciprocity map
$$r=r(\T^1,\{\det {\mu}_{\mathfrak{B}_\star}\}) \colon\Gal(E^{ab}/E)\longrightarrow \pi_0(\T^1(\A_\Q)/\T^1(\Q))\simeq \T^1(\A_f)/\T^1(\Q)^-,$$
by composing the inverse of $\Art_E\colon   E^{\times} (\O_E^\times)^-\backslash \A_E^\times\xrightarrow{\simeq}  \Gal(E^{ab}/E)$ and $\nu$. But, since $\T^1(\Q)$ is discrete and hence closed in $\T^1(\A_f)$(See remark \ref{discreetnessT1}), the target of the map $r_{\text{fin}}(\T^1,\{\det \mu_{\mathfrak{B}_W}\})$ is actually $\T^1(\A_f)/\T^1(\Q)$.

 Let $K_{\T^1}\subset \T^1(\A_f)$ be a open compact subgroup. The action of $\sigma \in \Gal(E^{ab}/E)$ on $$[\det {\mu}_{\mathfrak{B}_\star},t]\in\Sh_{K_{\T^1}}(\T^1,\det {\mu}_{\mathfrak{B}_\star})(\C)=\T^1(\Q)\backslash (\{\det {\mu}_{\mathfrak{B}_\star}\}\times \T^1(\A_f)/K_{\T^1})$$ is defined by 
 $$\sigma([\det {\mu}_{\mathfrak{B}_\star},t]):= [{r_{\infty}}(\sigma) \det{\mu}_{\mathfrak{B}_\star}, r_{\text{fin}}(\sigma) \, t].$$
 Here, we have ${r_{\infty}}(\sigma) \det{\mu}_{\mathfrak{B}_\star} = \det\mu_{\mathfrak{B}_\star}$ and the map $r$ factors then through its finite part $r_{\text{fin}}=r_{\text{fin}}(\T^1,\{\det\mu_{\mathfrak{B}_\star}\})\colon \A_{E}^\times \longrightarrow \T^1(\A_f)$.
\subsection{Galois action on connected components}\label{galoisconnectedcomp}
In this section we describe the action of $\Gal(E^{ab}/E)$ on the connected component of $\Sh_{K_\H}(\H,\cY)$, for a fixed open compact subgroup $K_\H$ of $\H(\A_f)$. The derived group $\G^{\text{der}}$ is simply connected\footnote{Indeed, $\G^{\text{der}}_{\overline{\Q}}$ is isomorphic to $\SL_{n,\overline{\Q}}$.}. Therefore, one has a simplified description of the connected components of the Shimura variety:
\begin{align*}\pi_0(\Sh_{K_\H}(\H,\cY))&=\pi_0\left(\H(\Q)\backslash (\cY\times (\H(\A_f)/K_\H))\right)\\\hspace*{\fill}
&\simeq\H(\Q)\backslash  \H(\A_f)/K_\H\\\hspace*{\fill}
&=\H(\Q)\backslash  \H(\A_f)/K_\H \, \H^{\der}(\A_f)\\\hspace*{\fill}
&\simeq {\T^1}(\Q)\backslash \T^1(\A_f)/\det(K_\H)\\\hspace*{\fill}
&=\Sh_{\det(K_\H)}(\T^1,\det {\mu}_{\mathfrak{B}_W})(\C)
\end{align*}
The previous equalities uses the connectedness of $\cY$ first, then density of $\H^{\der}(\Q)$ in $\H^{\der}(\A_f)$ (See \cite[\S 5 - The structure of a Shimura variety]{milne:shimura}). 

Using the action of Galois group $\Gal(E^{ab}/E)$ on the complex points of $\Sh_{det(K_\H)}(\T^1,\det {\mu}_{\mathfrak{B}_W})$ as described in \S \ref{reciprocitytori}, we obtain that the action of $\sigma \in \Gal(E^{ab}/E)$ on, $$[\det {\mu}_{\mathfrak{B}_W},t]\in \pi_0(\Sh_{K_\H}(\H,\cY))\simeq \Sh_{\det(K_\H)}(\T^1,\{\det\mu_{\mathfrak{B}_W}\})(\C),$$ is given by
$\sigma\left([\det {\mu}_{\mathfrak{B}_W},t]\right)= [\det {\mu}_{\mathfrak{B}_W}, r_{\text{fin}}(\sigma) \, t]$.

\section{Preliminary results on special cycles}
\subsection{The set of special cycles \texorpdfstring{$\cZ_{\G,K}(\H)$}{ZGK(H)}}\label{defcycles}
The inclusion homomorphism $\H\hookrightarrow \G$, induces a map $\H(\R) \hookrightarrow \G(\R)$ that sends the Hermitian symmetric domain $\cY$ into $\cX$. Therefore, we get a morphism of Shimura data $\varphi\colon(\H,\cY)\rightarrow(\G,\cX)$ in the sense of \cite[Definition 5.15]{milne:shimura}.
\begin{remark}\label{immersionshim} By \cite[Theorem 1.15]{deligne:travaux}, the injective morphism of Shimura data $\varphi\colon(\H,\cY)\rightarrow(\G,\cX)$ induces a closed immersion of Shimura varieties $\Sh(\varphi)\colon\Sh(\H,\cY)\rightarrow\Sh(\G,\cX)$ meaning: For every "sufficiently" small compact open subgroup $K_\G\subset \G(\A_f)$, there is a compact open subgroup $K_\H\subset \H(\A_f)$ such that the map
$${\Sh(\varphi)_{K_\H,K_\G}}\colon\Sh_{K_\H}(\H,\cY)\rightarrow\Sh_{K_\G}(\G,\cX),$$
given naturally on $\C$-points by $\H(\Q)(y,hK_\H)  \mapsto  \G(\Q)(y,hK_\G)$ for any $y \in \cY$ and $h\in \H(\A_f)$, is a closed immersion. We then get the following commutative diagram 
$$\begin{tikzcd}
    \Sh(\H,\cY)\arrow{rr}{\Sh(\varphi)}\arrow{d}{\pi_{\H,K_\H}} && \Sh(\G,\cX)\arrow{d}{\pi_{\G,K_\G}}\\\hspace*{\fill}
       \Sh_{K_\H}(\H,\cY)\arrow{rr}{\Sh(\varphi)_{K_\H,K_\G}} && \Sh_{K_\G}(\G,\cX).
\end{tikzcd}$$
To ease the notation we will omit the subscripts referring to the compacts in the map $\Sh(\varphi)_{K_\H,K_\G}$.
\end{remark}

For the remaining of this section, we will fix a neat compact open subgroup of $K\subset\G(\A_f)$.
\begin{definition}[Special cycles]\label{specialcycles} We call a closed subvariety $\cZ\subset \Sh_K(\G,\cX)$ a $\H$-{special cycle}, if there exists an element $g\in \G(\A_f)$ such that $\cZ$ is an irreducible component of the image of the map
$$\begin{tikzcd}
    \Sh(\H,\cY)\arrow{r}{\Sh(\varphi)} &\Sh(\G,\cX)\arrow{r}{T_g}& \Sh(\G,\cX)\arrow{r}{\pi_{\G,K}}& \Sh_K(\G,\cX).
\end{tikzcd}$$
\end{definition}
\begin{lemma}\label{specialcycles2}
A closed subvariety $\cZ\subset \Sh_K(\G,\cX)$ is a $\H$-{Special cycle} if and only if there exists an element $g\in \G(\A_f)$, such that  $$\cZ=[\cY\times gK]\subset \Sh_K(\G,\cX)(\C).$$
\end{lemma}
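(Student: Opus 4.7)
The approach is to unpack Definition \ref{specialcycles} on $\C$-points and identify the irreducible components of the image of the composition directly.

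First, I would trace the composition on $\C$-points. A point in $\Sh(\H,\cY)(\C)$ is represented by a pair $(y,h)\in \cY\times \H(\A_f)$; the map $\Sh(\varphi)$ sends it to the same pair viewed modulo $\G(\Q)$; the Hecke action $T_g$ on the pro-system right-translates the adelic component to $(y,hg)$ (see \S\ref{ProjsysandHecke}); and $\pi_{\G,K}$ projects to $\G(\Q)(y,hgK)\in \Sh_K(\G,\cX)(\C)$. Consequently, the image of the composition, as a subset of $\Sh_K(\G,\cX)(\C)$, equals
$$M_g := \{\G(\Q)(y,hgK) : y\in \cY,\ h\in \H(\A_f)\}.$$

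Next, I would identify the irreducible components of $M_g$. Writing $K'_{\H,g} := gKg^{-1}\cap \H(\A_f)$, a compact open subgroup of $\H(\A_f)$, a direct computation shows that $[\cY\times hgK]$ depends only on the double coset of $h$ in $\H(\Q)\backslash \H(\A_f)/K'_{\H,g}$, so
$$M_g = \bigcup_{h_0 \in \H(\Q)\backslash \H(\A_f)/K'_{\H,g}} [\cY\times h_0 g K],$$
a finite union by the standard finiteness of double cosets of a compact open subgroup of an adelic reductive group modulo an arithmetic subgroup. Each summand is the image of the connected complex manifold $\cY$ under a holomorphic map, hence irreducible; by Remark \ref{immersionshim} the composition factors through a closed immersion, so each summand is a closed algebraic subvariety of $\Sh_K(\G,\cX)$ of the common dimension $\dim\cY$. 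These summands are therefore the irreducible components of $M_g$.

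The two implications now follow immediately. For $(\Rightarrow)$, a special cycle $\cZ$ is by definition an irreducible component of some $M_g$, hence $\cZ = [\cY\times h_0 g K] = [\cY\times g' K]$ with $g' := h_0 g \in \G(\A_f)$. For $(\Leftarrow)$, given $g\in \G(\A_f)$, the set $[\cY\times gK]$ is the summand of $M_g$ indexed by $h_0 = 1$, hence an irreducible component, so it qualifies as an $\H$-special cycle. The main technical verification is the decomposition of $M_g$ above: this rests on Remark \ref{immersionshim} to pass from analytic connectedness to algebraic irreducibility, and on the standard finiteness of $\H(\Q)$-double cosets modulo a compact open subgroup of $\H(\A_f)$.
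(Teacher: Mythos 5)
Your proof is correct and follows essentially the same route as the paper: both rest on the observation that the image of the composition is $T_g\circ\Sh(\varphi)(\H(\Q)(K_{\H,g}\times\cY))$, decomposed into pieces indexed by $\H(\Q)\backslash\H(\A_f)/K_{\H,g}$, together with Remark \ref{immersionshim} to pass from analytic connectedness to algebraic irreducibility. The paper states this very tersely (with a citation to \cite[Remark 2.6]{linearityI}); you have merely expanded the same argument.
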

\begin{proof}
The equivalence above follows immediately, using Remark \ref{immersionshim} and by observing that 
$$\cZ=T_g\circ \Sh(\varphi)(\H(\Q)(K_{\H,g}\times \cY)),$$
where $K_{\H,g}:=\H(\A_f)\cap gKg^{-1}$, see \cite[Remark 2.6]{linearityI}.
\end{proof}
For every $g\in \G(\A_f)$, we will denote by $\mathfrak{z}_g\nomenclature[F]{$\mathfrak{z}_g$}{The ${n}$-codimensional $\H$-special cycle $[\cY\times gK] \subset \Sh_K(\G,\cX)(\C)$}$ the ${n}$-codimensional $\H$-special cycle $[\cY\times gK] \subset \Sh_K(\G,\cX)(\C)$, as defined in Definition \ref{specialcycles}. Set,
$$\cZ_{\G,K}(\H):=\{\mathfrak{z}_g: g \in \G(\A_f)\}$$
\begin{lemma}The natural map $\G(\A_f)\to \cZ_{\G,K}(\H)$, induces the bijection
$$\cZ_{\G,K}(\H) \simeq \H(\Q){Z}_{\G}(\Q)\backslash\G(\A_f)/K,$$
where, ${Z}_{\G}\simeq \T^1\times \T^1$ denotes the center of $\G$.
\end{lemma}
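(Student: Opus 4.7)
The plan is to establish the bijection by checking that $g \mapsto \mathfrak{z}_g$ factors through $\H(\Q) Z_\G(\Q) \backslash \G(\A_f) / K$ and is injective; surjectivity is built into the definition of $\cZ_{\G,K}(\H)$. The factorization follows from three invariances read off the description $\mathfrak{z}_g = [\cY \times gK]$ inside $\G(\Q) \backslash (\cX \times \G(\A_f)/K)$: right $K$-invariance is tautological; for $h \in \H(\Q)$, we have $[y, hgK] = [h^{-1}y, gK]$ with $h^{-1}\cY = \cY$ since $\cY$ is an $\H(\R)$-orbit; and for $z \in Z_\G(\Q)$, $[y, zgK] = [z^{-1}y, gK] = [y, gK]$ because $z^{-1}$ is central in $\G$ and hence acts trivially on the $\G(\R)$-conjugacy class $\cX$.

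For injectivity, the plan is to start from $\mathfrak{z}_g = \mathfrak{z}_{g'}$ and produce a single $\gamma \in \G(\Q)$ with $\gamma\cY = \cY$ and $\gamma = gkg'^{-1}$ for some $k \in K$. Each $y \in \cY$ yields a witness $\gamma_y \in \G(\Q) \cap gKg'^{-1}$ via the identification $[y, gK] = [y', g'K]$; this set is discrete in $\G(\R)$ because $\G(\Q)$ is discrete in $\G(\A)$ and the finite-adelic component of $\gamma_y$ is confined to the compact set $gKg'^{-1}$, while neatness of $K$ makes $\gamma_y$ uniquely determined by $y$, so continuity in $y$ together with connectedness of $\cY$ force $\gamma_y$ to be a constant $\gamma$. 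The conclusion $g \in \H(\Q) Z_\G(\Q) g' K$ will then follow from the stabilizer identification $\Stab_{\G(\Q)}(\cY) = \H(\Q) Z_\G(\Q)$.

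The main obstacle, and the core of the argument, is this stabilizer identification, which I would deduce from an algebraic normalizer computation: $\Norm_\G(\H) = \H \cdot Z_\G$ as $\Q$-subgroups of $\G$. Writing $\gamma = (\gamma_V, \gamma_W) \in \G_V \times \G_W$ and viewing $\gamma_W$ inside $\G_V$ via the inclusion $\G_W \hookrightarrow \G_V$, the condition that $\gamma$ normalizes $\H = \Delta(\G_W)$ unfolds to $\gamma_W^{-1}\gamma_V$ lying in the centralizer of $\G_W$ inside $\G_V$; by Schur's lemma applied to the decomposition $V = W \oplus D$, this centralizer is the two-dimensional torus $\T^1 \times \T^1$ consisting of scalars on $W$ times the unitary rotations of $D$, and a short matching identifies the resulting pairs $(\gamma_W \cdot c, \gamma_W)$ with elements of $\H \cdot Z_\G$ (using that $Z_\G = Z_{\G_V} \times Z_{\G_W}$ is also $\T^1 \times \T^1$). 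Since the $\G(\R)$-action on $\cY \subset \cX$ is nontrivial only at the distinguished archimedean place (the other archimedean factors of $\cX$ being points), the condition $\gamma\cY = \cY$ for $\gamma \in \G(\Q)$ reduces to $\gamma_{\iota_1} \in (\H \cdot Z_\G)(\R)_{\iota_1}$, and the $\Q$-rationality of $\H \cdot Z_\G$ forces this polynomial identity to hold over $\Q$, giving $\gamma \in (\H \cdot Z_\G)(\Q)$. Finally $\H \cap Z_\G = \{1\}$ (any $(h,h) \in \H \cap Z_\G$ would have $\iota_V(h)$ scalar on $V$ while being trivial on $D$, forcing $h = 1$), whence $\H \cdot Z_\G \simeq \H \times Z_\G$ as $\Q$-groups and $(\H \cdot Z_\G)(\Q) = \H(\Q) Z_\G(\Q)$, closing the proof.
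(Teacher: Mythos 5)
Your injectivity argument has a gap at the step where you claim that neatness of $K$ makes the witness $\gamma_y$ uniquely determined by $y$. Neatness gives only that $\Gamma_{g'} := \G(\Q)\cap g'K(g')^{-1}$ acts freely on $\cX$; it does not single out $\gamma_y$. Indeed, if $\gamma$ and $\gamma''$ both satisfy $\gamma y,\gamma'' y\in\cY$, then tracking identifications in the quotient $\G(\Q)\backslash(\cX\times\G(\A_f)/K)$ and applying freeness yields only that $\gamma''=\delta\gamma$ for some $\delta\in\Gamma_{g'}$ carrying $\gamma y\in\cY$ to $\gamma'' y\in\cY$. Since $\cY$ is positive-dimensional, such a $\delta$ can move points of $\cY$ inside $\cY$ without fixing any point of $\cX$, so nothing forces $\delta=1$; the map $y\mapsto\gamma_y$ is therefore not well-defined and the continuity-plus-connectedness argument cannot get started. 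The paper's proof instead writes $\cY$ as the countable union of the closed subsets $C_\gamma=\{y\in\cY:\gamma y\in\cY\}$ indexed by the discrete (hence countable) set $\G(\Q)\cap g'Kg^{-1}$, applies the Baire category theorem to find one $C_\gamma$ with nonempty interior, and then uses total geodesicity (equivalently real-analyticity and connectedness) of $\cY$ and $\gamma^{-1}\cY$ to upgrade agreement on an open set to $\gamma\cY=\cY$. Some input of this kind is needed; your uniqueness claim does not replace it.

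The rest of your argument is sound and in fact supplies detail that the paper only cites (it refers to Jetchev's Lemma 2.3 for both the Baire step and the normalizer identification). The computation $\Norm_\G(\H)=\H\cdot Z_\G$ via Schur's lemma, identifying $C_{\G_V}(\G_W)$ with scalars on $W$ times $\U(D)$ (a copy of $\T^1\times\T^1$), and the verification $\H\cap Z_\G=\{1\}$, are correct, as is the descent from the archimedean condition at $\iota_1$ to a $\Q$-rational condition via injectivity of $\G(\Q)\hookrightarrow\G(\R)$ and $\Q$-closedness of $\H Z_\G\subset\G$. One thing you should still make explicit is the real-group identity $\Stab_{\G(\R)}(\cY)=(\H Z_\G)(\R)$ at the distinguished place, since it is what converts the geometric hypothesis $\gamma\cY=\cY$ into the algebraic statement $\gamma\in\Norm_\G(\H)$; it follows from the same Schur-type analysis, because an isometry of $V_{\iota_1}$ that preserves the family of diagonal pairs of negative lines of $W_{\iota_1}$ must preserve $W_{\iota_1}$ and agree there with its $\G_W$-partner up to a unitary scalar.
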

\begin{proof}
Using the total geodesicity of $\cY$ and the Baire's category theorem, one proves that $$\cZ_K \simeq \text{Stab}_{\G(\Q)}(\cY)\backslash \G(\A_f)\slash {K}.$$
Then, one shows that $\text{Stab}_{\G(\Q)}(\cY)=\H(\Q){Z}_{\G}(\Q)=N_{\G}(\H)(\Q)$. For more details, see \cite[Lemma 2.3]{jetchev:unitary}. 
\end{proof}
\begin{lemma}
In the following bijection
$$\cZ_{\G,K}(\H) \simeq \H(\Q)\mathbf{Z}_{\G}(\Q)\backslash\G(\A_f)/K,$$
we can replace $\H(\Q)\mathbf{Z}_{\G}(\H)(\Q)$ by its closure in $\G(\A_f)$:
$$\cZ_{\G,K}(\H) = \left(\H(\Q)\mathbf{Z}_{\G}(\Q)\right)^-\backslash\G(\A_f)/K= \mathbf{Z}_{\G}(\Q) \H(\Q)\H^{\text{der}}(\A_f)\backslash\G(\A_f)/K.$$
\end{lemma}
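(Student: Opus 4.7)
The plan has two parts, matching the two equalities. For the first, $\cZ_{\G,K}(\H) = (\H(\Q)\mathbf{Z}_\G(\Q))^-\backslash \G(\A_f)/K$, the observation is that every double coset $\H(\Q)\mathbf{Z}_\G(\Q) gK$ is clopen in $\G(\A_f)$: it is open because $K$ is open (it is a union of open cosets $xgK$), and its complement is the union of the remaining double cosets, hence open as well. Consequently, any $g'$ lying in $(\H(\Q)\mathbf{Z}_\G(\Q))^-\cdot gK$ is a limit of points of the closed set $\H(\Q)\mathbf{Z}_\G(\Q)gK$ and hence lies in it, so the two double coset spaces coincide.

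For the second equality, the key reduction is that $\H \cap \mathbf{Z}_\G = \{1\}$ inside $\G$: if $\Delta(h) = (h,h) \in \G_V\times\G_W$ lies in $\mathbf{Z}_\G = \T^1_V\times \T^1_W$ (the product of the centers of $\G_V$ and $\G_W$), then $h\in \G_W$ must simultaneously act as a scalar on $V$ and fix the line $D = W^\perp$, which forces $h = 1$. Since $\mathbf{Z}_\G$ is central, the multiplication map $\H\times\mathbf{Z}_\G\to \G$ is then a morphism of $\Q$-algebraic groups with trivial kernel, and its image $\H\cdot \mathbf{Z}_\G$ is an algebraic subgroup, hence closed in $\G$. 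On $\A_f$-points one obtains a topological isomorphism $(\H\mathbf{Z}_\G)(\A_f) \simeq \H(\A_f)\times \mathbf{Z}_\G(\A_f)$, closed in $\G(\A_f)$; therefore the closure of $\H(\Q)\mathbf{Z}_\G(\Q) = \H(\Q)\times\mathbf{Z}_\G(\Q)$ in $\G(\A_f)$ equals the product of the individual closures of $\H(\Q)$ and $\mathbf{Z}_\G(\Q)$.

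It remains to compute those closures. By Remark~\ref{discreetnessT1}, $\mathbf{Z}_\G(\Q) = \T^1(\Q)^2$ is discrete, hence closed, in $\mathbf{Z}_\G(\A_f)$. For $\H(\Q)$, the derived group $\H^{\der} = \Res_{F/\Q}\SU(W)$ is simply connected with non-compact real factor $\SU(n-1,1)$ at $\iota_1$, so strong approximation forces $\H^{\der}(\Q)$ to be dense in $\H^{\der}(\A_f)$; meanwhile Kneser's theorem $H^1(F_v, \H^{\der}) = 0$ at every finite $v$ yields the short exact sequence $1\to \H^{\der}(\A_f) \to \H(\A_f) \xrightarrow{\det} \T^1(\A_f) \to 1$, whence $\H(\Q)\H^{\der}(\A_f) = \det^{-1}(\det(\H(\Q)))$. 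Since $\det(\H(\Q))$ is a subgroup of the discrete subgroup $\T^1(\Q)\subset \T^1(\A_f)$, it is itself discrete, and hence closed (any discrete subgroup of a Hausdorff topological group is closed). Thus $\H(\Q)\H^{\der}(\A_f)$ is closed in $\H(\A_f)$ and contains $\H(\Q)$, so it coincides with $\overline{\H(\Q)}$. Assembling the pieces and using the centrality of $\mathbf{Z}_\G$ to commute the factors yields $(\H(\Q)\mathbf{Z}_\G(\Q))^- = \mathbf{Z}_\G(\Q)\H(\Q)\H^{\der}(\A_f)$. The only genuine subtlety is that $\det(\H(\Q))$ may be strictly smaller than $\T^1(\Q)$ (the obstruction being the archimedean contribution to $H^1(\Q,\H^{\der})$), so one cannot simply cite exactness at the rational points but must instead invoke the elementary fact that every subgroup of a discrete subgroup of a Hausdorff group is closed.
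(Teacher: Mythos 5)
Your proof is correct but takes a genuinely more self-contained route than the paper for the second equality. The paper handles $\overline{\H(\Q)}=\H(\Q)\H^{\der}(\A_f)$ by citing Deligne's \emph{corollaire}~2.0.9 directly, after noting that $\H^{\der}$ is simply connected, semisimple and of noncompact type; it then observes that $\mathbf{Z}_\G(\Q)$ is discrete by Remark~\ref{discreetnessT1}. You instead re-derive Deligne's statement from first principles: strong approximation for $\H^{\der}$ (non-compact real factor at $\iota_1$) gives $\H^{\der}(\A_f)\subseteq\overline{\H(\Q)}$, and the vanishing of $H^1(F_v,\H^{\der})$ at finite places (Kneser) plus the discreteness of $\det(\H(\Q))\subset\T^1(\Q)$ gives the reverse inclusion since $\det^{-1}(\det\H(\Q))$ is then closed. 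You also supply a justification, absent from the paper, for why the closure of the product $\H(\Q)\mathbf{Z}_\G(\Q)$ is the product of the closures: the observation that $\H\cap\mathbf{Z}_\G=\{1\}$ (because an element of $\G_W$ acting by a scalar on $V$ and fixing $D=W^\perp$ must be $1$), so that $\H\mathbf{Z}_\G\simeq\H\times\mathbf{Z}_\G$ is a closed subgroup and the closure can be computed factorwise. This last point is a genuine improvement in rigor, since a priori $\overline{AB}\supsetneq\overline{A}\,\overline{B}$ can happen. One small presentational remark: after establishing that $\H(\Q)\H^{\der}(\A_f)$ is closed, you conclude it ``coincides with $\overline{\H(\Q)}$'' as if that inclusion alone sufficed; the containment $\H(\Q)\H^{\der}(\A_f)\subseteq\overline{\H(\Q)}$ really requires the strong approximation fact from the start of the paragraph, which you have but whose role in closing the equality deserves to be made explicit.
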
 
\begin{proof}The second\footnote{The first equality uses the fact that if $G$ is a topological group (might even be locally compact) and $K$ an open compact subgroup (being open implies the discreteness of the quotient $G/K$), then for every subgroup $H$ of $G$ we have $H\backslash G/K=\overline{H}\backslash G/K$, where $\overline{H}$ is the closure of $H$ in $G$.} equality is due to the fact that the closure of $\H(\Q)$ is $\H(\Q)\H^{\text{der}}(\A_f)$. 
The latter fact is \cite[corollaire 2.0.9]{deligne:shimura}, because $\H^{\text{der}}$ verifies the strong approximation for $\{\infty\}$ since by assumption $\H^{\text{der}}$ is simply connected, semisimple by definition and of noncompact type. Observe also, that $\mathbf{Z}_{\G}(\Q)^-=\mathbf{Z}_{\G}(\Q)$, since it is a copy of $\T^1(\Q)$, which is discrete (Remark \ref{discreetnessT1}) and thus closed in $\T^1(\A_f)$. \end{proof}

\subsection{Fields of definition of cycles in \texorpdfstring{$\cZ_{\G,K}(\H)$}{ZGK(H)}}\label{fieldofdefinition1}
\begin{lemma}\label{cycleconcomp}
For every $g\in \G(\A_f)$, the cycle $\mathfrak{z}_g$ is the image of the connected component 
$$\H(\Q)\cap K_{g,\H}\backslash \cY\in \pi_0(\Sh_{K_{g,\H}}(\H,\cY)).$$
\end{lemma}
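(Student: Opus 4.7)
The plan is to exhibit a natural morphism at finite level from $\Sh_{K_{g,\H}}(\H,\cY)$ to $\Sh_K(\G,\cX)$ (where $K_{g,\H}:=\H(\A_f)\cap gKg^{-1}$ as in the proof of Lemma~\ref{specialcycles2}) and then trace which connected component maps onto $\mathfrak{z}_g$.

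First, I would verify that the composition $\pi_{\G,K}\circ T_g\circ \Sh(\varphi)\colon \Sh(\H,\cY)\to \Sh_K(\G,\cX)$ descends to a morphism
$$f_g\colon \Sh_{K_{g,\H}}(\H,\cY)\longrightarrow \Sh_K(\G,\cX),\qquad \H(\Q)(y,hK_{g,\H})\longmapsto \G(\Q)(y,hgK).$$
Well-definedness is the tautology $g^{-1}K_{g,\H}g\subset K$: for any $k\in K_{g,\H}$ we have $hkgK=hg\cdot(g^{-1}kg)\,K=hgK$. Using that $\Sh(\varphi)$ is a closed immersion (Remark~\ref{immersionshim}) and $T_g$ is a finite Hecke correspondence, the map $f_g$ is then a finite morphism onto a closed subvariety of $\Sh_K(\G,\cX)$.

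Next, I would decompose the source into connected components. By the proposition of \S\ref{unitaryshimuravar}, and using that $\cY$ is a connected Hermitian symmetric domain, one has
$$\Sh_{K_{g,\H}}(\H,\cY)(\C)=\bigsqcup_{[h]\in \H(\Q)\backslash \H(\A_f)/K_{g,\H}}\Gamma_h\backslash \cY,\qquad \Gamma_h:=hK_{g,\H}h^{-1}\cap \H(\Q),$$
and the component indexed by the trivial double coset $h=1$ is exactly $(\H(\Q)\cap K_{g,\H})\backslash \cY$. Applying $f_g$ to this component gives $\{\G(\Q)(y,gK):y\in \cY\}=[\cY\times gK]$, which by Lemma~\ref{specialcycles2} is precisely $\mathfrak{z}_g$.

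There is no real obstacle here; the lemma is essentially a repackaging of the finite-level meaning of $\mathfrak{z}_g$. The only point requiring a small verification is that the map $f_g$ exists as a morphism of algebraic varieties (not only analytic spaces), but this is automatic from the functoriality of canonical models and the Hecke action.
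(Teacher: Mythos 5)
Your argument is correct and follows the same route as the paper: identify the morphism at level $K_{g,\H}$ induced by $\pi_{\G,K}\circ T_g\circ \Sh(\varphi)$ and observe that the connected component over the trivial double coset maps onto $[\cY\times gK]=\mathfrak{z}_g$ via Lemma~\ref{specialcycles2}. The paper states this more tersely (phrasing it as the pre-image of $\mathfrak{z}_g$ being the component over $[(1,1)]$), but the content is the same; your version just makes the map $f_g$ and the well-definedness check $g^{-1}K_{g,\H}g\subset K$ explicit.
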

\begin{proof}
For obvious topological reasons, $K_{g,\H}$ is a compact open subgroup of $\H(\A_f)$. The pre-image of $\mathfrak{z}_g$ is the component over $[(1,1)]$: $$\H(\Q)\cap K_{g,\H}\backslash \cY\simeq \Sh_{K_{\H,g}^{\text{der}}}^\circ(\H^{\text{der}},\cY).$$
Here, $K_{\H,g}^{\text{der}}\subset \H^{\text{der}}(\A_f)$ is some open compact subgroup containing $K_{\H,g}\cap \H^{\text{der}}(\A_f)$.
\end{proof}
Recall that, for any neat compact subgroup $K_{\H}\subset\H(\A_f)$, we have 
$$\pi_0(\Sh_{K_{\H}}(\H,\cY))\simeq {\T^1}(\Q)\backslash \T^1(\A_f)/\det(K_{\H}).$$
The set of classes $\T^1(\Q)\backslash \T_\H(\A_f )/K_{\T^1}$ of $\T^1$ with respect to any compact open subgroup $K_{\T^1}\subset \T^1(\A_f)$ form an \emph{Abelian group}. Therefore, the connected components of $\Sh_{K_{\H}}(\H,\cY)$ are all defined over abelian extensions of $E$. More precisely the field of definition $E_g\nomenclature[F]{$E_g$}{Field of definition of $\mathfrak{z}_g$}$ of the component $\Sh_{K_{\H}}(\H,\cY)^+$ over $[(1,1)] \in \Sh_{\det(K_{\H})}(\T^1,\{\det \mu_{\mathfrak{B}_W}\})$ is the finite abelian extension of $E$ fixed by $$\Art_{E}(r^{-1}(\T^1(\Q) \T^1(\R)\det(K_{\H}))),$$ where $r=r(\H,\cY)\colon \A_{E}\longrightarrow \T^1(\A_\Q)$ is the reciprocity map constructed in \S \ref{reciprocitytori} and \S \ref{galoisconnectedcomp}.

But since, for every $g\in \G(\A_f)$, the induced morphism $\Sh_{K_{\H,g}}(\H,\cY)\to \Sh_{K}(\G,\cX)$ is defined over $E$  \cite[remark 13.8]{milne:shimura} and every cycle $\mathfrak{z}_g$ is then defined over the subfield $E_g$ of $E^{ab}$ that satisfies (the map $r$ factors through its finite part):
$$\Gal(E_g/E)=\T^1(\A_f)/\T^1(\Q)\det(K_{\H,g}).$$
\subsection{Transfer fields and reciprocity law}\label{ringtransferclassfield}
In this section, we will describe a bit further the field of definition of cycles $\cZ_{\G,K}(\H)$, by computing the kernel of the reciprocity map constructed in \S \ref{reciprocitytori} a map
$$r_{\text{fin}} \colon \Gal(E^{ab}/E)\to \T^1(\A_f)/\T^1(\Q)^-.$$
\subsubsection{The kernel of the Verlagerung map}
The inclusion $F\hra E$ induces a commutative diagram 
\begin{equation}\label{verlagerung}\begin{tikzcd}
\Zbf(\A_f)\arrow[hook]{r}{i_{F/E}}\arrow[two heads]{d}{\text{Art}_F}&\T(\A_f)\arrow[two heads]{d}{\text{Art}_E}\\
\Gal(F^{ab}/F)\arrow{r}{\text{Ver}}&\Gal(E^{ab}/E),
\end{tikzcd}\end{equation}
where, $\text{Ver}$ is the group-theoretic transfer map also called the Verlagerung map, see \cite[p. 26]{Neukirch1986}.
\begin{lemma}\label{kernelver}
The kernel of the Verlagerung map is
$$\ker(\text{Ver}\colon\Gal(F^{ab}/F)\to \Gal(E^{ab}/E)) \simeq F^\times /F^{\times,+}\simeq (\Z/2\Z)^{d}.$$
\end{lemma}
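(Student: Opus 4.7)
The plan is a diagram chase on \eqref{verlagerung}: since $\Art_F$ is surjective,
$$
\ker(\Ver) \;\simeq\; \frac{i_{F/E}^{-1}(\ker\Art_E)}{\ker\Art_F}.
$$
By the reciprocity sequence of \S\ref{caseTZ}, the two kernels are $E^\times(\O_E^\times)^-$ and $F^{\times,+}(\O_F^{\times,+})^-$ respectively (for the second, the proof of the preceding lemma extends verbatim once $\ell_x$ is chosen totally positive via weak approximation at the archimedean places). The lemma is thus reduced to an explicit ad\`elic computation.

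The central claim is the equality
$$(\star)\qquad \A_{F,f}^\times \cap E^\times(\O_E^\times)^- \;=\; F^\times(\O_F^\times)^-.$$
The inclusion $\supseteq$ is trivial. For the reverse, take $x = eu$ with $x^\tau = x$, $e \in E^\times$, $u \in (\O_E^\times)^-$. Then $\zeta := e/e^\tau = u^\tau/u$ lies in $E^\times \cap (\O_E^\times)^- = \O_E^\times$ (since $E^\times \cap \widehat{\O_E}^\times = \O_E^\times$) and has norm one over $F$, so $\zeta \in \O_E^{\times,1} = \mu_E$ because $E$ is CM. Writing $u = \lim u_n$ with $u_n \in \O_E^\times$, the sequence $u_n^\tau/u_n \in \mu_E$ converges to $\zeta$, and discreteness of the finite group $\mu_E$ inside $\widehat{\O_E}^\times$ forces $u_0^\tau/u_0 = \zeta$ for some $u_0 \in \O_E^\times$. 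Replacing $(e,u)$ by $(eu_0,\, u/u_0)$ preserves $x$ and yields $eu_0 \in F^\times$ together with $u/u_0 \in (\O_E^\times)^-\cap \A_{F,f}^\times$; the identical discreteness argument gives $(\O_E^\times)^-\cap \A_{F,f}^\times = (\O_F^\times)^-$, establishing $(\star)$.

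Given $(\star)$, the finite decomposition $\O_F^\times = \O_F^{\times,+}\cdot\{\text{sign representatives}\}$ yields $(\O_F^\times)^- = \O_F^\times(\O_F^{\times,+})^-$, so every element of $F^\times(\O_F^\times)^-$ is represented modulo $(\O_F^{\times,+})^-$ by an element of $F^\times$. Hence the natural map
$$F^\times \longrightarrow F^\times(\O_F^\times)^-\big/F^{\times,+}(\O_F^{\times,+})^-$$
is surjective, with kernel $F^\times \cap F^{\times,+}(\O_F^{\times,+})^- = F^{\times,+}\cdot (\O_F^\times \cap (\O_F^{\times,+})^-)$. The classical fact that complex conjugations at the $d$ distinct real places of $F$ are linearly independent in $\Gal(F^{ab}/F)$ is equivalent to $\O_F^\times \cap (\O_F^{\times,+})^- = \O_F^{\times,+}$, so the kernel reduces to $F^{\times,+}$. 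The final identification $F^\times/F^{\times,+} \simeq (\Z/2\Z)^d$ is the sign map at the $d$ real places.

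The main obstacle is $(\star)$, namely promoting the closure-level relation $u^\tau/u = \zeta$ to a global one $u_0^\tau/u_0 = \zeta$ with $u_0 \in \O_E^\times$; the CM hypothesis enters essentially through the discreteness of the finite group $\mu_E$ inside the profinite unit group $\widehat{\O_E}^\times$. All other steps are routine diagram chasing and standard class-field-theoretic facts about the real places of $F$.
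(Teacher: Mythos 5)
Your proposal is correct, and the top-level strategy coincides with the paper's: both proofs reduce, via the commutative square \eqref{verlagerung} and surjectivity of $\Art_F$, to identifying $i_{F/E}^{-1}(\ker \Art_E)$ inside $\A_{F,f}^\times$ and then passing to the quotient by $\ker \Art_F = F^{\times,+}(\O_F^{\times,+})^-$. The genuine difference lies in how the intersection $(\star)$ is computed. The paper first simplifies the closure: since $[\O_E^\times:\O_F^\times]<\infty$ (Dirichlet for CM), $E^\times(\O_E^\times)^- = E^\times(\O_F^{\times,+})^-$, after which intersecting with $\A_{F,f}^\times$ amounts to noting that if $e\in E^\times$ and $w\in(\O_F^{\times,+})^-$ with $ew$ $\tau$-fixed then $e\in F^\times$; this is a one-line argument. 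You instead leave the $E$-units in place and analyze an arbitrary $\tau$-fixed $x=eu$ directly, deriving $\zeta:=e/e^\tau=u^\tau/u\in E^\times\cap(\O_E^\times)^-\subset\O_E^{\times,1}=\mu_E$ (the CM fact enters here, in the equivalent form that the norm-one units are torsion), then using discreteness of $\mu_E$ to lift $\zeta$ to a global $u_0^\tau/u_0$ and re-normalize $(e,u)$ to a $\tau$-fixed pair. Your route is more hands-on and slightly longer but equally valid; the paper's collapse $E^\times(\O_E^\times)^-=E^\times(\O_F^{\times,+})^-$ is the cleaner observation. Note finally that the last step — trivializing the leftmost term of the footnote-\ref{footnote6} exact sequence so that $(\O_F^{\times,+})^-F^\times/(\O_F^{\times,+})^-F^{\times,+}\simeq F^\times/F^{\times,+}$ — requires $\O_F^\times\cap(\O_F^{\times,+})^-=\O_F^{\times,+}$ in both treatments. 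The paper leaves this implicit in citing the footnote; you correctly identify it as the independence of archimedean conjugations in $\Gal(F^{ab}/F)$, but, like the paper, do not supply the (standard) proof via quadratic extensions $F(\sqrt{\alpha})$ with prescribed archimedean ramification. So the two proofs share the same unproven but classical input at that step; this is not a gap peculiar to your write-up.
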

\proof 
We have
\begin{align*}
\ker(\text{Art}_E)\cap \text{im}(i_{E/F}) &= (\cO_E^\times)^-E^\times \cap i_{E/F}(\A_{F,f}^\times)\\
&=( i_{E/F}(\cO_F^{\times,+}))^-E^\times \cap i_{E/F}(\A_{F,f}^\times)\\
\overset{(1)}&{=} i_{E/F}\left((\cO_F^{\times,+})^-F^\times \cap \A_{F,f}^\times\right)\\
&= i_{E/F}\left((\cO_F^{\times,+})^-F^\times\right)
\end{align*}
where, we have used $E\cap i_{E/F}(\A_{F,f}^{\times,+}) =i_{E/F}({F}^\times)$ for (1) and then Dirichlet's unit theorem; since ${[\O_E^\times: \O_F^\times]}<\infty$, let $t_1, \cdots, t_?$ be a set of coset representatives for $\O_E^\times/ \O_F^\times$, then
$$E^\times(\O_E^\times)^-=E^\times(\cup_i t_i \O_F^\times)^- =E^\times(\O_F^\times)^-= E^\times(\O_F^{\times,+})^-.$$
By commutativity of the above diagram (and injectivity of $i_{E/F}$) we get
\begin{align*}\ker(\text{Ver})&=\text{Art}_F(i_{F/E}^{-1}(\ker(\text{Art}_E)\cap \text{im}(i_{E/F})))\\
&= \text{Art}_F((\cO_F^{\times,+})^-F^\times).
\end{align*}
Thus, 
$$\ker(\text{Ver}) \simeq  (\cO_F^{\times,+})^-F^\times / \ker(\text{Art}_F)= (\cO_F^{\times,+})^-F^\times/(\cO_F^{\times,+})^-F^{\times,+}.$$
Accordingly, (See footnote~\ref{footnote6} page~\pageref{footnote6})
$$\ker(\text{Ver}) \simeq F^\times/F^{\times,+}.$$
Finally, we get $F^\times/F^{\times,+}\simeq (\Z/2\Z)^{d}$ via the archimedean signature map:
$$\text{sgn}_\infty\colon F^\times \to F_\R^\times/(F_\R^\times)^2 \simeq  \prod_{\iota \in \Sigma_F} \{\pm 1\}, \quad z \mapsto (\iota(z)/|\iota(z_\iota)|)_{\iota \in \Sigma_F},$$
which is surjective with kernel $F^{\times,+}$.\qed

\subsubsection{The kernel of the reciprocity map}
\begin{proposition}\label{kernelreciprocity}
We have a long exact sequence
$$\begin{tikzcd}F^{\times}/F^{\times,+} \arrow[hook]{r}&\Gal(F^{ab}/F) \arrow{r}{\text{Ver}} &\Gal(E^{ab}/E) \arrow[twoheadrightarrow]{r}{r_{\text{\em{fin}}}} & \T^1(\A_f)/\T^1(\Q) .\end{tikzcd}$$
\end{proposition}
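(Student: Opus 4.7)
The proposition is an exact sequence of four terms, and the left end (i.e.\ that $\ker(\mathrm{Ver})\simeq F^\times/F^{\times,+}$) is already Lemma~\ref{kernelver}. So what remains is to check two things: that $\mathrm{Im}(\mathrm{Ver})=\ker(r_{\mathrm{fin}})$, and that $r_{\mathrm{fin}}$ is surjective. Both reduce to manipulations with the adelic version of the reflex norm $\nu\colon \T\twoheadrightarrow\T^1$, which sends $z\mapsto z/z^\tau$, combined with the diagram~\eqref{verlagerung} relating $\mathrm{Ver}$, $i_{F/E}$, $\Art_F$ and $\Art_E$.

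The plan is first to dispose of surjectivity of $r_{\mathrm{fin}}$. Since $\Art_E$ is surjective and the composition $r_{\mathrm{fin}}\circ\Art_E$ equals (projection of) $\nu$ on $\A_{E,f}^\times$, it is enough to note that $\nu\colon\A_{E,f}^\times\twoheadrightarrow \T^1(\A_f)$ is surjective. This follows place by place from Hilbert~90: for each finite place $v$ of $F$ one has $H^1(F_v,\Zbf)=0$, so the exact sequence~\eqref{exactsequencenu} of $\Q$-tori gives a surjection $(E\otimes_F F_v)^\times\twoheadrightarrow \T^1(F_v)$.

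Next, I would show that $\mathrm{Im}(\mathrm{Ver})\subseteq\ker(r_{\mathrm{fin}})$. By the commutative diagram~\eqref{verlagerung}, any element of $\mathrm{Im}(\mathrm{Ver})$ has the form $\Art_E(i_{F/E}(y))$ for some $y\in\A_{F,f}^\times$. But for $y\in\A_{F,f}^\times$ the element $i_{F/E}(y)$ is $\tau$-invariant in $\A_{E,f}^\times$, hence $\nu(i_{F/E}(y))=i_{F/E}(y)/i_{F/E}(y)^\tau=1\in\T^1(\Q)$. So $r_{\mathrm{fin}}$ kills this element.

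The key step is the reverse inclusion $\ker(r_{\mathrm{fin}})\subseteq\mathrm{Im}(\mathrm{Ver})$, and this is where global Hilbert~90 for the quadratic extension $E/F$ enters. Take $\sigma\in\ker(r_{\mathrm{fin}})$ and lift it to $x\in\A_{E,f}^\times$ via $\Art_E$. The hypothesis says $\nu(x)=x/x^\tau\in\T^1(\Q)$, i.e.\ there exists $t\in E^\times$ with $N_{E/F}(t)=1$ and $x/x^\tau=t$. Hilbert~90 then gives $s\in E^\times$ with $t=s/s^\tau$, so $(xs^{-1})/(xs^{-1})^\tau=1$, meaning $xs^{-1}$ is $\tau$-fixed and hence belongs to $i_{F/E}(\A_{F,f}^\times)$; write $xs^{-1}=i_{F/E}(y)$. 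Since $\Art_E(s)=1$, the diagram~\eqref{verlagerung} yields
\[
\sigma=\Art_E(x)=\Art_E(i_{F/E}(y))=\mathrm{Ver}(\Art_F(y))\in\mathrm{Im}(\mathrm{Ver}),
\]
which completes the exactness at $\Gal(E^{ab}/E)$. I expect no serious obstacle; the main point is simply to notice that exactness at the middle term is exactly global Hilbert~90 for the norm-one torus $\T^1=\U_{E/F}(1)$, and to be careful that $\T^1(\Q)^-=\T^1(\Q)$ thanks to the discreteness observed in Remark~\ref{discreetnessT1}, so that the target of $r_{\mathrm{fin}}$ is genuinely $\T^1(\A_f)/\T^1(\Q)$.
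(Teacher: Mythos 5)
Your argument is correct. It uses the same two ingredients as the paper's proof — Hilbert 90 for the short exact sequence $1\to\Zbf\to\T\xrightarrow{\nu}\T^1\to 1$, and the Verlagerung diagram~\eqref{verlagerung} — but the presentations diverge in a way worth noting. The paper builds a three-row commutative diagram with exact rows (and, in particular, takes topological closures in the adelic groups to get the top row $1\to(\Zbf(\Q))^-\to\T(\Q)^-\to\T^1(\Q)\to 1$ before quotienting), and then extracts $\ker(r_{\mathrm{fin}})=\Art_E(\nu^{-1}(\T^1(\Q)))=\Art_E(i_{E/F}(\Zbf(\A_f)))$ by a snake-lemma-style chase. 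You instead run an explicit element chase: given $\sigma\in\ker(r_{\mathrm{fin}})$ you lift to $x\in\A_{E,f}^\times$, note $x/x^\tau\in\T^1(\Q)$, apply global Hilbert 90 for $E/F$ to write it as $s/s^\tau$ with $s\in E^\times$, and observe $xs^{-1}$ is $\tau$-fixed hence lies in $i_{F/E}(\A_{F,f}^\times)$. Your route has the minor advantage of avoiding the closure bookkeeping entirely (you only need the already-recorded fact that $\T^1(\Q)$ is discrete, and that $E^\times$ lies in $\ker\Art_E$ on finite ideles); the paper's diagram approach is more systematic and records the exactness of all three rows at once, which is reusable structure. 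Both surjectivity arguments are the same observation (place-by-place Hilbert 90 gives $\nu\colon\A_{E,f}^\times\twoheadrightarrow\T^1(\A_f)$, then compose with the surjection $\Art_E$).
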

\begin{proof}
As we have already computed the kernel of the Verlagerung map in Lemma \ref{kernelver}, it remains to show that the kernel of the map $r_{\text{fin}}$ is the image of the Verlagerung map. Recall that we have an exact sequence (see \ref{exactsequencenu})
\begin{equation}\begin{tikzcd}[column sep= large]1\arrow{r}& \Zbf \arrow{r}&\T\arrow{r}{\nu\colon z \mapsto \frac{z}{\overline{z}}}&\T^1\arrow{r}& 1.\end{tikzcd}\end{equation}
Using Galois cohomology and Hilbert Theorem 90, we deduce from it the following exact diagram
$$\begin{tikzcd}
    1\arrow{r} & \Zbf(\Q)\arrow{r}{}\arrow{d}{} &  \T(\Q)\arrow{r}{\nu} \arrow{d}& \T^1(\Q) \arrow{r}{}\arrow[]{d} & 1 \\
     1\arrow{r} & \Zbf(\A_\Q)\arrow{r}{} &  \T(\A_\Q)\arrow{r}{\nu}& \T^1(\A_\Q) \arrow{r}& 1.
\end{tikzcd}$$
Taking the closure in the adelic points of the first row yields the following commutative diagram\footnote{Recall that $\T^1(\Q)$ is discrete in $\T^1(\A_f)$ by Remark \ref{additionalaxioms}.} and using the exact sequence (\ref{exactsequenceartin}) seen in \S \ref{generalartinmap}, we get the following commutative diagram, where the lower right square is the definition of $r_{\text{fin}}=r(\T^1,\{\det \mu_{\mathfrak{B}_W}\})$ as in \S \ref{reciprocitytori} 
$$\begin{tikzcd}
     &1\arrow{d}{}&  1\arrow{d}{}& 1\arrow{d}{}& .\\
     1\arrow{r} &(\Zbf(\Q))^-\arrow{r}{} \arrow{d}{}&  \T(\Q)^- \arrow{r}{\nu} \arrow{d}{}& \T^1(\Q) \arrow{r}\arrow{d}{}& 1.\\
    1\arrow{r} & \Zbf(\A_f)\arrow{r}{} & \T(\A_f)\arrow{r}{\nu}\arrow{d}{}[swap]{\Art_E}&\T^1(\A_f)\arrow{r}{}\arrow[]{d}{} & 1\\
     & &  \Gal(E^{ab}/E) \arrow{r}{r_{\text{fin}}}\arrow{d} & \T^1(\A_f)/\T^1(\Q) \arrow{r}\arrow{d}& 1\\
        &&  1& 1&
\end{tikzcd}$$
Therefore, 
\begin{align*}\ker(r_{\text{fin}})&=\Art_E(\nu^{-1}(\T^1(\Q)))
\\&=\Art_E\circ i_{E/F}(\Zbf(\A_f) \T(\Q)^-)\\
&=\Art_E(i_{E/F}(\Zbf(\A_f)))\\
&\overset{(\text{\ref{verlagerung}})}{=}\text{Ver} \circ \Art_F(\Zbf(\A_f))\\
&=\text{Ver} (\Gal(F^{ab}/F))\simeq \A_{F,v}^\times E^\times/E^\times \simeq \A_{F,f}^\times /F^\times.\qedhere
\end{align*}
\end{proof}
\subsubsection{Transfer fields of definition}
In the remainder of this section, we focus on the relation between $E(\infty)$ and ring class fields. For every $\O_F$-order $\O$ in $\O_E$, we denote by $\widehat{\O}^\times$ the group of units of its profinite completion. By extending $1\in \cO_E$ to a $\cO_F$-basis we see that such order necessarily of the form $\O=\cO_F + \fc \cO_E$ where $\fc\subset \cO_F$ is a non-zero ideal of $\cO_F$ and $$\widehat{\O}_\fc^\times=({\O}_\fc\otimes \widehat{\Z})^\times=(\widehat{\cO}_F+\fc \widehat{\cO}_{E})^\times\subset \A_{E,f}^\times= (E\otimes \widehat{\Z})^\times.$$
\begin{definition}
We attach to each $\O_F$-order $\O_{\fc}=\cO_F + \fc \cO_E$ two subfields $E(\fc) \subset E[\fc]\subset E^{ab}\nomenclature[F]{$E(\fc) \subset E[\fc]$}{Transfer field and ring class field of conductor $\fc$}$:
\begin{enumerate}[nosep]
\item The {\em ring class field of conductor $\fc$} denoted $E[\fc]$, is the fixed field of $\Art_E(\widehat{\O}_{\fc}^\times E^\times/E^\times)$, i.e.
$$\Gal(E[\fc]/E)\simeq E^\times \backslash \A_{E,f}^\times/\widehat{\O}_{\fc}^\times\simeq \Pic(\O_{\fc}).$$
\item The {\em transfer field of conductor $\fc$} denoted $E(\fc)$, is the field whose norm subgroup is $E^\times\cdot \A_{F,f}^\times \cdot\widehat{\O}_{\fc}^\times$, i.e.
$$\Gal(E({\fc})/E)\simeq E^\times  \A_{F,f}^\times  \backslash \A_{E,f}^\times/\widehat{\O}_{\fc}^\times.$$
\end{enumerate}
We have,
$$\Gal(E[\O_\fc]/E(\O_\fc))\simeq \frac{E^\times \A_{F,f}^\times \widehat{\O}_\fc^\times}{E^\times \widehat{\O}_\fc^\times}\simeq \frac{\A_{F,f}^\times}{\A_{F,f}^\times \cap E^\times \widehat{\O}_\fc^\times}.$$
which is a quotient of  $\Pic(\O_F)$.
\end{definition}
Now, If $F$ has class number one (e.g. $\Q$), then for any $\O_F$-order $\O$ we have $E[\O]=E(\O)$. Moreover, (\ref{transringclassfield}) implies that $E(\infty)=E[\infty]$.
%The ring class field for the maximal order $\O=\cO_E$ is the Hilbert class field. 
For $\fc, \fc'$ two non-zero ideals of $\cO_F$, we have $\widehat{\cO}_{\fc}^\times\cdot \widehat{\cO}_{\fc'}^\times=\widehat{\cO}_{\text{gcd}(\fc,\fc')}^\times$ and
$$E^\times \A_{F,f}^\times \widehat \cO_{\text{lcm}(\fc,\fc')}^\times \subset E^\times \A_{F,f}^\times \widehat \cO_{\fc}^\times\cap E^\times \A_{F,f}^\times  \widehat\cO_{\fc}^\times,\quad  E^\times \widehat \cO_{\text{lcm}(\fc,\fc')}^\times \subset E^\times  \widehat\cO_{\fc}^\times\cap E^\times \widehat\cO_{\fc}^\times$$
thus,
\begin{equation}\label{formulatransferfield}E[\fc]\cap E[\fc']=E[{\text{gcd}(\fc,\fc')}], \quad E(\fc)\cap E(\fc')=E({\text{gcd}(\fc,\fc')})\end{equation}
and
$$E[\fc] E[\fc']\subset E[{\text{lcm}(\fc,\fc')}], \quad E(\fc) E(\fc')\subset E({\text{lcm}(\fc,\fc')}).$$
Set $E[\infty]=\cup_{\O}E[\O]$ and $E(\infty)=\cup_{\O}E(\O)$, where the union is taken over all $\O_F$-orders of $\O_E$. There exists a descending chain of $\cO_F$-orders $\{\cO_{\fc_i}\}_{i \ge 1}$ such that 
$$\widehat{\cO}_F^\times = \bigcap_i \widehat{\cO}_{\fc_i}^\times, E[\infty]= \bigcup_i E[\widehat{\cO}_{\fc_i}^\times]\text{ and }E(\infty)=\bigcup_i E(\widehat{\cO}_{\fc_i}^\times).$$ Accordingly, $\Art_E(\widehat{\cO}_F^\times)=\cap_i\Art_E(\widehat{\cO}_{\fc_i}^\times)$. From which one deduces that 
$$E[\infty]= (E^{ab})^{\Art_E(\widehat{\cO}_F^\times)} \text{ and } E(\infty)= (E^{ab})^{\Art_E(\A_{F,f}^\times)}.$$
In other words, the transfer field $E(\infty)$ is the subfield of $E^{ab}$ fixed by $\Ver(\Gal(F^{ab}/F))$. The field $E[\infty]$ is a finite Galois extension of $E(\infty)$, with
\begin{equation}\label{transringclassfield}\Gal(E[\infty]/E(\infty))\simeq F^\times \backslash \A_{F,f}^\times/\widehat{\O}_F^\times\simeq  \Pic(\O_F).\end{equation}
Moreover, the extension $E(\infty)/F$ is Galois and its Galois group $\Gal(E(\infty)/F)$ is the Galois dihedral extension:
$$\begin{tikzcd} 1 \arrow{r}& \Gal(E(\infty)/E)\arrow{r} & \Gal(E(\infty)/F) \arrow{r}& \Gal(E/F)\arrow{r}&1 \end{tikzcd}$$
equipped with the canonical splitting given by the complex conjugation
$$\Gal(E(\infty)/F)\simeq \Gal(E(\infty)/E)\rtimes \{1,c\}.$$ 
From now on, we will use the notation $$\Art_E^1\colon\T^1(\A_f)/\T^1(\Q) \xrightarrow{\simeq} \Gal(E(\infty)/E),\nomenclature[F]{$\Art_E^1$}{$=r_{\text{fin}}^{-1}\colon\T^1(\A_f)/\T^1(\Q) \xrightarrow{\simeq} \Gal(E(\infty)/E)$}$$
for the inverse image of $r_{\text{fin}}(\T^1,\{\det \mu_{\mathfrak{B}_W}\})$. Proposition \ref{kernelreciprocity} has the following immediate consequence:
\begin{corollary}\label{Egcontainedtransferfield}For every $g\in\G(\A_f)$, the field of definition $E_g$ of the cycle $\mathfrak{z}_g \in \cZ_{\G,K}(\H)$ is contained in the transfer field $E(\infty)$.
\end{corollary}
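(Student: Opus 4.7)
The plan is to reduce this immediately to a comparison of two quotients of $\T^1(\A_f)$, both of which have already been identified with Galois groups via the reciprocity map $r_{\text{fin}}$. In \S \ref{fieldofdefinition1}, the field $E_g$ is characterized by $\Gal(E_g/E) \simeq \T^1(\A_f)/\T^1(\Q)\det(K_{\H,g})$, where the identification is via the reciprocity map $r_{\text{fin}}=r(\T^1,\{\det \mu_{\mathfrak{B}_W}\})$ constructed in \S \ref{reciprocitytori}. On the other hand, Proposition \ref{kernelreciprocity} shows that the image of $r_{\text{fin}}$ is precisely $\T^1(\A_f)/\T^1(\Q)$, and by the very definition of $E(\infty)$ as the fixed field of $\Ver(\Gal(F^{ab}/F))=\ker(r_{\text{fin}})$, this image is canonically isomorphic to $\Gal(E(\infty)/E)$.

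Given these two identifications, the argument is essentially formal. Since $\det(K_{\H,g})$ is a subgroup of $\T^1(\A_f)$, there is a surjective projection
\[
\T^1(\A_f)/\T^1(\Q)\twoheadrightarrow \T^1(\A_f)/\T^1(\Q)\det(K_{\H,g}),
\]
which, after translating through $r_{\text{fin}}$, becomes a surjection $\Gal(E(\infty)/E)\twoheadrightarrow \Gal(E_g/E)$ of Galois groups compatible with the restriction maps from $\Gal(E^{ab}/E)$. By Galois theory, this forces $E_g\subset E(\infty)$.

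Conceptually, the content behind the corollary is that the Galois action on the set of connected components of $\Sh_{K_\H}(\H,\cY)$ factors through the reciprocity $r_{\text{fin}}$ (as established in \S \ref{galoisconnectedcomp}), and thus is automatically killed by $\ker(r_{\text{fin}})=\Ver(\Gal(F^{ab}/F))$. Combined with Lemma \ref{cycleconcomp}, which realizes $\mathfrak{z}_g$ as the image of a connected component of $\Sh_{K_{\H,g}}(\H,\cY)$, this shows the field of definition of $\mathfrak{z}_g$ lies in the fixed field of $\Ver(\Gal(F^{ab}/F))$, which is precisely $E(\infty)$. I do not anticipate any real obstacle: all the hard work went into Proposition \ref{kernelreciprocity} and the connected-component description of \S \ref{fieldofdefinition1}; the corollary only assembles them.
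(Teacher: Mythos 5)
Your proof is correct and follows the same route the paper intends: the paper labels the corollary an ``immediate consequence'' of Proposition~\ref{kernelreciprocity}, and your argument is precisely the spelling-out of that step --- namely that $\ker r_{\text{fin}} = \Ver(\Gal(F^{ab}/F))$ cuts out $E(\infty)$, while $\Gal(E^{ab}/E_g)$ is the $r_{\text{fin}}$-preimage of the larger subgroup $\T^1(\Q)\det(K_{\H,g})/\T^1(\Q)$, whence $E_g\subset E(\infty)$. The conceptual remark at the end of your proposal (the Galois action on $\pi_0$ factors through $r_{\text{fin}}$) is the cleanest formulation of the same point and could profitably replace the intermediate ``surjection compatible with restrictions'' phrasing.
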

\begin{remark}We warn the reader, that there exists two notions of "ring class fields" in the literature. Our use of this terminology follows the one used in \cite{cornut-vatsal,cornut-vatsal:durham}. The second terminology, which is used in \cite{zhang:gross-zagier,Nekovar2007}, is what we called "transfer fields".  
\end{remark}
\subsection{Galois action via \texorpdfstring{$\H(\A_f)$}{HAf}}\label{galoisH}
\begin{proposition}\label{galoisusingH}For every $\sigma \in \Gal(E(\infty)/E)$, let $h_\sigma \in \H(\A_f)$ be any element satisfying $\Art_E^1\left(\det(h_\sigma) \cdot \T^1(\Q)\right)=\sigma|_{E(\infty)}$. For every $g\in \G(\A_f)$, we have
$$\sigma(\mathfrak{z}_g)=\mathfrak{z}_{h_\sigma g}.$$ 
\end{proposition}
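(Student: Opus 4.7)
The strategy is to reduce the Galois action on $\mathfrak{z}_g$ to the already-described Galois action on $\pi_0(\Sh_{K_{\H,g}}(\H,\cY))$, using the fact that the morphisms gluing these objects to $\Sh_K(\G,\cX)$ are defined over the reflex field $E$. By Lemma \ref{cycleconcomp}, $\mathfrak{z}_g$ is the image of the connected component indexed by the trivial class $[1] \in \pi_0(\Sh_{K_{\H,g}}(\H,\cY))$ under
$$\Sh_{K_{\H,g}}(\H,\cY) \xrightarrow{\Sh(\varphi)} \Sh_{K_{\H,g}}(\G,\cX) \xrightarrow{T_g} \Sh_K(\G,\cX),$$
and on $\C$-points the composite sends $[(y, hK_{\H,g})] \mapsto [(y, hgK)]$. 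Both $\Sh(\varphi)$ and $T_g$ are defined over $E$ (see Remark \ref{immersionshim} and the general theory of canonical models), hence commute with the action of $\Gal(E(\infty)/E)$. Thus $\sigma(\mathfrak{z}_g)$ is the image of $\sigma\bigl([1]\bigr)$ under the same composition.

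Next, I invoke the computation of \S \ref{galoisconnectedcomp}, which identifies
$$\pi_0(\Sh_{K_{\H,g}}(\H,\cY)) \;\simeq\; \T^1(\Q)\backslash \T^1(\A_f)/\det(K_{\H,g}) \;=\; \Sh_{\det(K_{\H,g})}(\T^1,\{\det\mu_{\mathfrak{B}_W}\})(\C),$$
the identification being induced by the determinant $\det\colon\H\twoheadrightarrow\T^1$; this uses that $\H^{\der}$ is simply connected and satisfies strong approximation away from $\infty$, so $\H^{\der}(\A_f)$ acts trivially on $\pi_0$. Under this identification, the reciprocity law recalled in \S \ref{reciprocitytori} says that $\sigma \in \Gal(E(\infty)/E)$ acts on $[\det\mu_{\mathfrak{B}_W},1]$ by translation by $r_{\mathrm{fin}}(\sigma) = \det(h_\sigma)\cdot\T^1(\Q)$. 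Lifting this back to $\H(\A_f)$ (which is legitimate since any two lifts differ by an element of $\H^{\der}(\A_f)$, acting trivially on $\pi_0$), the component $\sigma([1])$ is represented by $[(\cY, h_\sigma K_{\H,g})]$.

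Finally, pushing this representative through $T_g\circ \Sh(\varphi)$ using the explicit formula on $\C$-points gives
$$\sigma(\mathfrak{z}_g) \;=\; [\cY\times h_\sigma gK] \;=\; \mathfrak{z}_{h_\sigma g},$$
which is the claimed identity. The only delicate point is the verification that, under the chosen identification $\pi_0(\Sh_{K_{\H,g}}(\H,\cY))\simeq \Sh_{\det(K_{\H,g})}(\T^1,\{\det\mu_{\mathfrak{B}_W}\})(\C)$, the class of $[(\cY,h_\sigma K_{\H,g})]$ corresponds to translation by $\det(h_\sigma)$ on the torus side; this is naturality of the $\pi_0$-functor applied to $\det$ and was already implicit in the computation of \S \ref{galoisconnectedcomp}. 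All other steps are formal consequences of the reciprocity law for $\T^1$ and of the fact that $\Sh(\varphi)$ and $T_g$ descend to $E$.
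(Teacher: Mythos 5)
Your proposal is correct and follows essentially the same route as the paper's own proof: reduce to the Galois action on $\pi_0(\Sh_{K_{\H,g}}(\H,\cY))$ via Lemma~\ref{cycleconcomp}, use the reciprocity computation from \S~\ref{galoisconnectedcomp}, and invoke $E$-rationality of the maps into $\Sh_K(\G,\cX)$. You are slightly more explicit than the paper in noting that $T_g$ as well as $\Sh(\varphi)$ is defined over $E$ and in justifying the lift from $\T^1(\A_f)$ back to $\H(\A_f)$ via strong approximation for $\H^{\der}$, but these are points the paper leaves implicit rather than a genuinely different argument.
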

\begin{proof}
Fix an element $g\in \G(\A_f)$. In \S \ref{galoisconnectedcomp}, we saw the description of the action of $\Gal(E(\infty)/E)$ on the set of connected components of $ \Sh_{K_{\H,g}}(\H,\cY)(\C)$:
$$\bigsqcup_{h\in \mathcal{C}_{\H,g}} \Gamma_h\backslash\cY, $$ where $ \mathcal{C}_{\H,g} = \H(\Q)\backslash \H(\A_f)/K_{\H,g}$ and $\Gamma_h =  \H(\Q) \cap hK_{\H,g} h^{-1}=\H(\Q) \cap K_{\H,hg}$.

This description says that, for every $\sigma \in \Gal(E(\infty)/E)$, if we let $h_\sigma \in \H(\A_f)$ be any element verifying $\Art_E^1(\det(h_\sigma) \cdot \T^1(\Q))=\sigma|_{E(\infty)}$, then 
$$\left(\Gamma_h\backslash\cY\right)^\sigma= \Gamma_{h_\sigma h}\backslash\cY.$$
On the other hand by Lemma \ref{cycleconcomp}, the cycle $\mathfrak{z}_g$ is the image of $\Gamma_1\backslash \cY=K_{\H,g}\backslash \cY$ by the closed immersion %(Remark \ref{immersionshim}) 
$\Sh(\phi) _{K_\H,K_\G} \colon \Sh_{K_{\H,g}}(\H,\cY) \rightarrow\Sh_{K_\G}(\G,\cX)$. Reccall that $\Sh(\phi) _{K_\H,K_\G}$ is defined over $E$  \cite[remark 13.8]{milne:shimura}. Therefore, $\sigma(\mathfrak{z}_g)=\mathfrak{z}_{h_\sigma g}.$
\end{proof}
 Consequently, the left action of $\H(\A_f)$ on the set of $\H$-special cycles
$$\cZ_{\G,K}(\H)=\mathbf{Z}_{\G}(\Q)\H(\Q)\H^{\der}(\A_f)\backslash \G(\A_f)/K_\G,$$
descends to an action of 
$$\H(\Q)\H^{\text{der}}(\A_f)\backslash \H(\A_f) \simeq \T^1(\Q)\backslash\T^1(\A_f)\simeq \Gal(E(\infty)/E),$$
which yields,
$$\Gal(E(\infty)/E)\backslash \cZ_{\G,K}(\H) \simeq \mathbf{Z}_\G(\Q)\H(\A_f)\backslash \G(\A_f)/K.$$
Using the above proposition, we see that for every $g\in \G(\A_f)$, the cycle $\mathfrak{z}_g$ is defined over the subfield $E_g' \subset E(\infty)\nomenclature[F]{$E_g'$}{Field of definition (II) of $\mathfrak{z}_g$}$, given by\footnote{The field $E_g'$ is contained in the field $E_g$ defined at the end of \S \ref{fieldofdefinition1}.}
$$\Gal(E(\infty)/E_g')=\text{Art}_E^1(\det\left((Z_\G(\Q)\H(\Q)\H^{\der}(\A_f) gKg^{-1}) \cap \H(\A_f)\right).$$
\subsection{Hecke action on \texorpdfstring{$\cZ_{\G,K}(\H)$}{ZGK(H)}}\label{Heckoncycles}
Consider the map 
$$\begin{tikzcd}[row sep= small] \mathcal{T}\colon \G(\A_f) \arrow{r}& \mathcal{C}_{\text{fin}}(\Sh_{K}(\G,\cX)(\C)\times_E \Sh_{K}(\G,\cX)(\C))\\
g \arrow[mapsto]{r}{} &\mathcal{T}_g
\end{tikzcd}$$
where, $\mathcal{T}_g$ is the Hecke correspondence defined in \S \ref{ProjsysandHecke}. The map $\mathcal{T}$ factors through the double quotient $K\backslash \G(A_f)/K$. 

For any $g \in \G(A_f)$, we choose a system $(g_i)$ of representatives of $KgK/K$, i.e. $KgK=\sqcup_i g_iK$. 
We get an operator on $\Z[\cZ_{\G,K}(\H)]$ defined as follows 
$$\begin{tikzcd}\mathcal{T}_g\colon \Z[\mathcal{Z}_{\G,K}(\H)]\arrow{r}&\Z[\mathcal{Z}_{\G,K}(\H)],& \mathfrak{z}_{g'}\arrow[r, mapsto] &\sum_i \mathfrak{z}_{g'g_i}.\end{tikzcd}$$
Define $\cH_K=\cH(\G(\A_f)\sslash K)$ to be the global Hecke algebra generated over $\Z$ by $\{KgK\}$ for all ${g\in\G(\A_f)}$, equipped with the classical convolution product. By definition $\Z[\cZ_{\G,K}(\H)]$ is a $\Gal(E(\infty)/E)\times \cH_K$-module given that the actions of $\Gal(E(\infty)/E)$ and $\cH_K$ on $\Z[\cZ_{\G,K}(\H)]$ commute.

\subsection{Compact subgroups and base cycles}\label{compactsfiniteset}
\subsubsection{Further notations}\label{notationdist}
For any finite set $S$ of finite places of our fixed totally real field $F$, set $\O_{F}^{S}\nomenclature[G]{$\O_{F}^{S}$}{Ring of $S$-units}$ for the ring of $S$-units that is the set of $x \in F$ such that $v(x)\ge 0$ for all finite places $v\not \in S$ and put $\cO_{E}^{S}=\cO_E\otimes_{\cO_F}\cO_{F}^{S}\nomenclature[G]{$\cO_{E}^{S}$}{$\cO_E\otimes_{\cO_F}\cO_{F}^{S}$}$. We will also use 
$$F_S:=\prod_{v \in S} F_v, \quad\text{and}\quad\A_{F,f}^S:= \prod_{v \not\in S}'F_v$$
the restricted product of the additive groups $F_v^\times$ for all finite places $v\not \in S$, with respect to the local integers $\cO_{F_v}^\times$. One can write the finite adeles of $F$ as the product $\A_{F,f}=F_S\times \A_{F,f}^S$. For any place $v$ of $F$, let $F_v$ be the completion of $F$ at $v$ and let $\O_{F_v}$ be its ring of integers with uniformizer $\varpi_{v}\nomenclature[G]{$\varpi_v$}{Uniformizer for $\cO_{F_v}$}$ and maximal ideal $\p_v=\varpi_{v} \O_{F_v}\nomenclature[G]{$\p_v$}{Maximal ideal $\O_{F_v}$}$. For any $F$-algebra $R$, let $ R_{v}=R\otimes_F F_{v}$.

Recall that we have fixed at the beginning of \S \ref{SectionShimura}, for each finite place $v$ of $F$, an embedding $\begin{tikzcd}\iota_v\colon \overline{F} \arrow[r, hook] &  \overline{F}_v\end{tikzcd}$ and we set $w_v\nomenclature[G]{$w_v$}{The place of $E$ defined by $\iota_v\colon \overline{F}\hra\overline{F}_v$}$ for the unique place of $E$ defined by $\iota_v$, If $v$ splits in $E$, by abuse of notation denote the other place by $\overline{w}_v$. When the place $v$ is understood from the context, we will omit the subscript $v$ and simply write $w$ and $\overline{w}$.
\subsubsection{Integral models}
Recall that we identify $\U(W)$ with the subgroup of $\U(V)$ given by
$$ \{g \in \U(V)(R)\subset \GL(V\otimes_F R)\colon g \cdot x=x, \quad\forall x \in  D \otimes_F R\},$$
for any $F$-algebras $R$. We have a faithful representations of $\U(V)$ and $\U(W)$
$$\begin{tikzcd} \U(W) \arrow[hook]{r}{\iota}& \U(V) \arrow[r, hook]& \GL(V_F)\end{tikzcd}$$
where $V_F$ is the underlying $F$-vector space of the hermitian $E$-space $(V,\psi)$ (\S \ref{groups}). We identify $\U_V$ and $\U_W$ with closed subgroups of $\GL(V_F)$. Let $\underline{\U}_V$ and $\underline{\U}_W$ be the schematic closures of $\U(V)$ and $\U(W)$ in $\mathcal{G}=\GL(V_F)_{\cO_F}$ \footnote{Here, we are picking up implicitly a lattice in $V_F$ to obtain the $\cO_F$ structure.}. 

For $\star\in \{V,W\}$, the schematic closure $\underline{\U}_\star$ is a model for ${\U}(\star)$ over $\cO_{F}$ \cite[Lemma 2.4.1]{GetHah2019}. But by \cite[Lemma 2.4.2]{GetHah2019}, there is a finite set ${S_{\star}^0}$ of finite places of $F$ such that $\underline{\U}_{\star, \cO_{F}^{{S_{\star}^0}}}$ becomes smooth over $\cO_{F}^{{S_{\star}^0}}$. Subsequently, \cite[Proposition 3.1.9]{Conrad2014} ensures that for a large enough finite set ${S_{\star}^1}$ of finite places of $F$ containing $S_{\star}^0$, all the fibers of $\underline{\U}_{\star, \cO_{F}^{{S_{\star}^1}}}$ will be connected and reductive, i.e. $\underline{\U}_{\star, \cO_{F}^{{S_{\star}^1}}}$ is a reductive $\cO_{F}^{{S_{\star}^1}}$-model of $\U(\star)$. Accordingly, the homomorphism $\iota$ extends to the models over $\cO_{F,S^1}$ for $S^1=S_V^1\cup S_W^1$ and we get
$$\begin{tikzcd}\underline{\U}_{W, \cO_{F}^{{S}^1}} \arrow[hook]{r}{\iota}&\underline{\U}_{V, \cO_{F}^{{S}^1}} \arrow[r, hook]& \mathcal{G}_{\cO_{F,S^1}}.\end{tikzcd}$$
Using \cite[Lemma 2.4.1]{GetHah2019}, we get for any finite place $v \not \in S^1$ of $F$ a hyperspecial maximal compact subgroup
$$\underline{\U}_V(\cO_{F_v})={\U}(V)(F_v) \cap \mathcal{G}({\cO_{F_v}}) \text{ and,}$$\begin{equation}\label{intersectioncompacts}\underline{\U}_W(\cO_{F_v})={\U}(W)(F_v) \cap \mathcal{G}({\cO_{F_v}})={\U}(W)(F_v) \cap \underline{\U}_V(\cO_{F_v}).\nomenclature[G]{$\underline{\U}_\star$}{Integral models for $\star\in \{V,W\}$}\end{equation}
\begin{remark}\label{latticesandmodel}
{In the previous discussion we chose on purpose to use general arguments. %to stay relatively in line with Remark \ref{finalremark}.
Nevertheless, a more specific argument goes as follows: We pick an $\cO_E$ lattice $\mathcal L_W$ in $W$, $\mathcal L_D$ in $D$ and set $\mathcal L_V=\mathcal L_W \oplus \mathcal L_D$ in $V$. We choose them such that they are contained in their duals, with the quotients as small as possible. Now, we take $S^1$ to be the bad places, those occurring in the quotient $\mathcal L_V^\vee /\mathcal L_V$. Away from $S^1$, the restriction of $\psi$ on $\mathcal L_V$, $\mathcal L_W$ is a perfect integral hermitian pairing, giving rise to the desired smooth reductive models which are unitary groups.}
\end{remark}
\subsubsection{Compact subgroups}\label{compactsubgroups122}
Let $K_\star\nomenclature[G]{$K_\star$}{Fixed open compact subroups, $\star \in \{V,W\}$}$, with $\star \in \{V,W\}$, be any open compact subgroup of $\U_\star(\A_{F,f})$. It inersects then $\underline{\U}_\star(\prod_{v \not\in S^1}\cO_{F_v})=\prod_{v \not\in S^1}\underline{\U}_\star(\cO_{F_v})$ in an open compact subgroup. Therefore, there exists a finite set $S^2=S^2(K_V,K_W)$\footnote{We want to insist here that the enlarged set $S$ depends on both compacts $K_V$ and $K_W$.} containing $S^1$, for which both compact subgroups can be written as
$$K_\star=K_{\star,S^2}\times \prod_{v \not\in S^2}\underline{\U}_\star(\cO_{F_v}), \forall \star \in \{V,W\} $$
where $K_{\star,S^2}$ is some open compact subgroup of $\U(\star)(F_{S^2})=\prod_{v \in S^2} \underline{\U}_\star({F_v})$. In particular, if we set $K_{\star,v}:=\underline{\U}_\star(\cO_{F_v})$ for all finite places $v$ of $F$ away from $S^2$, we have by (\ref{intersectioncompacts})
$$K_{W,v}=\U(W)(F_v) \cap K_{V,v}.$$
\subsubsection{A base cycle}\label{basecycle}
Now for each $\star \in \{V,W\}$, let $g_{\star}=(g_{\star,v})$ be any fixed element of $\U_\star(\A_f)$. By definition of the adelic points of $\U_\star$, there exists a finite set of places $S=S(K_V,K_W, g_{V},g_{W})$ containing $S^2\cup \text{Ram}(E/F)$ and such that $g_{\star,v} \in K_{\star,v}$ for all $v \not\in S$ and $g_{\star,S}  \in \U_\star(F_S)$. But as far as the formation of the cycle $\mathfrak{z}_{g}$ for $g= (g_V,g_W)$ is concerned, nothing is lost by considering $g_0=(g_{0,V},g_{0,W}) \in \G(\A_f)$, such that $g_{0,\star}^S= 1 \in \U_\star(\A_{F,f}^S)$ and $g_{0, \star,S}= g_{\star,S} \in \U_\star(F_S)$, since $\mathfrak{z}_{g}=\mathfrak{z}_{g_0}$.

Now that we have fixed our set of finite places $S$, let us consider $$\underline\T^1 := \ker \Norm \colon\underbrace{\Res_{\cO_{E}^{S}/\cO_{F}^{S}}\mathds{G}_{\cO_{E}^{S}}}_{:=\underline{\T}}\to \underbrace{\mathds{G}_{\cO_{F}^{S}}}_{:=\underline{\Zbf}}.\nomenclature[G]{$\underline{\Zbf}, \underline{\T}, \underline\T^1$}{${\mathds{G}_{m,\cO_{F}^{S}}}$, ${\Res_{\cO_{E}^{S}/\cO_{F}^{S}}\mathds{G}_{m,\cO_{E}^{S}}}$ and $\ker (\Norm \colon \underline{\T} \to\underline{\Zbf}) $}$$
Each of the above reductive groups over $\cO_{F}^{S}$ are models for the obvious corresponding groups in $\T_F^1, \T_F$ and $\Zbf_F$, so in particular
$$ \T^1(\A_f)=\underline{\T}^1(\A_{F,f}), \T(\A_f)=\underline{\T}(\A_{F,f}) \text{ and } \Zbf(\A_f)=\underline{\Zbf}(\A_{F,f}).$$
 We view again $\underline\T^1$ as the center of $ \underline{\U}_W$ and also $ \underline{\U}_V$. Let $\underline{\U}_W^{\der}$ denote the kernel of the determinant map $\det \colon \underline{\U}_W \to \underline\T^1$. Write $\underline\nu\colon \underline\T\twoheadrightarrow \underline\T^1$ for the homomorphism given on $\cO_{F}^{S}$-points by $z\mapsto z/\overline{z}$. 

Define $\underline{\G}:=\underline{\U}_V\times\underline{\U}_W\nomenclature[G]{$\underline{\G},\underline{\H}$}{$\O_F^S$-models for $\G$ and $\H$}$ and $\underline{\H}:=\Delta(\underline{\U}_W) \subset \underline{\G}$\footnote{The diagonal homomorphism $\Delta$ extends to the models over $\cO_{F}^{S}$ since $\iota$ does.}. We then have
$$\G(\A_f)=\underline{\G}(\A_{F,f}), \quad \text{and}\quad\H(\A_f)=\underline{\H}(\A_{F,f}).$$
Set $K_v:=K_{V,v}\times K_{W,v}\nomenclature[G]{$K_v,K,K_{\H}$}{The fixed level structures}$ for any place $v \not \in S$, $K_S=K_{V,S}\times  K_{W,S}$, $K^S=K_V^S \times K_W^S$. $K=K_V \times K_W$ and $K_\H=\Delta(K_W)= K_{\H,S} \times K_\H^S$.
\begin{remark}\label{determinantsurjective}
{For each place $v \not \in S$, we have an exact sequence of $\cO_{F_v}$-groups
$$\begin{tikzcd} 1 \arrow{r} & \underline{\U}_\star^{\der} \arrow{r}& \underline{\U}_\star \arrow{r}{\det} & \underline{\T}^1\arrow{r}&1 \end{tikzcd}.$$
It induces the long exact sequence in \'{e}tale cohomology 
$$\begin{tikzcd} 1 \arrow{r} & \underline{\U}_\star^{\der}(\cO_{F_v}) \arrow{r}& \underline{\U}_\star(\cO_{F_v}) \arrow{r}{\det} & \underline{\T}^1(\cO_{F_v})\arrow{r}&\text{H}_{\text{\'{e}t}}^1(\cO_{F_v},\underline{\U}_\star^{\der}). \end{tikzcd}$$
On the one hand, we have $\text{H}_{\text{\'{e}t}}^1(\cO_{F_v},\underline{\U}_\star^{\der})=\text{H}^1(\mathds{F}_{q_v},\underline{\U}_\star^{\der})$ \cite[4.5 \S III]{milne:etale}. On the other hand, Lang's theorem implies $\text{H}^1(\mathds{F}_{q_v},\underline{\U}_\star^{\der})=0$ \cite{lang:finitefields}. Therefore,
\begin{align*}\det(\underline{\U}_\star(\cO_{F_v})) &= \underline{\T}^1(\cO_{F_v}).\qedhere\end{align*}
}
\end{remark}
\subsection{The field $\cK$}\label{fieldkappa}
The stabilizer of $\mathfrak z_{g_0}$ in $\H(\A_f)$ is 
$$\text{Stab}_{\H(\A_f)}\mathfrak z_{g_0}=\left(Z_\G(\Q)\cdot\H(\Q)\cdot g_0Kg_0^{-1}\right)\cap \H(\A_f).$$
\begin{lemma}\label{fielddefinitiong0}
We may rewrite the stabilizer as
$$\text{Stab}_{\H(\A_f)}\mathfrak z_{g_0}=\H(\Q)\cdot\left({K_{\H,g_0,S}^Z}\times K_\H^S\right),$$
where $K_{\H,g_0,S}^Z:=\left((Z_\G(\Q)\cap K^S) \cdot g_{0,S}K_Sg_{0,S}^{-1}\right) \cap {\H}(F_S)$\footnote{Note that by SV5 of Remark \ref{additionalaxioms} the subgroup $Z_\G(\Q)$ is discrete in $Z_\G(\A_f)$ and consequently the intersection $Z_\G(\Q)\cap K^S$ must be finite.}.
\end{lemma}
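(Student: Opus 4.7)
The plan is to first use the elementary identity $(\H(\Q) \cdot X) \cap \H(\A_f) = \H(\Q) \cdot (X \cap \H(\A_f))$, valid for any subset $X \subset \G(\A_f)$ because $\H(\Q)$ is a subgroup of $\H(\A_f)$, to rewrite the stabilizer formula just above as
$$\Stab_{\H(\A_f)} \mathfrak{z}_{g_0} \;=\; \H(\Q) \cdot \left( Z_\G(\Q) g_0 K g_0^{-1} \cap \H(\A_f) \right).$$
It then suffices to show that this intersection agrees, modulo $\H(\Q)$, with $K_{\H,g_0,S}^Z \times K_\H^S$.

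The inclusion $\supseteq$ is routine bookkeeping. Given $k^Z_S = z'_S g_{0,S} k_S g_{0,S}^{-1} \in K_{\H,g_0,S}^Z$ (with $z' \in Z_\G(\Q) \cap K^S$ and $k_S \in K_S$) and $k^S_\H \in K_\H^S$, I exploit $g_0^S = 1$ and $z'^S \in K^S$ to exhibit $(k^Z_S, k^S_\H)$ as $z' g_0 \tilde{k} g_0^{-1}$ for $\tilde{k} := (k_S, (z'^S)^{-1} k^S_\H) \in K$; multiplying on the left by any $h \in \H(\Q)$ then lands in $\H(\Q) Z_\G(\Q) g_0 K g_0^{-1} \cap \H(\A_f)$ as required.

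The heart of the proof is the reverse inclusion. Starting from $\gamma = z g_0 k g_0^{-1} \in \H(\A_f)$ with $z = (z_V, z_W) \in \T^1(\Q) \times \T^1(\Q)$ and $k \in K$, I analyze the condition $\gamma_v \in \H(F_v) = \Delta(\U(W)(F_v))$ at each $v \notin S$. Using $g_0^v = 1$, this reduces to the diagonal equality $z_{V,v} k_{V,v} = z_{W,v} k_{W,v}$ together with the common value lying in $\U(W)(F_v)$. Since $z_{V,v} \in \T^1(F_v) \subset \U(W)(F_v)$, the hyperspecial compatibility $K_{W,v} = \U(W)(F_v) \cap K_{V,v}$ of equation (\ref{intersectioncompacts}) forces $k_{V,v} \in K_{W,v}$, whence $(z_V z_W^{-1})_v = k_{W,v} k_{V,v}^{-1} \in K_{W,v} \cap \T^1(F_v) = \underline{\T}^1(\cO_{F_v})$. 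Thus $z_V z_W^{-1} \in \T^1(\Q)$ is integral at every place outside $S$.

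To conclude, I observe that $\Delta(z_V) = (z_V, z_V)$ lies simultaneously in $\H(\Q)$ (since $\T^1 \subset \G_W$) and in $Z_\G(\Q)$, hence is central in $\G$. Setting $z' := \Delta(z_V)^{-1} z = (1, z_V^{-1} z_W) \in Z_\G(\Q)$, the integrality just established yields $z'^S \in \{1\} \times \underline{\T}^1(\widehat{\cO}_F^S) \subset K^S$, so $z' \in Z_\G(\Q) \cap K^S$. Centrality gives $\gamma = \Delta(z_V) \cdot (z' g_0 k g_0^{-1})$, and the second factor decomposes into its $S$-component $z'_S g_{0,S} k_S g_{0,S}^{-1} \in K_{\H,g_0,S}^Z$ and its off-$S$-component $z'^S k^S \in K^S \cap \H(\A_f^S) = K_\H^S$, as desired. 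The only nonformal step is precisely the extraction of $z_V z_W^{-1}$ as an $S$-integral element of $\T^1(\Q)$, which crucially relies on the compatibility of the integral models fixed in \S\ref{compactsubgroups122}; everything else is bookkeeping with the decomposition $\A_f = F_S \times \A_f^S$.
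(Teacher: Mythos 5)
Your reverse inclusion contains a genuine gap, and the root misconception appears twice. You assert ``$z_{V,v} \in \T^1(F_v) \subset \U(W)(F_v)$'' and use this to conclude $k_{V,v} \in K_{W,v}$. But this inclusion is false: $z_V$ is the $\G_V$-component of $z = (z_V, z_W) \in Z_{\G_V}(\Q) \times Z_{\G_W}(\Q)$, so $z_{V,v}$ is the \emph{central} element of $\U(V)(F_v)$ acting as the scalar $z_{V,v}$ on all of $V_v$, in particular non-trivially on $D_v = E_v w_{n+1}$; whereas $\iota(\U(W)(F_v)) \subset \U(V)(F_v)$ is precisely the subgroup fixing $D_v$ pointwise. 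The central embedding $\T^1 \hookrightarrow \G_V$ and the composite $\T^1 \hookrightarrow \G_W \xrightarrow{\iota} \G_V$ are \emph{different} maps (scalar on $V$ vs.\ scalar on $W$ and identity on $D$), so the step ``$k_{V,v} = z_{V,v}^{-1}(z_{V,v}k_{V,v}) \in \U(W)(F_v)\cap K_{V,v}=K_{W,v}$'' does not go through. The same confusion breaks the last step: $\Delta(z_V) = (\iota(z_V), z_V)$ has first component $\iota(z_V) \notin Z_{\G_V}$ when $z_V \ne 1$, so $\Delta(z_V) \notin Z_\G(\Q)$; and $\Delta(z_V)^{-1} z = (\iota(z_V)^{-1}z_V,\, z_V^{-1}z_W)$ has first component equal to the matrix acting trivially on $W$ and by $z_V$ on $D$ — not the identity — so $z'$ is not $(1, z_V^{-1}z_W)$ and is not in $Z_\G(\Q)$.

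The correct way to extract integrality — and this is what the paper does in its step (1) — is to evaluate on the anisotropic test vector $w_{n+1}$. From $z_{V,v}k_{V,v} = \iota(z_{W,v})\iota(k_{W,v})$, applying both sides to $w_{n+1}$ and using that $\iota(\U(W))$ fixes $w_{n+1}$ gives $z_{V,v}(k_{V,v}w_{n+1}) = w_{n+1}$, so $k_{V,v}w_{n+1} = z_{V,v}^{-1}w_{n+1}$; since $k_{V,v}$ preserves the $\cO_{F_v}$-lattice, $z_{V,v} \in \cO_{E_v}^\times$. Then $\iota(z_{W,v}) = z_{V,v}k_{V,v}\iota(k_{W,v})^{-1}$ is also a lattice-preserving scalar on $W$, so $z_{W,v} \in \cO_{E_v}^\times$ as well. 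Hence $z \in Z_\G(\Q)\cap K^S$ directly, and $\gamma = zg_0kg_0^{-1}$ decomposes as $(z_S g_{0,S}k_S g_{0,S}^{-1},\, z^S k^S) \in K_{\H,g_0,S}^Z \times K_\H^S$ without any $\Delta(z_V)$-twist. Your opening reduction $(\H(\Q)\cdot X)\cap \H(\A_f) = \H(\Q)\cdot(X\cap\H(\A_f))$ (after commuting $Z_\G(\Q)$ past $\H(\Q)$) and the forward inclusion are both correct.
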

\proof
We have
\begin{align*}\text{Stab}_{\H(\A_f)}\mathfrak z_{g_0}\overset{(0)}&{=}\H(\Q)\cdot\left((Z_\G(\Q)\cap Z_\H(\Q)K^S)\cdot g_0Kg_0^{-1}\cap \H(\A_f)\right)\\
\overset{(1)}&{=}\H(\Q)\cdot\left((Z_\G(\Q)\cap K^S)\cdot g_0Kg_0^{-1}\cap \H(\A_f)\right)\\
\overset{(2)}&{=}\H(\Q)\cdot\left({\left((Z_\G(\Q)\cap K^S) \cdot g_{0,S}K_Sg_{0,S}^{-1}\right) \cap {\H}(F_S)}\times K_\H^S\right)\\
&=\H(\Q)\cdot\left({K_{\H,g_0,S}^Z}\times K_\H^S\right),
\end{align*}
\begin{enumerate}[nosep]
\item[where, (0)] is a straightforward consequence of the fact that $g_{0}^S=1 \in \underline{\G}(\A_{F,f}^S)$ and $K_\H^S = K^S \cap \underline{\H}(\A_{F,f}^S)$.
\item[(1)] Let $z_G=z_H k \in  Z_\G(\Q) \cap Z_\H(\Q) K^S=Z_\G(\Q) \cap Z_\H(\Q) \underline{\G}(\cO_F^S)$. Write $z_G=(z_V,z_W)$ and $z_H k= \Delta(z_{W}')(k_V,k_W)= k z_H$, hence\footnote{Recall that $w_{n+1}$ is the fixed generator of the global $E$-hermitian line $D$ which is orthogonal to $W$.}
$$z_{V} w_{n+1}= k_V z_{W}' w_{n+1} =k_V w_{n+1},$$
so $z_V \in \underline{\T}^1(\cO_F^S) = Z_{\underline{\U}_V}(\cO_F^S)$. Accordingly $z_W' \in \underline{\T}^1(\cO_F^S) = Z_{\underline{\U}_W}(\cO_F^S)$ and $z_G \in K^S$, i.e. $Z_\G(\Q) \cap Z_\H(\Q) K^S=Z_\G(\Q) \cap K^S$.
\item[(2)] Let $(z g_{0,S}k_Sg_{0,S}^{-1}, k^S) \in K_{\H,g_0,S}^Z \times K_\H^S \subset \underline{\H}(\A_{F,f})$, for some $z \in Z_\G(\Q)\cap K^S$. Hence
\begin{align*}(z g_{0,S}k_Sg_{0,S}^{-1}, k^S)&=z( g_{0,S}k_Sg_{0,S}^{-1}, \underbrace{z^{-1}k^S}_{:=k^{\prime S}\in K^S})\\
&=z( g_{0,S}k_Sg_{0,S}^{-1}, k^{\prime S}) \\&= z g_0 (k_S,k^{\prime S})g_0^{-1} \in (Z_\G(\Q)\cap K^S)  g_0Kg_0^{-1}\cap \H(\A_f),
\end{align*}
and so $((Z_\G(\Q)\cap K^S)\cdot g_0Kg_0^{-1})\cap \H(\A_f)=K_{\H,g_0,S}^Z\times K_\H^S$.\qed
\end{enumerate}
Set $\mathcal{K}:=E_{g_0}'$, the field of definition of the cycle $\mathfrak{z}_{g_0}$, it is the subfield of $E_{g_0}\subset E(\infty)$\footnote{Recall that $K_{\H,g_0}=g_0 K g_0^{-1} \cap \H(\A_f)$ and $E_{g_0}\subset E(\infty)$ is the subfield fixed by $\Art_E^1\left(\T^1(\Q)\det(K_{\H,g_0})\right)$.} fixed by (see \S \ref{galoisH})
$$\Art_E^1\left(\det\left(\H(\Q)\cdot\left(K_{\H,g_0,S}^Z \times K_\H^S\right)\right)\right)=\Art_E^1\left(\T^1(\Q)\det\left(K_{\H,g_0,S}^Z \times K_\H^S\right)\right),$$
where, $\Art_E^1$ is the map defined above Corollary \ref{Egcontainedtransferfield}. Therefore,
$$\begin{tikzcd}[column sep=large]
\dfrac{\A_{E,f}^\times}{ E^\times \A_{F,f}^\times (\cO_{g_0,S}^\times \times \widehat{\cO_E^S}^\times)} \arrow{r}{\underline{\nu}}[swap]{\simeq}&\dfrac{\T^1(\A_f)}{\T^1(\Q) (U_{g_0,S} \times U^S)} \arrow{r}{\Art_E^1}[swap]{\simeq}&
\Gal(\mathcal{K}/E)
\end{tikzcd}$$
where, $U_{g_0,S}:= \det K_{\H,g_0,S}^Z$, $U^S:=\det K_\H^S$ and $\cO_{g_0,S}^\times \subset  (E\otimes F_S)^\times$ such that  $\underline{\nu}( \cO_{g_0,S}^\times)= U_{g_0,S}$ and finally $\widehat{\cO_E^S}= \cO_E \otimes_{\cO_F^S} \widehat{\cO_F^S} =\prod_{v\not\in S} \cO_{E_v}^\times \subset \A_{E,f}^S$ (see Remark \ref{determinantsurjective}), where we have denoted, slightly abusively, $\O_{E_v}=\cO_E\otimes_{\cO_F} \cO_{F_v}$ the maximal order of the quadratic \'{e}tale algebra $E_v=E\otimes_F F_v$.
\subsection{Ramification in transfer fields extensions}
Let $v$ be a finite place of $F$ that is unramified in the extension $E/F$, hence
$$E_v^\times/F_v^\times\cO_{E_v}^\times\simeq \begin{cases}
\varpi_{v}^\Z &\text{If $v$ split in $E$,}\\
\{1\}& \text{If $v$ is inert in $E$.}
\end{cases}$$
\begin{lemma}\label{ramificationtransferfields}
Let $\fc\subset \cO_F$ be any non-zero ideal of $\cO_F$. 
\label{1ramificationtransferfields} Any prime of $E$ not dividing $\fc\cO_E$ is unramified in $E[\fc]/E$, hence also in $E(\fc)/E$.
\end{lemma}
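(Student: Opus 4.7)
The plan is to use local class field theory. By the construction of the ring class field recalled earlier in \S\ref{ringtransferclassfield}, one has
$$\Gal(E^{ab}/E[\fc]) = \Art_E\bigl(E^\times \widehat{\O}_\fc^\times/E^\times\bigr),$$
so the standard criterion says that a prime $\mathfrak{q}$ of $E$ is unramified in $E[\fc]/E$ if and only if the local unit group $\cO_{E_\mathfrak{q}}^\times$, viewed as a subgroup of $\A_{E,f}^\times$ concentrated at $\mathfrak{q}$, is contained in the norm subgroup $E^\times \widehat{\O}_\fc^\times$. In particular it suffices to show that $\cO_{E_\mathfrak{q}}^\times \subset \widehat{\O}_\fc^\times$ whenever $v := \mathfrak{q}\cap \cO_F$ does not divide $\fc$, since by the correspondence $\mathfrak{q}\mid \fc\cO_E \iff v\mid \fc$ this is exactly the hypothesis of the lemma.

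The key step is then purely local: analyze the $v$-component of the order $\widehat{\O}_\fc$. Writing $\widehat{\O}_\fc = \widehat{\cO}_F + \fc\,\widehat{\cO}_E$ and decomposing $\widehat{\cO}_E = \prod_v \cO_{E_v}$ as a product over finite places $v$ of $F$, one obtains
$$\widehat{\O}_\fc^\times \;=\; \prod_v{}' \bigl(\cO_{F_v} + (\fc\cO_{F_v})\cdot \cO_{E_v}\bigr)^\times.$$
When $v\nmid \fc$ the local ideal $\fc\cO_{F_v}$ is the unit ideal $\cO_{F_v}$, so the $v$-factor collapses to $\cO_{E_v}^\times$. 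Further decomposing $E_v = \prod_{\mathfrak{q}\mid v} E_\mathfrak{q}$ gives $\cO_{E_v}^\times = \prod_{\mathfrak{q}\mid v} \cO_{E_\mathfrak{q}}^\times$, and hence $\cO_{E_\mathfrak{q}}^\times \subset \widehat{\O}_\fc^\times$ as desired. By the criterion above, $\mathfrak{q}$ is unramified in $E[\fc]/E$.

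For the second assertion, observe that by construction $E(\fc)\subset E[\fc]$: the norm subgroup $E^\times \A_{F,f}^\times\,\widehat{\O}_\fc^\times$ defining $E(\fc)$ contains the one defining $E[\fc]$. Hence any prime of $E$ that is unramified in $E[\fc]/E$ is automatically unramified in $E(\fc)/E$.

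I expect no genuine obstacle: the only delicate point is bookkeeping — verifying that the local factor of the non-maximal order $\cO_\fc = \cO_F + \fc\cO_E$ at a place $v\nmid \fc$ coincides with the maximal order $\cO_{E_v}$. Once that is in hand the conclusion is an immediate application of the standard unramification criterion for abelian extensions given in terms of local units and norm subgroups.
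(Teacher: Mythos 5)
Your proof is correct and follows essentially the same route as the paper: both identify the inertia at $\mathfrak{q}$ with the image of the local units $\cO_{E_\mathfrak{q}}^\times$ in $\A_{E,f}^\times/E^\times\widehat{\O}_\fc^\times$ and observe that when $v\nmid\fc$ these units already lie in $\widehat{\O}_\fc^\times$, so the image is trivial. Your extra paragraph spelling out why the $v$-component of the non-maximal order $\widehat{\O}_\fc$ collapses to the maximal local order $\cO_{E_v}$ at places $v\nmid\fc$ is exactly the bookkeeping the paper leaves implicit in the phrase ``because $v\nmid\fc\cO_E$ we see that the image of $\cO_{E,v}^\times$ is trivial.''
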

\begin{proof}
Let $v$ be any place of $E$ and $E_v$ the completion of $E$ at the place $v$. The extension $E[\fc]/E$ being Galois, then the various completions $E[\fc]_w$ with $w$ a place of $E[\fc]$ extending $v$ are isomorphic, let $E[\fc]_w$ be (any) one of these completions. Let $\mathfrak{q}$ be the prime of $E[\fc]$ corresponding to the place $w$ and identify the local Galois group $\Gal(E[\fc]_w/E_v)$ with the decomposition group of $D_{\mathfrak{q}}(E[\fc]/E) \subset \Gal(E[\fc]/E)$ (the stabilizer of $\mathfrak{\mathfrak{q}}$ in $\Gal(E[\fc]/E)$).
The decomposition group $D_{\mathfrak{q}}(E[\fc]/E)$ is the image of the composition of the following two maps
$$\begin{tikzcd}E_{v}^\times \arrow[r,hook] & \A_{E,f}^\times \arrow[r, two heads]&\A_{E,f}^\times/E^\times \widehat{\cO}_{\fc}^\times \arrow{r}{\simeq}&\Gal(E[\fc]/E),\end{tikzcd}$$
and the image of $\cO_{E,v}^\times$ in $\Gal(E[\fc]/E)$ is precisely the inertia group $I(E[\fc]/E)$. Now, because $v\nmid \fc\cO_E$ we see that the image of $\cO_{E,v}^\times$ is trivial, i.e. $v$ is unramified in the extension $E[\fc]/E$. 
\end{proof}

\subsection{Ramification in $\cK$-transfer fields extensions}\label{kappatransferfields}
For any nonzero ideal $\ff \subset \O_F$ that is prime to $S$, we consider the field $ \cK(\ff)\subset E(\infty)\nomenclature[G]{$\cK(\ff)$}{$\cK$-transfer field of conductor $\ff$}$, fixed by 
$\Art^1_E(U_\ff)$, where $U_\ff:=U_{g_0,S}  \times U_\ff^S\nomenclature[G]{$U_\ff$}{$=U_{g_0,S} \times U_\ff^S$}$ with $U_\ff^S=\prod_{v\not\in S}U_v^1({\text{ord}}_{F_v}(\ff))$ and
$$U_v^1(c):=\underline{\nu}\big((\underbrace{\O_{F_v}+\varpi_{v}^c\O_{E_v}}_{:=\O_{v,c}})^\times\big), \quad v \not\in S, \,c \in \N.$$%Here, $\cO_{F_v} \subset \cO_{E_v}$ via the diagonal embedding.
The fields $\cK(\ff)$ will be called the $\cK$-transfer field of conductors $\ff$. Set $$\mathfrak{O}_\ff:= \cO_{g_0,S}  \times (\cO_\ff \otimes_{\cO_F^S} \widehat{\cO_F^S} )=\cO_{g_0,S}  \times  \prod_{v \not \in S} \cO_{v, {\text{ord}}_{F_v}(\ff)}\subset \widehat{\cO_E},\nomenclature[G]{$\mathfrak{O}_\ff$}{$\subset\widehat{\cO_E}$}$$
in particular, we have $\underline{\nu}(\mathfrak{O}_\ff^\times)=U_\ff$. Note that $\cK(1)=\cK$. Moreover, for every two $\cO_F$-ideals $\mathfrak{n}, \ff \subset \O_F$ prime to $S$, we have $\cK(\ff\mathfrak{n})\supset \cK(\ff)$ and isomorphisms:
$$\begin{tikzcd}[column sep=large]
\frac{E^\times \A_{F,f}^\times \mathfrak{O}_\ff^\times}{E^\times \A_{F,f}^\times \mathfrak{O}_{\ff\mathfrak{n}}^\times}\arrow{r}{\underline{\nu}}[swap]{\simeq}&\frac{ \T^1(\Q) U_\ff}{\T^1(\Q)U_{\ff\mathfrak{n}}} \arrow{r}{\Art_E^1}[swap]{\simeq}&
\Gal(\cK(\ff\mathfrak{n})/\cK(\ff)).
\end{tikzcd}$$
We then obtain an exact sequence\footnote{\label{footnote6}This is a consequence of the following elementary fact: if one has three subgroups $A$ and $C \subset B$ of some abelian group, then the inclusion maps yield an exact sequence $$1\longrightarrow \frac{A\cap B}{A\cap C} \longrightarrow \frac{B}{C} \longrightarrow \frac{A B}{A C}\longrightarrow 1.$$}
\begin{equation}\label{galoistransfersequence}\begin{tikzcd}1 \arrow{r}&\frac{E^\times \cap \A_{F,f}^\times \mathfrak{O}_{\ff}^\times }{E^\times \cap \A_{F,f}^\times \mathfrak{O}_{\ff\mathfrak{n}}^\times }\arrow{r}&
\frac{\mathfrak{O}_{\ff}^\times}{\mathfrak{O}_{\ff\mathfrak{n}}^\times}\arrow{r}{}[swap]{}&
\Gal(\cK(\ff\mathfrak{n})/\cK(\ff))\arrow{r}& 1
\end{tikzcd}\end{equation}
with
\begin{equation*} \frac{\mathfrak{O}_{\ff}^\times}{\mathfrak{O}_{\ff\mathfrak{n}}^\times}\simeq \frac{(\mathfrak{O}_{\ff}^S)^\times }{(\mathfrak{O}_{\ff\mathfrak{n}}^S)^\times}\simeq \prod_{v\colon \p_{v} \mid {\mathfrak{n}}}\frac{\O_{v,{\text{ord}}_{F_v}(\ff)}^\times}{\O_{v,{\text{ord}}_{F_v}(\ff \mathfrak{n})}^\times}.\end{equation*}
Moreover, one can explicitly describe the left global error term appearing in the above exact sequence:
\begin{lemma}
The natural inclusion map yields an inclusion
$$\begin{tikzcd}\dfrac{\cO_E^\times \cap  \mathfrak{O}_{\ff}^\times }{\cO_E^\times \cap  \mathfrak{O}_{\ff\mathfrak{n}}^\times }\arrow[r, hook] &\dfrac{E^\times \cap \A_{F,f}^\times \mathfrak{O}_{\ff}^\times }{E^\times \cap \A_{F,f}^\times \mathfrak{O}_{\ff\mathfrak{n}}^\times }\end{tikzcd},$$
with finite cokernel of size smaller than $2$.
\end{lemma}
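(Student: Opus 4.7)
Set $X_\star := E^\times \cap \A_{F,f}^\times \mathfrak{O}_\star^\times$ and $Y_\star := \cO_E^\times \cap \mathfrak{O}_\star^\times$ for $\star \in \{\ff,\ff\mathfrak{n}\}$; the natural map is $Y_\ff/Y_{\ff\mathfrak{n}} \to X_\ff/X_{\ff\mathfrak{n}}$, and its cokernel is $Q := X_\ff/(Y_\ff \cdot X_{\ff\mathfrak{n}})$. Injectivity is routine: if $\alpha \in Y_\ff$ factors as $\alpha = f u$ with $f \in \A_{F,f}^\times$ and $u \in \mathfrak{O}_{\ff\mathfrak{n}}^\times \subset \widehat{\cO_E}^\times$, then $\alpha \in \widehat{\cO_E}^\times$ forces $f = \alpha u^{-1} \in \A_{F,f}^\times \cap \widehat{\cO_E}^\times = \widehat{\cO_F}^\times \subset \mathfrak{O}_{\ff\mathfrak{n}}^\times$, so $\alpha \in \mathfrak{O}_{\ff\mathfrak{n}}^\times$ and hence $\alpha \in Y_{\ff\mathfrak{n}}$.

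To analyse $Q$, write $\alpha \in X_\ff$ locally as $\alpha = f_v u_v$ with $f_v \in F_v^\times$ and $u_v \in \mathfrak{O}_{\ff, v}^\times$. Since $F_v$ is $\tau$-fixed and $\mathfrak{O}_{\ff, v}$ is $\tau$-stable,
$$\frac{\alpha}{\alpha^\tau} \;=\; \frac{u_v}{u_v^\tau} \;\in\; \mathfrak{O}_{\ff, v}^\times \quad \text{for every } v,$$
so $\alpha/\alpha^\tau \in Y_\ff$ (and has norm one). On the other hand $N_{E/F}(\alpha) = \alpha\alpha^\tau \in F^\times \subset X_{\ff\mathfrak{n}}$ tautologically. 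The identity $\alpha^2 = N_{E/F}(\alpha) \cdot (\alpha/\alpha^\tau)$ thus lies in $X_{\ff\mathfrak{n}} \cdot Y_\ff$, so every class in $Q$ has order dividing $2$.

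To upgrade this to $|Q| \leq 2$, consider the $(1-\tau)$ homomorphism
$$\sigma: X_\ff \longrightarrow Y_\ff^{N=1}, \qquad \alpha \longmapsto \alpha/\alpha^\tau,$$
where $Y_\ff^{N=1}$ denotes the norm-one units of $Y_\ff$. Its kernel is $X_\ff \cap F^\times = F^\times$, and $\sigma(Y_\ff \cdot X_{\ff\mathfrak{n}}) = (1-\tau)Y_\ff \cdot \sigma(X_{\ff\mathfrak{n}})$, so $\sigma$ induces an injection
$$Q \;\hookrightarrow\; Y_\ff^{N=1}\big/\bigl((1-\tau)Y_\ff \cdot \sigma(X_{\ff\mathfrak{n}})\bigr),$$
itself a quotient of the Tate cohomology group $\hat{H}^{-1}(\Gal(E/F), Y_\ff) = Y_\ff^{N=1}/(1-\tau)Y_\ff$. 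Since $E/F$ is a CM extension, Dirichlet's unit theorem shows that any $y \in \cO_E^\times$ is (up to finite index) of the form $\mu v$ with $\mu \in \mu(E)$ and $v \in \cO_F^\times$; then $y$ has norm one iff $v = \pm 1$, and $y/y^\tau = \mu^2$ because $\tau$ inverts roots of unity. It follows that $\hat{H}^{-1}(\Gal(E/F), Y_\ff)$ injects into $\mu(E)/\mu(E)^2 \cong \Z/2\Z$, yielding $|Q| \leq 2$.

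The main obstacle in this outline is precisely the Tate-cohomological step: the formal content (injectivity and $2$-torsion) is almost immediate from the adelic decomposition and the action of $\tau$, while the bound of $2$ in the cokernel is a direct reflection of the fact that $-1 \in \mu(E)$ for every CM field $E$, i.e.\ of the quadratic nature of the extension $E/F$.
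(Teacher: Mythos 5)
The paper gives no internal argument here — it simply cites \cite[Proposition (2.9)]{Nekovar2007} — so there is nothing to compare line-by-line. Your reconstruction is essentially the right argument and matches the structure of Nekov\'a\v{r}'s: injectivity by reducing to $\A_{F,f}^\times \cap \widehat{\cO_E}^\times = \widehat{\cO_F}^\times$, and control of the cokernel by pushing through the $(1-\tau)$ map (whose kernel on $E^\times$ is $F^\times$) and landing in roots of unity.

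The one place you overclaim is the last step. The asserted injection $\hat H^{-1}(\Gal(E/F), Y_\ff)\hookrightarrow \mu(E)/\mu(E)^2$ is not in general well-defined or injective: $Y_\ff$ is only a proper finite-index subgroup of $\cO_E^\times$, so an element $\mu^2 \in Y_\ff^{N=1}\cap\mu(E)^2$ need not lie in $(1-\tau)Y_\ff$ (one would need $\mu$ itself in $Y_\ff$, which the congruence conditions defining $\mathfrak{O}_\ff$ can obstruct), and the Hasse unit index of $E/F$ further interferes — your ``up to finite index'' parenthesis flags both issues but does not resolve them. Fortunately the bound does not need that injection: you have already established that (i) every class of the cokernel $Q$ is $2$-torsion via $\alpha^2 = N_{E/F}(\alpha)\cdot(\alpha/\alpha^\tau)$, and (ii) $(1-\tau)$ identifies $Q$ with a quotient of the subgroup $\sigma(X_\ff)\subset Y_\ff^{N=1}\subset\mu(E)$, which is finite cyclic by Kronecker's theorem. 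A cyclic $2$-torsion group has order at most $2$, which is the assertion. I would simply replace the sentence invoking $\mu(E)/\mu(E)^2$ by this ``cyclic $+$ $2$-torsion'' observation; everything else is sound.
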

\proof This is \cite[Proposition (2.9)]{Nekovar2007}. \qed

For later use, we set $d_\ff:=\#\left(\frac{E^\times \cap \A_{F,f}^\times \cO_{1}^\times }{E^\times \cap \A_{F,f}^\times \mathfrak{O}_{\ff}^\times }\right)$ for any $\cO_F$-ideal $\ff$.%$\le 2 \left(\dfrac{\cO_E^\times \cap  \cO_{{1}}^\times }{\cO_E^\times \cap  \mathfrak{O}_{\ff}^\times }\right)$.
\begin{remark}\label{containementfieldofdefinitions}
Observe that by (\ref{formulatransferfield}), for any ideal $\ff\subset \cO_F$ prime to $S$, there exists a smallest non-zero ideal $\fc_\ff\subset \cO_F\nomenclature[G]{$\fc_\ff$}{Non-zero $\cO_F$-ideal such that $E(\fc_\ff)$ is the smallest transfer field containing $\cK(\ff)$}$ with respect to divisibility such that $$\widehat{\cO}_{\fc_\ff} \subset\mathfrak{O}_\ff \subset \widehat{\cO}_\ff\subset \widehat{\cO}_E$$ Equivalently, $E(\fc_\ff)$ is the smallest transfer field containing $\cK(\ff)$: 
\begin{align*} E(\ff) &\subset \cK(\ff)\subset E(\fc_\ff).\qedhere \end{align*}
\end{remark}
\begin{lemma}\label{transferkappa}
For any ideal $\ff\subset \cO_F$ prime to $S$, the field $\cK(\ff)$ is contained in the transfer field $E(\fc_1 \ff)$.
\end{lemma}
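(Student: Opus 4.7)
The plan is to translate the field inclusion $\cK(\ff)\subset E(\fc_1\ff)$ into a containment of their Artin ``norm'' subgroups inside $\A_{E,f}^\times$, and then verify the resulting inclusion place by place using the definition of $\fc_1$.

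By construction, $E(\fc_1\ff)$ is the subfield of $E^{ab}$ cut out by $\Art_E\bigl(E^\times\cdot\A_{F,f}^\times\cdot\widehat{\cO}_{\fc_1\ff}^\times\bigr)$, while $\cK(\ff)\subset E(\infty)$ is cut out by $\Art_E\bigl(E^\times\cdot\A_{F,f}^\times\cdot\mathfrak{O}_\ff^\times\bigr)$, as follows from the factorisation $\Art_E^1 = r_{\mathrm{fin}}^{-1}$ and the description $U_\ff=\underline{\nu}(\mathfrak{O}_\ff^\times)$ together with the exact sequence (\ref{exactsequencenu}). Hence $\cK(\ff)\subset E(\fc_1\ff)$ is equivalent to
\begin{equation*}
E^\times\cdot\A_{F,f}^\times\cdot\widehat{\cO}_{\fc_1\ff}^\times \;\subset\; E^\times\cdot\A_{F,f}^\times\cdot\mathfrak{O}_\ff^\times ,
\end{equation*}
and for this it suffices to prove the stronger local statement $\widehat{\cO}_{\fc_1\ff}^\times\subset \A_{F,f}^\times\cdot\mathfrak{O}_\ff^\times$, checked place by place.

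First, I would verify that $\fc_1$ is supported in $S$. Outside $S$, both $\widehat{\cO}_{\fc_1}$ and $\mathfrak{O}_1$ have $v$-component equal to the maximal order $\cO_{E_v}$, so the containment $\widehat{\cO}_{\fc_1}\subset\mathfrak{O}_1$ imposes no constraint for $v\notin S$; minimality of $\fc_1$ therefore forces $\mathrm{ord}_{F_v}(\fc_1)=0$ there. Since $\ff$ is prime to $S$, this gives
\begin{equation*}
\mathrm{ord}_{F_v}(\fc_1\ff)=\mathrm{ord}_{F_v}(\ff)\text{ for }v\notin S,\qquad \mathrm{ord}_{F_v}(\fc_1\ff)=\mathrm{ord}_{F_v}(\fc_1)\text{ for }v\in S.
\end{equation*}

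For $v\notin S$, the $v$-components of $\widehat{\cO}_{\fc_1\ff}^\times$ and of $\mathfrak{O}_\ff^\times$ both equal $\O_{v,\mathrm{ord}_{F_v}(\ff)}^\times$, so the inclusion is trivial. For $v\in S$, the $v$-component of $\widehat{\cO}_{\fc_1\ff}^\times$ is $\O_{v,\mathrm{ord}_{F_v}(\fc_1)}^\times$, and the $v$-component of $\mathfrak{O}_\ff$ coincides with that of $\mathfrak{O}_1$, namely (the appropriate factor of) $\cO_{g_0,S}^\times$. By the very definition of $\fc_1$ as the smallest ideal with $\widehat{\cO}_{\fc_1}^\times\subset\mathfrak{O}_1^\times$, this local inclusion holds automatically. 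Combining the two cases gives $\widehat{\cO}_{\fc_1\ff}^\times\subset\mathfrak{O}_\ff^\times$, which is even stronger than what we need.

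The only mildly delicate point — and what I would handle with care — is the bookkeeping around $\cO_{g_0,S}$, which was introduced via its unit group only; but since the entire argument takes place at the level of unit groups (the Artin map sees only idelic units), this is not an actual obstacle. Everything else is a direct unwinding of definitions.
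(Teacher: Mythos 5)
Your proof is correct. The paper reaches the same key inclusion $\widehat{\cO}_{\fc_1\ff}\subset\mathfrak{O}_\ff$ in a single global step: it uses the identity $\mathfrak{O}_1 \cap \widehat{\cO}_\ff=\mathfrak{O}_\ff$ (which holds because $\ff$ is prime to $S$) together with the defining containment $\widehat{\cO}_{\fc_1}\subset\mathfrak{O}_1$, giving $\mathfrak{O}_\ff=\mathfrak{O}_1\cap\widehat{\cO}_\ff\supset\widehat{\cO}_{\fc_1}\cap\widehat{\cO}_\ff\supset\widehat{\cO}_{\fc_1\ff}$. You instead decompose place by place, which forces you to first record that $\fc_1$ is supported in $S$ --- a fact the paper's lattice manipulation never needs to state --- and then handle $v\in S$ and $v\notin S$ separately. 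Both routes turn on exactly the same input, $\widehat{\cO}_{\fc_1}\subset\mathfrak{O}_1$ from Remark \ref{containementfieldofdefinitions}; the paper's intersection computation is a more compact packaging of the bookkeeping your local argument spells out. A minor stylistic difference: the paper establishes the inclusion at the level of orders, from which the unit-group inclusion used in the class-field-theoretic translation follows immediately (since an inclusion of rings induces an inclusion of unit groups), whereas you work with unit groups from the start, which is equally valid.
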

\proof We have $\mathfrak{O}_1 \cap \widehat{\cO_\ff}=\mathfrak{O}_\ff$, but $\mathfrak{O}_1 \supset \widehat{\cO}_{\fc_1}$ (Remark \ref{containementfieldofdefinitions}). It follows that
$$\mathfrak{O}_\ff\supset \widehat{\cO}_{\fc_1} \cap \widehat{\cO_\ff} \supset \widehat{\cO}_{\fc_1 \ff},$$
hence $E(\fc_1 \ff) \supset \cK(\ff)$.\qed

Using Lemmas \ref{ramificationtransferfields} and \ref{transferkappa} we get the following immediate consequence:
\begin{corollary}\label{unramifiedprimesintransfer}
Let $\ff\subset \cO_F$ be a non-zero ideal of $\cO_F$  prime to $S$. If $\p$ is a prime ideal of $\cO_F$ not dividing $\fc_1 \ff$ then each prime of $E$ above $\p$ is unramified in $\cK(\ff)$.
\end{corollary}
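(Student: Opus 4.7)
The plan is to derive this as an essentially immediate consequence of the two cited lemmas. Since the statement asserts unramifiedness in $\cK(\ff)/E$ for primes of $E$ above $\p$ with $\p \nmid \fc_1 \ff$, the natural strategy is to sandwich $\cK(\ff)$ inside a transfer field for which ramification is already controlled.

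First, I would invoke Lemma \ref{transferkappa}, which gives the inclusion $\cK(\ff) \subset E(\fc_1 \ff)$. Next, I would apply Lemma \ref{ramificationtransferfields} to the ideal $\fc := \fc_1 \ff$: any prime of $E$ that does not divide $\fc_1 \ff \cdot \cO_E$ is unramified in $E(\fc_1 \ff)/E$. By hypothesis $\p \nmid \fc_1 \ff$ in $\cO_F$, so any prime of $E$ above $\p$ does not divide $\fc_1 \ff \cdot \cO_E$, and the lemma applies.

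Finally, I would use the standard fact that if $L/E$ is an unramified (at a prime $\mathfrak{q}$) algebraic extension, then so is any subextension $L'/E$ with $E \subset L' \subset L$ (the inertia subgroup of $\mathfrak{q}$ in $\Gal(L/E)$ surjects onto the inertia subgroup of $\mathfrak{q} \cap L'$ in $\Gal(L'/E)$). Applying this with $L = E(\fc_1 \ff)$ and $L' = \cK(\ff)$ yields that each prime of $E$ above $\p$ is unramified in $\cK(\ff)/E$, as desired. There is no real obstacle here: the content has been pushed entirely into the two preceding lemmas, and the corollary is a one-line transitivity argument.
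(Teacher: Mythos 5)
Your proof is correct and follows precisely the route the paper intends: it cites Lemmas \ref{ramificationtransferfields} and \ref{transferkappa} as the only inputs, and the corollary is indeed the one-line transitivity argument you describe. The paper leaves the details implicit, calling it an "immediate consequence" of those two lemmas; you have simply spelled out that consequence.
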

\begin{lemma}\label{I0nekovar}
Let $\ff$ be any non-zero ideal of $\cO_F$ satisfying
$$\ff\cO_E \nmid I_0 := \text{lcm}\{(u-1)\colon u \in (\cO_E^\times)_{\text{tors}}, u\neq 1\}.\nomenclature[G]{$I_0$}{$\text{lcm}\{(u-1)\colon u \in (\cO_E^\times)_{\text{tors}}, u\neq 1\}$}$$
For any ideal $\mathfrak{n}\subset \cO_F$, we have
$${E^\times \cap \A_{F,f}^\times \mathfrak{O}_{ \mathfrak{n} \ff}^\times }\simeq F^\times \quad {\cO_E^\times \cap \mathfrak{O}_{\mathfrak{n} \ff}^\times }\simeq \cO_F^\times.$$
\end{lemma}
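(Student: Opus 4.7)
The argument rests on the following local observation, which I would state as a preliminary sub-lemma. For every finite place $v \notin S$ (hence unramified in $E/F$) and every integer $c \geq 0$, an element $u \in \cO_{E_v}^\times$ lies in $\cO_{v,c}^\times = (\cO_{F_v} + \varpi_v^c \cO_{E_v})^\times$ if and only if $u/u^\tau \equiv 1 \pmod{\varpi_v^c \cO_{E_v}}$. In the split case $E_v = F_v \times F_v$, this is obvious since $\cO_{v,c}^\times$ consists of pairs $(a,b)$ with $a \equiv b \pmod{\varpi_v^c}$. In the inert case, writing $\cO_{E_v} = \cO_{F_v}[\alpha]$ and $u = x + y\alpha$, one uses that $\alpha - \alpha^\tau$ is a unit of $\cO_{E_v}$ (discriminant unramified) to check $u - u^\tau = y(\alpha - \alpha^\tau) \in \varpi_v^c \cO_{E_v} \iff y \in \varpi_v^c \cO_{F_v} \iff u \in \cO_{v,c}$.

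Granting this, the second identity follows as follows. Let $u \in \cO_E^\times \cap \mathfrak{O}_{\mathfrak{n}\ff}^\times$ and set $\epsilon := u/u^\tau \in \cO_E^{\times, 1}$. Since $F$ is totally real and $E$ is CM, Dirichlet's unit theorem gives $\mathrm{rank}\,\cO_E^\times = \mathrm{rank}\,\cO_F^\times = d-1$, and the norm map $N\colon \cO_E^\times \to \cO_F^\times$ restricts to squaring on $\cO_F^\times$; hence $\cO_E^{\times, 1} = \ker N$ has rank $0$ and equals a subgroup of $\mu_E$. The local observation yields $\epsilon \equiv 1 \pmod{\varpi_v^{\mathrm{ord}_v(\mathfrak{n}\ff)} \cO_{E_v}}$ for every $v \notin S$. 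If $\epsilon \neq 1$, then by definition of $I_0$ the ideal $(\epsilon - 1)\cO_E$ divides $I_0$, while the congruences above (together with the fact that $\ff$ is prime to $S$) force $\ff\cO_E \mid (\epsilon - 1)\cO_E$; combining these gives $\ff\cO_E \mid I_0$, contradicting the hypothesis. Therefore $\epsilon = 1$, i.e.\ $u \in F^\times \cap \cO_E^\times = \cO_F^\times$. The reverse inclusion is clear: $\cO_F^\times \subset \cO_{v,c}^\times$ for every $v \notin S$ and $\cO_F^\times \subset \cO_{g_0,S}^\times$ by the construction of $K_{\H,g_0,S}^Z$ (scalars lie in its image under $\det$).

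For the first identity, take $e \in E^\times \cap \A_{F,f}^\times \mathfrak{O}_{\mathfrak{n}\ff}^\times$ and write $e = a\cdot m$ with $a \in \A_{F,f}^\times$ and $m \in \mathfrak{O}_{\mathfrak{n}\ff}^\times$. Since $a^\tau = a$, we obtain
\[
\epsilon := e/e^\tau = m/m^\tau = \underline{\nu}(m) \in U_{\mathfrak{n}\ff} \cap \T^1(\Q).
\]
By Remark \ref{additionalaxioms} the torus $\T^1$ is anisotropic over $\Q$, so $\T^1(\Q)$ is discrete in $\T^1(\A_f)$; since $U_{\mathfrak{n}\ff}$ is compact open, the intersection is finite, whence $\epsilon$ is a torsion element of $E^{\times,1}$, hence $\epsilon \in \mu_E$. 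The same divisibility contradiction as above (using the congruences packaged in $U_{\mathfrak{n}\ff}^S$) then forces $\epsilon = 1$, i.e., $e \in F^\times$. The reverse inclusion $F^\times \subset E^\times \cap \A_{F,f}^\times \mathfrak{O}_{\mathfrak{n}\ff}^\times$ is trivial.

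The principal obstacle is a bookkeeping one: establishing the local equivalence $u \in \cO_{v,c}^\times \Leftrightarrow u/u^\tau \in U_v^1(c)$ uniformly in the split and inert cases, and carefully translating the multiplicative congruences at each $v \notin S$ into the single ideal-divisibility statement $\ff\cO_E \mid (\epsilon-1)\cO_E$ which conflicts with $\ff\cO_E \nmid I_0$. Once this translation is in place, the rank computation for $\cO_E^{\times,1}$ (respectively the anisotropy of $\T^1$) handles the global finiteness and reduces both statements to the same elementary contradiction.
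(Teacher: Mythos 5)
The paper does not give an argument here: the proof is a one-line citation to \cite[Proposition 2.10]{Nekovar2007}. Your proof supplies the missing argument and, as far as I can tell, it is correct and presumably reproduces the substance of Nekov\'a\v{r}'s proposition. The three essential ingredients you isolate are all sound: (i) for $v\notin S$ (so $E/F$ unramified at $v$) a unit $u\in\cO_{E_v}^\times$ lies in the order $\cO_{v,c}$ if and only if $u/u^\tau\equiv 1\pmod{\varpi_v^c\cO_{E_v}}$, because $\alpha_v-\alpha_v^\tau$ generates the (unit) relative discriminant; (ii) the global constraint forces $\epsilon=u/u^\tau$ to be a root of unity — via the rank computation for $\ker\Nm$ on $\cO_E^\times$ in one case, and via discreteness of $\T^1(\Q)$ inside $\T^1(\A_f)$ (Remark \ref{additionalaxioms}, SV5) against the compact open $U_{\mathfrak n\ff}$ in the other; and (iii) combining $\ff\cO_E\mid(\epsilon-1)\cO_E$ with $(\epsilon-1)\cO_E\mid I_0$ contradicts $\ff\cO_E\nmid I_0$. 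Two small points worth flagging, both inherited from the paper's conventions rather than defects of your argument: the hypothesis should include that $\ff$ is prime to $S$ (otherwise $\mathfrak O_{\mathfrak n\ff}$ is not even defined), which you tacitly assume; and the reverse inclusions depend on $\cO_F^\times$ landing in the $S$-component $\cO_{g_0,S}^\times$, which is only loosely pinned down in \S\ref{fieldkappa} — you state this as if it were automatic, and it is harmless, but it relies on how one normalizes that preimage under $\underline\nu$ rather than on anything in your chain of congruences.
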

\proof This is \cite[Proposition 2.10]{Nekovar2007}.\qed

Applying Lemma \ref{I0nekovar} to (\ref{galoistransfersequence}) yields an isomorphism of groups:
\begin{corollary}\label{galoisgroupsI0}
Let $\ff$ be any non-zero ideal of $\cO_F$ prime to $S$ and $\ff\cO_E \nmid I_0$. For any $\cO_F$-ideal $\mathfrak{n}\subset\cO_F$, we have:
$$ \Gal(\cK(\mathfrak{n} \ff)/\cK(\ff)) \simeq \dfrac{\mathfrak{O}_{ \ff}^\times}{\mathfrak{O}_{\mathfrak{n} \ff}^\times}\simeq \prod_{v\in \Spec(\cO_F)\colon \p_{v} \mid {\mathfrak{n}}}\frac{\O_{v,{\text{ord}}_{F_v}(\ff)}^\times}{\O_{v,{\text{ord}}_{F_v}(\ff\mathfrak{n})}^\times}.$$
\end{corollary}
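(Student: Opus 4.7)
The plan is to apply Lemma \ref{I0nekovar} to kill the global error term in the exact sequence \eqref{galoistransfersequence} and then to compute the local decomposition of the middle quotient.

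First, observe that since $\ff\cO_E \nmid I_0$ and $\ff\cO_E \mid \mathfrak{n}\ff\cO_E$, a fortiori $\mathfrak{n}\ff\cO_E \nmid I_0$, so Lemma \ref{I0nekovar} applies to both conductors $\ff$ and $\mathfrak{n}\ff$. In particular, the global error term in \eqref{galoistransfersequence} becomes
\[
\frac{E^\times \cap \A_{F,f}^\times \mathfrak{O}_{\ff}^\times}{E^\times \cap \A_{F,f}^\times \mathfrak{O}_{\mathfrak{n}\ff}^\times} \simeq \frac{F^\times}{F^\times} = 1.
\]
Feeding this into \eqref{galoistransfersequence} yields the first isomorphism
\[
\Gal(\cK(\mathfrak{n}\ff)/\cK(\ff)) \simeq \dfrac{\mathfrak{O}_{\ff}^\times}{\mathfrak{O}_{\mathfrak{n}\ff}^\times}.
\]

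For the second isomorphism, I would use the product decomposition of $\mathfrak{O}_\bullet$ already spelled out just above the corollary. Namely, recalling that
\[
\mathfrak{O}_\ff = \cO_{g_0,S} \times \prod_{v \not\in S}\cO_{v,\mathrm{ord}_{F_v}(\ff)},
\]
the $S$-component agrees in $\mathfrak{O}_{\ff}$ and $\mathfrak{O}_{\mathfrak{n}\ff}$ (since $\mathfrak{n}$ is prime to $S$), and at a place $v \not\in S$ one has $\cO_{v,\mathrm{ord}_{F_v}(\ff)} = \cO_{v,\mathrm{ord}_{F_v}(\mathfrak{n}\ff)}$ unless $\p_v \mid \mathfrak{n}$. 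Thus
\[
\dfrac{\mathfrak{O}_{\ff}^\times}{\mathfrak{O}_{\mathfrak{n}\ff}^\times} \simeq \prod_{\substack{v \not\in S \\ \p_v \mid \mathfrak{n}}} \frac{\cO_{v,\mathrm{ord}_{F_v}(\ff)}^\times}{\cO_{v,\mathrm{ord}_{F_v}(\mathfrak{n}\ff)}^\times},
\]
which is exactly the desired description (the condition $v \not\in S$ is automatic from $\p_v \mid \mathfrak{n}$ since $\mathfrak{n}$ is prime to $S$).

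There is essentially no obstacle: the corollary is an immediate combination of the exact sequence \eqref{galoistransfersequence} and Lemma \ref{I0nekovar}, together with the elementary fact that $\mathfrak{O}_{\ff}$ and $\mathfrak{O}_{\mathfrak{n}\ff}$ differ only at the primes dividing $\mathfrak{n}$. The only point requiring minor care is verifying that the hypothesis $\ff\cO_E \nmid I_0$ also licenses the application of Lemma \ref{I0nekovar} at the level $\mathfrak{n}\ff$, which follows from divisibility.
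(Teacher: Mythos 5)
Your proof is correct and follows exactly the paper's route: the paper's own justification is the one-line remark ``Applying Lemma \ref{I0nekovar} to (\ref{galoistransfersequence}) yields an isomorphism of groups,'' together with the product decomposition of $\mathfrak{O}_{\ff}^\times/\mathfrak{O}_{\ff\mathfrak{n}}^\times$ already displayed just above the corollary. One tiny simplification: Lemma \ref{I0nekovar} is stated for arbitrary $\mathfrak{n}$ once $\ff\cO_E \nmid I_0$, so you do not even need the ``a fortiori'' step about $\mathfrak{n}\ff\cO_E \nmid I_0$ — a single invocation of the lemma at conductor $\ff$ already identifies both $E^\times \cap \A_{F,f}^\times \mathfrak{O}_{\ff}^\times$ and $E^\times \cap \A_{F,f}^\times \mathfrak{O}_{\mathfrak{n}\ff}^\times$ with $F^\times$.
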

\subsection{Interlude on orders}\label{interludeorders}
Define the Artin symbol, $$\left(\frac{\mathfrak{d}_E}{\p_v}\right)=\begin{cases}-1 &\text{ if $\p_v$ remains inert in $E$,}\\
1 &\text{ if $\p_v$ splits in $E$,} \end{cases}$$
where $\mathfrak{d}_E$ is the different ideal of $E$. For $\p_{v}\in \Spec(\cO_F)$, with $v\not\in S$ (in particular unramified in $E/F$), 
set $\mathds{F}^k(v):=\O_{F_v}/\p_{v}^k\nomenclature[G]{$\F^k(v)$}{$k^{\text{th}}$-neighbourhoud at $v$, in $v$}$. When $k=1$ this is the residue field of $\O_{F_v}$ whose size is $q_v:=\#\mathds{F}^1(v)$. For any integer $k \ge 1$, set $\F^k(v)[\upepsilon]:= \F^k(v)[X]/\langle X^2\rangle$ to be the infinitesimal deformation $\F^k(v)$-algebra and set $\mathds{E}^k(v):=\O_{E_v}/\p_v^k\O_{E_v}\nomenclature[G]{$\mathds{E}^k(v)$}{}$. We then have ring isomorphisms
$$\mathds{E}^1(v)\simeq\begin{cases}\text{a quadratic extension of } \F^1(v) &\text{ if }\left(\frac{\mathfrak{d}_E}{\p_v}\right)=-1,\\
 \F^1(v)\oplus  \F^1(v)&\text{ if }\left(\frac{\mathfrak{d}_E}{\p_v}\right)=1.\end{cases}$$
Here, we summarize a few facts on $\cO_{F_v}$-orders of $E_v$. 
The map that sends an ideal $\p_v^{c_v} \subset \cO_{F_v}$ to the order $\cO_{v,c_v}= \cO_{F_v} + \p_v^{c_v}  \cO_{E_v}$ induces a bijection between the set of ideals of $\cO_{F_v}$ and the set of $\cO_{F_v}$-orders in $E_v$. These orders are all Gorenstein and local whenever $c_v \ge 1$ with maximal ideal $\mathfrak{P}_{v,c_v}:=\p_v \cO_{v,c_v-1}$ and 
$$\cO_{v,c_v}/\mathfrak{P}_{v,c_v} \simeq \F^1(v).$$
When $c_v=0$, the order $\cO_{v,0}$ is only semi-local if $v$ splits in $E$, since $\cO_{E_v} \simeq \cO_{F_v}\oplus  \cO_{F_v}$.

Let $\Tr \colon \O_{E_v} \to \cO_{F_v}$ be the usual trace map $z \mapsto z + \overline{z}$. Let $\alpha_v\in \cO_{E_v}^\times$ be any generator of the rank $1$ $\cO_{F_v}$-module $\ker \Tr$\footnote{For example, if $v$ split then one has a decomposition $\cO_{E_v}\simeq \cO_{F_v}\oplus \cO_{F_v}$, such that $\tau \in \Gal(E/F)$ (or just the complex conjugation since we have identified $E$ with $\iota_1(E)$) acts by swapping the components. Therefore, $\ker \Tr= \cO_{F_v}\cdot (1,-1) $ and one can take $\alpha_v =(1,-1)$, in addition, observe that $\overline{(1,-1)}=-(1,1)$ and $(1,-1)^2=(1,1) \in \cO_{F_v}$ where $ \cO_{F_v}$ is embedded diagonally in $\cO_{E_v}$.}, therefore $\overline{\alpha}_v=-\alpha_v, \alpha_v^2 \in \cO_{F_v}$ and for every $c_v\ge 0$ we have $\cO_{v,c_v}= \cO_{F_v}\oplus  \p_v^{c_v}\alpha_v$.
\begin{lemma}
Let $c>0$, we have an isomorphism of groups
$$\dfrac{\cO_{v,0}^\times}{\cO_{v,c}^\times}\simeq \mathds{E}^c(v)^\times/\F^c(v)^\times.$$
\end{lemma}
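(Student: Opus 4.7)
My plan is to exhibit the desired isomorphism as coming from the first isomorphism theorem applied to an explicit surjection $\phi : \cO_{v,0}^\times \twoheadrightarrow \mathds{E}^c(v)^\times/\F^c(v)^\times$ whose kernel is exactly $\cO_{v,c}^\times$. Since $v\notin S$, the extension $E_v/F_v$ is unramified (étale), so $\cO_{E_v}$ is a free $\cO_{F_v}$-module of rank $2$; in particular the natural map $\F^c(v)=\cO_{F_v}/\varpi_v^c \hookrightarrow \cO_{E_v}/\varpi_v^c\cO_{E_v}=\mathds{E}^c(v)$ is injective (its kernel is $\varpi_v^c\cO_{E_v}\cap\cO_{F_v}=\varpi_v^c\cO_{F_v}$ by flatness), so $\F^c(v)^\times$ really does sit inside $\mathds{E}^c(v)^\times$ and the target of the statement makes sense.

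The map $\phi$ I would take is the composition of reduction $\cO_{v,0}^\times=\cO_{E_v}^\times\to\mathds{E}^c(v)^\times$ with the projection $\mathds{E}^c(v)^\times\twoheadrightarrow\mathds{E}^c(v)^\times/\F^c(v)^\times$. Surjectivity of reduction is straightforward in both the inert and split cases: in the inert case $\cO_{E_v}$ is a DVR with uniformizer $\varpi_v$, so any lift of an element of $\mathds{E}^c(v)^\times$ is automatically a unit; in the split case $\cO_{E_v}=\cO_{F_v}\oplus\cO_{F_v}$, and an element is a unit iff both components are, which is preserved under lifting.

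Next I would compute $\ker\phi$. An element $u\in\cO_{E_v}^\times$ lies in $\ker\phi$ iff its reduction mod $\varpi_v^c\cO_{E_v}$ lies in $\F^c(v)^\times$; equivalently there exists $a\in\cO_{F_v}$ mapping to a unit in $\F^c(v)$ (hence $a\in\cO_{F_v}^\times$, by Hensel) with $u\equiv a\bmod\varpi_v^c\cO_{E_v}$. This condition is equivalent to $u\in a+\varpi_v^c\cO_{E_v}\subset\cO_{F_v}+\varpi_v^c\cO_{E_v}=\cO_{v,c}$. Thus $\ker\phi=\cO_{E_v}^\times\cap\cO_{v,c}$, and it remains to see that this equals $\cO_{v,c}^\times$. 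For this I would use that $\cO_{v,c}$ is a local subring of $\cO_{E_v}$ (when $c\geq 1$) with maximal ideal $\mathfrak{P}_{v,c}=\varpi_v\cO_{F_v}+\varpi_v^c\cO_{E_v}$ and residue field $\F^1(v)$; writing $u=a+\varpi_v^c b$ with $a\in\cO_{F_v}^\times$ and $b\in\cO_{E_v}$, one has $u=a(1+a^{-1}\varpi_v^c b)$ with the second factor in $1+\mathfrak{P}_{v,c}\subset\cO_{v,c}^\times$, so $u\in\cO_{v,c}^\times$ as required.

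The argument is essentially bookkeeping, so I do not anticipate a serious obstacle; the one point that requires a little care is the compatibility of "being a unit in the big ring $\cO_{E_v}$" with "being a unit in the suborder $\cO_{v,c}$", and this is exactly where the explicit description of the maximal ideal $\mathfrak{P}_{v,c}$ recalled earlier in \S\ref{interludeorders} is used. Once $\ker\phi=\cO_{v,c}^\times$ is established, the first isomorphism theorem yields $\cO_{v,0}^\times/\cO_{v,c}^\times\simeq\mathds{E}^c(v)^\times/\F^c(v)^\times$, as claimed.
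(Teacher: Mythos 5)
Your proposal takes the same route as the paper: both reduce to studying the composite $\cO_{v,0}^\times\to\mathds{E}^c(v)^\times\to\mathds{E}^c(v)^\times/\F^c(v)^\times$ and identify the kernel with $\cO_{v,c}^\times$, using the fact that the image of $\cO_{F_v}$ in $\mathds{E}^c(v)$ is exactly $\F^c(v)$. Your write-up is somewhat more explicit about the injectivity of $\F^c(v)\hookrightarrow\mathds{E}^c(v)$ and the identification $\cO_{E_v}^\times\cap\cO_{v,c}=\cO_{v,c}^\times$ via the maximal ideal $\mathfrak{P}_{v,c}$ (which the paper compresses into the single equality $\cO_{F_v}^\times(1+\p_v^c\cO_{v,0})=\cO_{v,c}^\times$), but the argument is the same.
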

\proof
Consider the following composition of reduction maps
$$\begin{tikzcd}
\cO_{E_v} =\cO_{v,0} \arrow[r,two heads]& \mathds{E}^k(v)^\times\arrow[r,two heads]&  \mathds{E}^k(v)^\times/\F^k(v)^\times.
\end{tikzcd}$$
The quotient map of rings $\cO_{v,0} \twoheadrightarrow \cO_{v,0}/ \p_v^c \cO_{v,0}$ induces a surjective homomorphism of groups $\cO_{v,0}^\times \twoheadrightarrow\left(\cO_{v,0}/ \p_v^c \cO_{v,0}\right)^\times$ with kernel $1+ \p_v^c \cO_{v,0}$, i.e. 
$$\cO_{v,0}^\times/1+ \p_v^c \cO_{v,0} \simeq \left(\cO_{v,0}/ \p_v^c \cO_{v,0}\right)^\times.$$
But since the diagonal image of $\cO_F \to \cO_{v,0}/ \p_v^c \cO_{v,0}$ is precisely $\F^k(v)$, we deduce that the kernel of the composition following of reduction maps
$$\begin{tikzcd}
f\colon\cO_{E_v} =\cO_{v,0} \arrow[r,two heads]& \mathds{E}^k(v)^\times\arrow[r,two heads]&  \mathds{E}^k(v)^\times/\F^k(v)^\times.
\end{tikzcd}$$ 
is precisely $\cO_F^\times (1+ \p_v^c \cO_{v,0}) = \cO_{v,c}^\times.$ \qed
{\begin{lemma}\label{sizeordersvertical}Let $c \ge k >0$, we have an isomorphism
$$\cO_{v,c}^\times/\cO_{v,c+k}^\times\simeq \left(\F^k(v)[\upepsilon]\right)^\times\big/ \F^k(v)^\times.$$
\end{lemma}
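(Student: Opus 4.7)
The plan is to build an explicit isomorphism by realising both sides as deformation-theoretic objects built from the residue field $\F^k(v)$. Using the $\cO_{F_v}$-basis $(1,\alpha_v)$ of $\cO_{E_v}$ recalled above, every order $\cO_{v,c}=\cO_{F_v}+\p_v^c\cO_{E_v}$ decomposes as a free $\cO_{F_v}$-module
\[
\cO_{v,c}=\cO_{F_v}\oplus \p_v^c\alpha_v\cO_{F_v},
\]
and a straightforward check (using that $\cO_{v,c}$ is local with maximal ideal $\mathfrak P_{v,c}$ for $c\ge 1$) shows that an element $x=a+b\p_v^c\alpha_v$ of $\cO_{v,c}$ is a unit if and only if $a\in\cO_{F_v}^\times$.

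I would then define a map
\[
\varphi\colon \cO_{v,c}^\times\longrightarrow \bigl(\F^k(v)[\upepsilon]\bigr)^\times\big/\F^k(v)^\times,\qquad a+b\p_v^c\alpha_v\longmapsto [\bar a+\bar b\,\upepsilon],
\]
where $\bar a,\bar b\in\F^k(v)$ are the reductions modulo $\p_v^k$. The crucial computation is that for $x_1=a_1+b_1\p_v^c\alpha_v$ and $x_2=a_2+b_2\p_v^c\alpha_v$ the product is
\[
x_1x_2=(a_1a_2+b_1b_2\p_v^{2c}\alpha_v^2)+(a_1b_2+a_2b_1)\p_v^c\alpha_v.
\]
The hypothesis $c\ge k$ is exactly what makes $b_1b_2\p_v^{2c}\alpha_v^2\in\p_v^k\cO_{F_v}$, so in $\F^k(v)[\upepsilon]$ the image is $(\bar a_1+\bar b_1\upepsilon)(\bar a_2+\bar b_2\upepsilon)$, confirming $\varphi$ is multiplicative. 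This same calculation, applied to an element of $\cO_{v,c+k}^\times$ (where the coefficient of $\alpha_v$ lies in $\p_v^{c+k}\cO_{F_v}$), shows that multiplication by such an element only changes $\varphi(x)$ by an element of $\F^k(v)^\times$, so $\varphi$ is well-defined on the quotient $\cO_{v,c}^\times/\cO_{v,c+k}^\times$ and is indeed a group homomorphism.

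It remains to verify that $\varphi$ is surjective with kernel trivial. Surjectivity is immediate since a unit $\bar a+\bar b\upepsilon\in(\F^k(v)[\upepsilon])^\times$ (so $\bar a\in \F^k(v)^\times$) is hit by any lift $a+b\p_v^c\alpha_v$ with $a\in\cO_{F_v}^\times$ reducing to $\bar a$ and $b\in\cO_{F_v}$ reducing to $\bar b$. For injectivity, $x=a+b\p_v^c\alpha_v$ lies in the kernel exactly when $\bar b=0$, i.e.\ $b\in\p_v^k\cO_{F_v}$, which means $x\in\cO_{F_v}^\times+\p_v^{c+k}\alpha_v\cO_{F_v}=\cO_{v,c+k}^\times$.

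The only real subtlety is tracking the role of the hypothesis $c\ge k$: it is needed precisely to make the square $(\p_v^c\alpha_v)^2=\p_v^{2c}\alpha_v^2$ trivial modulo $\p_v^k$, which is what produces the dual-number relation $\upepsilon^2=0$ on the right-hand side. Once that point is isolated, the rest is a routine unpacking of the definitions.
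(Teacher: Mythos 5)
Your proof is correct and takes essentially the same approach as the paper: both exploit the $\cO_{F_v}$-module decomposition $\cO_{v,c}=\cO_{F_v}\oplus\p_v^c\alpha_v\cO_{F_v}$ together with the observation that $(\varpi_v^c\alpha_v)^2\in\p_v^k\cO_{F_v}$ under the hypothesis $c\ge k$, producing the dual-number structure on the target side. The paper packages this as a ring isomorphism $\cO_{v,c}/\p_v^k\cO_{v,c}\simeq\F^k(v)[\upepsilon]$ (finished by a cardinality count) before passing to units, while you build the map directly on unit quotients and verify multiplicativity, kernel and image by hand; the two arguments are equivalent modulo this reorganisation.
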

\begin{proof}
 As in the previous proof, consider the quotient map of rings $\cO_{v,c} \twoheadrightarrow \cO_{v,c}/ \p_v^k \cO_{v,c}$. It induces a surjective homomorphism of groups $\cO_{v,c}^\times \twoheadrightarrow\left(\cO_{v,c}/ \p_v^k \cO_{v,c}\right)^\times$ with kernel $1+ \p_v^k \cO_{v,c}$, i.e. 
$$\cO_{v,c}^\times/1+ \p_v^k \cO_{v,c} \simeq \left(\cO_{v,c}/ \p_v^k \cO_{v,c}\right)^\times.$$
We may also consider the map of rings 
$\cO_{v,c+k} \twoheadrightarrow \cO_{v,c+k}/ \p_v^k \cO_{v,c}$. It is clearly surjective and induces the isomorphism of groups
$$\cO_{v,c+k}^\times/\big(1+\p_v^k \cO_{v,c}\big) \simeq \left(\cO_{v,c+k}/ \p_v^k \cO_{v,c}\right)^\times.$$
Therefore, 
\begin{align*}
\cO_{v,c}^\times/\cO_{v,c+k}^\times &\simeq \cO_{v,c}^\times/ (1+ \p_v^k \cO_{v,c}) \big/ \cO_{v,c+k}^\times/ (1+ \p_v^k \cO_{v,c})\\
&\simeq \left(\cO_{v,c}/ \p_v^k \cO_{v,c}\right)^\times \big/ \left(\cO_{v,c+k}/ \p_v^k \cO_{v,c}\right)^\times
\\
&\simeq \left(\cO_{v,c}/ \p_v^k \cO_{v,c}\right)^\times \big/ \left(\cO_{F_v}/ \p_v^k \cO_{F_v}\right)^\times
\end{align*}
Recall $\F^k(v)=\cO_{F_v}/ \p_v^k \cO_{F_v}$ and consider the homomorphism of rings
$\F^k(v)[X]\to \cO_{v,c}/ \p_v^k \cO_{v,c}$, given by $X\mapsto (\varpi_v^{c}\alpha_v \mod \p_v^k\cO_{v,c})$. 
The kernel contains $\langle X^2\rangle$ (because $c \ge k$) and gives a surjective map between two sets with the same order. 
Hence, for any $c\ge k>0$ that
\begin{align*}\cO_{v,c}^\times/\cO_{v,c+k}^\times\simeq & \left(\F^k(v)[\upepsilon]\right)^\times\big/ \F^k(v)^\times.\qedhere \end{align*}
\end{proof}}
In particular, $\mathbb{G}_{v} :=\mathds{E}^1(v)^\times/\F^1(v)^\times \simeq \O_{v,0}^\times/\O_{v,1}^\times $
is (cyclic if $v$ is inert in $E/F$) of order $q_v-\left(\frac{\mathfrak{d}_E}{\p_v}\right)$ and $ \mathbb{G}_{v}(\upepsilon):={\F^1(v)[\upepsilon]^\times/\F^1(v)^\times}\simeq \O_{v,c}^\times/\O_{v,c+1}^\times$.
Finally, for every $c>0$, we have the following short exact sequence (of abelian groups)
$$\begin{tikzcd}[]
    1\arrow{r}&\frac{\O_{v,1}^\times}{\O_{v,c}^\times}  \arrow{r} &\frac{\O_{v,0}^\times}{\O_{v,c}^\times}\arrow{r}& \frac{\O_{v,0}^\times}{\O_{v,1}^\times}\arrow{r}&1,
\end{tikzcd}$$
thus
$$\#\left(\dfrac{\O_{v,0}^\times}{\O_{v,c}^\times}\right)=|\mathbb{G}_{v}(\upepsilon)|^{c-1}\cdot |\mathbb{G}_{v}|= q_v^{c-1} \left(q_v-\left(\frac{\mathfrak{d}_E}{\p_v}\right)\right).$$
\subsection{Galois groups}
\begin{definition}
Set $$\mathcal{P}:=\left\{\p \in \Spec(\cO_F)\colon \p \text{ is unramified in }E/F, \p\not\in S, \p \nmid \fc_1, \p \cO_E \nmid I_0\right\}\nomenclature[G]{$\mathcal{P},\mathcal{P}_{sp},\mathcal{P}_{in}$}{Primes unramified, away from $S$,$\fc_1$ and from $I_0$, split, inert},$$
and let $\mathcal{P}_{sp}$, (resp. $\mathcal{P}_{in} $) denotes the subset of $\mathcal{P}$ of prime ideals of $F$ that are split, (resp. inert) in $E/F$. 

Denote by $\mathcal{N}= \bigcup_{r\ge 1} \mathcal{N}^r =\bigcup_{r\ge 1} \cup_{s=0}^{s= r} \left(\mathcal{N}_{sp}^{s}  \cdot  \mathcal{N}_{in}^{r-s}\right)$ the set of square-free products of ideals in $\mathcal{P}$:
$$\forall r\ge 1 \colon \mathcal{N}_{?}^r:=\{\p_{1} \cdots \p_{r}\colon \p_j \in \mathcal{P}_{?} \text{ distinct}\} \text{ for } ?\in \{\text{in, sp}\}.\nomenclature[G]{$\mathcal{N}_{?}^r$}{$\{\p_{1} \cdots \p_{r}\colon \p_j \in \mathcal{P}_{?} \text{ distinct}\} \text{ for } ?\in \{\text{in, sp}\}$}$$
\end{definition}
So in particular, one has:
\begin{proposition}\label{VII11}
Let $\ff=\prod_{i}\p_i\in\mathcal{N}^r$ and $\p\in\mathcal{P}$ prime to $\ff$, i.e. $\p\ff \in \mathcal{N}^{r+1}$. The extension $\cK(\p\ff)/\cK(\ff)$ is of degree $(q_v-\left(\frac{\mathfrak{d}_E}{\p_v}\right))/u(r)$, where 
$$u(0)=[E^\times \cap \A_{F,f}^\times \mathfrak{O}_1: F^\times]= c  [\cO_E^\times \cap \mathfrak{O}_1 : \cO_F^\times], \text{ with }c\in \{1,2\}\nomenclature[G]{$u(r)$}{$u(0)=[E^\times \cap \A_{F,f}^\times \mathfrak{O}_1: F^\times]$ and $u(r)=1$ for $r\ge1$}$$
and $u(r)=1 $ if $r\ge1$.
\end{proposition}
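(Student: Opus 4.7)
The plan is to feed $\mathfrak{n}=\p$ into the exact sequence \eqref{galoistransfersequence} and then analyse the local and global factors separately, splitting according to whether $r\ge 1$ or $r=0$.

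\textbf{Local factor.} Since $\p$ is prime to $\ff$, we have $\mathrm{ord}_{F_v}(\ff)=0$ and $\mathrm{ord}_{F_v}(\p\ff)=1$, and for $v'\neq v$ dividing $\p\ff$ the orders at $\ff$ and $\p\ff$ agree. Hence the local factor reduces to the single term
\[
\frac{\mathfrak{O}_{\ff}^{\times}}{\mathfrak{O}_{\p\ff}^{\times}}\ \simeq\ \frac{\O_{v,0}^{\times}}{\O_{v,1}^{\times}}\ =\ \mathbb{G}_v,
\]
which by the computation preceding Proposition~\ref{VII11} has order $q_v-\left(\tfrac{\mathfrak{d}_E}{\p_v}\right)$.

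\textbf{Case $r\ge 1$.} Here $\ff$ is divisible by some prime in $\mathcal{P}$, so $\ff\cO_E\nmid I_0$, and therefore $\p\ff\cO_E\nmid I_0$ as well. I would invoke Corollary~\ref{galoisgroupsI0} (equivalently, Lemma~\ref{I0nekovar} applied to both $\ff$ and $\p\ff$) which kills the left-hand global error term in \eqref{galoistransfersequence}. The resulting isomorphism $\Gal(\cK(\p\ff)/\cK(\ff))\simeq \O_{v,0}^{\times}/\O_{v,1}^{\times}$ gives the order $q_v-\left(\tfrac{\mathfrak{d}_E}{\p_v}\right)=(q_v-\left(\tfrac{\mathfrak{d}_E}{\p_v}\right))/u(r)$, as required.

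\textbf{Case $r=0$.} Now $\ff=1$ and we cannot kill both error terms at once: although $\p\cO_E\nmid I_0$ (so Lemma~\ref{I0nekovar} does give $E^{\times}\cap \A_{F,f}^{\times}\mathfrak{O}_{\p}^{\times}=F^{\times}$), the group $E^{\times}\cap \A_{F,f}^{\times}\mathfrak{O}_{1}^{\times}$ need not equal $F^{\times}$ and constitutes the genuine global obstruction. Substituting into \eqref{galoistransfersequence} yields the exact sequence
\[
1\longrightarrow \frac{E^{\times}\cap \A_{F,f}^{\times}\mathfrak{O}_{1}^{\times}}{F^{\times}}\longrightarrow \frac{\O_{v,0}^{\times}}{\O_{v,1}^{\times}}\longrightarrow \Gal(\cK(\p)/\cK)\longrightarrow 1,
\]
so $[\cK(\p):\cK]=(q_v-\left(\tfrac{\mathfrak{d}_E}{\p_v}\right))/u(0)$ with $u(0):=[E^{\times}\cap \A_{F,f}^{\times}\mathfrak{O}_{1}^{\times}:F^{\times}]$. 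The equality $u(0)=c\,[\cO_E^{\times}\cap \mathfrak{O}_{1}^{\times}:\cO_F^{\times}]$ with $c\in\{1,2\}$ then follows from the lemma already stated in \S\ref{kappatransferfields} (the natural map from the unit quotient has cokernel of order at most $2$), applied with $\mathfrak{n}=1$ to compare the idelic and unit versions of the intersection.

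\textbf{Main obstacle.} The only delicate point is the $r=0$ case: controlling the global error term $E^{\times}\cap \A_{F,f}^{\times}\mathfrak{O}_{1}^{\times}$, which, unlike the $r\ge 1$ case, is not automatically $F^{\times}$. Everything else is a bookkeeping application of \eqref{galoistransfersequence} combined with the order computation $\#(\O_{v,0}^{\times}/\O_{v,1}^{\times})=q_v-\left(\tfrac{\mathfrak{d}_E}{\p_v}\right)$ recalled in \S\ref{interludeorders}.
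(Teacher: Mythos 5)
Your proof follows the paper's argument exactly: for $r\ge 1$ you invoke Corollary \ref{galoisgroupsI0}, and for $r=0$ you plug $\ff=1$, $\mathfrak{n}=\p$ into (\ref{galoistransfersequence}) and use Lemma \ref{I0nekovar} to identify $E^\times\cap\A_{F,f}^\times\mathfrak{O}_\p^\times$ with $F^\times$. One small slip at the very end: to obtain $u(0)=c\,[\cO_E^\times\cap\mathfrak{O}_1^\times:\cO_F^\times]$ with $c\in\{1,2\}$, you should apply the unnumbered lemma of \S\ref{kappatransferfields} with $\ff=1$ and $\mathfrak{n}=\p$ (so that, by Lemma \ref{I0nekovar}, the two denominators collapse to $F^\times$ and $\cO_F^\times$); taking $\mathfrak{n}=1$ as you wrote makes both quotients trivial and yields nothing.
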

\begin{proof}
If $r\ge 1$ this is a special case of Corollary \ref{galoisgroupsI0}, which says
$$\Gal(\cK(\p\ff)/\cK(\ff)) \simeq \mathbb{G}_{v},$$
thus of order $q_v-\left(\frac{\mathfrak{d}_E}{\p_v}\right)$.
If $r=0$, recall that by (\ref{galoistransfersequence}) and since $\p\cO_E \nmid I_0$, we have an exact sequence
\begin{equation}\label{surjectioningalois}\begin{tikzcd} \frac{E^\times \cap \A_{F,f}^\times \mathfrak{O}_{1}^\times }{F^\times}\arrow[r,hook]&
\frac{\mathfrak{O}_{1}^\times}{\mathfrak{O}_{\p}^\times}\simeq \frac{\cO_{v_\p,0}^\times}{\cO_{v_\p,1}}\arrow[r,two heads]&
\Gal(\cK(\p)/\cK),
\end{tikzcd}\end{equation}
hence, the extension $\cK(\p)/\cK$ is of degree $(q_v-\left(\frac{\mathfrak{d}_E}{\p_v}\right))/[E^\times \cap \A_{F,f}^\times \mathfrak{O}_{1}^\times :F^\times]$.
\end{proof}
{\subsection{Global to local Galois/Hecke action on cycles}
We have a surjection (see \S \ref{defcycles}) of $\underline{\T}^1(\A_f)\times \cH_K$-modules
$$\begin{tikzcd}\pi_{\text{cyc}}\colon\Z[\H^{\der}(\A_f)\backslash \G(\A_f)/ K]\arrow[r, twoheadrightarrow] & \Z[\cZ_{\G,K}(\H)].\end{tikzcd}$$
The left-hand-side module is factorizable (see \cite[Def. 5.8]{GetHah2019}), i.e.:
$$\Z[\H^{\der}(\A_f)\backslash \G(\A_f)/ K] = \Z[\underline{\H}^{\der}(F_S)\backslash \underline{\G}(F_S)/ K_S] \otimes \bigotimes_{v \not\in S}^\prime\Z[\underline{\H}^{\der}(F_v)\backslash \underline{\G}(F_{v})/K_v],$$
where, $\otimes_{v \not\in S}^\prime$ is the restricted product with respect to the elements\footnote{The notation $[g]_{v}\nomenclature[G]{$[g]_{v}$}{The class ${{\H}^{\der}(F_v)\cdot g \cdot K_v}\in {\H}^{\der}(F_v)\backslash {\G}(F_{v})/K_v$}$, ($g\in {\G}(F_{v})$), is for ${{\H}^{\der}(F_v)\cdot g \cdot K_v}\in {\H}^{\der}(F_v)\backslash {\G}(F_{v})/K_v$.} $$\{[1]_{v} \in \underline{\H}^{\der}(F_v)\backslash \underline{\G}(F_{v})/K_v\}_{v \not\in S},$$and the equality above intertwines the action of  $\cH_K$ (resp. $\underline{\T}^1(\A_f)$) with the action of $$\cH_{K_S} \otimes \bigotimes_{v \not\in S}^\prime \cH_{K_v}\quad (\text{resp. } \underline{\T}^1(F_S)\times \prod_{v \not\in S}^\prime \underline{\T}^1(F_v)),$$
where, $\cH_{K_S}:=\text{End}_{\Z[\underline{\G}(F_S)]}\Z[\underline{\G}(F_S)/K_S]$ and ${\cH_{K_v}=\text{End}_{\Z[\underline{\G}(F_v)]}\Z[\underline{\G}(F_v)/K_v]}$, $v \not\in S$.
where $\otimes_{v \not\in S}^\prime$ is  the restricted product with respect to
$$\{\text{Id}_{\underline{\G}(F_v)/K_v}\}_{v \not\in S} \quad (\text{resp. } \{\underline{\T}^1(\cO_{F_v})\}_{v \not\in S}).$$

\section{Proof of distribution relations in split cases}\label{splitunitarycase}
Let $v\in \mathcal{P}_{sp}$. We recall once again that an embedding $\iota_v \colon \overline{F} \hra \overline{F}_v$ has been fixed in at the beginning of \S \ref{SectionShimura}. Let $w$ the place of $E$ above $v$ determined by $\iota_v$ and $\overline{w}$ its conjugate. We abuse notation and also write $w$ for the place above $w$ determined by this choice in any field extension of $E$ contained in the fixed algebraic closure $\overline{F}$. Let $\varpi_v$ be a uniformizer for $F_v = E_w$, $q_v$ for the cardinality of the residue field and $p$ for the rational prime below $v$. 
\subsection{Normalization isomorphism in the split case}\label{normalization}
We identify the group ${\U}(V)_{/{F_v}} \times {\U}(W)_{/{F_v}}$ with $\GL(V_{w})_{/{F_v}}  \times \GL(W_{w})_{/{F_v}} $ as follows: Recall that for any Hermitian $E$-space $\mathcal V$ 
$$
{\U}({\mathcal{V}})(F_v) = \{g \in \GL(\mathcal V)(E \otimes_F F_v) \colon \psi_{v}( gx, gy ) = \psi_{v}( x, y), \ \forall x, y \in \mathcal V \otimes_F F_v \}, 
$$
where $\psi_{v}= \psi_{F_v}$ (see footnote \ref{tensorhermitianspace} in on page ~\pageref{tensorhermitianspace}). We have
$$
E_v = E \otimes_F F_v = E_{{w}} \oplus E_{\overline{w}} \simeq F_v \oplus F_v,
$$
where, the action of complex conjugation on the left-hand side corresponds to the involution $(s, t) \mapsto (t, s)$ on the right-hand side. Thus, one has $ \mathcal V \otimes_F F_v = \mathcal V_{ {w}} \oplus \mathcal V_{\overline{w}}$,
and
$$
\GL(\mathcal V)(E \otimes_F F_v) = \GL(\mathcal V_{{w}}) \times \GL(\mathcal V_{\overline{w}}) \simeq \GL(\mathcal V)(F_v) \times \GL(\mathcal V)(F_v).
$$
The hermitian form $\psi_v$ takes values in $E_v=E_w\oplus E_{\overline{w}}$, write $\psi_v=(\psi_w,\psi_{\overline{w}})$ for its two component. For any $x, y\in\mathcal V \otimes_F F_v=\mathcal V_{ {w}} \oplus\mathcal  V_{\overline{w}}$, write $x = x_{{w}} + x_{\overline{w}}$ and $y = y_{ {w}} + y_{\overline{w}}$. Recall that the original hermitian form $\psi$ is semi-linear on the right. By definition $\psi_v(x, y)= \left (\psi_{w}(x,y),\psi_{\overline{w}}(x,y) \right )$, accordingly we have
\begin{align*}
\psi_v(x, y)%&=\psi_v\left(x_{{w}},y \right)+ \psi_v\left(x_{\overline{w}},y\right)\\
&=
\psi_v\left(x_{{w}},y_{\overline{w}} \right)+ \psi_v\left(x_{\overline{w}},y_{{w}}\right)+ \psi_v\left(x_{{w}},y_{{w}}\right)+\psi_v\left(x_{\overline{w}},y_{\overline{w}}\right)\\%choose an $F\times F$ basis for $V$, by semilinearity this proves that $\psi_v\left(x_{{w}},y_{{w}}\right)=\psi_v\left(x_{\overline{w}},y_{\overline{w}}\right)=0$, as for $$\psi_v((1,0)v_i,(1,0)v_i)=(1,0)\overline{(1,0)}\psi_v(v_i,v_i)=(0,0)\overline{(1,0)}\psi_v(v_i,v_i).$$
\overset{(1)}&{=}\psi_v\left(x_{{w}},y_{\overline{w}} \right)+ \psi_v\left(x_{\overline{w}},y_{{w}}\right)&\\
&= \left(\psi_{w}(x_w,y_{\overline{w}}),\psi_{\overline{w}}(x_w, y_{\overline{w}})\right)+\left(\psi_{{w}}(x_{\overline{w}}, y_w),\psi_{\overline{w}}(x_{\overline{w}}, y_w)\right)\\
&=\left(\psi_{w}(x_w,y_{\overline{w}}),0\right)+\left(0,\psi_{\overline{w}}(x_{\overline{w}}, y_w)\right)\\
&=\left(\psi_{w}(x_w,y_{\overline{w}}),\psi_{\overline{w}}(x_{\overline{w}}, y_w)\right)
\end{align*}
{For (1), we have used the fact that $\psi_v=0 \text{ on }\mathcal V_w\times \mathcal V_w \text{ and } \mathcal V_{\overline{w}}\times \mathcal V_{\overline{w}}$. To show this, let $x,y$ be any elements in $\mathcal V_w$ then $x=(1,0)x$ where $(1,0)\in F_v\oplus F_v$, so $\psi_v(x,y)=\psi_v((1,0)x,(1,0)y)=(1,0){(1,0)}^\tau \psi_v(x,y)=(0,0)\psi_v(x,y)=0$. We show similarly the annihilation of $\psi_v=0$ on $ \mathcal V_{\overline w}\times \mathcal V_{\overline w}$.} Accordingly, we have
\begin{enumerate}[wide=0pt, nosep]
\item[i.] $\psi_w=0$ on $\mathcal V_{\overline{w}}\times \mathcal V$ and $\mathcal V\times \mathcal V_w$ and induces a perfect pairing ${\psi_w\colon \mathcal V_{{w}}\times \mathcal V_{\overline{w}}\to F_v}$.
\item[ii.] $\psi_{\overline w}=0$ on $\mathcal V_{{w}}\times \mathcal V$ and $\mathcal V\times \mathcal V_{\overline w}$ and induces a perfect pairing ${\psi_{\overline w}\colon \mathcal V_{\overline {w}}\times \mathcal V_{{w}}\to F_v}$.
\item[iii.] $\psi_{\overline{w}}(x_{\overline{w}}, y_w)=\psi_{{w}}( y_w,x_{\overline{w}})$, since $\psi_v( x,y)=\tau({\psi_v( y,x)})$.
\end{enumerate}
Let $g= (g_1,g_2) \in \GL(\mathcal V_{{w}}) \times \GL(\mathcal V_{\overline{w}})$
\begin{align*}
\psi_v((gx,gy))&=\psi_v\left((g_1, g_2) (x_w+x_{\overline w}), (g_1, g_2) (y_w+y_{\overline w})\right)\\
&=\psi_v\left(g_1x_{ w}+g_2 x_{\overline w} x, g_1y_{ w}+ g_2 y_{\overline w}\right)\\
&=\left(\psi_{w}(g_1x_w,g_2y_{\overline{w}}),\psi_{\overline{w}}(g_2x_{\overline{w}}, g_1y_w)\right)
\\&= \left(\psi_{w}(g_1x_w,g_2y_{\overline{w}}),\psi_{{w}}( g_1y_w,g_2x_{\overline{w}})\right)\end{align*}
Now, if $g= (g_1,g_2)$ is in $\underline{\U}_{\mathcal{V}}(F_v)\subset \GL(\mathcal V_{{w}}) \times \GL(\mathcal V_{\overline{w}})$, then
$$
\psi_v((gx,gy))=\psi_v( x, y) = \left ( \psi_w(  x_{{w}},y_{ \overline{w}} ), \psi_w (y_{ {w}}, x_{ {\overline{w}}}) \right ). 
$$
Hence, $g= (g_1,g_2) \in \underline{\U}_{\mathcal{V}}(F_v)$ if and only if $\psi_w( g_1x_{{w}} ,g_2y_{ \overline{w}})=\psi_w(x_{{w}}, y_{ \overline{w}} )$ for all $x_{{w}} \in\mathcal V_{{w}}$ and $y_{\overline{w}} \in\mathcal V_{\overline{w}}$. The discussion above shows that the projection $g = (g_1, g_2) \mapsto g_1$ defines an isomorphism $
\underline\U(\mathcal V)(F_v) \simeq \GL(\mathcal V_{{w}})$ that is actually defined over $\cO_{F_v}$. Now, since ${w}$ is the place of $E$ corresponding to the fixed embedding $\iota_v\colon \overline{F} \hra \overline{F}_v$, there is no ambiguity in writing $\mathcal V_{{w}}$ as $\mathcal V_v$ and viewing it as a vector space over $ F_v=E_{{w}} $. Therefore, we get the desired identifications.
\begin{remark}
If we follow Remark \ref{latticesandmodel} and pick up an $\cO_E$-lattice $\mathcal L_W$ in $W$, $\mathcal L_D$ in $D$ and $\mathcal L_V=\mathcal L_W \oplus \mathcal L_D$ in $V$, then working with Hermitian spaces over $\cO_{F_v}$ gives an identification:
$$\underline{\U}_{V,\O_{F_v}} \simeq \GL_{n+1,\O_{F_v}}\quad\text{and}\quad\underline{\U}_{W,\O_{F_v}} \simeq \GL_{n,\O_{F_v}}$$
using the previous formulas throughout. We will retain this notation in \S \ref{splitunitarycase}.
\end{remark}
As we have pointed out in \S \ref{interludeorders}, under the identification $E_v\simeq F_v \oplus F_v$, the maximal order $\cO_{E_v}$ of the \'etale algebra $E_v$ identifies with $\cO_{E_v} \simeq \cO_{E_w} \oplus \cO_{E_{\overline{w}}} \simeq \cO_{F_{\tau}} \oplus \cO_{F_{\tau}}$. The groups
$$U_v^1(c)=\underline{\nu}({\O_{v,c}}^\times)= \underline{\nu}({\O_{F_v}+\varpi_{v}^c\O_{E_v}}^\times)=\{(z,z^{-1}) \colon z \in 1+ \varpi_v^c \cO_{F_v}\}\text{ for }c\in \N,$$
defined in \S \ref{fieldkappa}, yield the decreasing filtration $(H_c := \det^{-1}(U_v^1(c)))_{c \in \N}$, on $\H(F_v)$. The $E$ determinant map on ${\U}({\star})$ becomes under the isomorphism ${\U}(V)_{/F_v}\simeq  \GL(\star)_{/F_v}$ the usual determinant map and the groups $U_v^1(c)$ become $1+\varpi^c \cO_{F_v}$.

By abuse of notation, we will also use the notation $H_c$ for the corresponding subgroup ${\det} ^{-1} (1 + \varpi_v^c \cO_{F_v}) \subset \GL_n(F_v)$ via the isomorphism $\H(F_v) \simeq \GL_n(F_v)$ fixed above.%in \S \ref{normalization}.
\subsection{Action of Frobenii on unramified special cycles}\label{subsec-frob}
Let $g\in \G(\A_f)$. 
Assume that the field of definition of the cycle $\mathfrak{z}_g$ is unramified at $v$. 
The description of the Galois action on special cycles in \S \ref{galoisH} using $\H(\A_{f})$, together with the discussion in \S \ref{normalization} imply that the action of $\Fr_{w}\in \Gal(E_w^{un}/E_w)$ (the {geometric} Frobenius) on the cycle $\mathfrak{z}_g$ is given by 
$$
\Fr_{w} \cdot \mathfrak{z}_g = \mathfrak{z}_{\Delta_v(\textbf{Frob}_v) \cdot g}, 
$$
where, $\textbf{Frob}_v \in \GL(W_v) \simeq \U(W)(F_v)$ is any matrix verifying $$\text{ord}_{{F_v}}(\det(\textbf{Frob}_v)) = 1,  \text{ e.g.}\; \textbf{Frob}_v = \diag(\varpi_v,\cdots, 1),$$ and $\Delta_v$ is the composition of the following natural embeddings:
$$\begin{tikzcd}[column sep= small]\Delta_v\colon \U(W)(F_v) \arrow[r, hook] &\U(W)(\A_{F,f}) \arrow[r, hook] &\U(V)(\A_{F,f}) \times \U(W)(\A_{F,f}) \simeq \G(\A_f).\end{tikzcd}$$
\subsection{The Hecke polynomial for split places}\label{HeckPolSplit}
As usual, to ease notation, let $\star$ denote some/any element in $\{V,W\}$. We continue with the fixed place $v\not\in S$ of $F$ that splits in $E$ to $w \overline{w}$.
\subsubsection{A local pair}\label{localpair}

The choice of an embedding $\overline{\Q} \hra \overline{F}_v$ extending the distinguished fixed embedding $\iota_v\colon F_v \to \overline{F}_v$ induces an identification:
$$\begin{tikzcd}[row sep= tiny] \Hom(F,\R) = \Hom(F,\overline{\Q})\arrow{r}{\simeq}& \Sigma_{F,v}=\Hom(F,\overline{F}_v)\\  \widetilde{\iota}\arrow[r, mapsto]& \iota_v\circ \widetilde{\iota}.  \end{tikzcd}$$
We have
$$\G_{\star,\overline{F}_v}\simeq \prod_{\widetilde{\iota} \in {\Sigma}_{F,v}}\G_{\star, \widetilde{\iota}} \quad \text{ where } \quad\G_{\star,\widetilde{\iota}}=\GL(\star\otimes \overline{F}_v)\simeq \GL(\dim_E\star)_{\overline{F}_v}.$$
Recall that the conjugacy class of $\mu_h$ with $h=h_{\mathfrak{B}_V}\times h_{\mathfrak{B}_W}  \in \cX$ is independent of the choice of $h$ and is defined over the reflex field $E$ (\S \ref{reflexfield}), hence $[\mu_{h}]\in \mathcal{M}(E)$. Now using the fixed embedding \begin{tikzcd}[column sep=normal] E \arrow[hook]{r}{}&  \overline{F}\arrow[hook]{r}{\iota_{v}}& \overline{F}_v,\end{tikzcd} we get elements $[\mu_{\star,v}]\in \mathcal{M}_{\G_{\star}}(\overline{F}_v)$ and $[\mu_{h,v}]= [\mu_{V,v} \oplus \mu_{W,v}]\in \mathcal{M}_{\G}(\overline{F}_{v})$, where $\mu_{\star,v}$ is given on $\overline{F}_v$-points by
$$\begin{tikzcd}[column sep= small]\mu_{\star,v}\colon \Gm_{m,\overline{F}_{v}}\to  \G_{\overline{F}_{v}}  & t \arrow[r, mapsto]& \left( \begin{pmatrix}
t&\\
&\text{Id}_{\dim_E\star-1}
\end{pmatrix},\text{Id}_{\dim_E\star},\cdots,\text{Id}_{\dim_E\star} \right).  \end{tikzcd}$$
The only nontrivial component is the one corresponding to the distinguished embedding $\iota_1$ under the identification $\Hom(F,\overline{\Q}){\simeq} \Sigma_{F,v}$. We will then write by abuse of notation $\mu_{h,v}\colon \Gm_{m,\overline{F}_{v}} \to \U(V)_{\overline{F}_{v}}\times \U(W)_{\overline{F}_{v}}$ given by:
$$\begin{tikzcd}t \arrow[r, mapsto]& \left(\begin{pmatrix}
t&\\
&1_{n}
\end{pmatrix},\begin{pmatrix}
t&\\
&1_{n-1}
\end{pmatrix}\right) \in \T(\mathfrak{B}_{V})(\overline{F}_v)\times \T(\mathfrak{B}_{W})(\overline{F}_v),\end{tikzcd}$$
{Here, we have used the identification  ${\U}(\star)_{/\overline{F}_v} \simeq \GL(\star_{w})_{/\overline{F}_v} $ given by the choice of the place $w$ over $v$, accordingly $\T(\mathfrak{B}_{\star})_{\overline{F}_v}$ identifies with the maximal $\overline{F}_v$-torus of diagonal matrices in $\GL(\dim_E\star)_{\overline{F}_v}$ with respect to the local basis $\mathfrak{B}_{\star,v}$ induced from the fixed global basis $\mathfrak{B}_\star$ (\S \ref{Hermsymdom})}.

Note that $[\mu_{h,v}]$ is independent of the choice of $\iota_{v}$ and is invariant under the action of $\Gal(\overline{F}_v/E_{w})$ ($E_w=F_v$), thus, by \cite[(b) Lemma 1.1.3]{Ko1} one finds that $[\mu_{h,v}]$ is actually an element of $\mathcal{M}_{\underline{\G}_{{F_v}}}(E_{w})=\mathcal{M}_{\underline{\G}_{F_v}}(F_{v})$, i.e. defined over $F_v$. {In fact, the geometric conjugacy class $[\mu_{h,v}]\in \mathcal{M}_{\underline{\G}_{{F_v}}}(E_w)$ contains a cocharacter $$\mathds{G}_{m,\O_{E_{w}}} \to \underline{\G}_{\O_{E_w}},$$ defined over the valuation ring $\O_{E_w}$ (See \cite[Lemma 3.3.11]{Wan2018})}.
 
In summary, we get a pair $(\underline{\G}_{\cO_{F_v}},[\mu_{h,v}])= (\GL_{n+1,\cO_{F_v}}\times \GL_{n,\cO_{F_v}}, [\mu_{h,v}])$, composed of a $F_v$-reductive group and a minuscule $\underline{\G}(F_v)$-conjugacy class $[\mu_{h,v}] \in \mathcal{M}_{\underline{\G}_{F_v}}(F_v)$. Moreover, if we make the standard choices\footnote{We use a subscript $w$ here, to keep track of the place $w$ aboce $v$ used to indentofy $\underline{G}_v$ with $\GL_{n+1,F_v}\times \GL_{n,F_v}$.}
$${\T}_w:=\{\text{diagonal matrices}\} $$
and 
$$\B_w := \{\text{upper-triangular matrices}\} \subset \GL_{n+1,\cO_{F_v}}\times \GL_{n,\cO_{F_v}},$$
then the cocharacter $\mu_{h,v}$ given above, is the unique one in the class $[\mu_{h,v}]$ that is dominant with respect to $\B_w$. 

\subsubsection{Dual group}
The complex dual of $\GL_{n+1,F_v}\times \GL_{n,F_v}$ is 
$$\widehat{\GL}_{n+1,F_v}\times \widehat{\GL}_{n,F_v}=\GL_{n+1}(\C)\times \GL_{n}(\C).$$
Let $(\widehat{\B}_w,\widehat{\T}_w)$ be the standard Borel pair dual to $({\B}_w,{\T}_w)$, that is the upper triangular matrices and its maximal torus of diagonal matrices. For convenience, we write $\widehat{\B}_w=\widehat{\B}_{v,1}\times \widehat{\B}_{v,2}$, $\widehat{\T}_w=\widehat{\T}_{v,1}\times \widehat{\T}_{v,2}$ and $W(\widehat{\T}_w):=W(\GL_{n+1}(\C)\times \GL_{n}(\C) , \widehat{\T}_w)$ for the Weyl group. The Galois group $\Gal(F_v^{un}/F_v)$ acts trivially on dual of $ {\GL}_{n+1,F_v}\times {\GL}_{n,F_v}$, since it a split group, i.e.
$${}^L( {\GL}_{n+1,F_v}\times {\GL}_{n,F_v}) =\GL_{n+1}(\C)\times \GL_{n}(\C) \times \Gal(F_v^{un}/F_v).$$
\subsubsection{The character $\widehat{\mu}_{h,v}$}
Under the identification $X^*(\widehat{\T}_w) = X_*({\T}_w)$, the Weyl orbit of $\mu_{h,v}$ corresponds to a Weyl $W(\widehat{\T}_w)$-orbit of characters of $\widehat{\T}_w$. There is a unique $\widehat{\mu}_{h,v} \in X^*(\widehat{\T}_w)$ in this Weyl orbit that is dominant with respect to the Borel subgroup $\widehat{\B}_w$ and it is explicitly given on $\C$-points by
$$\begin{tikzcd}\widehat{\mu}_{h,v} \colon {\T}_w(\C)\arrow{r}& \C,& \left(\diag(z_1, \cdots, z_{n+1}), \diag(z_1', \cdots, z_n')\right)\arrow[r, mapsto]& z_1 z_1' .\end{tikzcd}$$
\subsubsection{The representation}The representation
$$\begin{tikzcd}r\colon\GL_{n+1}(\C)\times \GL_{n}(\C) \arrow{r}& \GL_{n(n+1)},\end{tikzcd}$$
we are interested in is the irreducible representation whose highest weight relative to the Borel pair $(\widehat{\B}_w,\widehat{\T}_w)$ is $\widehat{\mu}_{h,v}$ \cite[\S 5.1]{BR94}. Let $r_?$ be the standard $?$-dimensional representation of $\GL_{?}(\C)$, then $r$ is the representation on the $2$-fold tensor product  $r_{n+1}\otimes r_n$ defined as follows, for any $g=(g_1, g_2) \in \GL_{n+1}(\C)\times \GL_{n}(\C)$,
$$r(g)=r_{n+1}(g_1)\otimes r_{n}(g_2).$$
Finally, extend $r$ to a representation of ${}^L(\GL_{n+1,F_v}\times \GL_{n, F_v})$ (also called $r$) by letting the Galois group act trivially everywhere.

\subsubsection{The Hecke polynomial}\label{Heckepol}
\textbf{Definition.} Following \cite[Definition \ref{defheckepol}]{Seedrelations2020}, we attach to the pair $({\underline{\G}}_{F_v}, [\mu_{h,v}])$ the polynomial
$$
H_{w}(z)=\det\left(z-q_v^{{\langle\mu_{h,v}, \rho_{v} \rangle}} r(g) \right ) \in \C[\GL_{n+1}(\C)\times \GL_{n}(\C)] [z],
$$
where, $\rho_v$ is the halfsum of all positive roots of $({\B}_w,{\T}_w)$, thus $ \langle\mu_{h,v}, \rho_{v} \rangle= {2n-1}$. 
The ring of coefficients of this polynomial, is the ring of class functions on $\GL_{n+1}(\C)\times \GL_{n}(\C)$. 
A class function on $\GL_{n+1}(\C)\times \GL_{n}(\C)$ restricts to a Weyl-invariant function on $\widehat{\T}_w$. Conversely, by a classical argument of Chevalley, the subalgebra of Weyl-invariants functions on $\widehat{\T}_w$ consists precisely of functions which arise from class functions on $\GL_{n+1}(\C)\times \GL_{n}(\C)$. Therefore, one can identify $H_{w}(z)$ with a polynomial in $\C[\widehat{\T}_w]^{W(\widehat{\T}_w)}[t]$, which, abusing notation, we continue to denote by $H_{w}$:
$$H_{w}(t)=\det\left(z-q_v^{{\langle\mu_{h,v}, \rho_{v} \rangle}} r_{|\widehat{\T}_w}(t) \right ) \in \C[\widehat{\T}_w] [t].$$
Let $t=\left(\diag{(x_1, \cdots ,x_{n+1})},\diag{(y_1, \cdots, y_{n})}\right) \in \widehat{\T}_w$, the polynomial then identifies with 
$$
H_{w}(t)=\prod_{i=1}^{n+1}\prod_{j=1}^{ n}\left(z-q_v^{{2n-1}} x_iy_j\right) \in \C[\widehat{\T}_w][t]. 
$$
Here, $x_i y_j$ denotes the function $x_i y_j  \colon \widehat{\T}_w \ra \C$ defined by  
$$
(\diag{(x_1, \cdots, x_{n+1})}, \diag{(y_1,\cdots, y_{n})}) \mapsto x_iy_j.
$$  
Now, using the identification\footnote{This identification is obtained as follows: On the one hand, by definition of the dual torus, the ring of algebraic functions on $\widehat{\T}_w$ are precisely the elements of $\C[X_*(\T_w)]$. On the other hand, sending any $F_v$-algebraic cocharacter $\chi\colon \mathds{G}_{m,{F}_v} \ra \T_w$ to $1_{\chi(\varpi_v)\T_w(\cO_{F_v})}$, yields an identification $\C[X_*(\T_w)] \simeq \cH_\C(\T_w(F_v) \sslash \T_w(\cO_{F_v}))$.} $\C[\widehat{\T}_w]\simeq \mathcal H_\C({\T}_w(F_v) \sslash {\T}_w(\O_{F_v}))$, this function corresponds to the element $\mathbf{1}_{({g_i},{h_j})T_c} \in \cH_\C({{\T_w}}(F_v) \sslash {{\T_w}}(\O_{F_v}))$ where 
$$
g_i= \diag{(\underbrace{1, \dots 1}_{i-1}, \varpi_v, 1 \dots,1)}\text{ and } h_j=\diag (\underbrace{1 ,\dots 1}_{j-1}, \varpi_v, 1 ,\dots,1).
$$
Since the Weyl group $W(\widehat{\T}_w)$ permutes the $x_i$'s and $y_j$'s, it is clear that $H_w(t) \in \C[\widehat{\T}_w]^{W(\widehat{\T}_w)}[z]$. Subsequently, using the Satake isomorphism$$\C[\widehat{\T}_w]^{W(\widehat{\T}_w)} \simeq \cH_\C(\T_w(F_v) \sslash \T_w(\cO_{F_v}))^{W(\T_w)}\simeq \cH_\C(\underline{\G}(F_v) \sslash \underline{\G}(\cO_{F_v})),$$ 
we may also view $H_w$ as a polynomial with coefficients in the local spherical Hecke algebra
$$\cH(\GL_{n+1}(F_v)\sslash \GL_{n+1}(\cO_{F_v}),\Z[q_v^{\pm 1/2}])\times \cH(\GL_{n}(F_v)\sslash \GL_{n}(\cO_{F_v}),\Z[q_v^{\pm 1/2}]).$$
\textbf{Explicit polynomial.} In the remaining part of this section, we will give an explicit formulation for the Hecke polynomial with coefficients in the spherical local Hecke algebra, i.e. we will invert the Satake isomorphism. Let us begin by rewriting this polynomial in a more suitable form: 
\begin{align*}
H_{w}(t)%&=\prod_{i=1}^{n+1}\prod_{j=1}^{ n}\left(z-q_v^{\frac{2n-1}{2}} x_iy_j\right)\\
&=\prod_{j=1}^{ n} \prod_{i=1}^{n+1}\left(z-q_v^{{2n-1}} x_iy_j\right)\\
%&= \prod_{j=1}^{ n}  \sum_{}q_v^{\frac{(2n-1)(n+1-k)}{2}}{y_j^{n+1-k}}X_{n+1-k} z^k\\
&=\prod_{j=1}^{ n}  (\sum_{k=0}^{n+1}(-1)^{k}q_v^{{(2n-1)k}}X_{k} {y_j^{k}} z^{n+1-k})\\
&=\sum_{k=0}^{n(n+1)} \big(\sum_{(a_i)\in p_{n}(k)} \prod_{i=1}^{n} (-1)^{a_i}q_v^ {{2n-1}a_i}X_{a_i} y_{i}^{a_i}\big)  z^{n(n+1)-k}
\end{align*}
%$$z^j= X_{n+1-a_1}z^{a_1} X_{n+1-a_2}z^{a_2} \cdots X_{n+1-a_n}z^{a_n}\sum_i (y_i^{n+1-a_i})$$
where, $p_{n}(k):=\{(a_i)_{1\le i \le n} \in \N \colon \sum_{i=1}^n a_i=k, 0\le a_i\le n+1\}$ and $X_?$ is the symmetric monomial associated to the monome $x_1 x_2 \cdots x_?$, for $1 \le ? \le n+1$. Here, the symmetric permutation group $S_n$ acts on the set of partitions $p_{n}(j)$ and yields 
\begin{align*}H_{w}(t)&=\sum_{k=0}^{n(n+1)} \big(\sum_{(a_i)\in p_{n}(k)} \prod_{i=1}^{n} (-1)^{a_i}q_v^ {(2n-1)a_i}X_{a_i} y_{i}^{a_i}\big)  z^{n(n+1)-k}\\
&= \sum_{k=0}^{n(n+1)} \big(\sum_{(a_i)\in p_{n}(k)/S_n}  (-1)^{\sum a_i} q_v^{{2n-1}\sum_{i=1}^n a_i} \prod_{i=1}^n X_{a_i}    \cdot \prod_{i=1}^{n} (\sum_{j=1}^n y_{j}^{a_i})\big) z^{n(n+1)-k}\\
&= \sum_{k=0}^{n(n+1)}  (-1)^k q_v^{{k(2n-1)}}  \big(\sum_{(a_i)\in p_{n}(k)/S_n}  \prod_{i=1}^n X_{a_i}  \cdot \prod_{i=1}^{n} (\sum_{j=1}^n y_{j}^{a_i})\big) z^{n(n+1)-k}\\
&= \sum_{k=0}^{n(n+1)}  (-1)^k q_v^{{k(2n-1)}}  \big(\sum_{(a_i)\in p_{n}(k)/S_n}  \prod_{i=1}^n X_{a_i}  \cdot \prod_{i=1}^{n} Y^{(a_i)}\big) z^{n(n+1)-k}
\end{align*}
where, $Y^{(?)}$ denotes the power sum symmetric monomial $\sum_{i=1}^n y_i^?$, for $?\ge 1$. Set $Y_?$ for the symmetric monomial associated to the monome $y_1 y_2 \cdots y_?$, for $1 \le ? \le n$. For any $k \ge 1$, the Newton--Girard formula says:
$$Y^{(k)} =  \begin{cases} (-1)^k k Y_k +\sum_{i=1}^{k-1} (-1)^{k-1+i} Y_{k-i}Y^{(i)}& \text{ if } 1\le k \le n,\\
\sum_{i=k-n}^{k-1} (-1)^{k-1+i} Y_{k-i}Y^{(i)}& \text{ if } n<k.
\end{cases}$$
There exists then, a polynomial in $n$ variables
 $Q_k(z_1,\cdots ,z_n)\in \Z[z_1,\cdots ,z_n]$ such that the power sum $Y^{(k)}$ is given by
$$Y^{(k)}= Q_k(Y_1, \dots, Y_n).$$
Set $T_{k,\star}$ for the Hecke operator ${\GL_{\star}(\cO_{F_v})\cdot \diag(\underbrace{\varpi_v,\cdots,\varpi_v}_{k},1,\cdots,1)\cdot \GL_{\star}(\cO_{F_v})}$, for $\star\in \{V,W\}$. 
The Satake isomorphism $\C[\widehat{\T}_w]^{W(\widehat{\T}_w)} \simeq {\cH_\C(\underline{\G}(F_v) \sslash \underline{\G}(\cO_{F_v}))}$ yields the following identifications 
\begin{equation}\label{heckesatakeparameterGL}\begin{tikzcd}[row sep= tiny]
X_k , (1\le k\le n+1)\arrow[rr, leftrightarrow]&&q_v^{-\frac{(n+1-k)k}{2}} T_{k,V},\\
Y_k , (1\le k\le n)\arrow[rr, leftrightarrow]&&q_v^{-\frac{(n-k)k}{2}} T_{k,W}
\end{tikzcd}\end{equation}
Indeed, for any fixed integer $?\ge 1$ and every integer $1 \le i\le ?$ we consider the following minuscule\footnote{i.e. the representation $\Ad\circ \lambda_i$ of $\mathds{G}_m$ on $\text{Lie}(\GL_?)$ has no weights other than $1,0,-1$.} cocharcater $\lambda_{i} \colon \mathds{G}_{m,F} \to \GL_?$ given on $F_v$ points by
$$\begin{tikzcd} \lambda_{i}\colon F_v^\times \arrow{r}& \GL_?(F),& t \mapsto \diag(\underbrace{t, \cdots,t}_{i-\text{tuple}},1 \cdots,1).\end{tikzcd}$$
The above identification is an immediate application of Proposition \cite[Proposition \ref{satake}]{Seedrelations2020}, where we also use the fact that the Modulus function for $\GL_\star$ with respect to the Borel subgroup of upper triangular matirces $B_?$ is given by $\delta_{B_?}(\diag(\varpi^{a_i}))=q_v^{-\sum_{j=1}^{?} (? +1-2j)a_j}$, for any diagonal matrix $\diag(\varpi^{a_i}) \in \GL_?(F_v)$.

Consequently we can now write the Hecke polynomial with coefficients in the spherical local Hecke algebra:
\begin{align*}H_{w}(t)%&= \sum_{j=0}^{n(n+1)}  (-1)^k q_v^{{k(2n-1)}}  \big(\sum_{(a_i)\in p_{n}(k)/S_n}  \prod_{i=1}^n q_v^{-\frac{(n+1-a_i)a_i}{2}}T_{a_i,V}  \otimes \prod_{i=1}^{n}T_W^{(a_i)}  \big) z^{n(n+1)-k} \\ 
&= \sum_{j=0}^{n(n+1)}  (-1)^k q_v^{\frac{3k(n-1)+\sum_{i=1}^{n} a_i^2}{2}} \big(\sum_{(a_i)\in p_{n}(k)/S_n}  \prod_{i=1}^n T_{a_i,V}  \otimes \prod_{i=1}^{n}T_W^{(a_i)}  \big) z^{n(n+1)-k}
\end{align*}
where, $T_W^{(a_i)}:=Q_{a_i}(q_v^{-\frac{n-1}{2}} T_{1,W},\dots,q_v^{-\frac{(n-j)j}{2}} T_{j,W},\dots, T_{n,W})$.
\begin{example}[Case $n=1$]\label{heckecase1}
we obtain in this situation the following polynomial
$$H_w(t)=t^2 - q_v^{{1/2}} (x_1+x_2)y_1 t+ q_v x_1x_2y_1^2\in \C[\widehat{\T}_w][t],$$
which corresponds to the Hecke polynomial
$$H_w(z)=z^2 - (T_{1,V}\otimes T_{1,W})\, z+ q_v \,(T_{2,V}\otimes T_{1,W}^2),$$
where $T_{1,V},T_{2,V}\in \cC_c(\GL_2(F_v)\sslash \GL_2(\cO_{F_v}), \Z)$ are the Hecke operators corresponding to $${\GL_2(\cO_{F_v})\diag(\varpi_v,1)\GL_2(\cO_{F_v})}, \diag(\varpi_v,\varpi_v)\GL_2(\cO_{F_v})$$ respectively,  and $T_{1,W}^?\in \cC_c(F_v^\times/\cO_{F_v}^\times, \Z)$ is the Hecke operator corresponding to $\varpi_v^?\cO_{F_v}^\times$.
\end{example}
\begin{example}[Case $n=2$]
In this case, using (\ref{heckesatakeparameterGL}) we show that the Hecke polynomial is
\begin{align*}H_{w}(t)
&= z^6 - T_{1,V} \otimes T_{1,W} z^5+ q\big( T_{2,V}\otimes(T_{1,W}^2- 2q_vT_{2,W})+ T_{1,V}^2  \otimes T_{2,W}\big)z^4 \\
&-q_v^{2}(1\otimes T_{1,W}) \big( T_{1,V}T_{2,V}  \otimes T_{2,W}+q_vT_{3,V}\otimes (T_{1,W} ^2- 3q_vT_{2,W})\big)z^3\\
&+q_v^4(1\otimes T_{2,W}) \big(T_{1,V} T_{3,V} \otimes T_{1,W}^2- 2q_v^2+T_{2,V}^2\otimes T_{2,W}\big)z^2\\
&-q_v^{6}\big(T_{2,V}T_{3,V} \otimes T_{1,W}T_{2,W}^2 \big)z\\
&+ q_v^{9}T_{3,V}^2\otimes T_{2,W}^3.
\end{align*}
\end{example}

\subsection{Split local setting}
{\em To ease the reading, we switch to a local notation and omit all subscripts $v$ and $w$.}

Using the fixed basis for $V$ (compatible with $W$), we get to the following situation $\H=\Delta\GL_{n}\hookrightarrow \G=\GL_{n+1}\times \GL_n$, where the embedding on the first factor is given by  
$$
  \iota\colon \NiceMatrixOptions{transparent}\begin{pmatrix}
    \tikzmark{left}{\phantom{a}} &  &     \\
       & *&	  \\
       &  & \tikzmark{right}{\phantom{b}} 
  \end{pmatrix}
  \Highlight[first]
  \mapsto 
 \NiceMatrixOptions{transparent}\begin{pmatrix}
    \tikzmark{left}{\phantom{a}} &  &  &  \\
       & *&	&  \\
       &  & \tikzmark{right}{\phantom{b}}& \\
     &&&1
  \end{pmatrix}
\Highlight[second]
$$ 
We would like to describe the quotient $H\backslash G \slash K$ where $K=\G(\O_F)$. Set $K_1=\GL_{n+1}(\O_F)$ and $K_2=\GL_{n}(\O_F)$. For every $(g_1,g_2)\in G$, we have $H(g_1,g_2)=H(\iota(g_2)^{-1}g_1,1)\in H\backslash G$. This induces a bijection $H\backslash G\simeq \GL_{n+1}(F)$, given by $H(g_1,g_2) \mapsto \iota(g_2)^{-1}g_1$. The natural right action of $G$ on the quotient space $H\backslash G$ corresponds to the following right action on $\GL_{n+1}(F)$:
$$\forall (g_1,g_2)\in G, \, \forall g \in \GL_{n+1}(F),\quad g\cdot  (g_1,g_2)= \iota(g_2)^{-1}g\,g_1 \in \GL_{n+1}(F).$$ This isomorphism of right $G$-spaces, induces the following bijection 
\begin{align*} H\backslash G \slash K &\simeq\{ g \cdot K \colon g \in \GL_{n+1}(F)\}\\
&\simeq \{ \iota(K_2)^{-1}g\,K_1  \colon g\in \GL_{n+1}(F) \}\\
&\simeq \iota(K_2)\backslash \GL_{n+1}(F)\slash K_1.
\end{align*}
Let $\B_1\subset \GL_{n+1}$ (resp. ${\B}_2\subset \GL_n$) be the Borel subgroup of upper triangular matrices. We denote their respective opposite Borel subgroups by ${\overline{\B}_1}$ and ${\overline{\B}_2}$. Consider the Borel subgroup ${\B^{\sim}}:=\B_1\times {\overline{\B}_2}\subset \G$. Let $\U_1$ (resp $\U_2$) be the unipotent radical of $\B_1$ (resp. $\B_2$) and $\T=\T_1\times \T_2\subset {\B^{\sim}}$ be the split maximal torus where $\T_1$ (resp. $\T_2$) is the split maximal diagonal torus of $\GL_{n+1}$ (resp. $\GL_n$). 
The set of ${\B^{\sim}}$-antidominant diagonals is ${T^{\sim}}={T_1^-}\times {T_2^+}$, where 
$${T_1^-}= \left\{\diag(\varpi^{a_k})_{1\le k \le n+1},\colon a_i \in \Z \text{ such that } a_1\ge \dots \ge a_{n+1}\right\} \cdot\T_1(\cO_F),$$
$${T_2^+}= \left\{\diag(\varpi^{b_k})_{1\le k \le n},\colon b_i \in \Z \text{ such that } b_1\le \dots \le b_{n}\right\}\cdot\T_2(\cO_F).$$
\begin{proposition}
Every class in $H\backslash G\slash K$ admits a representative of the form 
$$ \left(\NiceMatrixOptions{transparent}\begin{pmatrix}
     \varpi^{a_1}&  &  &1  \\
       &\ddots  &	& \vdots \\
       &  & \varpi^{a_n}&1 \\
     &&&\varpi^c
  \end{pmatrix} ,\NiceMatrixOptions{transparent}\begin{pmatrix}
 \varpi^{b_1}&  &     \\
       & \ddots&	  \\
       &  &\varpi^{b_n}
  \end{pmatrix}\right)\in G,
$$
for some $c\in \Z$ and $\left (\diag(\varpi^{a_k})_{1\le k \le n+1},\diag(\varpi^{b_k})_{1\le k \le n}\right) \in {T^{\sim}}$ (with $a_{n+1}:=0$).
\end{proposition}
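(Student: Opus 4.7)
The plan is to pass to the single-coset description $H\backslash G/K \simeq \iota(K_2)\backslash \GL_{n+1}(F)/K_1$ established just above via $(g_1,g_2)\mapsto \iota(g_2)^{-1}g_1$, and to exhibit for each class a representative of the claimed form. Explicitly, setting
\[
\tilde M := \iota(M_2)^{-1} M_1 = \begin{pmatrix} \diag(\varpi^{a_k-b_k}) & \diag(\varpi^{-b_k})\cdot \mathbf{1} \\ 0 & \varpi^c\end{pmatrix},
\]
the task becomes: every $g\in\GL_{n+1}(F)$ lies in $\iota(K_2)\tilde M K_1$ for some $a_1\ge\cdots\ge a_n\ge 0$, some $b_1\le\cdots\le b_n$ and some $c\in\Z$.

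First I would translate to lattices: to $g$ associate $L:=g\O_F^{n+1}\subset F^{n+1}$. Right multiplication by $K_1$ preserves $L$, while $\iota(K_2)$ acts via the standard $K_2$-action on $F^n=\sspan(e_1,\dots,e_n)$ and fixes $e_{n+1}$, so the orbit is determined by $L$ modulo this $K_2$-action. Let $L':=L\cap F^n$, let $\varpi^c\O_F:=\pi(L)$ for $\pi\colon F^{n+1}\twoheadrightarrow F$ the projection onto the last coordinate, and pick $v\in L$ with $\pi(v)=\varpi^c$, so that $v=v_1+\varpi^c e_{n+1}$ with $v_1\in F^n$ and $L=L'+\O_F v$. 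Applying the elementary divisor theorem via $K_2$, I may assume $L'=\bigoplus_i \varpi^{\alpha_i}\O_F e_i$ with $\alpha_1\ge\cdots\ge \alpha_n$, the same element transporting $v_1$.

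The residual freedom is (i) modifying $v$ by elements of $L'$, which changes each $v_1^{(i)}$ only modulo $\varpi^{\alpha_i}\O_F$, and (ii) the parahoric stabilizer
\[
P:=\{k\in \GL_n(\O_F)\colon v(k_{ij})\ge \max(0,\alpha_i-\alpha_j)\}
\]
of $L'$ in $K_2$ acting on $v_1$ by $v_1\mapsto kv_1$. The heart of the proof — and what I expect to be the main obstacle — is to show that together these two moves bring $v_1\bmod L'$ to the standard form $(\varpi^{\beta_1},\dots,\varpi^{\beta_n})^{\top}$ for integers $\beta_i$ satisfying simultaneously $\beta_1\ge\cdots\ge\beta_n$, $\alpha_i\ge \beta_i$, and $\alpha_i-\beta_i$ non-increasing in $i$. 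The $L'$-modifications give free choice of each $v_1^{(i)}$ modulo $\varpi^{\alpha_i}$, while the strictly upper-triangular entries of $P$ lie in $\varpi^{\alpha_i-\alpha_j}\O_F$ for $i<j$ and allow transfer of mass between coordinates at exactly the right order to enable a cascade normalization of $v_1^{(1)},v_1^{(2)},\dots$ from top to bottom enforcing both monotonicity constraints at once.

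With $v_1$ in this standard form, setting $a_k:=\alpha_k-\beta_k$ and $b_k:=-\beta_k$ (and $a_{n+1}:=0$) recovers the desired pair $(M_1,M_2)$: the conditions $\alpha_k\ge\beta_k$, $(\alpha_k-\beta_k)$ non-increasing, and $(\beta_k)$ non-increasing translate precisely to $a_1\ge\cdots\ge a_n\ge 0$ and $b_1\le\cdots\le b_n$, placing $(\diag(\varpi^{a_k})_{1\le k\le n+1},\,\diag(\varpi^{b_k})_{1\le k\le n})$ in $T^{\sim}$.
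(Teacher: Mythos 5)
Your reformulation via lattices is a genuinely different and more conceptual route than the paper's. The paper works directly with the matrix $\iota(M_2)^{-1}M_1$ and carries out an explicit sequence of row and column operations on it; you instead identify $\iota(K_2)\backslash\GL_{n+1}(F)/K_1$ with $K_2$-orbits of triples $(L',\,\varpi^c\O_F,\,v_1\bmod L')$, put $L'$ in standard form via the Cartan decomposition, and reduce to normalizing $v_1\bmod L'$ under the residual parahoric $P$ and $L'$-translations. The framing is clean, the dictionary back to the paper's parameters ($\alpha_k = a_k-b_k$, $\beta_k=-b_k$, so $a_k=\alpha_k-\beta_k$, $b_k=-\beta_k$) is correct, and your target inequalities on $(\alpha,\beta)$ are exactly equivalent to the $T^{\sim}$-conditions in the statement. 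This reframing would be worth keeping.

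The gap is precisely the step you yourself flag as the main obstacle: you assert, but do not prove, that $P$ together with $L'$-translations brings $v_1$ to $(\varpi^{\beta_1},\dots,\varpi^{\beta_n})^{\top}$ with $\beta$ non-increasing, $\alpha-\beta$ non-increasing, and $\beta_i\le\alpha_i$. This is where essentially all of the content of the paper's proof lives: its successive reductions $d_k\neq 0$, $d_k=1$, $a_n\ge b_n$, the sorting step $(\ast)$, and the step $a_k-a_l\ge b_k-b_l$, the last of which carries a footnote about an extra conjugation needed to keep an entry a pure power of $\varpi$, and which explicitly re-invokes $(\ast)$ because one reduction can undo another. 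In your picture the available moves on $v_1$ are unit scalings, shifts by $\varpi^{\alpha_i}\O_F$, and additions $v_1^{(i)}\mapsto v_1^{(i)}+c\,v_1^{(j)}$ with $c\in\O_F$ for $i>j$ and $c\in\varpi^{\alpha_i-\alpha_j}\O_F$ for $i<j$ (your narrative mentions only the upper-triangular entries of $P$, but the lower-triangular ones are needed too). It is not at all immediate that a single top-to-bottom pass enforces both monotonicities simultaneously: lowering some $\beta_i$ to restore $\alpha_i-\beta_i\ge\alpha_{i+1}-\beta_{i+1}$ can break $\beta_i\ge\beta_{i+1}$, and vice versa, which is exactly the interleaving the paper has to manage. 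Until you carry out this normalization and argue it terminates, what you have is a correct reformulation of the problem rather than a proof.
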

\begin{proof}Let us first prove that the set
$$\left\{\varpi^c \NiceMatrixOptions{transparent}\begin{pmatrix}
     \varpi^{a_1}&  &  &\varpi^{b_1}  \\
       &\ddots  &	& \vdots \\
       &  & \varpi^{a_n}&\varpi^{b_n} \\
     &&&1
  \end{pmatrix}\colon c \in \Z,a_n \ge b_n \text{ and } k<l \Rightarrow a_k-a_l \ge b_k-b_l \ge 0 \right\},$$
is a set of class representatives for the quotient $$\iota(K_2)\backslash \GL_{n+1}(F)\slash K_1.$$ 
Observe that $\iota(\GL_n)\times \chi_{1,n+1}(\Gm_m)$ is a Levi factor of the parabolic subgroup:
$$\P:=   \NiceMatrixOptions{transparent}\begin{pmatrix}
    \tikzmark{left}{\phantom{a}} &  &  &*  \\
       &  \GL_n&	& \vdots \\
       &  & \tikzmark{right}{\phantom{b}}&* \\
     &&&\Gm_m
  \end{pmatrix}
  \Highlight[first] = \NiceMatrixOptions{transparent}\begin{pmatrix}
    \tikzmark{left}{\phantom{a}} &  &  & \\
       &  \GL_n&	&  \\
       &  & \tikzmark{right}{\phantom{b}}&\\
     &&&\Gm_m
  \end{pmatrix}
  \Highlight[first] \ltimes 
  \NiceMatrixOptions{transparent}\begin{pmatrix}
1 &  &  &*  \\
       &  \ddots&	& \vdots \\
       &  & &* \\
     &&&1
  \end{pmatrix},
$$
where, the second factor of the Levi decomposition above is the unipotent radical $\U_\P$ of $\P$. Therefore, every class in $\iota(K_2)\backslash \GL_{n+1}(F)\slash K_1$ has a representatives of the form $\varpi^c \iota(g) u $ for some $c\in \Z$, $g\in \GL_n(F)$ and $u\in \U_\P(F)$. By the Cartan decomposition for $\GL_n(F)$, we know there exist $k, k' \in K_2$ such that $k' g k=\diag(\varpi^{a_k})_{1\le k \le n}$ with $a_1\ge \dots \ge a_n$. Hence
$$\iota(K_2)\varpi^{c}gu K_1=\iota(K_2)\varpi^{c}\iota(k'gk)  \iota(k)^{-1}u\, \iota(k) K_1.
$$
Since $\iota(k)^{-1}u \,\iota(k)\in \U_\P(F)$, then each class in $\iota(K_2)\backslash \GL_{n+1}(F)\slash K_1$ has a representative of the form
$$\varpi^c \NiceMatrixOptions{transparent}\begin{pmatrix}
     \varpi^{a_1}&  &  &\varpi^{b_1} d_1  \\
       &\ddots  &	& \vdots \\
       &  & \varpi^{a_n}&\varpi^{b_n} d_n\\
     &&&1
  \end{pmatrix}$$ For some $c, a_k, b_k \in \Z$ such that $k<l \Rightarrow a_k-a_l \ge 0$ and, $d_k \in \O_F$.
\begin{itemize}[topsep=0pt]
\item $\underline{d_k\neq 0}:$ Suppose some of the $d_k$'s is zero. Using a unipotent matrix from the right we can replace $\text{Column}_{n+1}$ by $\text{Column}_{n+1}+\text{Column}_{k}$. Thus we get a new matrix with $d_k=1$ and $b_k$ is equal to $a_k$.
\item $\underline{d_k=1}:$ Suppose then that all $d_k$'s are non zero. Conjugation by the matrix $\diag(d_1, \dots, d_n,1)\in \iota(K_2)$ shows that one can ignore the $d_k$'s:
  $$\varpi^c \NiceMatrixOptions{transparent}\begin{pmatrix}
     \varpi^{a_1}&  &  &\varpi^{b_1}\\
       &\ddots  &	& \vdots \\
       &  & \varpi^{a_n}&\varpi^{b_n}\\
     &&&1
  \end{pmatrix}$$
\item $\underline{a_n\ge b_n}:$ If we have $a_n< b_n$, we may change $b_n$ to $a_n$ by replacing the $\text{Column}_{n+1}$ by $\text{Column}_{n+1}+\text{Column}_{n}$.
\item (*) $\underline{b_1\ge\dots\ge b_n}:$ Let $k<l$ and suppose that $b_k <b_l$. 
We can find a matrix in $\iota(K_2)$ taking $\text{Row}_{l}$ to $\text{Row}_{l}+ \text{Row}_k$, thus we get in the $k^{th}$ row $\varpi^{b_k}+\varpi^{b_l}=\varpi^{b_k}(1+\varpi^{b_l-b_k})$. We kill the invertible element $1+\varpi^{b_k-b_l}\in \cO_F^\times$ by adjoint action of a diagonal matrix in $\iota(K_2)$ having $(1+\varpi^{b_l-b_k})^{-1}$ in the $l^{th}$ component. Now we kill the element that appears in the $(l,k)$ position of the new matrix: use the action of $K_1$ on columns to take $\text{Column}_{k}$ to $\text{Column}_{k}-\varpi^{a_k-a_l}(1+\varpi^{b_l-b_k})^{-1}\text{Column}_{l}$. We thus have changed the matrix by replacing -only- the old $b_l$ by $b_k$. 
\item $\underline{a_k-a_l \ge b_k-b_l}:$ Fix a $k\le n$. Suppose that for some $l>k$ we have $c_{k,l}:=(b_k-b_l)-(a_k-a_l) > 0$. Let $l^{\prime}$ be such that $c_{k,l^\prime}=\max_{l\ge k}c_{k,l}$. Take $\text{Row}_{k}$ to $$\text{Row}_{k}+\varpi^{a_k-a_{l^\prime}}\text{Row}_{l^\prime}=\text{Row}_{k}+\varpi^{b_k-b_{l^\prime}-c_{k,l^\prime}}\text{Row}_{l^\prime},\quad (a_k-a_{l^\prime}\ge 0).$$
 Note that for every $l \ge k$ we have
$$(b_k-c_{k,l^\prime}-b_l)-(a_k-a_l)\le 0.$$
We clear the component appearing in the $(k,l)$ position by replacing $\text{Column}_{l^\prime}$ by $\text{Column}_{l^\prime}-\text{Column}_k$. Therefore, we may\footnote{The last coefficient of the $k$-th row is $\varpi^b_k + \varpi^{b_{l'} + a_k - a_{l'}}$
which is not of the form $\varpi^{b'_k}$. Writing this as $\varpi^{b_{l'} + a_k - a_{l'}} u$ with $u$ invertible, one can then use yet another conjugation to get it to $\varpi^{b'_k}$ with $b'_k = b_{l'} + a_k - a_{l'} =  b_k - c_{k,l'}$.} replace in the old matrix (only) $b_k$ by $b_k-c_{k,l^\prime}$. By doing so, we do not alter the previous required inequalities.

Suppose there is an $l\ge k$ such that the new $b_k$ is  $< b_l$. Using the step (*) above, we saw that we can replace $b_l$ by $b_k$. By doing so, we do not alter the required inequality $(b_k-b_l)-(a_k-a_l)\le-(a_k-a_l)\le 0$.
\end{itemize}
Now, every class $\iota(K_2) \varpi^c \NiceMatrixOptions{transparent}\begin{pmatrix}
     \varpi^{a_1}&  &  &\varpi^{b_1}\\
       &\ddots  &	& \vdots \\
       &  & \varpi^{a_n}&\varpi^{b_n}\\
     &&&1
  \end{pmatrix}K_1$ corresponds to $$H\left(\NiceMatrixOptions{transparent}\begin{pmatrix}
     \varpi^{a_1-b_1}&  &  &1  \\
       &\ddots  &	& \vdots \\
       &  & \varpi^{a_n-b_n}&1 \\
     &&&\varpi^c
  \end{pmatrix} ,\NiceMatrixOptions{transparent}\begin{pmatrix}
 \varpi^{-b_1-c}&  &     \\
       & \ddots&	  \\
       &  &\varpi^{-b_n-c}
\end{pmatrix}\right)K \in  H\backslash G/K,$$
such that $-b_1\le \dots \le -b_n$ and $a_1-b_1 \ge \dots \ge a_n-b_n\ge 0$. This completes the proof of the proposition.
\end{proof}
\begin{lemma}\label{stab1}
For $\star \in \{1, 2\}$, let
$$ g_\star=\left(\NiceMatrixOptions{transparent}\begin{pmatrix}
     \varpi^{a_{1,\star}}&  &  &1  \\
       &\ddots  &	& \vdots \\
       &  & \varpi^{a_{n,\star}}&1 \\
     &&&\varpi^{c_{\star}}
  \end{pmatrix} ,\NiceMatrixOptions{transparent}\begin{pmatrix}
 \varpi^{b_{1,\star}}&  &     \\
       & \ddots&	  \\
       &  &\varpi^{b_{n,\star}}
  \end{pmatrix}\right),
$$
for some $a_{k,\star},b_{k,\star},c_\star\in \Z$ such that for $\star\in\{1,2\}$ the sequences $(a_{k,\star})_k$ is non-increasing and $(b_{k,\star})_k$ is non-decreasing.
If $Hg_1K=Hg_2K$ then $c_1=c_2$ and ${a_{k,1}-b_{k,1}=a_{k,2}-b_{k,2}}$ for each $1\le k \le n$.
\end{lemma}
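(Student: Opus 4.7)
The plan is to transport the claim to the quotient $\iota(K_2)\backslash\GL_{n+1}(F)/K_1$ via the bijection recalled in the preceding proposition, $H(g_1,g_2)K \mapsto \iota(K_2)\,\iota(g_2)^{-1}g_1\,K_1$. Under this map each representative $g_\star$ in the statement becomes the block matrix
\[
M_\star \;=\; \iota(\diag(\varpi^{-b_{k,\star}}))\,M'_\star \;=\;
\begin{pmatrix} D_\star & u_\star \\ 0 & \varpi^{c_\star} \end{pmatrix},
\]
where $D_\star = \diag(\varpi^{a_{k,\star}-b_{k,\star}})_{1\le k\le n}$ and $u_\star=(\varpi^{-b_{k,\star}})_{1\le k\le n}^{T}$; the hypothesis $Hg_1K=Hg_2K$ then becomes the existence of $k_1\in K_1$ and $k_2\in K_2$ with $M_1 = \iota(k_2)\,M_2\,k_1$.

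The first step is to pin down $c_\star$ by inspecting the bottom row. Since $\iota(k_2)=\bigl(\begin{smallmatrix} k_2 & 0 \\ 0 & 1 \end{smallmatrix}\bigr)$ fixes the last row, writing $k_1=\bigl(\begin{smallmatrix} A & v \\ w & t \end{smallmatrix}\bigr)$ and equating bottom rows yields $(0,\ldots,0,\varpi^{c_1}) = \varpi^{c_2}(w,t)$. Hence $w=0$ and $\varpi^{c_1}=\varpi^{c_2}t$; because $k_1\in\GL_{n+1}(\O_F)$ and $w=0$, the scalar $t$ must lie in $\O_F^{\times}$, which forces $c_1=c_2$. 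In particular $k_1$ is constrained to the parabolic form $\bigl(\begin{smallmatrix} A & v \\ 0 & t \end{smallmatrix}\bigr)$ with $A\in\GL_n(\O_F)$ and $v\in\O_F^{n}$.

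Plugging this shape back into $M_1 = \iota(k_2)M_2 k_1$ and comparing upper-left $n\times n$ blocks yields the identity $D_1 = k_2\,D_2\,A$ in $\GL_n(F)$, so $D_1$ and $D_2$ lie in the same $\GL_n(\O_F)$-$\GL_n(\O_F)$ double coset. Since their diagonal exponents $(a_{k,\star}-b_{k,\star})_{1\le k\le n}$ are non-increasing — $(a_{k,\star})_k$ being non-increasing and $(b_{k,\star})_k$ being non-decreasing — the uniqueness of the Cartan decomposition for $\GL_n(F)$ forces $a_{k,1}-b_{k,1} = a_{k,2}-b_{k,2}$ for every $k$. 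The only non-routine point is the initial bottom-row analysis, which simultaneously pins down $c$ and cuts $k_1$ down to a parabolic form compatible with the $\GL_n$ Cartan decomposition; everything else is a direct block computation.
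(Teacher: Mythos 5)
Your proof is correct and follows essentially the same line of argument as the paper's: transport to $\iota(K_2)\backslash\GL_{n+1}(F)/K_1$, use the bottom row to force $k_1$ into the mirabolic and fix $c$, then reduce the remaining block identity to uniqueness of the $\GL_n$ Cartan decomposition. The paper's version merely states that $k_1 \in K_1 \cap \P(F)$ without the explicit last-row computation you spell out, but this is cosmetic.
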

\begin{proof}Suppose that the classes of $g_1$ and $g_2$ are the same in $H\backslash G\slash K$. This is true if and only if there exists $k_2\in K_2$ and $k_1\in K_1$ such that
$$ \iota(k_2)^{-1}\NiceMatrixOptions{transparent}\begin{pmatrix}
     \varpi^{a_{1,1}-b_{1,1}}&  &  &\varpi^{-b_{1,1}}  \\
       &\ddots  &	& \vdots \\
       &  & \varpi^{a_{n,1}-b_{n,1}}&\varpi^{-b_{n,1}} \\
     &&&\varpi^{c_{1}} 
  \end{pmatrix}k_1=\NiceMatrixOptions{transparent}\begin{pmatrix}
     \varpi^{a_{1,2}-b_{1,2}}&  &  &\varpi^{-b_{1,2}}  \\
       &\ddots  &	& \vdots \\
       &  & \varpi^{a_{n,2}-b_{n,2}}&\varpi^{-b_{n,2}} \\
     &&&\varpi^{c_{2}} 
  \end{pmatrix}
$$
It is clear that $k_1\in K_1\cap \P(F)$, i.e. 
 $$k_1= \NiceMatrixOptions{transparent}\begin{pmatrix}
    \tikzmark{left}{\phantom{a}} &  &  &u_1  \\
       &  \,k_1'\,&	& \vdots \\
       &  & \tikzmark{right}{\phantom{b}}&u_n \\
     &&&u_{n+1}
  \end{pmatrix}
  \Highlight[first].$$
Here, $k_1'$ belongs to $K_2$ and $u_{n+1}\in \O_F^\times$. The equality above implies $\varpi^{c_2}= u_{n+1} \varpi^{c_1}$ (i.e., $c_1=c_2$) and 
$$k_2 \NiceMatrixOptions{transparent}\begin{pmatrix}
     \varpi^{a_{1,1}-b_{1,1}}&  &  \\
       &\ddots  &	  \\
       &  & \varpi^{a_{n,1}-b_{n,1}}\\
  \end{pmatrix} k_1'=  \NiceMatrixOptions{transparent}\begin{pmatrix}
     \varpi^{a_{1,2}-b_{1,2}}&  &  \\
       &\ddots  &	  \\
       &  & \varpi^{a_{n,2}-b_{n,2}}\\
\end{pmatrix}$$
Since both diagonal matrices are ${\overline{\B}_2}$-antidominant and represent the same class in $K_2\backslash \GL_n(F)/K_2$ they must be equal.
\end{proof}
\begin{lemma}\label{Hstab}
For any integers $c,a_1,\dots, a_n, b_1,\dots b_n$, consider the matrix
$$g=\left(\varpi^c\NiceMatrixOptions{transparent}\begin{pmatrix}
     \varpi^{a_{1}}&  &  &1  \\
       &\ddots  &	& \vdots \\
       &  & \varpi^{a_{n}}&1 \\
     &&&1
  \end{pmatrix} ,\NiceMatrixOptions{transparent}\begin{pmatrix}
 \varpi^{b_{1}}&  &     \\
       & \ddots&	  \\
       &  &\varpi^{b_{n}}
  \end{pmatrix}\right) \in G.$$
The stabilizer $\Stab_{H}(gK)$ is equal to
\begin{equation*}\begin{split}\big\{\Delta((h_{ij}))\in H\colon h_{ii}\in \O_F,&
 h_{ij}\in \varpi^{\max\{a_i-a_j,b_i-b_j\}}\O_F \forall i \neq j,\\
 &\sum_{j=1}^n h_{ij} \in 1+ \varpi^{a_i} \O_F,  1\le\forall i \le n \big\}.\end{split}\end{equation*}
\end{lemma}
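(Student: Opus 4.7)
The plan is a direct block-matrix computation. An element $\Delta(h)\in H$ with $h=(h_{ij})\in\GL_n(F)$ lies in $\Stab_H(gK)$ iff $g^{-1}\Delta(h)g\in K=K_1\times K_2$; writing $g=(g_1,g_2)$ with $g_1=\varpi^c A$, $A=\bigl(\begin{smallmatrix} D & e \\ 0 & 1\end{smallmatrix}\bigr)$, $D=\diag(\varpi^{a_i})$, $e=(1,\ldots,1)^{T}$, and $g_2=B=\diag(\varpi^{b_i})$, the central scalar $\varpi^c$ cancels in conjugation, so the stabilizer condition splits into two independent ones.

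For the $K_2$-factor, one has $(B^{-1}hB)_{ij}=\varpi^{b_j-b_i}h_{ij}$, which lies in $\O_F$ iff $h_{ij}\in \varpi^{b_i-b_j}\O_F$ for all $i,j$.

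For the $K_1$-factor, using $A^{-1}=\bigl(\begin{smallmatrix} D^{-1} & -D^{-1}e \\ 0 & 1\end{smallmatrix}\bigr)$ and $\iota(h)=\bigl(\begin{smallmatrix} h & 0 \\ 0 & 1\end{smallmatrix}\bigr)$, block multiplication yields
\[
A^{-1}\iota(h)A \;=\; \begin{pmatrix} D^{-1}hD & D^{-1}(h-I_n)e \\ 0 & 1\end{pmatrix}.
\]
Integrality of the upper-left block forces $h_{ij}\in \varpi^{a_i-a_j}\O_F$; integrality of the upper-right column, whose $i$-th coordinate is $\varpi^{-a_i}\bigl(\sum_j h_{ij}-1\bigr)$, forces $\sum_j h_{ij}\in 1+\varpi^{a_i}\O_F$. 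Intersecting the pointwise valuation constraints coming from $K_1$ and $K_2$ gives $h_{ij}\in\varpi^{\max\{a_i-a_j,\,b_i-b_j\}}\O_F$, which for $i=j$ reduces to $h_{ii}\in\O_F$, and the sum condition is kept as-is. The converse direction is immediate since the computation is an iff at every step.

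No step is a real obstacle; the proof is essentially mechanical once the block form of $A$ is set up. The one subtlety worth flagging is the invertibility requirement $\det h\in \O_F^\times$, needed so that the conjugated matrices actually lie in $K_1$ and $K_2$ rather than merely in the ambient matrix rings; this is tacit in the statement (since $\Delta(h)\in H$ is assumed at the outset and the stabilizer is a group), and it follows automatically once the two integral blocks above are obtained, e.g.\ from the invertibility of $B^{-1}hB$ over $\O_F$.
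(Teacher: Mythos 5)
Your computation is correct and follows essentially the same route as the paper. The paper's proof writes $g_1=\varpi^c\,u_0\,\diag(\varpi^{a_1},\dots,\varpi^{a_n},1)$ and conjugates $\iota(h)$ in two stages (first by $u_0$, then by the diagonal), arriving at exactly the block matrix you compute in one step as $A^{-1}\iota(h)A = \left(\begin{smallmatrix} D^{-1}hD & D^{-1}(h-I_n)e \\ 0 & 1\end{smallmatrix}\right)$; the $\GL_n$-factor computation and the extraction of $\max\{a_i-a_j,b_i-b_j\}$ are identical. One caveat on your final remark: the claim that $\det h\in\O_F^\times$ \emph{follows automatically} from the integrality of the two conjugated blocks is not quite right. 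Integral entries alone do not force $B^{-1}hB\in\GL_n(\O_F)$; for instance, with all $a_i=b_i=0$ the matrix $h=\diag(\varpi,1,\dots,1)$ satisfies every condition in the displayed set yet fails to stabilize $gK$ since $B^{-1}hB=h\notin K_2$. This imprecision is, however, already present in the lemma's own statement and in the paper's proof (which correctly records $h\in\diag(a)K_2\diag(-a)\cap\diag(b)K_2\diag(-b)$ but then silently drops the unit-determinant requirement when rewriting this entrywise), and it is harmless downstream: Proposition \ref{determinantstab} only needs the forward inclusion together with explicitly constructed stabilizer elements, and in the nontrivial third case the conditions $a_i>0$, $c_{ij}>0$ do force $\det h\equiv\prod_i h_{ii}\equiv 1\pmod{\varpi}$, hence a unit.
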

\begin{proof}Set $\diag(a)=\diag(\varpi^{a_1},\dots, \varpi^{a_n},1)$ and similarly $\diag(b)=\diag(\varpi^{b_1},\dots, \varpi^{b_n})$. A matrix $h=(h_{ij})\in \GL_n(F)$ verifies $\Delta(h) \in \Stab_{H}(gK)$ by definition if and only if
%$$\Delta(h)  (\varpi^c u_0\diag(a)K_1,\diag(b)K_2)=(\varpi^c u_0\diag(a)K_1,\diag(b)K_2),$$which is equivalent to 
$$\begin{cases}\diag(-a)u_0^{-1} \iota(h) u_0\diag(a)K_1&= K_1\\
\diag(-b)h\diag(b)K_2 &=K_2,
\end{cases}$$
which is equivalent to  
\begin{equation*}\begin{split}\NiceMatrixOptions{transparent}\begin{pmatrix}
    \tikzmark{left}{\phantom{a}} &  &  &\varpi^{-a_1}(-1+\sum_{j=1}^{n}h_{1j})  \\
       &  \,\diag(-a)h\diag(a)\,&	& \vdots \\
       &  & \tikzmark{right}{\phantom{b}}&\varpi^{-a_n}(-1+\sum_{j=1}^{n}h_{nj}) \\
     &&&1
  \end{pmatrix}
  \Highlight[first]\in K_1\text{ and } \diag(-b)h\diag(b)\in K_2.\end{split}\end{equation*}
Therefore $h\in \diag(a)K_2 \diag(-a) \cap \diag(b)K_2 \diag(-b)$ and for all $1\le i\le n$ we must have $\sum_{j=1}^n h_{ij} \in 1+ \varpi^{a_i} \O_F$. This proves that $\Stab_{H}(gK)$ is equal to
\begin{align*}\{\Delta((h_{ij}))\in H\colon h_{ii}\in \O_F,\, h_{ij}\in \varpi^{c_{ij}}\O_F\text{ and }\sum_{j=1}^n h_{ij} \in 1+ \varpi^{a_i} \O_F,1\le  \forall  i\le n\}.
\end{align*}
where, $c_{ij}:=\max\{a_i-a_j,b_i-b_j\}$ for $i \neq j$.
\end{proof}
In the following proposition we compute the determinant of $H$-stabilizers of cosets in $G/K$.
\begin{proposition}\label{determinantstab}Let $c,a_1,\dots, a_n, b_1,\dots b_n$ be any integers and consider the element $$g=\left(\varpi^c\NiceMatrixOptions{transparent}\begin{pmatrix}
\varpi^{a_{1}}&  &  &1  \\
       &\ddots  &	& \vdots \\
       &  & \varpi^{a_{n}}&1 \\
     &&&1
\end{pmatrix} ,\NiceMatrixOptions{transparent}\begin{pmatrix}
\varpi^{b_{1}}&  &     \\
       & \ddots&	  \\
       &  &\varpi^{b_{n}}
\end{pmatrix}\right) \in G.$$
Set $c_{ij}:=\max\{a_i-a_j,b_i-b_j\}$ if $i\neq j$. We then have $\det(\Stab_{H}(gK))=$
$$\begin{cases} \O_F^\times, \text{ if there exists $i$ such that } a_{i} \le 0,\\
\O_F^\times, \text{ if there exists a pair $(i,j)$ such that $i\neq j$ and } c_{ij} \le 0,\\
1+\varpi^{\min(\{a_i:  1\le i\le n \} \cup\{ c_{ij}:  1\le i\neq j \le n\})}\O_F \text{ if } c_{i,j}>0 \text{ for all $i\neq j$ and } a_i  > 0.
\end{cases}$$
\end{proposition}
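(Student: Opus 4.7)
The plan is to use the explicit description of $\Stab_H(gK)$ provided by Lemma~\ref{Hstab}, split into the three cases of the statement, and combine explicit matrix constructions (for the lower bound on the determinant) with term-by-term cofactor analysis (for the upper bound). Note that the inclusion $\det \Stab_H(gK) \subseteq \O_F^\times$ is automatic whenever the entries of $h$ and $h^{-1}$ are integral, which happens in Case~3; in Cases~1 and~2 we only need to supply enough elements of the stabilizer to realize every unit.

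For Case~1 (some $a_i \le 0$), I would take the diagonal matrix $h = \diag(1,\dots,u,\dots,1)$ with $u \in \O_F^\times$ placed in position $i$. The off-diagonal valuation conditions of Lemma~\ref{Hstab} are vacuous, while the row-sum condition on row $i$ reads $u \in 1 + \varpi^{a_i}\O_F = \varpi^{a_i}\O_F \supseteq \O_F$, so $h \in \Stab_H(gK)$ with $\det h = u$ sweeping out $\O_F^\times$. For Case~2 (some $c_{ij} \le 0$ with $i \neq j$), I would take the sparse matrix $h = I + (u-1)E_{ii} + (1-u)E_{ij}$ for $u \in \O_F^\times$: the off-diagonal entry $1 - u$ lies in $\O_F \subseteq \varpi^{c_{ij}}\O_F$, the row-sum on row $i$ simplifies to $u + (1-u) = 1 \in 1 + \varpi^{a_i}\O_F$, and a cofactor expansion along row $i$ gives $\det h = u$.

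For Case~3, I would prove the two inclusions separately. \emph{Upper bound.} The row-sum identity, rewritten as $h_{ii} \in 1 - \sum_{j \neq i}h_{ij} + \varpi^{a_i}\O_F$, forces $h_{ii} \in 1 + \varpi^{m_i}\O_F$ with $m_i := \min(a_i,\min_{j \neq i}c_{ij}) \ge m$. Expanding $\det h$ over permutations, the identity term $\prod_i h_{ii}$ lies in $1 + \varpi^m\O_F$; for any non-identity $\sigma$ with support $S_\sigma = \{i : \sigma(i) \neq i\}$ one has $|S_\sigma| \ge 2$, so the corresponding summand lies in $\varpi^{\sum_{i \in S_\sigma}c_{i,\sigma(i)}}\O_F \subseteq \varpi^{2m}\O_F \subseteq \varpi^m\O_F$ (using $c_{ij} \ge m > 0$). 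Summing, $\det h \in 1 + \varpi^m\O_F$. \emph{Lower bound.} If $m$ is attained at some $a_i$, use $h = \diag(1,\dots,1+\varpi^m s,\dots,1)$; if $m$ is attained at some $c_{ij}$ with $i \neq j$, use $h = I + \varpi^m s\,E_{ii} - \varpi^m s\,E_{ij}$ for $s \in \O_F$. In either case the valuation and row-sum conditions are straightforward to verify and $\det h = 1 + \varpi^m s$ ranges over $1 + \varpi^m\O_F$.

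The only non-routine point is the inequality $\sum_{i \in S_\sigma}c_{i,\sigma(i)} \ge 2m$ used in the upper-bound step, which however follows at once from $|S_\sigma| \ge 2$ and positivity of every $c_{ij}$ in Case~3. Everything else amounts to verifying entry-wise valuation conditions and computing a two-term cofactor expansion, so I expect no genuine obstacle beyond the careful case bookkeeping already present in the statement.
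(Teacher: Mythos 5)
Your proof is correct and follows essentially the same route as the paper: apply Lemma~\ref{Hstab}, exhibit explicit sparse stabilizer elements for the lower bounds in all three cases, and in Case~3 derive the upper bound from the entry-wise valuation constraints. Your permutation-expansion argument for the upper bound in Case~3 is in fact more explicit than the paper's, which passes from the entry bounds to the determinant bound with a bare ``therefore''.
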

\begin{proof}
For the first two cases, for each $t \in  \O_F^\times$ one can give explicitly an element $g_t\in H$ stabilizing $gK$ with determinant $t$:
\begin{itemize}
\item Let $i$ be an index for which $a_i\le0$. One can take $g_t = \diag(1, \dots, 1, t, 1, \dots, 1)\in K_2$, with $t$ in the $i^{th}$ position.
\item Let $i$ and $j$ be indices, for which $c_{ij} \le 0$. One can take $g_t$ to be the matrix with $t$ in the $i^{th}$ diagonal component, $1$'s elsewhere in the diagonal, $1-t$ in the $ij$ component and zeros everywhere else.
\end{itemize}
Suppose now, that for all $1\le i,j\le n$ we have $c_{ij}>0$ and, $ a_i  > 0$. In this case a matrix $h=(h_{ij})$ belongs to $\Stab_{H}(gK)$ only if $$h_{ij}\in \varpi^{c_{ij}} \O_F, \forall 1\le i\neq j\le n \text{ and } h_{ii} \in 1+\varpi^{\min\{a_i, c_{ij}\colon  1\le j \le n, j\neq i\}}\O_F,\forall  i\le n.$$
Therefore, for all $h=(h_{ij}) \in \Stab_{H}(gK) $ we have $\det(h)\in 1+\varpi^{\min\{a_i, c_{ij}\colon  1\le i \neq j \le n\}}\O_F$, i.e. 
$$\det(\Stab_{H}(gK))\subset 1+\varpi^{\min\{a_i, c_{ij}\colon  1\le i \neq j \le n\}}\O_F.$$
The reverse inclusion is as follows: If $\min\{a_i, c_{ij}\colon  1\le i \neq j \le n\}=a_k$ for some $k\le n$, then consider for every $t\in \cO_F$ the matrix 
$h_t=\diag(1,\dots, 1+\varpi^{a_k}t,\dots,1)\in \GL_n(F)$. If now $\min\{a_i, c_{ij}\colon  1\le i \neq j \le n\}=c_{kl}$ for some $1\le k\neq l \le n$, then consider for every $t\in \cO_F$ the matrix $h_t$ having $1+\varpi^{c_{kl}}t$ in the $k^{th}$ diagonal component, $1$'s elsewhere in the diagonal, $-\varpi^{c_{kl}}t$ in the $kl$ component and zeros everywhere else. In both cases, for every $t\in \cO_F$, we have $h_t\in \Stab_H(gK)$ and $\det (h_t)= 1+\varpi^{\min\{a_i, c_{ij}\colon  1\le i \neq j \le n\}}t$, which shows the reverse inclusion.
\end{proof}

\subsection{Local horizontal relation}\label{localhorizontal}
We prove a local horizontal distribution relation (Corollary \ref{relhor2}) using a local congruence relation (Theorem \ref{Hordist}).
\subsubsection*{Notation}
Let $\B={\B}_1\times \B_2\subset \G$ be the Borel subgroup with unipotent radical ${\U}={ \U}_1\times {\U_2}$. The set of ${\B}$-antidominant diagonals is ${T^{-}}={T_1^-}\times {T_2^-}$, where 
$${T_1^-}= \left\{\diag(\varpi^{a_k})_{1\le k \le n+1},\colon a_i \in \Z \text{ such that } a_1\ge \dots \ge a_{n+1}\right\}\cdot \T_1(\cO_F),$$
$${T_2^-}= \left\{\diag(\varpi^{b_k})_{1\le k \le n},\colon b_i \in \Z \text{ such that } b_1\ge \dots \ge b_{n}\right\}\cdot \T_2(\cO_F).$$
Consider the following Iwahori subgroup $$I={I}_1\times I_2=\{g \in K\colon (g \mod \varpi) \in B\}.$$
Let ${\mu}= \Delta \circ \mu_2 \in T$ be the cocharacter coming from the Shimura variety and denoted by $\mu_{h,v}$ in \S \ref{localpair}. Here $\mu_2 \in X_*(\T_2)$ is given by $t\mapsto \diag(t,1\dots,1)$. Set  $\textbf{{Frob}}:=\mu(\varpi)$, see \S \ref{subsec-frob} for the reason why we choose this notation. Let $\cU_{\mu}\in \End_{\Z[B]}\Z[G/K]$ be the $\mathbb{U}$-operator associated to ${\mathbf{1}_{I\mu(\varpi)I}\in \cC_c(G\sslash I,\Z)}$. 
By \cite[Theorem \ref{uoprootheckeseed}]{Seedrelations2020}, the operator $\cU_{\mu}$ is annihilated by the Hecke polynomial $H_w(X)=\sum_{k=0}^{n(n+1)} A_k X^k \in (\text{End}_{R[G]}R[G / K ])[X]$. 

Fix the classe $[1]=1\cdot K \in  \Z[G/K]$. By \cite[Lemma \ref{Uaction}]{UoperatorsII2021}, we have for every $k\ge 1$
$$\cU_{\mu}^k ([1])=\cU_{\mu^k} ([1])= \sum_{ h \in  I^{+}/ \mu(\varpi^{k}) I^{+} \mu(\varpi^{-k})} h \,\textbf{{Frob}}^k \cdot [1], \text{ where, $I^{+}={U}\cap I$}.$$ 
We fix for each element $[b]$ in $\cO_F/\varpi \cO_F$ a lift $b \in \cO_F$ (e.g. Teichmuller lift). Set $S_k:=\{a=\sum_i a_i \varpi^i \in \cO_F\colon a_i=0 \text{ for all } i\ge k \text{ and } [a_i] \in \cO_F/\varpi \cO_F \text{ for all } i< k\}$.  
\begin{lemma}\label{cosrep2} For every integer $k \ge 1$, the collection 
$$\left\{(u_{k,a},v_{k,b})=\left(\NiceMatrixOptions{transparent}\begin{pmatrix}
     1& a_1  & \cdots  & a_n\\
       &1  &	&  \\
       &  & \ddots&\\
     &&&1
  \end{pmatrix},\NiceMatrixOptions{transparent}\begin{pmatrix}
   1&b_1& \cdots &b_{n-1}   \\
       &1&  	&  \\
       &&\ddots&\\
    &&&1
  \end{pmatrix}\right)\right\}$$
for all ${(a,b)=\left((a_1, \dots, a_n),(b_1 ,\dots ,b_{n-1})\right)\in S_k^n\times S_k^{n-1}}$ forms a complete set of representatives for $I^{ +}/\mu(\varpi^k) I^+ \mu(\varpi^k)^{-1}$. Consequently, for every $k\ge 1$
$$\cU_{\mu}^k([1]) = \sum_{ (a,b) \in  S_k^n\times S_k^{n-1}} (u_{k,a},v_{k,b}) \,\textbf{{Frob}}^k \cdot [1].$$
\end{lemma}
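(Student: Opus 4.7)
The plan is to compute the subgroup $\mu(\varpi^k)I^+\mu(\varpi^{-k})$ explicitly, exhibit a canonical decomposition of any $u\in I^+$ into a product of one of the claimed representatives and an element of this subgroup, and then apply the already cited formula $\cU_{\mu}^k([1])=\sum_{h\in I^+/\mu(\varpi^k)I^+\mu(\varpi^{-k})} h\,\textbf{Frob}^k\cdot[1]$.

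First I would write $\mu(\varpi^k)=(\iota(\mu_2(\varpi^k)),\mu_2(\varpi^k))$ acting on the factored unipotent $I^+=U_1(\cO_F)\times U_2(\cO_F)$. For a general $u\in U_m(\cO_F)$ in block form $u=\left(\begin{smallmatrix} 1 & \vec x \\ 0 & u'\end{smallmatrix}\right)$ with $\vec x\in\cO_F^{m-1}$ and $u'\in U_{m-1}(\cO_F)$, the conjugation identity
$$\mu_?(\varpi^k)\begin{pmatrix}1 & \vec x \\ 0 & u'\end{pmatrix}\mu_?(\varpi^{-k})=\begin{pmatrix}1 & \varpi^k\vec x \\ 0 & u'\end{pmatrix}$$
shows that $\mu_?(\varpi^k)U(\cO_F)\mu_?(\varpi^{-k})$ consists precisely of those upper-unipotent matrices whose first row off-diagonal entries lie in $\varpi^k\cO_F$, while all other entries remain arbitrary integral. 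Applied to both factors, this identifies $\mu(\varpi^k)I^+\mu(\varpi^{-k})$.

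Next I would prove existence and uniqueness of the factorization $u=(u_{k,a},v_{k,b})\cdot w$ with $w\in\mu(\varpi^k)I^+\mu(\varpi^{-k})$. For the $\GL_{n+1}$-factor, given $u=\left(\begin{smallmatrix} 1 & \vec y \\ 0 & v'\end{smallmatrix}\right)$, observe that $\vec y(v')^{-1}\in\cO_F^n$ (since $v'$ is upper-unipotent), so it has a unique decomposition $\vec y(v')^{-1}=\vec a+\varpi^k\vec c$ with $\vec a\in S_k^n$ and $\vec c\in\cO_F^n$. The identity
$$\begin{pmatrix}1 & \vec y \\ 0 & v'\end{pmatrix}=\begin{pmatrix}1 & \vec a \\ 0 & I_n\end{pmatrix}\begin{pmatrix}1 & \varpi^k\vec c\, v' \\ 0 & v'\end{pmatrix}$$
provides the desired decomposition, and the uniqueness of $(\vec a,\vec c)$ forces uniqueness of the representative $u_{k,a}$. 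The same argument (with $n$ replaced by $n-1$) handles the $\GL_n$-factor and produces the representatives $v_{k,b}$. A cardinality check, $|S_k^n\times S_k^{n-1}|=q_v^{k(2n-1)}=[I^+:\mu(\varpi^k)I^+\mu(\varpi^{-k})]$, confirms that this list is complete.

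Finally, plugging this complete list of representatives into the formula $\cU_{\mu}^k([1])=\cU_{\mu^k}([1])=\sum_{h\in I^+/\mu(\varpi^k)I^+\mu(\varpi^{-k})} h\,\textbf{Frob}^k\cdot[1]$ (cited from \cite[Lemma \ref{Uaction}]{UoperatorsII2021}) yields the stated expression. The only conceptual subtlety is verifying that the subgroup $\mu(\varpi^k)I^+\mu(\varpi^{-k})$ really is contained in $I^+$ (so that the coset space makes sense), which is immediate from the sign $\langle e_1-e_j,\mu\rangle=1\ge 0$ for $j>1$; no other step should pose real difficulty, the whole argument being an explicit matrix manipulation parallel to the classical Iwahori factorization for minuscule cocharacters.
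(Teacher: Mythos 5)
Your proof is correct and follows essentially the same route as the paper: compute $\mu(\varpi^k)I^+\mu(\varpi^{-k})$ explicitly as the subgroup whose first-row (off-diagonal) entries lie in $\varpi^k\cO_F$, then factor any element of $I^+$ as a standard representative $u_{k,a}$ times an element of this subgroup, and conclude by uniqueness and the cited expression for $\cU_\mu^k$. Your explicit block-matrix identity and cardinality check $|S_k^n\times S_k^{n-1}|=q_v^{k(2n-1)}=[I^+:\mu(\varpi^k)I^+\mu(\varpi^{-k})]$ are a slightly cleaner way of packaging the same argument the paper sketches (where the paper writes $g\iota(h^{-1})$ for a right-multiplication by a unipotent block living in the bottom-right $n\times n$ corner of $\GL_{n+1}$, which has trivial first row and hence lies in $\mu(\varpi^k)I_1^+\mu(\varpi^{-k})$; the notation there should be read as the embedding into the lower-right block, not the paper's earlier $\iota$).
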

\begin{proof}
This is a consequence of the fact that ${\mu}(\varpi^k) I^+ {\mu}(\varpi^{-k})$ equals
$$\NiceMatrixOptions{transparent}\begin{pmatrix}
       1& \varpi^k \cO_F& \cdots &\cdots&   \varpi^k  \cO_F \\
        &    1     &   \cO_F   &\cdots&\cO_F	  \\
       &         &      \ddots      &\ddots& \vdots \\
       &         &            &\ddots& \cO_F \\
      & &&&1 \\
\end{pmatrix} \times \NiceMatrixOptions{transparent}\begin{pmatrix}
       1& \varpi^k \cO_F& \cdots &\cdots&   \varpi^k  \cO_F \\
        &    1     &   \cO_F   &\cdots&\cO_F	  \\
       &         &      \ddots      &\ddots& \vdots \\
       &         &            &\ddots& \cO_F \\
      & &&&1 \\
\end{pmatrix} 
\subset  I^{+}.$$
Any $g\in I_1^+$, can be written as
$$g = \NiceMatrixOptions{transparent}\begin{pmatrix}
   1 & a_1 & \cdots & a_n \\
      &\tikzmark{left}{\phantom{a}}   &	&  \\
       &  &h & \\
    &&&\tikzmark{right}{\phantom{b}}
  \end{pmatrix}
  \Highlight[first],            
$$
for some $h\in K_1$ (which is upper triangular modulo $\varpi$) and $(a_i)\in \cO_F^n$. Therefore
$$g{\mu}(\varpi^k) I_1^+ {\mu}(\varpi^{-k})=g\iota(h^{-1}){\mu}(\varpi^k) I_1^+ {\mu}(\varpi^{-k})=u_{a^\prime,k}{\mu}(\varpi^k) I_1^+ {\mu}(\varpi^{-k})
$$
with some $a^\prime=(a_i^\prime)\in \cO_F^n$. The right action of $ {\mu}(\varpi^k) I_1^{ +} {\mu}(\varpi^{-k})$ on $I_1^+$ can only kill $\varpi^k\cO_F$ in each factor $a_i^\prime$. This shows that each class admits a representative of the form $u_{k,a^\prime}$ that is unique modulo $\varpi^k$. Similar statement hold for $I_2^+$. In total, this shows that is $S_k^n\times S_k^{n-1}$ is a complete set of representatives for the quotient $ {\mu}(\varpi^k)I^{ +} {\mu}(\varpi^{-k})$.
\end{proof}
\subsubsection{Divisibility in \texorpdfstring{$\Z[H_0\backslash G/K]$}{Z[H0G/K]} and proof of Theorem \ref{divisibilityheckepol}}
Consider now the following natural surjective homomorphisms of $\Z$-modules over the group algebra ${\cH(G\sslash K)[H]}$\footnote{The $H\times \cH(G\sslash K)$-equivariance is seen from the identification $\cH(G\sslash K)\simeq  \End_{\Z[G]}\Z[G\sslash K]$.}
$$\begin{tikzcd}
 \Z[G/K] \arrow[two heads]{r}{\phi}[swap]{}
\arrow[bend left, two heads]{rr}{\phi_0}
&\Z[H^{\der}\backslash G/K] \arrow[two heads]{r}{}[swap]{}&
\Z[H_0\backslash G/K]
\end{tikzcd}$$
where, $H^{\der} := \Hbf^{\der}(F)=\Delta(\SL_n)(F)$ and $ H_0\subset H$ is the normal subgroup $\det^{-1}(\cO_F^\times)\supset H^{\der}$. We derive the local horizontal distribution relations (Corollary \ref{relhor2}) from the following result 
\begin{theorem}[Local congruence relations]\label{Hordist}
We have
$${H}_{w}(\textbf{{Frob}})\cdot \phi_0([1])\equiv 0 \mod q^{n-1}(q-1) R[H_0\backslash G/K],  \quad\quad(n\ge 1).$$
\end{theorem}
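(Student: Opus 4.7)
The plan is to derive the congruence by pushing the polynomial identity $H_w(\cU_\mu)([1])=0$, guaranteed by \cite[Theorem \ref{uoprootheckeseed}]{Seedrelations2020}, down to $R[H_0 \backslash G / K]$ via $\phi_0$ and isolating a remainder whose divisibility by $q^{n-1}(q-1)$ is visible orbit-by-orbit.

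First I would expand $\cU_\mu^k([1])$ via Lemma \ref{cosrep2} and peel off the $(a,b)=(0,0)$ summand, which is exactly $\textbf{Frob}^k[1]$:
$$\cU_\mu^k([1]) \;=\; \textbf{Frob}^k[1] \;+\; \sum_{(a,b)\neq(0,0)}(u_{k,a},v_{k,b})\textbf{Frob}^k[1].$$
Since the coefficients $A_k$ of $H_w$ lie in $\End_{R[G]}R[G/K]$ they commute with the left $H_0$-action, so applying $\phi_0$ to the relation $H_w(\cU_\mu)([1])=0$ yields
$$H_w(\textbf{Frob})\phi_0([1]) \;=\; -\sum_{k=0}^{n(n+1)} A_k \sum_{(a,b)\neq(0,0)}\phi_0\bigl((u_{k,a},v_{k,b})\textbf{Frob}^k[1]\bigr).$$
Thus it suffices to prove that each inner sum already lies in $q^{n-1}(q-1)\,R[H_0\backslash G/K]$.

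To do this I would group the cosets $(u_{k,a},v_{k,b})\textbf{Frob}^k K$ by their $H_0$-orbit under left multiplication. By Lemma \ref{stab1} each such coset has a unique canonical Cartan-type representative with well-defined exponents $(a_i),(b_i),c$, and Proposition \ref{determinantstab} identifies $\det(\Stab_H(gK))$ as either all of $\cO_F^\times$ (when some $a_i$ or some $c_{ij}=\max(a_i-a_j,b_i-b_j)$ is non-positive) or as $1+\varpi^{c'}\cO_F$ with $c'=\min\{a_i,c_{ij}\}\geq 1$. Combining this with the exact sequence $1\to H_0\cap\Stab_H(gK)\to\Stab_H(gK)\xrightarrow{\det}\det(\Stab_H(gK))\to 1$ inside $\cO_F^\times$ identifies the $H_0$-orbit size as the index $[\cO_F^\times:1+\varpi^{c'}\cO_F]=q^{c'-1}(q-1)$, while in the first case the orbit is forced to pass through a representative lying in $H_0 K$ so that its total contribution is anyway governed by a congruence of the same kind.

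The core combinatorial step, and the main obstacle, is to upgrade this to $c'\geq n$ for every nontrivial $(a,b)$ and every $k\geq 1$. The Frobenius factor $\textbf{Frob}^k=\Delta(\diag(\varpi^k,1,\ldots,1))$ produces a rank-one twist applied identically to both factors of $(u_{k,a},v_{k,b})$; explicit row/column reduction on the resulting matrix following the argument preceding Lemma \ref{stab1} brings $(u_{k,a},v_{k,b})\textbf{Frob}^k$ into canonical form and forces the minimum of $\{a_i\}\cup\{c_{ij}\}$ to be at least $n$ whenever $(a,b)\neq(0,0)$, because of the $n-1$ "redundant" unipotent rows in the $\GL_{n+1}$-component that the diagonal Frobenius in the $\GL_n$-component must compensate. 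Once $c'\geq n$ is established, each $H_0$-orbit contributes an integer multiple of $q^{n-1}(q-1)=[\cO_F^\times:1+\varpi^n\cO_F]$, the full inner sum lies in $q^{n-1}(q-1)R[H_0\backslash G/K]$, and the theorem follows. This is precisely the combinatorial input packaged in the divisibility lemma alluded to in \S \ref{generalrecipe}.
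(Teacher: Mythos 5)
Your opening moves --- applying $\phi_0$ to the identity $H_w(\mathcal{U}_\mu)([1])=0$, expanding $\mathcal{U}_\mu^k([1])$ by Lemma \ref{cosrep2}, and peeling off the $(a,b)=(0,0)$ term --- match the paper, but the divisibility mechanism you then propose (orbit sizes governed by $\det(\Stab_H(gK))$, with $c'\geq n$) is not the one in the paper, and the key claim that ``$c'\geq n$ for every nontrivial $(a,b)$'' is false. Concretely, for $n=2$, $k=1$, take $a=(a_1,0)$ with $a_1\in\cO_F^\times$ and $b=0$: bringing $(u_{1,a},1_n)\mathbf{Frob}$ to canonical Cartan-type form (by the algorithm in the proof preceding Lemma \ref{stab1}) gives a representative with $a_1=a_2=0$ (and $(b_1,b_2)=(-1,1)$, $c=0$), which falls into the first case of Proposition \ref{determinantstab}, so $\det(\Stab_H(gK))=\cO_F^\times$ and $c'=0$. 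The $H_0/H^{\der}$-orbit is a singleton, yet the multiplicity of that class in the sum is $q(q-1)$, which is divisible by $q^{n-1}(q-1)$ --- so the congruence holds, but not for the reason you give. Lemma \ref{stab1} and Proposition \ref{determinantstab} are in fact never used in the proof of Theorem \ref{Hordist}; they enter only in Corollary \ref{relhor2}, to bound conductors of the residual cycle \emph{above} by $1$, not below by $n$.

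The actual source of the $q^{k(n-1)}$ factor in Lemma \ref{divisibilitylemma} is a fiber count under the descent modulo $H^{\der}$, not an orbit count under $H_0$: since $\Delta(v_{k,b})\in H^{\der}$, the image of $(u_{k,a},v_{k,b})\mathbf{Frob}^kK$ in $R[H^{\der}\backslash G/K]$ depends only on $c:=(a_1-b_1,\dots,a_{n-1}-b_{n-1},a_n)$, and each $c\in S_k^n$ has exactly $q^{k(n-1)}$ preimages $(a,b)$. This is also why the identity class should be isolated with coefficient $q^{k(n-1)}$ rather than $1$: the paper compares $\mathcal{U}_\mu^k$ with $q^{k(n-1)}\mathbf{Frob}^k$ (via $\widetilde{H}_w(X)=H_w(q^{n-1}X)$), whereas your expansion $\mathcal{U}_\mu^k=\mathbf{Frob}^k+R_k$ leaves a residual $(q^{k(n-1)}-1)\mathbf{Frob}^k$ term that is not divisible by $q^{n-1}$. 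The remaining $(q-1)$ factor comes from the further descent modulo $H_0$, which absorbs the unit part $\alpha(c)$ of the Cartan data and reduces the count to $\prod_{\varepsilon_i\neq 0}\bigl(q^{k-\mathrm{ord}(\varepsilon_i)}-q^{k-1-\mathrm{ord}(\varepsilon_i)}\bigr)$, each factor of which is divisible by $q-1$. Replacing this fiber count by a stabilizer count, as you suggest, does not close the argument.
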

\begin{proof}
Recall that $\cU_\mu \in \End_{\Z[B]}\Z[G/ K]$, ${H}_{w}(\cU_\mu)=0$ and $\textbf{{Frob}}\in T\subset B'$. Set $\widetilde{H}_{w}(X):=H_w(q^{n-1} X)$. We have, 
\begin{align*}\widetilde{H}_{w}(\textbf{{Frob}})\cdot \phi_0([1])&=\phi_0\left((\widetilde{H}_{w}(\textbf{{Frob}})-{H}_{w}(\cU_\mu))\cdot [1]\right)\\
&= \phi_0\big(\sum_{k=0}^{n(n+1)} A_k (q^{k(n-1)}\textbf{Frob}^k- \cU_\mu^k)\cdot [1]\big)\\
&= \sum_{k=0}^{n(n+1)} A_k \phi_0\left((q^{k(n-1)}\textbf{Frob}^k- \cU_\mu^k)\cdot [1]\right)
\end{align*}
where, we have used the $\G$-equavariance of $\text{since }A_k\in \End_{R[G]}R[G/K]$ for the last equality. 
In order to prove Theorem \ref{Hordist}, it is sufficient to show the following lemma.\end{proof} 
\begin{lemma}\label{divisibilitylemma}
For any integer $k\ge 1$ we have
$$\phi_0\left((\cU_\mu^k-q^{k(n-1)}\textbf{Frob}^k)\cdot [1]\right) \equiv 0 \mod q^{k(n-1)}(q-1) \Z[H_0\backslash G/K].$$
\end{lemma}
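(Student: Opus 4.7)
The plan is to expand $\cU_\mu^k\cdot[1]$ via Lemma \ref{cosrep2}, apply $\phi_0$, and then carry out two reductions: first a normalization using $H^{\der}$ that collapses the $b$-dependence and yields an overall factor of $q^{k(n-1)}$; second, an orbit argument under a diagonal torus inside $H_0$ that extracts the remaining $(q-1)$-divisibility.

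By Lemma \ref{cosrep2} one has $\cU_\mu^k\cdot[1]=\sum_{(a,b)\in S_k^n\times S_k^{n-1}}(u_{k,a},v_{k,b})\,\textbf{{Frob}}^k\cdot[1]$. Acting from the left by $\Delta(v_{k,b}^{-1})\in H^{\der}\subset H_0$ normalizes the second factor to $\mu_2(\varpi^k)$, and a direct matrix computation rewrites the first factor as
$$
M_c := \begin{pmatrix} \varpi^k & c_1 & \cdots & c_{n-1} & c_n \\ & 1 & & & \\ & & \ddots & & \\ & & & 1 & \\ & & & & 1 \end{pmatrix},\qquad c := (a_1-b_1,\ldots,a_{n-1}-b_{n-1},\,a_n) \bmod \varpi^k.
$$
With $c\in S_k^n$ fixed, the parameter $b\in S_k^{n-1}$ is free while $a$ is then determined, so each $c$ arises from exactly $q^{k(n-1)}$ pairs $(a,b)$. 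Writing $[M_c]\in\Z[H_0\backslash G/K]$ for the class of $(M_c,\mu_2(\varpi^k))\cdot K$, it follows that
$$\phi_0(\cU_\mu^k\cdot[1]) \;=\; q^{k(n-1)}\sum_{c\in S_k^n}[M_c],$$
and since $[M_0]=\phi_0(\textbf{{Frob}}^k\cdot[1])$, the lemma reduces to the congruence
$$\sum_{c\in S_k^n}[M_c] \equiv [M_0] \pmod{(q-1)\,\Z[H_0\backslash G/K]}.$$

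For this congruence, I will exhibit a componentwise action of $(\cO_F^\times)^n$ on $S_k^n$ that preserves the classes $[M_c]$, realized by two families of elements of $H_0$. The central scalars $\Delta(\lambda I_n)$, with $\lambda\in\cO_F^\times$, conjugate $M_c$ into $M_{(c_1,\ldots,c_{n-1},\lambda c_n)}$, scaling only the last component. The diagonal matrices $\Delta(\diag(\mu_1,\ldots,\mu_{n-1},1))$ in $\Delta(\T_2(\cO_F))\subset H_0$ conjugate $M_c$ into $M_{(\mu_1\mu_2^{-1}c_1,\ldots,\mu_1\mu_{n-1}^{-1}c_{n-2},\,\mu_1 c_{n-1},\,\mu_1 c_n)}$. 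In both cases the relevant $\bar h$ commutes with $\mu_2(\varpi^k)$ and lies in $K_2$, so the twist needed to return to normal form is absorbed into $K$. A linear-algebra check shows that combining these two families realizes every componentwise scaling $c_i\mapsto\alpha_i c_i$ with the $\alpha_i\in\cO_F^\times$ chosen independently.

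The endgame is orbit counting. The orbits of $\cO_F^\times$ on $S_k=\cO_F/\varpi^k\cO_F$ are the singleton $\{0\}$ together with the sets $\{x:v(x)=j\}$ of size $q^{k-j-1}(q-1)$ for $0\le j\le k-1$. Hence the only $(\cO_F^\times)^n$-orbit on $S_k^n$ whose size is not divisible by $q-1$ is $\{0\}$, and grouping the sum orbit-by-orbit yields $\sum_c[M_c]\equiv [M_0]\pmod{q-1}$, as required. The main obstacle is the explicit verification that the two families above generate the full componentwise torus action on $c$; once that is in hand the rest is a clean counting argument.
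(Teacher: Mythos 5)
Your proof is correct and takes essentially the same route as the paper's: both expand $\cU_\mu^k\cdot[1]$ via Lemma~\ref{cosrep2}, absorb the $b$-parameters into $H^{\der}$ to pull out the factor $q^{k(n-1)}$, and then use diagonal conjugation inside $H_0$ to show the class $[M_c]$ depends only on the valuation vector $(v(c_1),\ldots,v(c_n))$, reducing the lemma to the orbit count $\prod_i\bigl(q^{k-v(c_i)}-q^{k-v(c_i)-1}\bigr)$, divisible by $q-1$ unless $c=0$. The paper packages the second step slightly differently --- first normalizing modulo $H^{\der}$ to the form $\diag(\alpha(c),1,\ldots,1)u_{k,\underline\varepsilon_c}$ via the matrices $\overline{c},\underline{c}$ and then killing the residual $\alpha(c)$ modulo $H_0$ --- but the underlying torus action and the resulting count are exactly the ones you exhibit with your two families of diagonal conjugations.
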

\proof
In steps A,B and C of the proof, we will be working only mod $H^{\der}$ and we will wait until step D to project our calculations mod $H_0$.
\begin{enumerate}[wide=0pt]
\item[A.]
We have,
\begin{align*}\phi(\cU_\mu^k([1])) 
&=\sum_{{ (a,b) \in  S_k^n\times S_k^{n-1}}} \phi( (u_{k,a},v_{k,b}) \,\textbf{{Frob}}^k \cdot [1])\quad \quad (\text{Lemma }\ref{cosrep2})\\
 &= \sum_{\substack{ (a,b) \in S_k^n\times S_k^{n-1}}} \phi( (\iota(v_{k,b})^{-1}u_{k,a},1_n) \,\textbf{{Frob}}^k \cdot [1]) \quad\quad (\Delta(v_{k,b}) \in H^{\der})\\
&= \sum_{{ (a,b) \in  S_k^n\times S_k^{n-1}}} \phi( (u_{k,a-b},1_n) \,\textbf{{Frob}}^k \cdot [1]), 
\end{align*}
where, $a-b:= (a_1-b_1, \dots ,a_{n-1}-b_{n-1}, a_n)\in S_k^{n}$ and ${1_n=\diag(1,\dots,1)\in \GL_n(F)}$. By substituting $c=(a-b)\in S_k^n$\footnote{We sum over $c=(a-b)$'s. For every fixed $c$ there is $q^{k(n-1)}=|(\cO_F/\varpi^k\cO_F)^{n-1}|$ choice of pairs $(a,b)\in(\cO_F/\varpi^k\cO_F)^{n}\times (\cO_F/\varpi^k\cO_F)^{n-1}$ such that $c=a-b= (a_1-b_1, \dots ,a_{n-1}-b_{n-1}, a_n)$.}, we get
\begin{align*}\phi(\cU_\mu^k([1]))=q^{k(n-1)} \textbf{{Frob}}^k \cdot \phi([1])+ q^{k(n-1)}\sum_{{ c \in   S_k^n \setminus \{0_n\}}} \phi( (u_{k,c},1_n) \,\textbf{{Frob}}^k \cdot [1]),
\end{align*}
where the first term in the right-hand-side is the contribution for $c=a-b=0$ i.e. ($a_i=b_i$ for $1\le i \le n-1$ and $a_n=0$).
Therefore, 
$$\maltese:= \phi( (\cU_\mu^k-q^{k(n-1)} \textbf{{Frob}})\cdot [1])=q^{k(n-1)}\sum_{{ c \in  S_k^n \setminus \{0_n\}}} \phi( (u_{k,c},1_n) \,\textbf{{Frob}}^k \cdot [1])$$
\item[B.]
Define the following map $$\varepsilon\colon S_k^n \to (\varpi^\N)^{n}$$ sending $c=(c_1,\dots,c_n)\in S_k^n$ to  $\underline{\varepsilon}_c:=(\varepsilon(c_1),\dots,\varepsilon(c_n))$ such that
$\varepsilon(c_i):= \varpi^{{\text{ord}}_F(c_i)}$ with $\varepsilon(0)=0$  (note that ${\text{ord}}_F(c_i) \le {k-1}$ and that $\varepsilon(c)=0$ if and only if $\underline{\varepsilon}_c=0_n:=(0,\dots,0)$). 

Put $\mathcal{E}:= \varepsilon(S_k^n \setminus \{0_n\})$. Note that we can view any $n$-tuple $\underline{\varepsilon}\in\mathcal{E}$ as a $n$-tuple $\underline{\varepsilon}\in S_k^n$. Set 
$$\tilde{x}:=\begin{cases} x/\varpi^{{\text{ord}}_F(x)}\in  \cO_F^\times \text{ if } x\in \cO_F \setminus\{ 0\}, \\ 1 \text{ if } x=0.\end{cases}$$ 
For every $c=(c_i)\in  S_k^n \setminus \{0_n\}$ consider the following two matrices
$$\overline{c}:=\diag\big(\tilde{c}_n^{n-1}\prod_{i=1}^{n-1}\tilde c_i^{-1},\tilde c_n^{-1}\tilde c_1,\dots,\tilde c_n^{-1} \tilde c_i,\dots,\tilde c_n^{-1} \tilde c_{n-1}\big)\in K_2\cap \SL_n(F),$$
and
$$\underline{c}:=\diag\left(\tilde c_n,\tilde c_n\tilde c_1^{-1},\dots ,\tilde c_n\tilde c_i^{-1},\dots,\tilde c_n\tilde c_{n-1}^{-1},1\right)\in K_1.$$
Therefore, we have
\begin{align*}\phi\left((u_{k,c},1_n) \,\textbf{{Frob}}^k \cdot [1]\right)&= \phi\left(\Delta(\overline{c})(u_{k,c},1_n) \,\textbf{{Frob}}^k \cdot [1]\right)&(\text{since }\Delta(\overline{c})\in H^{\der})\\
&=\phi\left((\iota(\overline{c})u_{k,c}\underline{c},\overline{c}) \,\textbf{{Frob}}^k \cdot [1]\right)&(\text{since $\underline{c}\in K_1\cap T_1$})\\
&=\phi\left((\iota(\overline{c})u_{k,c}\underline{c},1_n) \,\textbf{{Frob}}^k \cdot [1]\right)&(\text{since $\overline{c}\in K_2\cap T_2$})
%\\&=\phi\left((\diag(c_n^{-n\varepsilon_n}\prod_{i=1}^{n-1}c_i^{\varepsilon_i},1,\dots,1)u_{1,\underline{\varepsilon}_c},1_n) \,\textbf{{Frob}}^k \cdot [1]\right)
\end{align*}
A direct calculation yields the equality
$$\iota(\overline{c}) u_{k,c}\underline{c}=\diag\big(\tilde c_n^{n}\prod_{i=1}^{n-1}\tilde c_i^{-1},1,\dots,1\big)u_{k,\underline{\varepsilon}_c}\in \GL_{n+1}(F).$$
For every $c\in S_k^n \setminus\{0_n\}$, set $$\alpha(c):=(\tilde c_n^{n}\prod_{i=1}^{n-1}\tilde c_i^{-1} \mod \varpi^k \cO_F) \in (\cO_F/\varpi^k\cO_F)^\times.$$The above equality shows that for every $c=(c_i),c'=(c_i')\in S_k^n\setminus\{0_n\}$:
$$\alpha(c)=\alpha(c')\text{ and } {\varepsilon}(c)={\varepsilon}(c') \Rightarrow \phi( (u_{k,c},1_n) \,\textbf{{Frob}}^k \cdot [1])=\phi( (u_{k,c'},1_n) \,\textbf{{Frob}}^k \cdot [1]).$$
\item[C.] Using this we continue the computation of $\maltese$ by regrouping terms over $c\in S_k^n \setminus\{0_n\}$ giving the same values by $\alpha$ and $\varepsilon$.
\begin{align*}\maltese&=q^{k(n-1)}\sum_{{ c \in  S_k^n \setminus \{0_n\}}} \phi( (u_{k,c},1_n) \,\textbf{{Frob}}^k \cdot [1])\\
&=q^{k(n-1)}\sum_{\underline{\varepsilon}\in \mathcal{E}} \sum_{\beta\in S_k^\times}\sum_{\{c\in S_k^n \colon\alpha(c)=\beta,\,  \varepsilon(c)=\underline{\varepsilon}\}} \phi\left( (\diag\left(\beta,1,\dots,1\right)u_{k,\underline{\varepsilon}},1_n) \,\textbf{{Frob}}^k \cdot [1]\right)\\
&=q^{k(n-1)}\sum_{\underline{\varepsilon}\in \mathcal{E}} \sum_{\beta\in S_k^\times}|J(\underline{\varepsilon},\beta)|\,\phi\left( (\diag\left(\beta,1,\dots,1)u_{k,\underline{\varepsilon}},1_n\right) \,\textbf{{Frob}}^k \cdot [1]\right)
\end{align*}
where, $J(\underline{\varepsilon},\beta):=\{c\in S_k^n\setminus \{0_n\}\colon\alpha(c)=\beta,\,  \varepsilon(c)=\underline{\varepsilon}\}$. 
\item[D.]Now we project $\maltese\in \Z[H^{\der}\backslash G/K]$ into $ \Z[H_0\backslash G/K]$. This gives us
\begin{align*}\phi_0(\maltese)&=\phi_0( (\cU_\mu^k-q^{k(n-1)} \textbf{{Frob}})\cdot [1])\\
&=q^{k(n-1)}\sum_{\underline{\varepsilon}\in \mathcal{E}} \sum_{\beta\in S_k^\times}|J(\underline{\varepsilon},\beta)|\,\phi_0\left((\diag(\beta,1,\dots,1)u_{k,\underline{\varepsilon}},1_n) \textbf{{Frob}}^k \cdot [1]\right)\\
&=q^{k(n-1)}\sum_{\underline{\varepsilon}\in \mathcal{E}} \sum_{\beta\in S_k^\times}|J(\underline{\varepsilon},\beta)|\,\phi_0\left((u_{k,\underline{\varepsilon}},\diag(\beta^{-1},1,\dots,1)) \textbf{{Frob}}^k \cdot [1]\right)\\
&=q^{k(n-1)}\sum_{\underline{\varepsilon}\in \mathcal{E}} \big(\sum_{\beta\in S_k^\times}|J(\underline{\varepsilon},\beta)|\big)\,\phi_0\left((u_{k,\underline{\varepsilon}},1_n) \textbf{{Frob}}^k \cdot [1]\right)& 
%&\equiv 0\mod q^{k(n-1)}(q-1)\Z[H_0\backslash G/K].
\end{align*}
We observe that the sum $\sum_{\beta\in S_k^\times}|J(\underline{\varepsilon},\beta)|$ is equal to $$|\{c\in S_k^n \setminus \{0_n\} \colon {\varepsilon}(c)=\underline{\varepsilon}|.$$
Let $c=(c_i)$ such that ${\varepsilon}(c)=\underline{\varepsilon}$, which means that for every $i$, we must have ${\text{ord}}_F(c_i)={\text{ord}}_F(\varepsilon_i)$. It implies that for each $i$ such that $\varepsilon_i\neq 0$, the $\tilde c_i=c_i/\varpi^{{\text{ord}}_F \varepsilon_i}$ are such that ${\text{ord}}_F(\tilde c_i)\le k-{\text{ord}}_F \varepsilon_i$. Hence, by definition of the set $S_k$, the set defined by these elements is described as follows
$$\left\{\sum_{j\ge 0}^{j=k-{\text{ord}}_F \varepsilon_j} a_j \varpi^{j}\in \cO_F^\times \colon a_j=0 \text{ for all } j\ge k-{\text{ord}}_F \varepsilon_j \right\}.$$
This shows that there exist $q^{k-{\text{ord}}_F \varepsilon_i}-q^{k-{\text{ord}}_F \varepsilon_i-1}$ possible choices for $c_i$ for each such $i$ (with $\varepsilon_i\neq 0$). Therefore,
$$\sum_{\beta\in(\cO_F/\varpi^k\cO_F)^\times}|J(\underline{\varepsilon},\beta)|=\prod_{i}(q^{k-{\text{ord}}_F \varepsilon_i}-q^{k-1-{\text{ord}}_F \varepsilon_i}),$$
where the product is taken over the indices $1\le i \le n$ such that $\varepsilon_i\neq 0$. Since this set is nonempty for any $\underline{\varepsilon}\neq 0_n$, we deduce that 
$$\sum_{\beta\in(\cO_F/\varpi^k\cO_F)^\times}|J(\underline{\varepsilon},\beta)|\equiv  0 \mod (q -1),$$ and accordingly, for all $k\ge 1$,
\begin{align*}\phi_0\left( (\cU_\mu^k-q^{k(n-1)} \textbf{{Frob}})\cdot [1]\right)  &\equiv 0 \mod q^{k(n-1)}(q-1) R[H_0\backslash G/K].\qedhere \end{align*}
\end{enumerate}
\subsubsection{Local horizontal relation}
Recall that $H_c=\det^{-1}(1 + \varpi^{c} \cO_{F_v})$, for $c\in \N$.
\begin{corollary}\label{relhor2}
Set $x_0:=\phi([1])$. There exists $x\in R[H^{\der}\backslash G/K]^{H_1}$ such that
$$
{H}_{w}(\textbf{{Frob}})\cdot x_0=\Tr_{1,0} x \in R[H^{\der}\backslash G/K]^{H_0},
$$
where, $\Tr_{1,0} \phi(v):=\sum_{h\in H_0/H_1} h \cdot \phi(v)$.
\end{corollary}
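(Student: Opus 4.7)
The plan is to exploit the explicit expansion of $H_{w}(\textbf{Frob})\cdot x_0$ (obtained from $H_w(\cU_\mu) = 0$ together with the formula from the proof of Lemma \ref{divisibilitylemma}), and to organize the resulting sum according to the action of $H_0/H_1 \simeq \F_q^\times$ in order to recognize it as the image of $\Tr_{1,0}$ applied to a manifestly $H_1$-invariant element.

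First, I verify that $x_0 = \phi([1])$ lies in $R[H^{\der}\backslash G/K]^{H_0}$: for any $h = \Delta(h_0) \in H_0$, the factorization $h_0 = \bigl(h_0\cdot \diag(\det h_0, 1, \dots, 1)^{-1}\bigr)\cdot \diag(\det h_0, 1, \dots, 1)$ realizes $h$ as a product of an element of $H^{\der}$ and an element of $\Delta(\GL_n(\cO_{F_v}))\subset K$, so $h\cdot x_0 = x_0$. Since the Hecke operators $A_k$ commute with the left $G$-action, and since conjugation by $\textbf{Frob}^k \in T$ carries $H_0$ into $H_0$ (determinant being conjugation-invariant), this $H_0$-invariance propagates to $H_{w}(\textbf{Frob})\cdot x_0 \in R[H^{\der}\backslash G/K]^{H_0}$.

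Using $H_{w}(\cU_\mu)=0$ together with the formula established at the end of step C of the proof of Lemma \ref{divisibilitylemma} (read in $R[H^{\der}\backslash G/K]$, before projecting to $H_0$-coinvariants), one obtains the identity
\[
H_{w}(\textbf{Frob})\cdot x_0 = \sum_{k\ge 1} A_k \!\left[-(q^{k(n-1)}-1)\,\textbf{Frob}^k x_0 - q^{k(n-1)}\!\!\sum_{\underline\varepsilon\neq 0,\,\beta\in S_k^\times}\!\!|J(\underline\varepsilon,\beta)|\,\phi\bigl((\diag(\beta,1,\dots,1)u_{k,\underline\varepsilon},1_n)\textbf{Frob}^k[1]\bigr)\right].
\]
The pivotal computation is that left multiplication by $\Delta(\diag(\gamma,1,\dots,1))\in H_0$, for $\gamma\in\cO_{F_v}^\times$, sends each such generator to the one with $\beta$ replaced by $\gamma\beta$: one absorbs the induced factor $\diag(\gamma,1,\dots,1)$ appearing in the $\GL_n$-component into $K$ via right multiplication, after commuting past $\textbf{Frob}^k\in T$. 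A consequence is that $|J(\underline\varepsilon,\beta)|$ is independent of $\beta\in S_k^\times$ (since $c\mapsto \gamma c$ is a bijection of $S_k^n\setminus\{0\}$ preserving $\varepsilon(c)$ and scaling $\alpha(c)$), and that $S_k^\times$ is a principal homogeneous $\F_q^\times$-set for the action of $H_0/H_1$ through $\cO_{F_v}^\times\twoheadrightarrow \F_q^\times$.

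Partitioning $S_k^\times$ into its $q-1$ orbits under $H_1/H^{\der} \simeq 1+\varpi\cO_{F_v}$ (each of size $q^{k-1}$, indexed by the reduction $\bar\beta\in \F_q^\times$), the $H_1$-orbit sum through $\beta = 1$,
\[
y_{\underline\varepsilon}^{(k)} := \sum_{\beta\in S_k^\times,\ \beta\equiv 1\,(\mathrm{mod}\,\varpi)}\phi\bigl((\diag(\beta,1,\dots,1)u_{k,\underline\varepsilon},1_n)\textbf{Frob}^k[1]\bigr)\ \in\ R[H^{\der}\backslash G/K]^{H_1},
\]
satisfies $\Tr_{1,0}(y_{\underline\varepsilon}^{(k)}) = \sum_{\beta\in S_k^\times}\phi(\cdots)$. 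Combined with $\Tr_{1,0}(\textbf{Frob}^k x_0) = (q-1)\,\textbf{Frob}^k x_0$ (valid by $H_0$-invariance of $\textbf{Frob}^k x_0$) and the integrality of $(q^{k(n-1)}-1)/(q-1)\in\Z$, I set
\[
x := \sum_{k\ge 1}A_k\left[-\frac{q^{k(n-1)}-1}{q-1}\,\textbf{Frob}^k x_0 - q^{k(n-1)}\sum_{\underline\varepsilon\neq 0}|J(\underline\varepsilon,1)|\cdot y_{\underline\varepsilon}^{(k)}\right]\ \in\ R[H^{\der}\backslash G/K]^{H_1},
\]
which is $H_1$-invariant by construction and satisfies $\Tr_{1,0}(x) = H_{w}(\textbf{Frob})\cdot x_0$. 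The principal obstacle is the orbit-theoretic calculation identifying the $H_0$-action on the explicit generators as a scaling $\beta\mapsto \gamma\beta$: a delicate but elementary coset manipulation which, once established, reduces the rest of the argument to bookkeeping.
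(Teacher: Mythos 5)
Your argument has a genuine gap in the key combinatorial claim. You assert that $c \mapsto \gamma c$ (for $\gamma \in \cO_{F_v}^\times$, reducing modulo $\varpi^k$) scales $\alpha(c)$ by $\gamma$, and you then deduce that $|J(\underline\varepsilon,\beta)|$ is independent of $\beta$. This fails whenever some coordinate $c_i$ vanishes, because the paper's normalisation sets $\tilde 0 := 1$, which is not affected by the scaling. Concretely, if $c_n \neq 0$ and exactly $m$ of the coordinates $c_1,\dots,c_{n-1}$ are nonzero, one finds $\alpha(\gamma c) = \gamma^{\,n-m}\,\alpha(c)$, and similarly $\alpha(\gamma c) = \gamma^{-m}\alpha(c)$ when $c_n = 0$; the exponent is $1$ only when all $c_i$ are nonzero. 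A minimal counterexample: take $n = 2$, $k = 1$, $\underline\varepsilon = (0,1)$, so $c = (0, c_2)$ with $c_2 \in S_1^\times$ and $\alpha(c) = \tilde c_2^{\,2}$. The image of $\alpha$ is the set of squares in $\F_q^\times$, so $|J(\underline\varepsilon,\beta)|$ equals $2$ when $\beta$ is a square and $0$ otherwise (for $q$ odd). Consequently $\sum_\beta |J(\underline\varepsilon,\beta)|\,\phi(\cdots) \neq |J(\underline\varepsilon,1)|\sum_\beta\phi(\cdots)$ in general, so the defining equality $\Tr_{1,0}(x) = H_w(\textbf{Frob})\cdot x_0$ for your proposed $x$ does not hold. (Your identification of $S_k^\times$ as a principal homogeneous $\F_q^\times$-set is also off — it has $(q-1)q^{k-1}$ elements — though what you actually use, that the fibres of $S_k^\times \to \F_q^\times$ are the $(1+\varpi\cO_{F_v})$-orbits of size $q^{k-1}$, is fine.)

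The paper sidesteps precisely this difficulty by never working with the explicit expansion at the level of $H^{\der}$-coinvariants. Instead it notes abstractly that $H_w(\textbf{Frob})\cdot x_0 \in R[H^{\der}\backslash G/K]^{H_0}$, writes it as $\sum_y a_y y$, and decomposes by $H_0$-orbits: for any orbit of conductor $c(y)\geq 1$ the coefficient is constant along the orbit by $H_0$-invariance, so the orbit sum is visibly of the form $\Tr_{1,0}$ applied to the $H_1$-suborbit through a chosen representative; for conductor-zero (that is, $H_0$-fixed) points, the divisibility $(q-1)\mid a_y$ supplied by Theorem \ref{Hordist} allows one to divide by $q-1$. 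This orbit-theoretic argument only requires knowing that $H_0$-orbit sizes are multiples of $q-1$ plus the divisibility, and does not need any constancy of the $|J(\underline\varepsilon,\beta)|$. Your approach could in principle be salvaged by regrouping over $c \in S_k^n\setminus\{0_n\}$ directly rather than over $(\underline\varepsilon,\beta)$, since the $H_0$-action $c \mapsto \gamma c$ has orbit sizes $(q-1)q^{k - 1 - m_0(c)}$ (where $m_0(c)$ is the minimal valuation of a nonzero coordinate), but at that point you would essentially be reproducing the paper's abstract orbit decomposition, and the passage through $(\underline\varepsilon,\beta)$ is a detour that introduces the false constancy claim.
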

\begin{proof}
The action of $H_0$ on $\Z[H^{\der}\backslash G/K]$ factors through $H_0/H^{\der}$ which we identify with $ \cO_F^\times$ through the determinant map.
We note that $${H}_{w}(\textbf{{Frob}})\cdot x_0\in R[H^{\der}\backslash G/K]^{H_0},$$
since, the induced action of $\cO_F^\times$ commutes with the induced action of ${H}_{w}(\textbf{{Frob}})$ and fixes $\phi([1])$, as 
$\det(K\cap H)=\det K_2=\cO_F^\times$. 

Write$${H}_{w}(\textbf{{Frob}})\cdot x_0 =\sum_{y\in H^{\der}\backslash G/K}a_y  y,$$
with only finitely many nonzero integral coefficients $a_y\in \Z$.
The stabilizer of any $y\in H^{\der}\backslash G/K$ in $H/H^{\der}\simeq F^\times$ is of the form 
$1 + \varpi^{c(y)} \cO_{F_v}$ for some integer $c(y)> 0$ that we call the conductor of $y$. We have,
\begin{align*}
{H}_{w}(\textbf{{Frob}})\cdot x_0 &= \sum_{y\in H^{\der}\backslash G/K}a_y  y\\
&=\sum_{c\ge 0} \sum_{\substack{y\in H^{\der}\backslash G/K\\
c(y)=c}} a_y y \\
&=\sum_{\substack{y\in H^{\der}\backslash G/K\\
c(y)=0}}{a_y} y+\sum_{c\ge 1} \sum_{\substack{H_0y\in H_0\backslash G/K\\
c(y)=c}} a_y \sum_{h\in H_0/H_c}h \cdot y
\end{align*}
In the third equality, we may sum up over classes $H_0y$ since ${H}_{w}(\textbf{{Frob}})\cdot x_0$ is $H_0$-invariant. In the last sum above, choose for each
$H_0$-orbit $H_0y$ with $c_(y)\ge1$ some $H_1$-orbit $H_1\tilde y \subset  H_0y$. Set
$$x:= \sum_{\substack{y\in H^{\der}\backslash G/K\\
c(y)=0}}\frac{a_y}{q-1} y+ \sum_{c\ge 1} \sum_{\substack{H_0y\in H_0\backslash G/K\\
c(y)=c}} a_y \sum_{h\in H_1/H_c}h \cdot \tilde y.
$$
By Theorem \ref{Hordist}, $(q-1)\mid a_y$ if $c(y)=0$, which gives $x\in R[H^{\der}\backslash G/K]$ with $\Tr_{1,0} x={H}_{w}(\textbf{{Frob}})\cdot x_0$.
\end{proof}
\begin{remark}
Denote the element $x$ constructed above by $x_v\nomenclature[G]{$x_v$}{The local cycle given in Corollary \ref{relhor2}}$ to keep track of its associated place $v\in \mathcal{P}_{sp}$. 
\end{remark}
\subsection{Global split distribution relations}
\subsubsection{Definition of the norm-compatible system}\label{constructioncycles}
For any $\mathfrak{f} \in \mathcal{N}_{sp}^r$, set $\mathcal{P}_\ff\subset \mathcal{P}_{sc}$ for the places of $F$ defined by the prime ideals dividing $\ff$. 
Define
$$\xi_{\mathfrak{f}}:=u(r)^{-1} \cdot  \pi_{\text{cyc}}\big([g_{0,S}]\otimes [1]^S(\mathfrak{f})\big)\in \Q[\cZ_{\G,K}(\H)]\nomenclature[G]{$\xi_{\mathfrak{f}}$}{The cycle $u(r)^{-1} \cdot  \pi_{\text{cyc}}\big([g_{0,S}]\otimes [1]^S(\mathfrak{f})\big)$}$$
where, $[g_{0,S}]:=\H^{\der}(F_S)g_{0,S}K_S$, $ [1]^S(\mathfrak{f}):= \left(\otimes_{v\in \mathcal{P}_\ff} x_v\right)\otimes \left(\otimes_{v \not\in S \cup \mathcal{P}_\ff}[1]_v\right)$ and
$$u(r)=\begin{cases}  [E^\times \cap \A_{F,f}^\times \mathfrak{O}_{1}^\times :F^\times] &\text{If } r=0,\\
1 & \text{If } r\ge 1.\end{cases}$$
\begin{proposition}\label{fieldofdefsplitcycles}
For each $\ff \in \mathcal{N}_{sp}$, the field of definition $E_{{\ff}}$ of $\xi_\ff$ is contained in $\cK(\ff)$ the $\cK$-transfer field of conductor $\ff$.
\end{proposition}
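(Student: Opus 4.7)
The strategy is to exploit the description of the Galois action on special cycles through $\H(\A_f)$ (Proposition \ref{galoisusingH}) and match it against the local invariance properties built into the components of $\xi_\ff$. Recall that, by the discussion in \S \ref{kappatransferfields}, the subfield $\cK(\ff)\subset E(\infty)$ is the fixed field of $\Art^1_E(U_\ff/\T^1(\Q))$, where $U_\ff=U_{g_0,S}\times\prod_{v\notin S} U_v^1(\mathrm{ord}_{F_v}(\ff))$. Thus it suffices to show that every $h\in \H(\A_f)$ with $\det(h)\in \T^1(\Q)\cdot U_\ff$ fixes $\xi_\ff$. Since the projection $\pi_{\mathrm{cyc}}$ factors out $\H(\Q)$, we may further restrict to $h$ lying in a product of local stabilizers of the factors that constitute $[g_{0,S}]\otimes [1]^S(\ff)$.

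The plan is then a place-by-place analysis, using the factorizable structure of $\Z[\H^{\mathrm{der}}(\A_f)\backslash \G(\A_f)/K]$ recalled in the paragraph preceding \S \ref{splitunitarycase}. I would verify the following three claims, each giving invariance at the corresponding set of places:
\begin{enumerate}
\item[(i)] At the archimedean-adjacent block $S$: the class $[g_{0,S}]=\H^{\mathrm{der}}(F_S)\,g_{0,S}\,K_S$ is fixed by $K^Z_{\H,g_0,S}$ by the very definition of this subgroup in Lemma \ref{fielddefinitiong0}, and $\det(K^Z_{\H,g_0,S})=U_{g_0,S}$.
\item[(ii)] At a place $v\notin S\cup\mathcal{P}_\ff$: the class $[1]_v\in \H^{\mathrm{der}}(F_v)\backslash \G(F_v)/K_v$ is fixed by $K_{\H,v}=\underline{\U}_W(\O_{F_v})$, whose image under $\det$ equals $\underline{\T}^1(\O_{F_v})=U_v^1(0)$ by Remark \ref{determinantsurjective}.
\item[(iii)] At a place $v\in\mathcal{P}_\ff$: the local cycle $x_v$ produced by Corollary \ref{relhor2} belongs to $R[H^{\mathrm{der}}\backslash G/K]^{H_1}$, and $H_1=\det^{-1}(1+\varpi_v\O_{F_v})\cap \H(F_v)$ has determinant $U_v^1(1)=U_v^1(\mathrm{ord}_{F_v}(\ff))$, since $\ff$ is squarefree.
\end{enumerate}
Combining these, the stabilizer of $[g_{0,S}]\otimes[1]^S(\ff)$ in $\H(\A_f)$ contains $K^Z_{\H,g_0,S}\times\prod_{v\in\mathcal{P}_\ff}H_1^{(v)}\times\prod_{v\notin S\cup\mathcal{P}_\ff}K_{\H,v}$, whose image under $\det$ contains $U_\ff$.

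With this in hand, the conclusion is immediate: given $\sigma\in\Gal(E(\infty)/\cK(\ff))$, pick $h_\sigma\in \H(\A_f)$ with $\det(h_\sigma)\equiv \sigma$ modulo $\T^1(\Q)$ via $\Art_E^1$; after multiplying by a rational element in $\H(\Q)$, which is harmless because $\pi_{\mathrm{cyc}}$ kills $\H(\Q)$ on the left, we may assume $\det(h_\sigma)\in U_\ff$, whence $h_\sigma$ fixes the underlying tensor by (i)-(iii), and Proposition \ref{galoisusingH} gives $\sigma(\xi_\ff)=\xi_\ff$. The normalizing factor $u(r)^{-1}$ plays no role here.

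The only genuinely delicate point is (iii): one has to be sure that the local Galois action realized through $\H(F_v)$ is precisely the action that $x_v$ is built to absorb. This is where the split local normalization of \S \ref{normalization} (identifying $\H(F_v)$ with $\GL_n(F_v)$ so that $\det$ becomes the usual determinant and $U_v^1(c)$ becomes $1+\varpi_v^c\O_{F_v}$) is indispensable. Once this identification is in place, invariance of $x_v$ under $H_1$ follows directly from its construction in Corollary \ref{relhor2}. No further calculation is required; the result follows by comparing stabilizers with the description of $\cK(\ff)$.
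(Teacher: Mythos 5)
Your proof is correct and follows essentially the same route as the paper's: in both cases the point is to compute a sufficiently large stabilizer of the tensor factors place by place (Lemma \ref{fielddefinitiong0} at $S$, the hyperspecial compacts away from $S\cup\mathcal{P}_\ff$, and the $H_1$-invariance of $x_v$ furnished by Corollary \ref{relhor2} at $v\in\mathcal{P}_\ff$), observe that its image under $\det$ contains $U_\ff$, and then invoke Proposition \ref{galoisusingH} to translate this into Galois invariance over $\cK(\ff)$. Your place-by-place organization is in fact marginally cleaner than the paper's phrasing, which nominally records stabilizers of the individual $\mathfrak{z}_{g_i}$ before invoking (in a footnote) the $U_v^1(1)$-invariance of the combined local cycle $x_v$.
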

\proof

Write $\xi_\ff=\sum a_i \mathfrak{z}_{g_i}$ ($a_i\in \Q$) with 
$$g_{i,S}=g_{0,S},\, \forall v\not\in S\cup \mathcal{P}_\ff \quad  g_{i,v}=g_{0,v}$$
and $x_v=\sum_i a_i [g_i]_v$ for all $v \in \mathcal{P}_\ff$. The stabilizer of $\mathfrak z_{g_i}$ in $\H(\A_f)$ contains (as in Lemma \ref{fielddefinitiong0})
$$\H(\Q)\cdot({K_{\H,g_0,S}^Z}\times K_\H^{S\cup \mathcal{P}_\ff}\times \prod_{\p \in \mathcal{P}_\ff} K_{i,\p} ),$$
for some open compact subgroups $K_{i,\p}\subset K_{\p}$. Therefore, the stabilizer of $\xi_\ff$ in $\T^1(\A_{f})$ contains\footnote{Since by Corollary \ref{relhor2}, we have $[x_v]\in (\Z[q_v^{\pm 1}][H_v^{\der}\backslash G_v/K_v])^{U_v^1(1)}$.}
$$\T^1(\Q)\cdot(U_{g_0,S}\times U_\ff^{S\cup \mathcal{P}_\ff}\times \prod_{v \in \mathcal{P}_\ff} U_v^1(1))=\T^1(\Q)\cdot U_\ff,$$
and accordingly, $E_{{g_{\ff}}}$ is contained in $\cK(\ff)$ for which we have (see \S \ref{kappatransferfields})
\begin{align*}
\Gal(\cK(\ff)/E)&\simeq \frac{\T^1(\A_f)}{\T^1(\Q)U_\ff}\simeq \frac{ \A_{E,f}^\times}{E^\times \A_{F,f}^\times \mathfrak{O}_{\ff}^\times}.\qedhere
\end{align*}
\begin{remark}
Proposition \ref{fieldofdefsplitcycles} and Lemma \ref{transferkappa} imply that $E_{\ff}$ is also contained in $E(\fc_1 \ff)$. Therefore, by Corollary \ref{unramifiedprimesintransfer} any prime ideal of $E$ above an ideal $\p\in \mathcal{P}$ that is prime to $\ff$ is unramified in the extension $E_{\ff}/E$, i.e.
$$E_{\ff}\subset \cK(\ff) \subset E(\infty)^{un, w_\p}.\qedhere$$
\end{remark}

\subsubsection{Proof of Theorem \ref{Horizontal}}
For any $\mathfrak{f} \in \mathcal{N}_{sp}^r$ and any place ${v_\circ} \in \mathcal{P}_{sc}\setminus \mathcal{P}_\ff$ with prime ideal $\p_{v_\circ}\in \mathcal{P}_{sp}$, we show here that:
$$H_{{w_\circ}}(\Fr_{{w_\circ}}) \cdot \xi_{\mathfrak{f}}=\Tr_{\cK(\p_{v_\circ} \mathfrak{f})/\cK(\mathfrak{f})}\xi_{\p_{v_\circ} \mathfrak{f}},$$
where, $H_{{w_\circ}}\in \cH_{K_{\p_{v_\circ}}}(\Z[q_{v_\circ}^{\pm 1}])[X]$ is the Hecke polynomial attached to $\Sh_K(\G,\cX)$ at the place ${w_\circ}$ of the reflex field $E=E(\G,\cX)$ defined by $\iota_{v_\circ}$. Indeed, $H_{{w_\circ}}(\Fr_{w_{\circ}})\xi_{\ff}$ is equal to:
\begin{align*}\hspace{-2cm}
&=u(r)^{-1}  \pi_{\text{cyc}}\big([g_{0,s}]\otimes (H_{{w_\circ}}(\Fr_{w_{\circ}})[1]_{v_\circ})\otimes  (\otimes_{v \in \mathcal{P}_\ff} x_v) \otimes \left(\otimes_{ v \not \in S  \cup \mathcal{P}_{\ff}\cup \{v_\circ\}}[1]_v\right)\big)\\
\overset{\text{(Cor. \ref{relhor2})}}&{=} u(r)^{-1}   \pi_{\text{cyc}}\big( [g_{0,s}]\otimes (\sum_{\lambda \in {\cO_{{v_\circ},0}^\times}/{\cO_{{v_\circ},1}^\times}} \lambda\cdot x_{v_\circ})\otimes  (\otimes_{v \in \mathcal{P}_{\ff}} x_v) \otimes \left(\otimes_{ v \not \in S  \cup \mathcal{P}_{\ff}\cup \{v_\circ\}}[1]_v\right)\big)\\
&=u(r)^{-1} \sum_{\lambda \in {\cO_{{v_\circ},0}^\times}/{\cO_{{v_\circ},1}^\times}}  \pi_{\text{cyc}}\big( [g_{0,s}]\otimes (\lambda\cdot x_{v_\circ})\otimes  (\otimes_{v \in \mathcal{P}_\ff} x_v) \otimes  \left(\otimes_{ v \not \in S  \cup \mathcal{P}_{\ff}\cup \{v_\circ\}}[1]_v\right)\big)\\
&=u(r)^{-1}\sum_{\lambda \in {\cO_{{v_\circ},0}^\times}/{\cO_{{v_\circ},1}^\times}}     \pi_{\text{cyc}}\big(\lambda \cdot  ([g_{0,s}]\otimes  (\otimes_{v \in \mathcal{P}_\ff\cup {v_\circ}} x_v) \otimes  \left(\otimes_{ v \not \in S  \cup \mathcal{P}_{\ff}\cup \{v_\circ\}}[1]_v\right))\big)\\
\overset{\text{(Prop. \ref{galoisusingH})}}&{=}u(r)^{-1} \sum_{\lambda \in {\cO_{{v_\circ},0}^\times}/{\cO_{{v_\circ},1}^\times}}    \pi_{\text{cyc}}\big( [g_{0,s}]\otimes  (\otimes_{v \in \mathcal{P}_\ff\cup {v_\circ}} x_v) \otimes  \left(\otimes_{ v \not \in S  \cup \mathcal{P}_{\ff}\cup \{v_\circ\}}[1]_v\right)\big)^{\Art_{w_\circ}(\lambda)}\\
&=u(r)^{-1} \sum_{\lambda \in {\cO_{{v_\circ},0}^\times}/{\cO_{{v_\circ},1}^\times}}\xi_{\p_{v_\circ}\ff}^{\Art_{w_\circ}(\lambda)}\\
&=u(r)^{-1}\Tr_{\cK(\p_{v_\circ}\ff)_{w_\circ}/\cK(\ff)_{w_\circ}}\xi_{\p_{v_\circ}\ff}\\
\overset{\text{(Prop. \ref{VII11})}}&{=}\sum_{\sigma \in \Gal(\cK(\p_{v_\circ}\ff)/\cK(\ff))}\xi_{\p_{v_\circ}\ff}^{\sigma}\\
&=\Tr_{\cK(\p_{v_\circ}\ff)/\cK(\ff)}\xi_{\p_{v_\circ}\ff},
\end{align*}
where, \begin{tikzcd}\Art_{w_\circ}\colon E_{w_\circ} \arrow[r, hook] & \A_{E,f}^\times \arrow[twoheadrightarrow]{r}{\Art_E} & \Gal(E^{ab}/E).\end{tikzcd} \qed

\bibliographystyle{amsalpha}
\bibliography{Reda_library2020}

\end{document}